\DeclarePairedDelimiter\ceil{\lceil}{\rceil}
\DeclarePairedDelimiter\floor{\lfloor}{\rfloor}
\author{Tuomas Orponen and Pablo Shmerkin}
\title[Projections, Furstenberg sets, and the $ABC$ problem]{Projections, Furstenberg sets, \\ and the $ABC$ sum-product problem}
\address{Department of Mathematics and Statistics\\ University of Jyv\"askyl\"a,
P.O. Box 35 (MaD)\\
FI-40014 University of Jyv\"askyl\"a\\
Finland} \email{tuomas.t.orponen@jyu.fi}
\address{Department of Mathematics\\
The University of British Columbia\\
1984 Mathematics Road, Vancouver, BC\\
Canada} \email{pshmerkin@math.ubc.ca}
\date{\today}
\subjclass[2010]{28A80 (primary) 28A78 (secondary)}
\keywords{Projections, sum-product problems, Furstenberg sets, Ahlfors-regular sets}
\thanks{T.O. is supported by the Academy of Finland via the project \emph{Incidences on Fractals}, grant No. 321896. }
\thanks{P.S. is supported by an NSERC Discovery Grant.}
\newcommand{\R}{\mathbb{R}}
\newcommand{\N}{\mathbb{N}}
\newcommand{\Z}{\mathbb{Z}}
\newcommand{\calH}{\mathcal{H}}
\newcommand{\spt}{\operatorname{spt}}
\newcommand{\Hd}{\dim_{\mathrm{H}}}
\newcommand{\Pd}{\dim_{\mathrm{p}}}
\newcommand{\diam}{\operatorname{diam}}
\newcommand{\dist}{\operatorname{dist}}
\newcommand{\m}{\mathfrak{m}}
\newcommand{\e}{\epsilon}
\def\Barint_#1{\mathchoice
          {\mathop{\vrule width 6pt height 3 pt depth -2.5pt
                  \kern -8pt \intop}\nolimits_{#1}}%
          {\mathop{\vrule width 5pt height 3 pt depth -2.6pt
                  \kern -6pt \intop}\nolimits_{#1}}%
          {\mathop{\vrule width 5pt height 3 pt depth -2.6pt
                  \kern -6pt \intop}\nolimits_{#1}}%
          {\mathop{\vrule width 5pt height 3 pt depth -2.6pt
                  \kern -6pt \intop}\nolimits_{#1}}}
\numberwithin{equation}{section}
\theoremstyle{plain}
\newtheorem{thm}[equation]{Theorem}
\newtheorem*{"thm"}{"Theorem"}
\newtheorem{lemma}[equation]{Lemma}
\newtheorem{cor}[equation]{Corollary}
\newtheorem{proposition}[equation]{Proposition}
\theoremstyle{definition}
\newtheorem{definition}[equation]{Definition}
\newtheorem{notation}[equation]{Notation}
\theoremstyle{remark}
\newtheorem{remark}[equation]{Remark}
\newtheorem{terminology}[equation]{Terminology}
\newcommand{\nref}[1]{(\hyperref[#1]{#1})}
\begin{document}

\begin{abstract}
We make progress on two interrelated problems at the intersection of geometric measure theory, additive combinatorics and harmonic analysis: the discretised sum-product problem, and the dimension of Furstenberg sets. Along the way, we obtain new information on the dimension of exceptional sets of orthogonal projections.

First, we give a new proof of the following asymmetric sum-product theorem: Let $A,B,C \subset \R$ be Borel sets with $0 < {\dim_{\mathrm{H}}} B \leq {\dim_{\mathrm{H}}} A < 1$ and ${\dim_{\mathrm{H}}} B + {\dim_{\mathrm{H}}} C > {\dim_{\mathrm{H}}} A$. Then, there exists $c \in C$ such that
\begin{displaymath} {\dim_{\mathrm{H}}} (A + cB) > {\dim_{\mathrm{H}}} A. \end{displaymath}
We use this to show that every $(s,t)$-Furstenberg set $F \subset \R^{2}$ associated with a line set of equal Hausdorff and packing dimension $t$ satisfies
\begin{displaymath} {\dim_{\mathrm{H}}} F \geq \min\left\{s + t,\tfrac{3s + t}{2},s + 1\right\}. \end{displaymath}
 \end{abstract}

\maketitle

\tableofcontents

\section{Introduction}

The purpose of this paper is to make progress on several interrelated problems at the interface of geometric measure theory, additive combinatorics, and harmonic analysis:
\begin{itemize}
\item The $ABC$ sum-product problem.
\item Exceptional set estimates for orthogonal projections.
\item The $(s,t)$-Furstenberg set problem and related incidence problems.
\end{itemize}
We will introduce these problems in dedicated subsections, and state our main results.

\subsection{Bourgain's projection and sum-product theorems}

All the results of this paper can be seen as generalisations and quantifications of two related, but different highly influential results of J.~Bourgain: the discretised sum-product and projection theorems \cite{Bo1, Bo2}. To put our results into context, we begin by stating these theorems.

The sum-product phenomenon predicts that if $A$ is a subset of ``intermediate size'' of a ring without sub-rings of ``intermediate size'', then either the sumset $A+A$ or the product set $A\cdot A$ has size substantially larger than that of $A$. The cases in which the ring is $\mathbb{R},\mathbb{Z}$ or a prime field $\mathbb{F}_p$, and size is measured by cardinality, have attracted considerable attention, see \cite{HRSZ25,MR4565644,Roche-Newton24,MR3474329,MR4069186,MR4469270} for a small selection of recent papers. Bourgain's sum-product theorem concerns the case in which $A\subset\R$ but ``size'' is measured by $\delta$-covering number: we denote the smallest number of $\delta$-balls needed to cover a set $X$ (in any metric space) by $|X|_{\delta}$. We remark that, after the introduction, $|X|_{\delta}$ will take on a slightly different, but for all purposes equivalent, meaning: see Definition \ref{def:coveringNumber}.

\begin{thm}[{Bourgain's discretised sum-product theorem \cite{Bo1}}] \label{thm:Bourgain-SP}
Given $s,t\in (0,1)$, there exists $\e=\e(s,t)>0$ such that the following holds for small enough $\delta > 0$.

Let $A\subset [1,2]$ be a set with $|A|_{\delta}\le\delta^{-t}$, satisfying the non-concentration assumption
\begin{equation} \label{eq:delta-s-set}
|A\cap B(x,r)|_{\delta} \le  \delta^{-\e}\cdot r^s |A|_{\delta} \quad \text{for all } x\in [1,2], \, r\in[\delta,1].
\end{equation}
Then
\[
\max\{|A+A|_{\delta},|A\cdot A|_{\delta}\} \ge |A|_{\delta}^{1+\e}.
\]
\end{thm}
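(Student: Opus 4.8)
The plan is to deduce Theorem~\ref{thm:Bourgain-SP} from the $\delta$-discretised counterpart of the asymmetric $ABC$ sum-product theorem quoted in the abstract, whose existence under only mild non-concentration hypotheses is announced there. I would argue by contradiction. Fix $A \subset [1,2]$ with $|A|_{\delta} = \delta^{-t}$ satisfying~\eqref{eq:delta-s-set}, let $\e > 0$ be a small parameter to be chosen in terms of $s$ and $t$, and suppose that simultaneously
\[
|A + A|_{\delta} \le |A|_{\delta}^{1+\e} \quad\text{and}\quad |A \cdot A|_{\delta} \le |A|_{\delta}^{1+\e}.
\]
The goal is to show this forces $A$ to look like an approximate subring of $\R$ at scale $\delta$, which the discretised $ABC$ theorem forbids for a set with $t\in(0,1)$ obeying~\eqref{eq:delta-s-set}.

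The first step is a combinatorial clean-up in the $\delta$-discretised category. From the bound on $|A+A|_{\delta}$ and the covering-number form of the Pl\"unnecke--Ruzsa inequalities (valid up to factors $\delta^{-O(\e)}$), all iterated sumsets satisfy $|kA - kA|_{\delta} \le \delta^{-O_k(\e)}|A|_{\delta}$; from the bound on $|A\cdot A|_{\delta}$, together with the multiplicative Balog--Szemer\'edi--Gowers theorem and multiplicative Pl\"unnecke--Ruzsa, one passes to a subset $A' \subseteq A$ with $|A'|_{\delta} \ge \delta^{O(\e)}|A|_{\delta}$ all of whose iterated product sets are comparably small. Since $A' \subseteq A$ is an honest subset of density $\ge \delta^{O(\e)}$, it still obeys~\eqref{eq:delta-s-set} with the same $s$ and with $\e$ replaced by $O(\e)$, and it has exponent $t' = t - O(\e) \in (0,1)$; crucially, the additive smallness $|A' + A'|_{\delta} \le \delta^{-O(\e)}|A'|_{\delta}$ is also inherited. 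Combining this with the multiplicative approximate-group structure of $A'$, I would extract the desired statement: $|A' + cA'|_{\delta} \le \delta^{-O(\e)}|A'|_{\delta}$ for every $c$ in a family of parameters of $\delta$-covering number $\ge \delta^{O(\e)}|A'|_{\delta}$ --- informally, adding $A'$ to its own dilates never causes growth.

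Now I would feed the triple $(A', A', A')$ into the $\delta$-discretised $ABC$ theorem: the dimensional hypotheses $0 < t' \le t' < 1$ and $t' + t' > t'$ all reduce to $t' \in (0,1)$, and the required non-concentration for $B = C = A'$ is exactly~\eqref{eq:delta-s-set} for $A'$. The conclusion, in its quantitative ``robust family of $c$'' form, is that $|A' + cA'|_{\delta} \ge |A'|_{\delta}^{1 + \e'}$ for $c$ in a family too large to be disjoint from the one produced in the previous paragraph, where $\e' = \e'(s, t) > 0$ (taking $\e$ small enough that the perturbed parameters $s - O(\e), t - O(\e)$ still give a comparable lower bound). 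Choosing $\e$ small enough that $O(\e) < \e'$, and then $\delta$ small, the two conclusions collide, contradicting the assumed upper bounds. Hence $\max\{|A+A|_{\delta}, |A\cdot A|_{\delta}\} \ge |A|_{\delta}^{1+\e}$, which is the theorem.

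The main obstacle is, of course, not this reduction but the discretised $ABC$ theorem itself: its ``multiplicative'' content is genuine incidence/projection geometry and is precisely what the bulk of the paper is devoted to --- the classical route of Bourgain replaces it with the comparably deep black box ``$\R$ has no approximate subrings of intermediate size''. The one genuinely delicate point \emph{within} the reduction is the second paragraph: one must run the additive and multiplicative Pl\"unnecke--Ruzsa and Balog--Szemer\'edi--Gowers machinery in tandem and verify that the refined set $A'$ simultaneously retains small additive doubling, approximate multiplicative closure, and the non-concentration~\eqref{eq:delta-s-set} with a \emph{positive} exponent --- the $ABC$ input being useless for a subset that has collapsed in any of these respects. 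This is essentially Bourgain's reduction of sum-product to projections, and keeping the various $\e$'s in the correct order is where one pays most of the losses; but it is soft compared with the projection estimate it calls upon.
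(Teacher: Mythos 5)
Your reduction breaks down at the point where Theorem \ref{thm:ABCConjecture} is invoked. You set $\alpha=\beta=\gamma=t'$ and assert that the required non-concentration for $B=C=A'$ ``is exactly \eqref{eq:delta-s-set}'', but \eqref{eq:delta-s-set} only supplies Frostman exponent $s$, while $|A|_{\delta}=\delta^{-t}$, and in Theorem \ref{thm:Bourgain-SP} one may have $s\ll t$. With the honest parameters ($\alpha\approx t$ from the cardinality bound, $\beta\approx s$ from non-concentration), the $ABC$ hypothesis $\gamma>\alpha-\beta$ forces the family of dilation parameters $c$ to be a $(\delta,\gamma)$-set with $\gamma>t-s$; the candidates your argument produces ($A'$ itself, or sets built from it by Pl\"unnecke--Ruzsa/BSG) carry exponent only about $s$, so the hypothesis fails whenever $t\ge 2s$. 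Moreover, your ``robust family'' bookkeeping is phrased in terms of $\delta$-covering number, which is the wrong quantity: the exceptional set in Theorem \ref{thm:ABCConjecture} is small only in the sense that it contains no $(\delta,\gamma,\delta^{-\chi})$-set (equivalently, its dimension is at most $\alpha-\beta+\kappa$), and a set of covering number $\delta^{O(\epsilon)}|A'|_{\delta}$ packed into an interval of length $\approx\delta^{1-t}$ satisfies no fixed positive Frostman exponent, so ``too large to be disjoint'' in covering number proves nothing. This is precisely the obstruction the paper itself flags: its explicit sum-product estimates become vacuous when $|A|_{\delta}\gg\delta^{-s}$, and recovering the full range of Theorem \ref{thm:Bourgain-SP} requires either the Bourgain--Gamburd reduction to $t=s$ \cite{BourgainGamburd08} or the weak non-concentration Furstenberg estimate (Corollary \ref{cor:FurstenbergWeakNonConcentration}), neither of which your argument supplies.

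There is a second, independent gap in the clean-up step: the claim that small $|A+A|_{\delta}$ and $|A\cdot A|_{\delta}$ yield a dense subset $A'$ with $|A'+cA'|_{\delta}\le\delta^{-O(\epsilon)}|A'|_{\delta}$ for all $c$ in a large, spread-out family is asserted rather than proved, and it is not a formal consequence of running additive and multiplicative Pl\"unnecke--Ruzsa/BSG ``in tandem'': those control iterated sums and iterated products separately, whereas $|A'+cA'|\le|A'+A'\cdot A'|$ mixes the two structures and amounts to saying $A'$ is an approximate subring --- this is the Katz--Tao/Bourgain--Gamburd reduction and is the real content of the step, not soft bookkeeping. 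Finally, note that the paper does not prove Theorem \ref{thm:Bourgain-SP} at all (it is quoted from \cite{Bo1}); its own effective sum-product statements (Theorem \ref{thm:intro-SP}, Corollary \ref{cor:gral-sum-product}) are obtained by a different route, namely Elekes' construction $(A+B)\times(A\cdot C)$ fed into the discretised Furstenberg estimates of Section \ref{s:Furstenberg}, which sidesteps both difficulties above; and since Theorem \ref{thm:ABCConjecture} itself rests, via \cite{OSW22} and \cite{OS23}, on Bourgain's projection theorem, your proposed derivation would in any case not be independent of Bourgain's original circle of ideas.
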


Theorem \ref{thm:Bourgain-SP} is connected with the problem of relating the size of a planar set to the size of its orthogonal projections in a ``sparse'' set of directions. Given $\theta\in S^1$, we denote by $\pi_{\theta}:\R^2\to \mathrm{span}(\theta)$ the orthogonal projection onto the span of $\theta$.
\begin{thm}[{Bourgain's discretised projection theorem \cite{Bo2}}] \label{thm:Bourgain-Proj}
Given $s\in (0,1]$, $t\in (0,2)$ there exists $\e=\e(s,t)>0$ and $\delta_{0} = \delta_{0}(s,t) > 0$ such that the following holds for all $\delta \in (0,\delta_{0}]$.

Let $E\subset S^1$ and $K\subset B(0,1) \subset \R^{2}$ be sets with the following properties:
\begin{itemize}
\item[\textup{(i)}] $|E\cap B(\theta,r)|_{\delta}\le \delta^{-\e} \cdot r^{s}|E|_{\delta}$ for all $\theta\in S^1$ and $r\in [\delta,1]$,
\item[\textup{(ii)}] $|K|_{\delta}\le\delta^{-t}$ and $|K\cap B(x,r)|_{\delta} \le \delta^{-\e}r^{s}|K|_{\delta}$ for all $x\in \R^{2}$ and  $r\in[\delta,1]$.
\end{itemize}
Then there exists $\theta\in E$ such that
\[
|\pi_{\theta}(K')|_{\delta} \ge \delta^{-\e} |K|^{1/2}_{\delta},\qquad K'\subset K, \, |K'|_{\delta} \ge\delta^{\e}|K|_{\delta}.
\]
\end{thm}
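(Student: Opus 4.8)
The plan is to follow Bourgain's strategy and deduce Theorem~\ref{thm:Bourgain-Proj} from the discretised sum--product theorem (Theorem~\ref{thm:Bourgain-SP}), arguing by contradiction. Suppose the conclusion fails: for every $\theta\in E$ there is a set $K_{\theta}\subset K$ with $|K_{\theta}|_{\delta}\geq\delta^{\e}|K|_{\delta}$ and $|\pi_{\theta}(K_{\theta})|_{\delta}<\delta^{-\e}|K|_{\delta}^{1/2}$. Write $N:=|K|_{\delta}=\delta^{-t}$. Since the $\geq\delta^{\e}N$ many $\delta$-balls of $K_{\theta}$ project into $\leq\delta^{-\e}N^{1/2}$ intervals of length $\delta$, applying Cauchy--Schwarz to the fibre sizes gives, for \emph{every} $\theta\in E$,
\begin{displaymath}
|P_{\theta}|_{\delta}\;\geq\;\frac{|K_{\theta}|_{\delta}^{2}}{|\pi_{\theta}(K_{\theta})|_{\delta}}\;\gtrsim\;\delta^{3\e}N^{3/2},\qquad P_{\theta}:=\{(x,x')\in K\times K:|\pi_{\theta}(x)-\pi_{\theta}(x')|\leq\delta\},
\end{displaymath}
and a pair counted by $P_{\theta}$ has $x-x'$ almost orthogonal to $\theta$.

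The first genuine step is \emph{regularisation}. Reparametrising directions by slopes, one may assume --- after a harmless restriction to a good arc and an $O(1)$ renormalisation --- that $\pi_{\theta}$ is the map $(x,y)\mapsto x+a(\theta)y$ with $a(\theta)\in[1,2]$. Then dyadic pigeonholing on the fibre multiplicities, on scales, and on $|x-x'|$, together with the usual uniformisation of $\delta$-sets obeying a non-concentration bound, lets us pass to subsets of $E$ and of $K$ of relative size $\geq\delta^{O(\e)}$ on which both $K$ and the slope set are (essentially) Ahlfors $t$- respectively $s$-regular down to scale $\delta$, and on which the ``shape'' of the bad configuration is uniform in $\theta$; hypotheses (i) and (ii) are exactly what powers this.

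The crux is to distill from the collinearity data a genuine one--dimensional additive configuration. One should first note that the soft estimate obtained by pairing the lower bound $\sum_{\theta\in E}|P_{\theta}|_{\delta}\gtrsim\delta^{O(\e)}|E|_{\delta}N^{3/2}$ against the upper bound $\sum_{\theta\in E}|P_{\theta}|_{\delta}\lesssim\delta^{s-O(\e)}|E|_{\delta}N^{2}$ --- the latter coming from the non-concentration of $E$ and of $K-K$ --- only yields a contradiction when $t<2s$; outside that range the sum--product input is indispensable. Following Bourgain, I would fix a generic slope $a_{0}$, rotate so that the $\pi_{a_{0}}$-fibres are vertical, and use the bad events to exhibit, for each admissible slope $a$, the set $K_{a}$ as an approximate grid --- a $\delta^{O(\e)}$-fraction of the vertical fibres $\{x\}\times K_{x}$ of $K$ crossed with a comparable family of ``rich'' lines in the $\pi_{a}$-fibre direction. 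Since each such rich line records a near-coincidence between the one-dimensional fibres it meets, letting $a$ range over the $s$-set of admissible slopes converts this grid-incidence data --- via the $\delta$-discretised Balog--Szemer\'edi--Gowers theorem and the Pl\"unnecke--Ruzsa inequalities --- into a single $\delta$-set $A\subset[1,2]$ with $|A|_{\delta}=N^{1/2}=\delta^{-t/2}$, a non-concentration bound $|A\cap B(y,r)|_{\delta}\leq\delta^{-O(\e)}r^{s'}|A|_{\delta}$ for some $s'=s'(s,t)>0$, and a positive-dimensional set of scalars $c$ with $|A+cA|_{\delta}\leq\delta^{-O(\e)}|A|_{\delta}$. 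A final application of Pl\"unnecke--Ruzsa upgrades this to $\max\{|A+A|_{\delta},|A\cdot A|_{\delta}\}\leq\delta^{-O(\e)}|A|_{\delta}$, which, once $\e$ is chosen small enough in terms of $s$ and $t$ (hence $s'$), contradicts Theorem~\ref{thm:Bourgain-SP} applied to $A$ --- note that $t/2\in(0,1)$ precisely because $t\in(0,2)$. This contradiction proves the theorem.

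The main obstacle is this last extraction: producing, out of the purely metric ``many small projections'' hypothesis, a one--dimensional set whose sumset \emph{and} product set are forced to be small, in contradiction with Theorem~\ref{thm:Bourgain-SP}. It is here that the bilinear/product structure of a large sub-configuration of $K$ must be isolated, that the $\delta$-discretised Balog--Szemer\'edi--Gowers and Pl\"unnecke--Ruzsa machinery becomes unavoidable (and with it the degradation of the non-concentration exponent from $s$ to $s'$), and that one must carry the loss $\delta^{O(\e)}$ through many pigeonholings; matching the parameters delivered by the regularisation step with those demanded by the extraction is the other delicate point.
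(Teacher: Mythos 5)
The paper never proves Theorem \ref{thm:Bourgain-Proj}: it is quoted verbatim from Bourgain \cite{Bo2} and used as a black box (for instance in the base case of the inductive scheme behind Theorem \ref{thm1}), so there is no in-paper argument to compare yours with. On its own terms, your proposal has a sound opening --- the Cauchy--Schwarz lower bound on $|P_{\theta}|_{\delta}$, the correct observation that the soft double count against the non-concentration of $E$ and $K$ only yields a contradiction when $t<2s$, and the intention to reduce to Theorem \ref{thm:Bourgain-SP} --- but it replaces the actual core of Bourgain's proof by an assertion.

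Two concrete gaps. First, the step in which the ``grid-incidence data'' is converted, ``via Balog--Szemer\'edi--Gowers and Pl\"unnecke--Ruzsa'', into a single $\delta$-set $A\subset[1,2]$ with $|A|_{\delta}=\delta^{-t/2}$, a non-concentration exponent $s'(s,t)>0$, and $|A+cA|_{\delta}\le\delta^{-O(\e)}|A|_{\delta}$ for a positive-dimensional set of scalars $c$, is exactly where the difficulty lies and is not an application of those two tools. Hypothesis (ii) is a ball non-concentration condition on $K$; it passes neither to fibres nor to projections (a ball in the projected line pulls back to a $\delta$-tube, which the hypothesis does not control), so nothing in your single-scale pigeonholing guarantees that the extracted one-dimensional set inherits any non-concentration exponent bounded below in terms of $s$ and $t$. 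Securing this is what forces the lengthy multiscale regularisation and induction on scales in \cite{Bo2} (and in modern treatments), and your sketch contains no substitute for it. Second, even granting such an $A$, the claim that ``a final application of Pl\"unnecke--Ruzsa'' upgrades smallness of $A+cA$ for many $c$ to $\max\{|A+A|_{\delta},|A\cdot A|_{\delta}\}\le\delta^{-O(\e)}|A|_{\delta}$ is not correct: Pl\"unnecke--Ruzsa controls iterated sumsets from small additive doubling and gives no bound whatsoever on $|A\cdot A|_{\delta}$. Deducing a contradiction from ``$|A+cA|_{\delta}$ small for a positive-dimensional set of $c$'' is itself a sum-product/ring-type theorem --- essentially the discretised $ABC$ statement, Theorem \ref{thm:ABCConjecture}, whose proof in this paper relies on \cite{OSW22} and \cite{OS23}, which in turn use Theorem \ref{thm:Bourgain-Proj} --- so as written this part of your reduction is both unproved and, within the logical architecture of this paper, circular.
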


Theorem \ref{thm:Bourgain-Proj} has, as a corollary, the following Hausdorff dimension version:
\begin{cor}[\cite{Bo2}] \label{cor:Bourgain-Proj}
Given $s\in (0,1]$, $t\in (0,2)$, there exists $\e=\e(s,t) > 0$ such that the following holds. Let $K\subset \R^2$ be a Borel set with $\Hd(K)=t$. Then,
\[
\Hd\big\{ \theta \in S^{1} : \Hd \pi_\theta (K) < \tfrac{1}{2}\Hd(K)+\e \big\} \le s.
\]
\end{cor}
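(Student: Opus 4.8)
The plan is to deduce Corollary~\ref{cor:Bourgain-Proj} from Theorem~\ref{thm:Bourgain-Proj} by the usual ``single scale to Hausdorff dimension'' argument, run by contradiction. Fix a Borel set $K\subset\R^{2}$ with $\Hd K=t$ (after rescaling, $K\subset B(0,1)$), and suppose that for some $\e=\e(s,t)>0$, to be chosen below, the exceptional set $E:=\{\theta\in S^{1}:\Hd\pi_{\theta}K<\tfrac t2+\e\}$ has $\Hd E>s$. Fix $t'\in(0,t)$, set $\sigma:=\min\{s,t',1\}\in(0,1]$, and let $\e_{0}:=\e(\sigma,t')>0$ be the constant furnished by Theorem~\ref{thm:Bourgain-Proj} for those parameters. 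By Frostman's lemma fix a Borel probability measure $\mu$ on a compact $K_{0}\subset K$ with $\mu(B(x,r))\lesssim r^{t'}$ --- for instance a normalised restriction of $\mathcal{H}^{t'}$ to a set of positive finite $\mathcal{H}^{t'}$-measure, which exists since $\Hd K>t'$ --- and, using $\Hd E>s$, a Borel probability measure $\nu$ on a compact subset of $E$ with $\nu(B(\theta,r))\lesssim r^{s}$.

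The heart of the matter is a finitary reduction. First, the dimensional lower bounds are discretised in the standard way: a dyadic pigeonhole on the $\nu$-masses of $\delta$-cubes produces, for every small $\delta$, a union $E_{\delta}$ of $\delta$-cubes satisfying hypothesis (i) of Theorem~\ref{thm:Bourgain-Proj} with exponent $\sigma$ and loss $\delta^{-\e'}$, where $\e'\to0$ as $\delta\to0$; and at a suitable sequence of scales $\delta\downarrow0$ --- those at which the $\delta^{t'}$-heavy cubes of $\mu$ carry a definite fraction of its mass, which occur by the upper density theorem for $\mathcal{H}^{t'}$ together with the reverse Fatou lemma --- a union $K_{\delta}$ of such cubes satisfies hypothesis (ii) with exponent $t'$, loss $O(1)$, cardinality $|K_{\delta}|_{\delta}\approx\delta^{-t'}$, and $\mu(K_{\delta})\gtrsim1$. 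Second, the Hausdorff-dimension hypothesis on the projections is turned into an obstruction to the conclusion of the theorem: for $\theta\in E$ one has $\mathcal{H}^{t/2+\e}(\pi_{\theta}K)=0$, so, applying the density theorem to $(\pi_{\theta})_{\ast}\mu$ and passing to dyadic scales via reverse Fatou, for each $\theta$ there are arbitrarily small scales at which $(\pi_{\theta})_{\ast}\mu$ puts at least half its mass on $\le C\delta^{-(t/2+\e)}$ intervals of length $\delta$. Since $E$ is, for every $n$, the union over $m\ge n$ of the set of directions good at scale $2^{-m}$, a pigeonhole over directions keeps a positive-dimensional subset good at a common scale; intersecting with the sequence of scales just described for $\mu$, one obtains a sequence $\delta_{k}\downarrow0$ such that, at each $\delta_{k}$, there is a set $E'\subset E$ with $\Hd E'>s$ and, for every $\theta\in E'$, a subset $K'_{\theta}\subset K_{\delta_{k}}$ with $|K'_{\theta}|_{\delta_{k}}\ge\delta_{k}^{\e'}|K_{\delta_{k}}|_{\delta_{k}}$ and $|\pi_{\theta}(K'_{\theta})|_{\delta_{k}}\le C\delta_{k}^{-(t/2+\e)}$.

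Now fix $\delta=\delta_{k}$ small, so that in particular $\e'<\e_{0}$. Applying Theorem~\ref{thm:Bourgain-Proj} to $E_{\delta}$ (the discretisation of $E'$) and $K_{\delta}$ produces $\theta\in E'$ with $|\pi_{\theta}(K')|_{\delta}\ge\delta^{-\e_{0}}|K_{\delta}|_{\delta}^{1/2}\approx\delta^{-\e_{0}-t'/2}$ for every $K'\subset K_{\delta}$ with $|K'|_{\delta}\ge\delta^{\e_{0}}|K_{\delta}|_{\delta}$; taking $K'=K'_{\theta}$ gives $\delta^{-\e_{0}-t'/2}\lesssim\delta^{-(t/2+\e)}$, hence $\e_{0}+t'/2\le t/2+\e+o(1)$ as $k\to\infty$, i.e.\ $\e\ge\e_{0}-\tfrac12(t-t')$. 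Since $\e_{0}=\e(\sigma,t')$ may be taken uniform as $t'$ ranges over a compact subinterval of $(0,t]$, choosing $t'$ close enough to $t$ makes $\e_{0}-\tfrac12(t-t')>0$; setting $\e:=\tfrac12\big(\e_{0}-\tfrac12(t-t')\big)$ then contradicts $\e\ge\e_{0}-\tfrac12(t-t')$. Therefore $\Hd E\le s$, which is the assertion of the corollary.

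The main obstacle is the finitary reduction, and in particular the passage from the purely Hausdorff-dimensional hypothesis $\Hd\pi_{\theta}K<u$ to a single-scale bound: unlike an upper box-dimension hypothesis, $\Hd\pi_{\theta}K<u$ does not control $|\pi_{\theta}K|_{\delta}$ at any fixed scale, so one must argue through $\mu$, its projections and the density theorem, and then carry out the delicate bookkeeping of synchronising the ``good'' scales --- which a priori depend on the direction $\theta$ and need not coincide with the scales at which $\mu$ itself discretises nicely --- by pigeonholing over a positive-dimensional set of directions. The remaining ingredients (Frostman's lemma, the dyadic mass-pigeonholes, and the tracking of the small parameters $\e,\e',\e_{0},t'$) are routine.
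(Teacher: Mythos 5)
The paper itself gives no proof of Corollary \ref{cor:Bourgain-Proj} (it is quoted from \cite{Bo2}), and your overall route -- Frostman measures on $K$ and on the exceptional set, a $\delta$-discretisation, and an application of Theorem \ref{thm:Bourgain-Proj} at a pigeonholed scale -- is indeed the standard one. However, your execution breaks down exactly at the step you yourself call ``the main obstacle''. First, the $\mu$-side structure you want (a scale at which the $\delta^{t'}$-heavy cubes carry ``a definite fraction'' of the mass) is not delivered by the tools you cite: the lower density theorem gives, for a.e.\ point, \emph{arbitrarily small} good radii, i.e.\ a $\limsup$ statement, and reverse Fatou reads $\mu(\limsup_m H_m)\ge\limsup_m\mu(H_m)$ -- the useless direction; at best a summability pigeonhole yields scales where the heavy cubes carry a $\delta^{\e'}$-fraction, not one half. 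Second, and more seriously, the $\theta$-side good scales are $\theta$-dependent subsequences, and the common scale you extract by pigeonholing over directions is produced by a pigeonhole over \emph{all} scales; there is no mechanism forcing it to lie in your $\mu$-good sequence, so ``intersecting with the sequence of scales just described for $\mu$'' is unjustified -- the two families of good scales need not meet at all. Third, even at a hypothetical common scale the interface fails as set up: to convert ``$\lesssim\delta^{-u}$ intervals capture a $\kappa$-fraction of $(\pi_\theta)_*\mu$'' into ``a $\delta^{\e_0}$-dense cube subset of the \emph{fixed} set $K_{\delta}$ with small projection'', the captured mass must land on $\cup K_{\delta}$, i.e.\ one needs $\kappa>1-\mu(\cup K_{\delta})$; with $\kappa\approx\delta^{\e'}$ (which is all the pigeonhole gives) and $\mu(\cup K_{\delta})$ only a definite or $\delta^{\e'}$-fraction, this has no reason to hold.

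The repair is a different ordering of the pigeonholes, and it is the crux rather than bookkeeping: do \emph{not} fix the $\mu$-discretisation in advance. Choose $\delta$ purely from the $\theta$-side (for each $\theta$ a scale at which $\lesssim\delta^{-u-\e'}$ tubes capture $\ge\delta^{\e'}$ of the $\mu$-mass, then the summability pigeonhole over $\theta$ using $\nu$); then, \emph{at that scale}, for each surviving $\theta$ pigeonhole the $O(\log\tfrac1\delta)$ dyadic $\mu$-mass classes of the cubes met by those tubes, and finally pigeonhole the class over $\theta$. The selected mass class serves as the set $K$ in Theorem \ref{thm:Bourgain-Proj}: its cubes have comparable masses, so it automatically satisfies the relative non-concentration of hypothesis (ii) (with exponent $\sigma\le t'$) and has cardinality $\ge\delta^{O(\e')-t'}$, while for each surviving $\theta$ the tube-hit cubes of that same class form a $\delta^{O(\e')}$-dense subset with $\pi_\theta$-projection $\lesssim\delta^{-u-\e'}$, giving the contradiction. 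Finally, a secondary but real point: you assert that $\e_0=\e(\sigma,t')$ ``may be taken uniform as $t'$ ranges over a compact subinterval of $(0,t]$''; the statement of Theorem \ref{thm:Bourgain-Proj} provides no such uniformity, and your numerics genuinely need $\e(\sigma,t')>(t-t')/2$ for some $t'<t$ (and control as the cardinality exponent of the selected class varies), so this must either be extracted from Bourgain's proof or obtained from a measure-theoretic formulation of the discretised theorem rather than from the black-box statement.
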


\subsection{\texorpdfstring{The $ABC$ sum-product problem}{The ABC sum-product problem}} It follows from Corollary \ref{cor:Bourgain-Proj} applied to a Cartesian product that if $A,B,C \subset \R$ are Borel sets with $\Hd A = \Hd B \in (0,1)$ and $\Hd C > 0$, then there exists $c \in C$ with
\begin{equation}\label{bourgain} \Hd (A + cB) > \Hd A. \end{equation}
The requirement $\Hd A = \Hd B$ could be slightly relaxed. However, the analogue of \eqref{bourgain} for sets $A,B$ without any relation between $\Hd A,\Hd B$ does not follow in any simple way from Bourgain's Theorem or its proof. In \cite{Orponen24}, the first author proposed the following: \eqref{bourgain} should be valid whenever $0 \leq \Hd B \leq \Hd A < 1$, and $\Hd C > \Hd A - \Hd B$ (as shown in \cite{Orponen24}, this would be sharp). This was established in \cite{Orponen24} under the stronger assumption $\Hd C > (\Hd A - \Hd B)/(1 - \Hd B)$. S. Eberhard and P. Varj\'{u} \cite{EV23} obtained a full resolution of this problem using entropy and additive combinatorial tools (we are grateful to P. Varj\'{u} for informing us about their work in progress). The first main result of this paper is a new proof of the $ABC$ sum-product problem, obtained via a completely different approach:
\begin{thm}\label{thm:ABCIntro} Let $0 < \beta \leq \alpha < 1$ and $\kappa > 0$. Then, there exists $\eta = \eta(\alpha,\beta,\kappa) > 0$ such that whenever $A,B \subset \R$ are Borel sets with $\Hd A = \alpha$ and $\Hd B = \beta$, then
\begin{displaymath} \Hd \{c \in \R : \Hd(A + cB) \leq \alpha + \eta\} \leq (\alpha - \beta) + \kappa. \end{displaymath}
\end{thm}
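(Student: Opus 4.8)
The plan is to deduce Theorem~\ref{thm:ABCIntro} from a single–scale, $\delta$-discretised statement, and to prove that statement by passing to the point–line dual picture and invoking the Furstenberg set estimate for Ahlfors-regular line families announced in the abstract (together with its $\delta$-discretised form, established elsewhere in the paper). Suppose, for a contradiction, that for every $\eta>0$ there exist $A,B$ as in the statement with $C := \{c : \Hd(A+cB)\le \alpha+\eta\}$ of Hausdorff dimension $>(\alpha-\beta)+\kappa$. By monotonicity in $\kappa$ we may assume $\kappa<\beta$; replacing $A,B,C$ by compact subsets and rescaling, we may assume $A,B,C\subset[1,2]$, that $\Hd C$ is still $>(\alpha-\beta)+\kappa$, and (shrinking $C$) that $\Hd C<\alpha+\beta$. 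Using Frostman's lemma, the inequality $\dim_{\mathrm{H}}\le \underline{\dim}_{\mathrm{B}}$, a pigeonhole over scales $\delta=2^{-k}$, and the standard uniformisation procedure, we obtain a scale $\delta>0$ and $\delta$-separated sets $A'\subset A$, $B'\subset B$, $C'\subset C$ which are Ahlfors-regular at the relevant branching scales in $[\delta,1]$, of exponents $\alpha'=\alpha-\epsilon'$, $\beta'=\beta-\epsilon'$, $\gamma'$ with $(\alpha-\beta)+\kappa-\epsilon'<\gamma'<\alpha'+\beta'$, and such that $|A'+cB'|_{\delta}\le \delta^{-\alpha-2\eta}$ for every $c\in C'$. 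Here $\epsilon'>0$ is a parameter we may take as small as we wish. It now suffices to contradict this configuration for $\eta,\epsilon'$ small in terms of $\alpha,\beta,\kappa$ and $\delta$ small.

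\emph{The dual line family and its Furstenberg set.} To $(a,b)\in A'\times B'$ associate the line $L_{a,b}=\{(c,a+cb):c\in\R\}$ in the $(c,y)$-plane, which has slope $b$ and $y$-intercept $a$. Since $B'\subset[1,2]$ and $A'$ is bounded, the map $(a,b)\mapsto L_{a,b}$ is bi-Lipschitz onto its image in a bounded part of the space of lines, so $\calL:=\{L_{a,b}:(a,b)\in A'\times B'\}$ is an $(\alpha'+\beta')$-Ahlfors-regular family of lines. Put
\begin{displaymath}
Y:=\bigcup_{c\in C'}\{c\}\times (A'+cB')\subset\R^{2}.
\end{displaymath}
For each $(a,b)\in A'\times B'$ and each $c\in C'$ the point $(c,a+cb)$ lies in $\{c\}\times(A'+cB')\subset Y$, so $L_{a,b}\cap Y$ contains the bi-Lipschitz copy $\{(c,a+cb):c\in C'\}$ of the $\gamma'$-Ahlfors-regular set $C'$. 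Thus, at scale $\delta$, the set $Y$ contains a $(\delta,\gamma')$-set inside the $\delta$-tube around every line of the Ahlfors-regular family $\calL$: it is a $(\gamma',\alpha'+\beta')$-Furstenberg set associated with an $(\alpha'+\beta')$-Ahlfors-regular line set. The Furstenberg estimate for Ahlfors-regular line families therefore gives, for every $\e>0$ and all small $\delta$,
\begin{displaymath}
|Y|_{\delta}\ \ge\ \delta^{-\min\{\gamma'+\alpha'+\beta',\ (3\gamma'+\alpha'+\beta')/2,\ \gamma'+1\}+\e}.
\end{displaymath}

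\emph{The contradiction.} As $c$ ranges over a $\delta$-interval the coordinate $a+cb$ varies by at most $2\delta$, so the part of $Y$ lying over that interval is covered by $O(\sup_{c\in C'}|A'+cB'|_{\delta})$ balls of radius $\delta$; hence, trivially,
\begin{displaymath}
|Y|_{\delta}\ \le\ O(1)\cdot |C'|_{\delta}\cdot \sup_{c\in C'}|A'+cB'|_{\delta}\ \le\ \delta^{-\gamma'-\alpha-2\eta-\e}.
\end{displaymath}
Comparing the two displays, a contradiction follows as soon as each of the three terms in the minimum exceeds $\gamma'+\alpha$ by a fixed positive amount (then choose $\eta,\e$ smaller than that amount). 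For the middle term this reads $\gamma'+\alpha'+\beta'>2\alpha$, i.e. $\gamma'+\beta>\alpha+2\epsilon'$, which holds with room to spare since $\gamma'>(\alpha-\beta)+\kappa-\epsilon'$; the terms $\gamma'+\alpha'+\beta'$ and $\gamma'+1$ similarly exceed $\gamma'+\alpha$ because $\beta>0$ and $\alpha<1$. This completes the argument.

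\emph{The main obstacle.} Granting the Furstenberg set estimate for Ahlfors-regular line families — which is proved in a separate part of the paper and is the substantive input — the remaining work for Theorem~\ref{thm:ABCIntro} is the uniformisation in the first step, which is technical but routine. Two points are worth stressing. First, it is \emph{essential} to pass to Ahlfors-regular subsets $A',B'$: the factor-$2$ loss present in the general (non-regular) projection and Furstenberg bounds would only deliver the weaker exceptional-set bound $2(\alpha-\beta)+\kappa$. Second, replacing a generic-position projection estimate for $A\times B$ by this dual Furstenberg estimate is precisely what makes the range $\alpha+\beta>1$ tractable: a planar set of dimension $>1$ admits no useful projection exceptional-set estimate below the threshold $1$, whereas encoding the \emph{product} structure of $A\times B$ into the line family $\calL$ circumvents this obstruction entirely.
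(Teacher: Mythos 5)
Your route has two fatal problems, one structural and one mathematical. Structurally, it is circular within this paper: the Furstenberg estimate for Ahlfors-regular line families (Theorem \ref{thm:Furstenberg-Ahlfors}, and equally the general bound of Theorem \ref{thm:FurstenbergGeneralDiscretized}) sits \emph{downstream} of the $ABC$ theorem in the paper's logical chain. It is deduced from Corollary \ref{cor2}, hence from Theorem \ref{thm1} and Proposition \ref{prop2}, whose proof concludes precisely by ``violating the $ABC$ sum-product theorem'', i.e.\ by invoking Theorem \ref{thm:ABCConjecture}. So the black box you propose to use already presupposes the statement you are trying to prove. The paper's actual proof of Theorem \ref{thm:ABCIntro} goes the other way: the discretised Theorem \ref{thm:ABCConjecture} is proved directly from radial projection/thin-tube estimates of \cite{OSW22} (Proposition \ref{prop:thin-tubes}), the expansion estimate of Proposition \ref{prop:expansion}, Pl\"unnecke--Ruzsa, additive-energy manipulations and Balog--Szemer\'edi--Gowers, and the continuous statement then follows by the reduction recorded in \cite{2022arXiv220100564O}; no Furstenberg input is used.

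Mathematically, the regularisation step in your first paragraph is not available. From $\Hd A=\alpha$, Frostman's lemma plus the uniformisation Lemma \ref{l:uniformization} produce $(\delta,\alpha-\epsilon')$-sets with \emph{uniform} branching, but uniform is far from Ahlfors-regular: the branching function can be any non-decreasing Lipschitz function dominating $(\alpha-\epsilon')x-\epsilon' m$, whereas $(\delta,t,\delta^{-\epsilon'})$-regularity (Definition \ref{def:deltaTRegularSet}) forces it to be nearly linear. A compact set of dimension $\alpha$ built with alternating full-branching and no-branching scale blocks contains, along a sequence of scales $\delta\to 0$, no $(\delta,\alpha-\epsilon',\delta^{-\epsilon'})$-regular subset at all: if $A$ has no branching on $[\delta,\delta^{1/2}]$, then any such subset $A'$ would satisfy both $|A'|_{\delta^{1/2}}\gtrsim |A'|_{\delta}\geq \delta^{\epsilon'}\delta^{-(\alpha-\epsilon')}$ and (by the upper-regularity clause with $R=1$, $r=\delta^{1/2}$) $|A'|_{\delta^{1/2}}\leq \delta^{-\epsilon'}\delta^{-(\alpha-\epsilon')/2}$, which is impossible for small $\delta$. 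Restricting to a scale block where the branching happens to be linear does not rescue the argument: the exponent on that block need not be close to $\alpha$ (or $\beta$), and after renormalisation the quantitative hypothesis $|A'+cB'|_{\delta}\leq\delta^{-\alpha-2\eta}$ no longer localises. Since you yourself note that the regular-case Furstenberg numerology is \emph{essential} (the general bounds would only give $2(\alpha-\beta)+\kappa$), this gap cannot be patched by falling back on the non-regular estimates; indeed the impossibility of reducing general sets to regular ones is exactly why the paper's general-set bounds are weaker and why its proof of the $ABC$ theorem proceeds by a completely different, additive-combinatorial argument.
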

Slightly informally, Theorem \ref{thm:ABCIntro} says that if $\Hd C \geq \Hd A - \Hd B + \kappa$, then there exists $c \in C$ with $\Hd (A + cB) \geq \Hd A + \eta$. We point out that a finite field version of the $ABC$ theorem was established much earlier in \cite{2018arXiv180109591O}, and our proof of Theorem \ref{thm:ABCIntro} borrows several ingredients from \cite{2018arXiv180109591O}.

Theorem \ref{thm:ABCIntro} is derived from a more technical, but often more useful, $\delta$-discretised version, which generalizes the (comparable size) Cartesian product case of Theorem \ref{thm:Bourgain-Proj}:
\begin{thm}[$\delta$-discretised $ABC$]\label{thm:ABCConjecture}
Let $0 < \beta \leq \alpha < 1$. Then, for every
\begin{equation*}
    \gamma \in (\alpha - \beta,1], 
\end{equation*}
there exist $\chi,\delta_{0} \in (0,\tfrac{1}{2}]$ such that the following holds. Let $\delta \in (0,\delta_{0}]$, and let $A,B \subset \delta \Z \cap [0,1]$ be  sets satisfying the following hypotheses:
\begin{enumerate}
\item[\textup{(A)}] $|A| \leq \delta^{-\alpha}$.
\item[\textup{(B)}] $B \neq \emptyset$, and $|B \cap B(x,r)| \leq \delta^{-\chi} r^{\beta}|B|$ for all $x \in \R$ and $r\in [\delta,1]$.
\end{enumerate}
Further, let $C\subset \delta\cdot \Z\cap [1,2]$ be a non-empty set satisfying $|C\cap B(x,r)|\le \delta^{-\chi} r^{\gamma}|C|$ for $x \in \R$ and $r\in [\delta,1]$.

Then there exists $c \in C$ such that
\begin{equation}\label{conclusion2} |\{a + cb : (a,b) \in G\}|_{\delta} \geq \delta^{-\chi}|A|, \quad \text{for all } G \subset A \times B, \, |G| \geq \delta^{\chi}|A||B|. \end{equation}
\end{thm}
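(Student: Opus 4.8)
The plan is to read \eqref{conclusion2} as a \emph{discretised projection statement} for the Cartesian product $K := A \times B \subset [0,1]^2$. If $\theta(c) \in S^1$ spans the line of slope $c$, then $\pi_{\theta(c)}(a,b)$ equals $a + cb$ up to the bounded factor $\sqrt{1 + c^2} \in [\sqrt{2}, \sqrt{5}]$, so $|\{a + cb : (a,b) \in G\}|_{\delta} \approx |\pi_{\theta(c)}(G)|_{\delta}$, and since $c \mapsto \theta(c)$ is bi-Lipschitz on $[1,2]$, the $\gamma$-non-concentration of $C$ passes (with the same exponent $\gamma$, at the cost of an absorbable constant) to $\theta(C)$. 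Thus Theorem~\ref{thm:ABCConjecture} asks: for $K = A \times B$ with $|A| \le \delta^{-\alpha}$ and $B$ a $\delta^{-\chi}$-approximate $\beta$-set, and any $\delta^{-\chi}$-approximate $\gamma$-set $E \subset S^1$ with $\gamma > \alpha - \beta$, there is $\theta \in E$ with $|\pi_{\theta}(G)|_{\delta} \ge \delta^{-\chi}|A|$ whenever $G \subset K$ has $|G| \ge \delta^{\chi}|K|$. The essential point is that a generic discretised projection theorem applied to an arbitrary set of covering dimension $t = \alpha + \beta$ is \emph{not} enough: it would only force projections of size $\delta^{-t/2} = \delta^{-(\alpha+\beta)/2} \le \delta^{-\alpha}$, whereas we need to beat $\delta^{-\alpha}$. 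So the \emph{product structure} of $K$ has to be exploited, and a first manifestation of it is that $\pi_{\theta}(A \times B)$ always contains a rescaled copy of $A$, hence $|\pi_\theta(A\times B)|_\delta \gtrsim |A|$ with equality the generic worst case.

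First I would pass to the contrapositive and tidy up: suppose that on a subset $E' \subseteq E$ with $|E'| \ge \delta^{\chi}|E|$ — which, after absorbing the density loss into the exponent, is still a $\delta^{-O(\chi)}$-approximate $\gamma$-set — every $\theta = \theta(c) \in E'$ admits $G_c \subseteq A \times B$ with $|G_c| \ge \delta^{\chi}|A||B|$ and $|\pi_{\theta}(G_c)|_{\delta} < \delta^{-\chi}|A|$. Cauchy--Schwarz on the fibres of $\pi_{\theta}$ turns each such inequality into an \emph{energy} lower bound: the number of $\delta$-near-collisions $a + cb \approx a' + cb'$ with $(a,b),(a',b') \in G_c$ is $\gtrsim |G_c|^2/|\pi_{\theta}(G_c)|_{\delta} \gtrsim \delta^{3\chi}|A||B|^2$, which — writing $a - a' \approx -c(b - b')$ and using $A, B \subset \delta\Z$ — reads $\sum_{s \in \delta\Z} |A \cap (A + s)| \cdot |B \cap (B + s/c)|_{\delta} \gtrsim \delta^{3\chi}|A||B|^2$ for every $c$ with $\theta(c) \in E'$. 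Now the product structure enters a second time: a dyadic pigeonhole in the sizes $|A \cap (A+s)|$ and $|B \cap (B+u)|$ forces each bad $c$ to lie $\delta$-close to the quotient set of a popular-difference part of $A - A$ by a popular-difference part of $B - B$, so the hypothesis produces a $\gamma$-dimensional worth of such quotients.

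The endgame is a two-sided count of this quotient/incidence configuration, and carrying it out is the crux. From below, the $\gamma$-non-concentration of $E'$ supplies $\approx \delta^{-\gamma}$ essentially distinct scales $c$, each contributing (by $B$'s $\beta$-non-concentration) $\approx \delta^{-\beta}$ worth of rescaled differences, and all of these must embed into the difference structure of $A$, of size $\lesssim |A| \le \delta^{-\alpha}$; comparing the totals against the available overlap budget yields $\gamma \le (\alpha - \beta) + O(\chi)$, contradicting $\gamma > \alpha - \beta$ once $\chi = \chi(\alpha,\beta,\gamma)$ is small enough and $\delta$ small. I expect the main obstacle to be precisely the \emph{asymmetry} $\alpha > \beta$. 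When $\alpha = \beta$ the displayed energy inequality, together with the $\beta$-non-concentration of $B$ and $C$, is essentially the Cartesian case of Bourgain's projection theorem (Theorem~\ref{thm:Bourgain-Proj}), and nothing more is needed. For $\alpha > \beta$ one cannot apply Bourgain directly; instead one must run an induction on scales that trades the excess dimension $\alpha - \beta$ of $A$ against the dimension $\gamma$ of $C$ — decomposing $A$ at each of its concentration scales into a coarse and a fine part, applying the balanced estimate (and, at the bottom level, Bourgain's sum-product Theorem~\ref{thm:Bourgain-SP}) on each, and recombining — while keeping the bookkeeping uniform over all possible shapes of $A$ and stable under passing to an arbitrary dense subset $G \subset A \times B$. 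This multiscale induction, fed by the structural input imported from the finite-field $ABC$ theorem of \cite{2018arXiv180109591O}, is where the real work lies.
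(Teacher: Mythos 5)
Your set-up (dualising to projections of $A\times B$, passing to the contrapositive, converting small projections into additive-energy lower bounds, and pigeonholing so that every bad $c$ is realised many times as a quotient of a popular difference of $A$ by a popular difference of $B$) is a reasonable opening and broadly consistent with how the paper begins (the paper also runs an energy argument, cf.\ \eqref{form108} and Remark \ref{rem4}). But the endgame is where the theorem actually lives, and there your argument has a genuine gap. The claim that ``$\approx\delta^{-\gamma}$ essentially distinct scales $c$, each contributing $\approx\delta^{-\beta}$ rescaled differences, must embed into the difference structure of $A$ of size $\lesssim\delta^{-\alpha}$, hence $\gamma\le(\alpha-\beta)+O(\chi)$'' implicitly assumes the dilates $c\cdot(B-B)_{\mathrm{pop}}$ for different $c$ are essentially disjoint. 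They need not be: if $B$ and $C$ carry compatible multiplicative structure, all these dilates can coincide, and ruling this scenario out is precisely the sum--product content of the theorem. If that count were valid, the $ABC$ theorem would follow from bookkeeping alone, with no need for Bourgain-type input at all. Your fallback --- a multiscale induction on the branching of $A$ reducing to the balanced case and, at the bottom, to Bourgain's Theorem \ref{thm:Bourgain-SP} --- is exactly the strategy that is not known to reach the sharp threshold: arguments of this type (cf.\ \cite{2022arXiv220100564O}) only give the weaker range $\gamma>(\alpha-\beta)/(1-\beta)$, and the paper explicitly notes that the asymmetric case does not follow in any simple way from Bourgain's theorem or its proof. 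So the crux of the proof is deferred to a step that, as described, either fails or is not known to work.

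For comparison, the paper's route is different in its key mechanism. It first reduces the strong statement (arbitrary dense $G\subset A\times B$) to the weak one ($G=A\times B$), quoting \cite{2022arXiv220100564O}; your direct handling of $G_c$ via energy is fine in spirit and the paper uses the asymmetric Balog--Szemer\'edi--Gowers theorem (Theorem \ref{thm:BSG}) for a related purpose. The decisive new input, however, is the radial projection / thin tubes theorem of \cite{OSW22}, packaged as Propositions \ref{prop:thin-tubes} and \ref{prop:expansion}: it shows (roughly) that the quotient set $(B-B)/(C-C)$ has dimension $\geq\min\{\beta+\gamma,1\}>\alpha$, so that by a quantitative Kaufman projection theorem one finds, for most quadruples, directions $(b-b')/(c-c')$ in which $B\times C$ expands beyond $\delta^{-\alpha}$. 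The Pl\"unnecke--Ruzsa inequalities (Lemma \ref{l:PR}) then transfer the counter assumption $|A+cB|_\delta\le\delta^{-\chi}|A|$ into smallness of $A'+(b-b')C'$, and the energy/BSG step produces exactly such a small sumset, giving the contradiction --- with no induction on scales over the structure of $A$. Incorporating an ingredient of this strength (or the sum--product machinery it replaces) is what your proposal is missing.
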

Theorem \ref{thm:ABCIntro} is a straightforward corollary of Theorem \ref{thm:ABCConjecture}, and the details were already recorded in \cite{Orponen24}.

\begin{remark}\label{rem5} In Theorem \ref{thm:ABCConjecture}, the set $C \subset \delta \Z \cap [1,2]$ may also be replaced by a probability measure $\nu$ on $[1,2]$ satisfying $\nu(B(x,r)) \leq \delta^{-\chi} \cdot r^{\gamma}$ for all $x \in \R$ and $r \in [\delta,1]$. Then, the conclusion \eqref{conclusion2} holds for some $c \in \spt(\nu)$. \end{remark}

\subsection{Exceptional set estimates for orthogonal projections}  \label{s:intro-projections}
Recall that $\pi_{\theta}$ stands for the orthogonal projection onto the line spanned by $\theta$. The following theorem from 1968 is due to Kaufman \cite{Ka}, sharpening a result of Marstrand \cite{Mar}:
\begin{thm}[Kaufman] Let $K \subset \R^{2}$ be a Borel set. Then,
\begin{equation}\label{kaufman} \Hd \{\theta \in S^{1} : \Hd \pi_{\theta}(K) < u\} \leq u, \quad \text{for all } 0 \leq u \leq \min\{\Hd K,1\}. \end{equation}
\end{thm}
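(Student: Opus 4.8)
The plan is to run the classical potential-theoretic (Frostman energy) argument. Abbreviate $E_{v} := \{\theta \in S^{1} : \Hd \pi_{\theta}(K) < v\}$; since $E_{u} = \bigcup_{v < u, \, v \in \Q} E_{v}$ and Hausdorff dimension is stable under countable unions, it suffices to treat the strict case $u < \min\{\Hd K, 1\}$, the equality case following by letting $v \uparrow u$. So I assume $u < \min\{\Hd K, 1\}$, write $E = E_{u}$, and suppose for contradiction that $\Hd E > u$. Then I fix parameters $u < s < \sigma$ with $s < \min\{\Hd K, 1\}$ and $\sigma < \Hd E$, which is possible because both $\min\{\Hd K,1\}$ and $\Hd E$ exceed $u$.

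Since $s < \Hd K$, Frostman's lemma furnishes a nonzero Radon measure $\mu$ supported on a compact subset of $K$ and of finite $s$-energy, $I_{s}(\mu) := \iint |x-y|^{-s}\, d\mu(x)\, d\mu(y) < \infty$. Since $\sigma < \Hd E$, Frostman's lemma also furnishes a nonzero Radon measure $\nu$ supported on a compact subset of $E$ with $\nu(B(\theta,r)) \le r^{\sigma}$. The one estimate carrying real content is the uniform directional bound: for every $e \in S^{1}$,
\begin{equation*}
\int_{S^{1}} |\langle \theta, e\rangle|^{-s}\, d\nu(\theta) \le C(s,\sigma,\nu(S^{1})) < \infty .
\end{equation*}
This holds because $\{\theta \in S^{1} : |\langle \theta, e\rangle| < t\}$ lies in the union of two arcs of radius $\lesssim t$ around the two points of $S^{1}$ orthogonal to $e$, hence has $\nu$-measure $\lesssim t^{\sigma}$; feeding the tail bound $\nu(\{|\langle\theta,e\rangle|^{-s} > \lambda\}) \lesssim \lambda^{-\sigma/s}$ into the layer-cake formula gives a convergent integral precisely because $\sigma > s$.

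Next, using $|\pi_{\theta}x - \pi_{\theta}y| = |x-y|\, |\langle\theta, (x-y)/|x-y|\rangle|$ and Tonelli's theorem (the integrand being non-negative and jointly Borel),
\begin{equation*}
\int_{E} I_{s}(\pi_{\theta}\mu)\, d\nu(\theta) = \iint \left( \int_{S^{1}} \left| \left\langle \theta, \tfrac{x-y}{|x-y|} \right\rangle \right|^{-s} d\nu(\theta) \right) |x-y|^{-s}\, d\mu(x)\, d\mu(y) \le C\, I_{s}(\mu) < \infty .
\end{equation*}
On the other hand, for each $\theta \in E$ the push-forward $\pi_{\theta}\mu$ is a nonzero measure whose compact support $\pi_{\theta}(\spt \mu)$ is contained in $\pi_{\theta}(K)$, a set of Hausdorff dimension $< u < s$; since a set carrying a nonzero measure of finite $s$-energy must have dimension at least $s$, this forces $I_{s}(\pi_{\theta}\mu) = +\infty$ for every $\theta \in E$. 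As $\nu(E) > 0$, the left-hand side of the last display equals $+\infty$, contradicting its right-hand side. Hence $\Hd E \le u$, as desired.

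I expect the only step requiring genuine thought to be the uniform directional bound: its convergence threshold $\sigma > s$ encodes exactly the one-dimensionality of the circle of directions, and it is also what makes the exponent $u$ in the theorem sharp. The remaining ingredients — the capacitary form of Frostman's lemma, the behaviour of Riesz energies under push-forward, Fubini–Tonelli (including the requisite joint measurability), and the standard fact that finite $s$-energy precludes dimension below $s$ — are routine and can all be found in Mattila's book.
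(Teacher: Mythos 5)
The paper does not prove this statement at all --- it is quoted as background and attributed to Kaufman \cite{Ka} --- so there is nothing to compare against except the classical argument, and what you have written is exactly that classical potential-theoretic proof. Your reduction to $u<\min\{\Hd K,1\}$, the choice $u<s<\sigma$, the directional estimate $\int_{S^{1}}|\langle\theta,e\rangle|^{-s}\,d\nu(\theta)\lesssim 1$ (valid precisely because $\sigma>s$), the Tonelli computation bounding $\int I_{s}(\pi_{\theta}\mu)\,d\nu$ by $C\,I_{s}(\mu)$, and the fact that $I_{s}(\pi_{\theta}\mu)=\infty$ whenever $\Hd\pi_{\theta}(K)<s$ are all correct and yield \eqref{kaufman}.

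The one step you should not treat as free is the application of Frostman's lemma to $E=\{\theta:\Hd\pi_{\theta}(K)<u\}$: Frostman's lemma requires $E$ to be Borel (or Suslin), i.e.\ it presupposes measurability of $\theta\mapsto\Hd\pi_{\theta}(K)$, which is a genuinely nontrivial point (it is a theorem of Mattila--Mauldin, not an exercise), and for a completely arbitrary set one cannot in general extract compact subsets of comparable dimension. The standard repair stays entirely inside your argument: for fixed $s\in(u,\min\{\Hd K,1\})$ note that $E\subset E':=\{\theta\in S^{1}:I_{s}(\pi_{\theta}\mu)=\infty\}$, and $E'$ \emph{is} Borel (indeed $\theta\mapsto I_{s}(\pi_{\theta}\mu)$ is lower semicontinuous by Fatou, since $\mu$ is compactly supported); running your energy estimate against a Frostman measure on $E'$ gives $\Hd E'\leq s$, hence $\Hd E\leq s$, and letting $s\downarrow u$ finishes. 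It is worth knowing that the paper itself is sensitive to exactly this issue in its own projection theorems: for sets of equal Hausdorff and packing dimension it deliberately replaces the appeal to Frostman's lemma by the Hausdorff-content decomposition of Lemma \ref{lemma10} and by discretised $(\delta,s)$-subsets, precisely so that no Borel structure on the exceptional set is needed.
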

It is conjectured (see for example \cite[(1.8)]{MR2994685}) that Kaufman's estimate is not sharp for any $0 < u < \min\{\Hd K,1\}$, and the sharp bound is
\begin{equation}\label{form93} \Hd \{\theta \in S^{1} : \Hd \pi_{\theta}(K) < u\} \leq \max\{2u - \Hd K,0\} \end{equation}
for $0 \leq u \leq \min\{\Hd K,1\}$. It follows from Corollary \ref{cor:Bourgain-Proj}  that the left-hand side of \eqref{kaufman} tends to $0$ as $u \searrow \tfrac{1}{2}\Hd K$. This behaviour is predicted by the conjectured inequality \eqref{form93}. It can be tracked from the proof of Corollary \ref{cor:Bourgain-Proj} that the dependence of  the dimension of the exceptional set on $2u-\Hd K$ is worse than exponential, and no improvement over \eqref{kaufman} is achieved if $u$ is not very close to $\tfrac{1}{2}\Hd K$.

Recently, the authors \cite{OS23} showed that Kaufman's bound \eqref{kaufman} can be improved by a small $\epsilon$ for all $0 < u < \min\{\Hd K,1\}$, but "$\epsilon$" (which depends on $u$ and $\Hd K$) ultimately comes from an application of Theorem \ref{thm:Bourgain-Proj}, so again it is tiny. For $\Hd K > 1$, a more quantitative improvement to \eqref{kaufman} is due to Peres and Schlag \cite{MR1749437}. Namely, the upper bound in \eqref{kaufman} can be replaced by $\max\{u + 1 - \Hd K,0\}$. However, this bound is weaker than \eqref{kaufman} for $\Hd K < 1$.

Our next result yields the conjectured numerology \eqref{form93} for sets with equal Hausdorff and packing dimension (which include Ahlfors-regular sets).
\begin{thm}\label{thm:projection} Let $K \subset \R^{2}$ be an arbitrary set of equal Hausdorff and packing dimension. Then \eqref{form93} holds for all $0 \leq u \leq \min\{\Hd K,1\}$. \end{thm}

\begin{remark}
It is not a typo that $K$ is not assumed to be Borel (or analytic). This is in line with the fact that Marstrand's theorem holds for arbitrary sets with equal Hausdorff and packing dimension, see \cite{MR3854026,MR4324956}. The main trick is to replace an appeal to Frostman's lemma by an appeal to Lemma \ref{lemma10}.
\end{remark}

\begin{remark} The proof of Theorem \ref{thm:projection} depends on \cite{OS23}. In fact, Theorem \ref{thm:projection} depends on Theorem \ref{thm:ABCConjecture}, which depends on the radial projection theorem in \cite{OSW24}, which finally depends on \cite{OS23} (which, as remarked, itself uses Theorem \ref{thm:Bourgain-Proj}).
\end{remark}

Theorem \ref{thm:projection} is derived from a natural $\delta$-discretised version, see Corollary \ref{cor2}.

\subsection{Furstenberg sets} The results in Section \ref{s:intro-projections} are fairly straightforward corollaries of estimates for $\delta$-discretised Furstenberg sets. An \emph{$(s,t)$-Furstenberg set} is a set $F \subset \R^{2}$ with the following property. There exists a non-empty line family $\mathcal{L}$ with $\Hd \mathcal{L} \geq t$ such that $\Hd (F \cap \ell) \geq s$ for all $\ell \in \mathcal{L}$. The Hausdorff dimension $\Hd \mathcal{L}$ of a line family $\mathcal{L}$ can be defined in several equivalent ways: for example, one may parametrise all non-vertical lines as $\ell_{a,b} = \{(x,y) \in \R^{2} : y = ax + b\}$, and then define
\begin{displaymath} \Hd \mathcal{L} := \Hd \{(a,b) : \ell_{a,b} \in \mathcal{L}\}. \end{displaymath}
This approach is concrete and explicit, but only works for families consisting of non-vertical lines. A "parametrisation free" approach is to first define a natural metric on the space $\mathcal{A}(2,1)$ of all affine lines in $\R^{2}$, as in \cite[Section 3.16]{Mattila95}, and then define $\Hd \mathcal{L}$ using that metric.

We briefly describe the well-known heuristic connection between Furstenberg sets and projections. By projective duality between point and lines, the Furstenberg problem can be alternatively rephrased as follows: let $P \subset \R^{2}$ with $\Hd(P)=t$, and for each $p\in P$ let $\mathcal{L}_p$ be an $s$-dimensional family of lines containing $p$. The $(s,t)$-Furstenberg set problem is equivalent to finding lower bounds on the Hausdorff dimension of
\begin{displaymath} \mathcal{L} = \bigcup_{p \in P} \mathcal{L}_{p}. \end{displaymath}
If each $\mathcal{L}_p$ consists of lines with slopes in a fixed set $E$, then morally the projection of $P$ through a typical direction in $E$ has dimension $\Hd(\mathcal{L})-s$ (this assumes a certain ``Fubini property'' that is not true for Hausdorff dimension, but can be justified at the $\delta$-discretised level). Since the exceptional projection problem corresponds to a very special type of Furstenberg set with "product structure", it is expected that the Furstenberg set problem is strictly more difficult than the projection problem. Regardless, all the currently known bounds match.

Finding lower bounds for the Hausdorff dimension of $(s,t)$-Furstenberg sets is an active and rapidly developing topic that dates back to unpublished work of Furstenberg, and to Wolff's influential exposition \cite{Wolff99}.  The main conjecture is that every $(s,t)$-Furstenberg set $F \subset \R^{2}$ with $s \in (0,1]$ and $t \in [0,2]$ has Hausdorff dimension
\begin{equation}\label{FurstConjecture}
\Hd F \geq \min \left\{s + t,\tfrac{3s + t}{2},s + 1\right\}.
\end{equation}
In addition to matching natural examples, \eqref{FurstConjecture} is the natural continuum counterpart of the foundational Szemer\'{e}di-Trotter bound \cite{MR729791} in incidence geometry; see \cite{Wolff99} for further discussion (although limited to the special case $t=1$).

The bound $s+t$ for $0\le t\le s\le 1$, which morally corresponds to Kaufman's bound for projections, is known and due to \cite{HSY22,LutzStull17}. The bound $s+1$ for $s+t\ge 2$, which is the Furstenberg analogue of Falconer's exceptional bound for projections \cite{MR673510}, was recently obtained by Y.~Fu and K.~Ren \cite{FuRen24} using Fourier analysis. For other pairs $(s,t)$, the conjecture remains open. Two elementary bounds recorded in \cite{MR3973547,MR12,Wolff99} are
\begin{equation}\label{form110}
\Hd F \geq \min\left\{\tfrac{t}{2} + s,2s\right\}.
\end{equation}
The case $t=2s$ is special, and this was exploited in \cite{HSY22,KT01} to obtain an $\epsilon$-improvement in that case. More recently, an $\epsilon$-improvement was achieved \cite{OS23,ShmerkinWang25b} in all cases. All these $\epsilon$'s here are tiny and non-explicit. Based on \cite{MR4283564} and the special structure in the case $t=2s$, explicit bounds were derived in \cite{BZ22}. For example, they show that the dimension of a $(1/2,1)$-Furstenberg set is at least $1+1/4536$. Fu and Ren \cite{FuRen24} also proved the lower bound $2s+t-1$, which improves over \eqref{form110} for $t>1$.

Our main result is the following:
\begin{thm}\label{thm:FurstIntro}
Let $F \subset \R^{2}$ be an $(s,t)$-Furstenberg set with $s \in (0,1]$ and $t \in [0,2]$. If the associated $t$-dimensional line family also has packing dimension $t$, then \eqref{FurstConjecture} holds. \end{thm}

Theorem \ref{thm:FurstIntro} is the ``Furstenberg version'' of Theorem \ref{thm:projection}. In analogy with \ref{s:intro-projections}, we prove a $\delta$-discretised theorem for Furstenberg sets (Theorem \ref{thm:Furstenberg-Ahlfors}) which yields Theorem \ref{thm:FurstIntro} as a corollary.

\subsection{Structure of the paper}
The logic of the proofs can be illustrated as follows:
\begin{align*}
\text{Radial projections} \, & \Longrightarrow ABC \text{ Theorem (Theorem \ref{thm:ABCConjecture})}\\
& \Longrightarrow \, \text{Projections of regular sets (Corollary \ref{cor2})}\\
& \Longrightarrow \, \text{Furstenberg sets associated with regular line sets (Theorem \ref{thm:Furstenberg-Ahlfors})}. \end{align*}
The arrows refer to implications between the $\delta$-discretised statements.

After a lengthy section on preliminaries, we first prove (the $ABC$) Theorem \ref{thm:ABCConjecture} in Section \ref{s:ABC}. Then, in Section \ref{s:ProjRegular}, we apply Theorem \ref{thm:ABCConjecture} to establish a $\delta$-discretised version of (the projection) Theorem \ref{thm:projection} for "almost Ahlfors-regular sets" (Definition \ref{def:deltaTRegularSet}). This is the longest proof in the paper.

In Section \ref{s:Furstenberg}, we derive our bounds for ($\delta$-discretised) Furstenberg sets. Finally, we derive the "continuous" results stated in the introduction from the $\delta$-discretised counterparts in Section \ref{s:continuousResults}; this is largely an exercise in pigeonholing, but we give full details.

\subsection{Some proof ideas}

\subsubsection{The $ABC$ Theorem} The proof of the $ABC$ theorem is easy to explain at a very heuristic level. Let us make a counter assumption that
\begin{equation}\label{form115} \sup_{c \in C} \Hd (A + cB) = \Hd A. \end{equation}
From the Pl\"unnecke-Ruzsa inequalities, it follows (heuristically) that
\begin{displaymath} \sup_{c,c'} \Hd (A + (c - c')B) \leq \Hd A. \end{displaymath}
Starting from \eqref{form115}, one can also show that
\begin{displaymath} \Hd(A + (b - b')C) = \Hd A \end{displaymath}
for "most" pairs $(b,b') \in B \times B$. This idea was already present in the finite field proof in \cite{2018arXiv180109591O}, but we provide a simplified argument starting at \eqref{form108}. Now, another application of the Pl\"unnecke-Ruzsa inequalities shows that
\begin{displaymath} \Hd ((c - c')B + (b - b')C) \leq \Hd A \quad \Longleftrightarrow \quad \Hd \left(B + \tfrac{b - b'}{c - c'}C \right) \leq \Hd A \end{displaymath}
for "most" quadruples $(b,b',c,c') \in B^{2} \times C^{2}$. However, it follows from the radial projection theorems in \cite{OSW24} that
\begin{displaymath} \Hd \tfrac{B - B}{C - C} \geq \min\{\Hd B + \Hd C,1\}. \end{displaymath}
Therefore, by \eqref{kaufman}, we may find $(b - b')/(c - c') \in (B - B)/(C - C)$ such that
\begin{displaymath} \Hd \left(B + \tfrac{b - b'}{c - c'}C\right) \geq \min\{\Hd B + \Hd C,1\} > \Hd A. \end{displaymath}
In fact, this is true for "most" quadruples $(b,b',c,c') \in B^{2} \times C^{2}$, and this yields the desired contradiction.

\subsubsection{Projections of regular sets} Next, we explain the connection between Theorem \ref{thm:projection} in the Ahlfors-regular case and the $ABC$ theorem, naturally brushing all technicalities under the carpet. Assume that $K \subset \R^{2}$ is a compact set with $\Hd K = t$, fix $u < \min\{t,1\}$, and let $E \subset S^{1}$ be a set of directions such that $\dim E = s \in [0,1]$, and $\Hd \pi_{\theta}(K) \leq u$ for all $\theta \in E$. For convenience, let us assume instead that $E \subset [0,1]$, and $\pi_{\theta}(x,y) = x + \theta y$.

Next, let $\theta_{0} \in E$ be the direction such that $\Hd \pi_{\theta_{0}}(K) =: u_{0}$ is maximal. Thus, $u_{0} \leq u$, and $\Hd \pi_{\theta}(K) \leq u_{0}$ for $\theta \in E$. Let us assume that $\theta_{0} = 0$, so $\pi_{\theta_{0}}(x,y) = x$.

If we are very lucky, $K$ now looks like the product of a $u_{0}$-dimensional set and a $(t - u_{0})$-dimensional set, say $K = A \times B$. Write $\alpha := u_{0}$ and $\beta := t - u_{0}$. If it happened that $s > \alpha - \beta = 2u_{0} - t$, then the $ABC$ theorem (roughly speaking) would tell us that there exist $\theta \in E$ and $\wp > 0$ such that
\begin{displaymath} \Hd \pi_{\theta}(K) = \Hd \pi_{\theta}(A \times B) = \Hd (A + \theta B) \geq \Hd A + \wp = u + \wp. \end{displaymath}
However, this would violate the maximality of "$u_{0}$", so we may deduce that
\begin{displaymath} s \leq 2u_{0} - t \leq 2u - \Hd K, \end{displaymath}
as in conjecture \eqref{form93}. Needless to say, the "lucky coincidence" that $K = A \times B$ is difficult to arrange. To make this happen, the Ahlfors-regularity of $K$ is very useful. Even under the Ahlfors-regularity assumption, we will not be able to show that $K = A \times B$, but instead that there exists a scale $\delta > 0$ and a $\delta^{1/2}$-tube $\mathbf{T} \subset \R^{2}$ such that $K \cap \mathbf{T}$ resembles a product set at scale $\delta$. Similar arguments have earlier appeared in \cite{MR4055989,MR4388762,OS23}.

In fact, this difficulty causes the proof to proceed rather differently from the idea above. At the core of the actual argument is Proposition \ref{prop2} which roughly speaking says the following. Assume (inductively) that we have already managed to prove a projection theorem of the following kind. If $K \subset \R^{2}$ is closed and $t$-regular, and $E \subset S^{1}$ is $s$-dimensional (with $s < 2 - t$), then for a "typical" direction $\theta \in E$ we have $\Hd \pi_{\theta}(K) \geq u$ (the real statement contains a $\delta$-discretised version of such a hypothesis). Then, as long as $u < (s + t)/2$, we can find a constant $\zeta = \zeta(u,s,t) > 0$ such that a similar conclusion holds with $u + \zeta$ in place of $u$. Iterating this proposition, we can gradually "lift" $u$ as close to the value $(s + t)/2$ as we like, see Section \ref{s:induction} for the details. This will prove Theorem \ref{thm:projection} for Ahlfors-regular sets (and sets of equal Hausdorff and packing dimension).

\subsubsection{Furstenberg sets for regular line sets} Furstenberg estimates are expected to be strictly harder to establish than the corresponding projection estimates, due to the special product structure of the subclass of Furstenberg sets that correspond to projections. In Section \ref{s:Furstenberg} we show, using an argument due to Hong Wang, that if the line set is roughly Ahlfors-regular, then projection estimates  self-improve to Furstenberg estimates. We are grateful to Hong Wang for pointing out an error in an earlier version of the article, and allowing us to include her ideas here.

\subsection{Further connections and related results}

The problems discussed in this article have a myriad of generalisations, connections and applications to areas as varied as ergodic theory, harmonic analysis and geometric group theory. We only discuss a small sample of related results and potential further directions.

Sets of equal Hausdorff and packing dimension also turned out to be easier to handle in other well-known problems. Recently,  H.~Wang and J.~Zahl \cite{WangZahl26} resolved the \emph{sticky Kakeya conjecture} in $\R^{3}$. By definition, a Kakeya set is \emph{sticky} if the associated line family in $\R^{3}$ has equal Hausdorff and packing dimension $2$. In this terminology, one could say that Theorem \ref{thm:FurstIntro} resolves the sticky Furstenberg set conjecture in $\R^{2}$. The Falconer distance set problem has also been resolved (at least in its dimension formulation) for sets of equal Hausdorff and packing dimension \cite{ShmerkinWang25}. To our knowledge, the mechanisms that allow this case to be treated in all these problems share some similarities (they all exploit in the statistical self-similarity of Ahlfors regular sets), but there are also significant differences.

This article is concerned with linear problems in $\R^{2}$. Recently, there has been significant progress on many variants of projection and Furstenberg set problems in higher dimensions, and for various kinds of non-linear generalisations; some of these are also "$\epsilon$-improvements" that ultimately depend on Bourgain's sum-product and projection theorems. See e.g. \cite{GGGHMW22,GGW24,He20, KatzZahl19,RazZahl21, PYZ22,Shmerkin23} for a small selection of recent results. It seems natural to explore whether the results of the methods of this paper can also yield further progress in some of these more general settings.

As a further application of the present resolution of the $ABC$ sum-product problem, we mention the joint work with Nicolas de Saxc\'e \cite{OSS23}, where we answer a question of Bourgain on the Fourier decay of multiplicative convolutions of Frostman measures. 

\begin{remark}\label{rem7}
  After an earlier version of this article appeared on the \emph{arXiv}, K.~Ren and H.~Wang \cite{RenWang23} proved the conjectured bounds for the size of $(s,t)$-Furstenberg sets and exceptional sets of projections in $\R^2$ discussed at \eqref{form93} and \eqref{FurstConjecture}, without \emph{a priori} assumptions on equal Hausdorff and packing dimension. In fact, they proved an important $\delta$-discretised version of these bounds. Their approach relies on Corollary  \ref{cor2} (via its application to Theorem \ref{thm:Furstenberg-Ahlfors}), but combines it with a striking new incidence bound for ``semi well-spaced'' Furstenberg sets, that relies on the \emph{high-low method} in harmonic analysis pioneered in \cite{GSW}. The original \emph{arXiv} version of this paper contained quantitative estimates for the dimension of Furstenberg sets, without the equal Hausdorff and packing dimension assumption, but these have been superseded by those in \cite{RenWang23} and hence removed from the current version. 

  Many other further developments have taken place since the first \emph{arXiv} version of this paper appeared; we mention only a few. Several works refined and extended the discretised Furstenberg estimates of Ren-Wang: see the papers of C. Demeter and H. Wang \cite{MR4869897}, H. Wang and S. Wu \cite{2024arXiv241108871W,2025arXiv250921869W}, and C. Demeter and W. O'Regan \cite{DemeterORegan25}. Among many applications of these refinements, we remark that Wang and Wu \cite{2024arXiv241108871W} established applications of Furstenberg estimates to the Fourier restriction problem. It is worth mentioning that all these works rely on the Ren-Wang incidence bound, and hence ultimately also on Corollary \ref{cor2} of the present paper.
  
  The Kakeya set problem in $\R^{3}$ was fully resolved by H. Wang and J. Zahl \cite{2025arXiv250217655W} in 2025, but the analogues of the Furstenberg set problem remain open in $\R^{d}$ for $d \geq 3$; see K. Ren's paper \cite{2023arXiv230904097R} for partial progress.  
\end{remark}

\begin{remark}
  The earlier \emph{arXiv} versions of this paper contained quantitative Furstenberg estimates under a ``single scale'' non-concentration assumption. This was Theorem 5.61 in v4. Combining the idea of that argument with the Ren and Wang's theorem \cite{RenWang23}, we were able to obtain sharp bounds for this variant of the Furstenberg problem, as well as related ones; these results have been split off into the separate article \cite{OS2026}.
\end{remark}

\subsection*{Acknowledgements} We are grateful to Hong Wang for pointing out a gap in our initial proof of Theorem \ref{thm:Furstenberg-Ahlfors}, and also for suggesting to us how to fix the gap. We thank Guangzeng Yi for a careful reading of the manuscript and pointing out a number of typos. We are also indebted to two anonymous referees for their careful reading and many helpful suggestions that improved the exposition.

\section{Preliminaries}

\subsection{Notation and terminology}

We use asymptotic notation $\lesssim, \gtrsim, \sim$. For example, $A\lesssim B$ means that $A\le C B$ for a universal $C>0$, while $A\lesssim_{r} B$ stands for $A\le C(r) B$ for a positive function $C(r)$. We also use standard $O$ notation; for example $A=O(B)$, $A=O_r(B)$ are synonym with $A\lesssim B$, $A\lesssim_r B$. The notation $A\lessapprox_{\delta} B$, $A\gtrapprox_{\delta} B$, $A\approx_{\delta} B$ or $A\approx B$ will occasionally be used to ``hide'' slowly growing functions of $\delta$ (logarithmic or small negative exponentials); its precise meaning will be made explicit each time this notation is used.

By a \emph{measure}, we always mean a Borel regular finite measure with compact support, unless otherwise specified.  

The open $r$-neighbourhood of a set $A \subset \R^{d}$ is denoted $A^{(r)}$. Logarithms are always base $2$.

\begin{definition}[Dyadic cubes]\label{def:dyadicCubes} If $n \in \Z$, we denote by $\mathcal{D}_{2^{-n}}(\R^{d})$ the family of (standard) half-open dyadic cubes in $\R^{d}$ of side-length $2^{-n}$. (Here, half-open means homothetic to $[0,1)^d$.) If $P \subset \R^{d}$ is a set, we moreover denote
\begin{displaymath} \mathcal{D}_{2^{-n}}(P) := \{Q \in \mathcal{D}_{2^{-n}} : Q \cap P \neq \emptyset\}. \end{displaymath}
Finally, we will abbreviate $\mathcal{D}_{2^{-n}} := \mathcal{D}_{2^{-n}}([0,1)^{d})$ for $n \geq 0$. \end{definition}

\begin{definition}[Covering numbers]\label{def:coveringNumber} For $P \subset \R^{d}$ and $n \in 2^{-\N}$, we write
\begin{displaymath} |P|_{2^{-n}} := |\mathcal{D}_{2^{-n}}(P)|. \end{displaymath}
 \end{definition}

We note that $|P|_{2^{-n}}$ is comparable (up to constants depending on "$d$") to the more common definition of covering number $N(P,2^{-n})$ which encodes the smallest number of open balls of radius "$2^{-n}$" required to cover $P$. The notation $|P|$ (without a subscript) will refer to cardinality in cases where $P$ is a finite set.

\begin{definition}[$(\delta,s,C)$-set]\label{def:deltaSSet} For $\delta \in 2^{-\N}$, $s \in [0,d]$, and $C > 0$, a non-empty bounded set $P \subset \R^{d}$ is called a \emph{$(\delta,s,C)$-set} if
\begin{displaymath} 
    |P \cap B(x,r)|_{\delta} \leq Cr^{s}|P|_{\delta}\quad\text{for all } x \in \R^{d}, \, r \in [\delta,1]. 
\end{displaymath}
If $\mathcal{P}$ is a finite union of dyadic cubes (possibly of different side-lengths), we say that $\mathcal{P}$ is a $(\delta,s,C)$-set if the union $\cup \mathcal{P}$ is a $(\delta,s,C)$-set in the sense above. \end{definition}

It is useful to note that if $P$ is a  $(\delta,s,C)$-set, then $|P|_{\delta} \geq \delta^{-s}/C$. This follows by applying the defining inequality with $r := \delta$ and to any $B(x,r)$ intersecting $P$. Another useful observation is that if $P$ is a $(\delta,s,C)$-set, and $P' \subset P$ with $|P'|_{\delta} \geq \lambda |P|_{\delta}$ for some $\lambda \in (0,1]$, then $P'$ is a $(\delta,s,C/\lambda)$-set.

\subsection{Dyadic tubes and slopes} \label{s:tube-preliminaries}

Recall from Definition \ref{def:dyadicCubes} that if $\delta \in 2^{-\N}$ and $A \subset \R^{d}$, the notation $\mathcal{D}_{\delta}(A)$ stands for dyadic cubes in $\R^{d}$ of side-length $\delta$ which intersect $A$. As before, we abbreviate $\mathcal{D}_{\delta} := \mathcal{D}_{\delta}([0,1)^{d})$, and in this section always $d = 2$. In addition to dyadic cubes, we now need to discuss \emph{dyadic tubes}.

\begin{definition}[Dyadic $\delta$-tubes]\label{def:dyadicTubes} Let $\delta \in 2^{-\N}$. A \emph{dyadic $\delta$-tube} is a set of the form 
  \begin{displaymath} 
     T = \cup \mathbf{D}(p) \cap B^2(0,10), 
  \end{displaymath}
  where $p \in \mathcal{D}_{\delta}([-1,1)\times \R)$, and $\mathbf{D}$ is the \emph{point-line duality map}
\begin{displaymath} \mathbf{D}(a,b) := \{(x,y) \in \R^{2} : y = ax + b\} \subset \R^{2} \end{displaymath}
sending the point $(a,b) \in \R^{2}$ to a corresponding line in $\R^{2}$. Abusing notation, we abbreviate $\mathbf{D}(p) := \cup \mathbf{D}(p)$, and we omit the intersection with $B^2(0,10)$; we simply keep in mind that we focus our attention to a bounded region. The collection of all dyadic $\delta$-tubes is denoted
\[
\mathcal{T}^{\delta} := \{\mathbf{D}(p): p \in \mathcal{D}_{\delta}([-1,1)\times \R)\}.
\]
A finite collection of dyadic $\delta$-tubes $\{\mathbf{D}(p)\}_{p \in \mathcal{P}}$ is called a $(\delta,s,C)$-set if $\mathcal{P}$ is a $(\delta,s,C)$-set in the sense of Definition \ref{def:deltaSSet}. \end{definition}

\begin{remark}
Dyadic $\delta$-tubes are not exactly $\delta$-neighbourhoods of a line, but if $z(p)$ is the centre of $p$ then
\[
\mathbf{D}(z(p))^{(c\delta)} \cap B^2(0,10) \subset \mathbf{D}(p)\cap B^2(0,10) \subset \mathbf{D}(z(p))^{(C\delta)} \cap B^2(0,10)
\]
for some universal constants $c, C > 0$.
\end{remark}

An \emph{ordinary} $\delta$-tube is a $\delta$-neighbourhood of a line in $\R^{2}$ intersected with $B^{2}(0,10)$. The following remark explains the relationship between dyadic and ordinary tubes.
\begin{remark}
Different tubes in $\mathcal{T}^{\delta}$ are essentially distinct, in the sense that the measure of their intersection is at most $(1-c)$ times the measure of each of them, for some constant $c>0$. Conversely, given a family of essentially distinct ordinary tubes $\mathcal{T}_o$ of width $\delta$ and slopes in $[-1,1]$, one can find a family of dyadic tubes $\mathcal{T}\subset\mathcal{T}^{\delta}$ such that each tube in $\mathcal{T}_o$ is covered by $O(1)$ tubes in $\mathcal{T}$, and vice versa. This allows us to convert all of our statements on dyadic tubes to statements on families of essentially distinct ordinary tubes. We deal with dyadic tubes for convenience only.
\end{remark}
For a broader discussion of dyadic tubes, see \cite[Section 2]{OS23}.

\begin{definition}[Slope set]\label{def:slopes} The \emph{slope} of a line $\ell = \mathbf{D}(a,b)$ is defined to be the number $a \in \R$; we will write $\sigma(\ell) := a$.

If $T = \mathbf{D}(p)$ is a dyadic $\delta$-tube, we define the slope $\sigma(T)$ as the minimum of the slopes of the lines contained in $T$, or in other words the left endpoint of the interval $\pi_{1}(p) \in \mathcal{D}_{\delta}([-1,1))$. Thus $\sigma(T) \in (\delta \cdot \Z) \cap [-1,1)$. If $\mathcal{T}$ is a collection of dyadic $\delta$-tubes, we write $\sigma(\mathcal{T}) := \{\sigma(T) : T \in \mathcal{T}\} \subset \delta \cdot \Z$. \end{definition}

\begin{remark}\label{rem:slopes} If $\mathcal{T}(p)$ is a family of dyadic $\delta$-tubes all of which intersect a common $\delta$-cube $p \in \mathcal{D}_{\delta}$, then $|\mathcal{T}(p)|_{\delta} \sim |\sigma(\mathcal{T}(p))|_{\delta}$, and the following are equivalent:
\begin{itemize}
\item $\mathcal{T}(p)$ is a $(\delta,s,C)$-set
\item $\sigma(\mathcal{T}(p)) \subset [-1,1)$ is a $(\delta,s,C')$-set.
\end{itemize} Here $C \sim C'$. For a proof, see \cite[Corollary 2.12]{OS23}. \end{remark}

Since families of dyadic $\delta$-tubes arise as the $\mathbf{D}$-images of families of dyadic $\delta$-cubes, our notations for dyadic cubes carry over to dyadic tubes. Notably, if $\mathcal{P} \subset \mathcal{D}_{\delta}$, and
\begin{displaymath} \mathcal{T} = \{\mathbf{D}(p) : p \in \mathcal{P}\}  \subset \mathcal{T}^{\delta}, \end{displaymath}
and $\delta \leq \Delta \leq 1$, we write
\begin{displaymath} \mathcal{T}^{\Delta}(\mathcal{T}) := \{\mathbf{D}(\mathbf{p}) : \mathbf{p} \in \mathcal{D}_{\Delta}(\mathcal{P})\} \quad \text{and} \quad |\mathcal{T}|_{\Delta} := |\mathcal{T}^{\Delta}(\mathcal{T})|. \end{displaymath}
For $\mathbf{T} = \mathbf{D}(\mathbf{p}) \in \mathcal{T}^{\Delta}(\mathcal{T})$, we will also use the notation $\mathcal{T} \cap \mathbf{T} := \{\mathbf{D}(p) \in \mathcal{T} : p \subset \mathbf{p}\}$.

\subsection{Lemmas on Lipschitz functions} \label{s:lipschitz}

Some proofs in this article depend crucially on carefully choosing scales in multiscale decompositions of different sets so that the ``branching'' of the set between various scales is controlled in some useful manner. This section contains lemmas for this purpose. For the details of the dictionary between Lipschitz functions and branching, see Lemma \ref{l:regular-is-frostman-ahlfors} below.

\begin{definition}[Interpolating slope]
Given a function $f:[a,b]\to\R$, we define $s_f(a,b)$ to be the slope of the secant line of $f$ through $a$ and $b$:
\[
s_f(a,b) = \frac{f(b)-f(a)}{b-a}.
\]
\end{definition}

\begin{definition}[$\e$-linear and superlinear functions] \label{def:eps-linear}
Given a function $f:[a,b]\to\R$ and numbers $\e>0,\sigma$, we say that $(f,a,b)$ is \emph{$(\sigma,\e)$-superlinear} or \emph{$f$ is $(\sigma,\e)$-superlinear on $[a,b]$} if
\[
f(x) \ge f(a)+\sigma(x-a)-\e(b-a)\quad\text{for all } x\in [a,b].
\]
If $\sigma=s_f(a,b)$, then we simply say that $f$ is \emph{$\e$-superlinear}

We say that \emph{$(f,a,b)$ is $\e$-linear} or \emph{$f$ is $\e$-linear on $[a,b]$} if both $(f,a,b)$ and $(-f,a,b)$ are $\e$-superlinear. Equivalently,
\[
\big|f(x)-  L_{f,a,b}(x) \big|\le \e |b-a| \quad\text{for all } x\in [a,b],
\]
where $L_{f,a,b}$ is the affine function that agrees with $f$ on $a$ and $b$.
\end{definition}

The next lemma follows by combining \cite[Lemmas 5.21 and 5.22]{ShmerkinWang25}.
\begin{lemma} \label{l:combinatorial-weak}
For every $d\in\N$ and $\epsilon>0$ there is $\tau=\tau(d,\e)>0$ such that the following holds: for any non-decreasing $d$-Lipschitz function $f:[0,m]\to\R$ with $f(0)=0$ there exist  sequences
\begin{align*}
0&=a_0 < a_1 <\cdots < a_{n}=: a \le m,\\
0&\le t_0 <t_1<\cdots <t_{n-1} \le d,
\end{align*}
such that:
\begin{enumerate}[(\rm i)]
   \item $a_{j+1}-a_j \ge \tau m$.
  \item $(f,a_{j},a_{j+1})$ is $(t_j,0)$-superlinear.
  \item $\sum_{j=0}^{n - 1} (a_{j+1}-a_j)t_j \ge f(m)-\e m$.
\end{enumerate}
\end{lemma}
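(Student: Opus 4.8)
\textbf{Proof plan for Lemma \ref{l:combinatorial-weak}.} The statement combines two ingredients: a decomposition of a non-decreasing Lipschitz function into pieces on which $f$ lies above its secant line (with non-decreasing slopes as we move to the right), plus control on the total ``slope-weighted length'' in terms of $f(m)$. The plan is to take the two lemmas cited from \cite{SW21} more or less off the shelf and splice them. First I would invoke \cite[Lemma 5.20]{SW21}, which (after rescaling $f$ by $1/d$, exactly as in the remark following Lemma \ref{l:tube-null}) produces a partition $0 = a_0 < a_1 < \cdots < a_n \le m$ into intervals of length $\ge \tau m$ on each of which $(f,a_j,a_{j+1})$ is $(\sigma_j,0)$-superlinear for some slope $\sigma_j \in [0,d]$; this is essentially a ``convexification'' statement. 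The key extra feature we want is that the slopes $\sigma_j$ can be taken \emph{strictly increasing} in $j$, which should follow by a merging step: if $\sigma_j \ge \sigma_{j+1}$ for adjacent intervals, then on the union $[a_j,a_{j+2}]$ the function stays above the secant of slope $\min\{\sigma_j,\sigma_{j+1}\}$ (in fact above a broken line dominating it), so we may coalesce the two intervals and keep the smaller slope; iterating terminates because the number of intervals decreases. One then reindexes and, after deleting any terminal pieces that contribute too little, obtains $\sigma_0 < \sigma_1 < \cdots$, at the cost of possibly shrinking $a = a_n$ below $m$ and slightly enlarging $\tau$.

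Second, I would use \cite[Lemma 5.21]{SW21} (or re-derive it directly) to get property (iii). The point is that superlinearity on $[a_j,a_{j+1}]$ with slope $\sigma_j$ gives $f(a_{j+1}) - f(a_j) \ge \sigma_j(a_{j+1}-a_j)$, hence $\sum_{j=0}^{n-1}(a_{j+1}-a_j)\sigma_j \le f(a_n) - f(a_0) = f(a) \le f(m)$; the reverse inequality $\sum_j (a_{j+1}-a_j)\sigma_j \ge f(m) - \e m$ is the content that needs the construction to be near-optimal. This near-optimality is where the error term $\e m$ enters: when we delete terminal intervals whose slope-weighted length is small, or when the convexification in Lemma 5.20 loses a little, each loss is at most a fixed fraction of $m$ times $d$, and by choosing the decomposition fine enough (many small pieces, which is precisely what the $\tau=\tau(d,\e)$ quantifier allows) the cumulative loss is $\le \e m$. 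Concretely, one runs Lemma \ref{l:tube-null}-style reasoning: approximate $f$ by a piecewise-affine function at scale controlled by $\e$, read off the slopes of the pieces, sort and merge them into increasing runs, and estimate the discarded mass.

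The main obstacle I anticipate is bookkeeping rather than conceptual: ensuring that the \emph{strict} monotonicity $\sigma_0 < \sigma_1 < \cdots < \sigma_{n-1}$, the uniform lower bound $a_{j+1}-a_j \ge \tau m$, and the near-optimality $\sum (a_{j+1}-a_j)\sigma_j \ge f(m) - \e m$ hold \emph{simultaneously} — the merging step that enforces strict monotonicity can in principle destroy a previously arranged lower bound on interval lengths or degrade the total weighted length, so one has to order the operations carefully (first convexify, then merge equal/decreasing slopes, then discard negligible terminal pieces, then verify lengths are still $\gtrsim \tau m$) and track the constants through each step. Since the paper only needs the conclusion as a black box for later multiscale arguments, I would keep the write-up short: state that it follows by combining \cite[Lemmas 5.20 and 5.21]{SW21} after rescaling by $1/d$ and a routine merging of intervals with non-increasing slopes, exactly as is already indicated in the sentence preceding the lemma, and relegate the constant-chasing to a parenthetical remark.
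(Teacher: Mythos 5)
Your overall plan coincides with the paper's: the paper offers no argument for Lemma \ref{l:combinatorial-weak} beyond the sentence preceding it, namely that it follows by combining \cite[Lemmas 5.20 and 5.21]{SW21} (after rescaling by $1/d$, as in the remark following Lemma \ref{l:tube-null}), and deferring to those two lemmas is exactly what you propose.

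One caveat about the extra detail you supply. The merging step you describe for enforcing strict monotonicity of the slopes --- coalescing adjacent intervals with $\sigma_j \ge \sigma_{j+1}$ and keeping the slope $\min\{\sigma_j,\sigma_{j+1}\}$ --- does preserve superlinearity, but your justification that the resulting loss in (iii) can be made $\le \epsilon m$ ``by choosing the decomposition fine enough'' is not correct: replacing $\sigma_j$ by the minimum on the union loses $(a_{j+1}-a_j)(\sigma_j-\sigma_{j+1})$ in the weighted sum, which is of the order of the length of the higher-slope interval times the slope gap and is not controlled by the fineness of the partition at all. Fortunately this lossy merge should never be needed: the natural construction behind statements of this type (and the one underlying the cited lemmas) is a greedy/convex-minorant one, where $\sigma_j$ is the minimal forward secant slope from $a_j$ and $a_{j+1}$ is the last point where it is attained; this yields non-decreasing slopes automatically and gives equality $f(a_{j+1})=f(a_j)+\sigma_j(a_{j+1}-a_j)$ at the breakpoints, so the weighted sum telescopes exactly to $f(a_n)$. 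Strictness then only requires merging intervals of \emph{equal} slope, which is lossless, and (iii) survives discarding the short intervals and the terminal piece provided their total length is at most $\epsilon m/d$, which is where the pigeonholing and the choice of $\tau=\tau(d,\epsilon)$ enter. With that correction your write-up matches the intended argument.
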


\begin{remark} \label{r:combinatorial-weak}
Note that (ii) and (iii) together imply that if $F$ is the piecewise affine function with $F(0)=0$ and slope $t_j$ on $[a_{j},a_{j+1}]$, then
\[
F(a_j) \le f(a_j) \le F(a_j)+\e m,\qquad j\in \{0,\ldots,n-1\}.
\]
Therefore, also
\[
|f(a_i)-f(a_j)- (F(a_i)-F(a_j))|\le \e m\qquad i,j\in\{0,\ldots,n-1\}.
\]
\end{remark}

\begin{cor}\label{cor:lemma2} Let $0 < \sigma \leq d$, $d \geq 1$, $\zeta \in (0,\sigma]$, and $\epsilon \in (0,\zeta/6]$. Let $f \colon [0,1] \to [0,\infty)$ be a non-decreasing piecewise affine $d$-Lipschitz function satisfying $f(0) = 0$ and $(f,0,1)$ is $(\sigma,\e)$-superlinear.  Then, there exists a point $a \in [\tfrac{\zeta}{12d},\tfrac{1}{3}]$ such that $(f,a,1)$ is $(\sigma-\zeta,0)$-superlinear.
\end{cor}
\begin{proof}
Write $c=\zeta/(12d)$. Let $g(x): [0,1-c] \to [0,(1-c)d]$ be the shifted function $g(x)=f(x+c)-f(c)$. Then $g$ satisfies the assumptions of Lemma \ref{l:combinatorial-weak}, which we apply with $\e/100$ in place of $\e$. Let $(a_j), (t_j)$ be the resulting numbers. Let $a=\inf\{ a_j: t_j \ge \sigma-\zeta\}+c$. Since the $t_j$ are increasing, we get that $(g,a-c,1-c)$ is $(\sigma-\zeta,0)$-superlinear, and then so is $(f,a,1)$. Since $(f,0,1)$ is $(\sigma,\e)$-superlinear, we have
\[
\sigma a -\e \le f(a).
\]
On the other hand, if $F$ is the piecewise affine function with $F(0)=0$ and slope $t_j$ on $[a_{j},a_{j+1}]$, then $F(a)\le dc+(\sigma-\zeta)(a-c)$.  By Remark \ref{r:combinatorial-weak}. 
\[
f(a) \le dc+(\sigma-\zeta)(a-c)+\e/100.
\]
Combining these two inequalities, some algebra yields
\[
a \le \frac{dc+1.01\e}{\zeta}+ c\le \frac{1}{3}.
\]
\end{proof}

\subsection{Uniform sets and branching numbers} \label{s:uniform}

\begin{definition}\label{def:uniformity}
Let $n \geq 1$, and let
\begin{displaymath} \delta = \Delta_{n} < \Delta_{n - 1} < \ldots < \Delta_{1} \leq \Delta_{0} = 1 \end{displaymath}
be a sequence of dyadic scales.  We say that a set $P\subset [0,1)^2$ is \emph{$\{\Delta_j\}_{j=1}^n$-uniform} if there is a sequence $\{N_j\}_{j=1}^n$ such that $N_{j} \in 2^{\N}$ and $|P\cap Q|_{\Delta_{j}} = N_j$ for all $j\in \{1,\ldots,n\}$ and all $Q\in\mathcal{D}_{\Delta_{j - 1}}(P)$.
\end{definition}

\begin{remark} The key feature of $\{\Delta_{j}\}_{j = 1}^{n}$-uniform sets is the following equation which will be used many times below without further remark: if $0 \leq k \leq l < m \leq n$, then
\begin{displaymath} |P \cap Q|_{\Delta_{m}} = |P \cap Q'|_{\Delta_{m}}|P \cap Q|_{\Delta_{l}}\quad\text{for all } Q \in \mathcal{D}_{\Delta_{k}}(P), \, Q' \in \mathcal{D}_{\Delta_{l}}(P). \end{displaymath}
Indeed, both sides equal $N_{k + 1}\cdots N_{m}$. As a corollary, if $P \subset [0,1)^{d}$, then the above simplifies to $|P|_{\Delta_{m}} = |P \cap Q|_{\Delta_{m}}|P|_{\Delta_{l}}$ for $0 \leq l < m \leq n$ and $Q \in \mathcal{D}_{\Delta_{l}}(P)$.

\end{remark}

The following simple but key lemma asserts that one can always find ``dense uniform subsets''. See e.g. \cite[Lemma 3.6]{Sh} for the short proof.
\begin{lemma} \label{l:uniformization}
Let $P\subset [0,1)^{d}$, $m,T \in \N$, and $\delta := 2^{-mT}$. Let also $\Delta_{j} := 2^{-jT}$ for $0 \leq j \leq m$, so in particular $\delta = \Delta_{m}$. Then, there is a $\{\Delta_j\}_{j=1}^{m}$-uniform set $P'\subset P$ such that
\begin{displaymath}
|P'|_\delta \ge  \left(2T \right)^{-m} |P|_\delta.
\end{displaymath}

In particular, if $\epsilon > 0$ and $T^{-1}\log (2T) \leq \epsilon$, then $|P'|_{\delta} \geq \delta^{\epsilon}|P|_{\delta}$.
\end{lemma}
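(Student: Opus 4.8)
The plan is a recursive dyadic pigeonholing over the ``super-scales'' $1=\Delta_0>\Delta_1>\dots>\Delta_m=\delta$, carried out \emph{from the finest scale $\Delta_m$ towards the coarsest $\Delta_1$}. It is convenient to picture the tree whose level-$j$ nodes are the cubes of $\mathcal{D}_{\Delta_j}(P)$, each a child of the cube of $\mathcal{D}_{\Delta_{j-1}}(P)$ containing it; the leaves sit at level $m$, and the number of leaves below a node $Q\in\mathcal{D}_{\Delta_j}(P)$ equals $|P\cap Q|_{\delta}$. I would construct a nested sequence $P=P_{m+1}\supseteq P_m\supseteq\dots\supseteq P_1=:P'$, each $P_k$ a union of dyadic $\delta$-cubes, such that $P_k$ is uniform at the scales $\Delta_k,\dots,\Delta_m$ (Definition \ref{def:uniformity} applied to that subrange of scales) and $|P_k|_\delta\gtrsim (2T)^{-(m-k+1)}|P|_\delta$.

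To pass from $P_{k+1}$ to $P_k$, note that every $Q\in\mathcal{D}_{\Delta_{k-1}}(P_{k+1})$ has $|P_{k+1}\cap Q|_{\Delta_k}$ children at level $k$, a number lying in $\{1,\dots,2^{dT}\}$ because a cube of side $\Delta_{k-1}$ contains exactly $2^{dT}$ cubes of side $\Delta_k=2^{-T}\Delta_{k-1}$. Dyadically pigeonholing this branching number over all such $Q$, weighted by the leaf-count $|P_{k+1}\cap Q|_\delta$, produces $N_k\in 2^{\N}$ and a subfamily $\mathcal{Q}_k\subseteq\mathcal{D}_{\Delta_{k-1}}(P_{k+1})$ with $N_k\le|P_{k+1}\cap Q|_{\Delta_k}<2N_k$ for every $Q\in\mathcal{Q}_k$, carrying at least a fraction $(dT+1)^{-1}$ of all leaves. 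Inside each $Q\in\mathcal{Q}_k$ I would retain only the $N_k$ level-$k$ children below which the most leaves lie, discarding every other level-$k$ cube together with its entire subtree, and discarding every $Q\notin\mathcal{Q}_k$ outright; keeping the heaviest $N_k$ of fewer than $2N_k$ children retains more than half the leaves below each surviving $Q$, so $|P_k|_\delta\gtrsim (dT)^{-1}|P_{k+1}|_\delta$. A careful choice of the pigeonhole region, exactly as in \cite[Lemma 3.6]{Sh}, keeps the accumulated loss over the $m$ steps to $(2T)^{-m}$. The step $k=m$ is already of the above form, while the final step $k=1$ is trivial because $\mathcal{D}_{\Delta_0}(P_2)=\{[0,1)^d\}$ is a single cube, so only $|P_2|_{\Delta_1}$ needs to be rounded down to a power of two. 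Then $P'=P_1$ is $\{\Delta_j\}_{j=1}^{m}$-uniform with $|P'|_\delta\gtrsim (2T)^{-m}|P|_\delta$, and the ``in particular'' clause follows by comparing exponents, since $(2T)^{-m}\ge\delta^{\epsilon}$ once $T^{-1}\log(2T)\le\epsilon$.

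The one point that genuinely needs care — and the reason for processing the super-scales from fine to coarse — is that the truncation performed at $\Delta_k$ must not disturb the uniformity at $\Delta_{k+1},\dots,\Delta_m$ already present in $P_{k+1}$. This is automatic: at the $k$-th step one only ever deletes \emph{whole} cubes of $\mathcal{D}_{\Delta_{k-1}}$ and of $\mathcal{D}_{\Delta_k}$ together with their subtrees, so for every surviving cube $R\in\mathcal{D}_{\Delta_{j-1}}(P_k)$ with $j>k$ the family of $\Delta_j$-children of $R$ inside $P_k$ coincides with that inside $P_{k+1}$ and hence still has cardinality $N_j$; conversely, at the time $\Delta_k$ is processed no constraint has yet been imposed at the coarser scales $\Delta_1,\dots,\Delta_{k-1}$, so there is nothing there to protect. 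I expect this bookkeeping, together with the elementary verification that the $m$ pigeonholes cost only a combined factor $(2T)^{m}$, to be the sole non-routine ingredients; the rest is the iterated dyadic pigeonholing recorded in \cite[Lemma 3.6]{Sh}.
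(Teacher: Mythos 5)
Your argument is exactly the iterated dyadic pigeonholing that the paper does not spell out but delegates to \cite[Lemma 3.6]{Sh}: pigeonhole the branching number at each of the $m$ super-scales, prune whole subtrees to make it constant, and observe (as you correctly do via the fine-to-coarse order) that deleting entire subtrees never disturbs the uniformity already secured at the finer scales. The only point you leave to the citation is the exact constant---your bookkeeping gives a per-level loss of $2(dT+1)$, hence $(2(dT+1))^{-m}$ rather than $(2T)^{-m}$---but this is a routine tightening (e.g.\ merging the top dyadic class of branching numbers; note also that the stated $(2T)^{-m}$ is essentially the one-dimensional constant of \cite{Sh}), and it is immaterial for the ``in particular'' clause since $T$ is always chosen large in terms of $\epsilon$ in the applications.
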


Lemma \ref{l:uniformization} will be mainly used through the following corollary:
\begin{cor}\label{cor:uni} For every $s \in (0,d]$ and $\epsilon \in (0,1]$, there exists $\delta_{0} > 0$ such that the following holds for all $\delta \in (0,\delta_{0}]$. Let $P \subset [0,1)^{d}$ be a $\delta$-separated $(\delta,s,\delta^{-\epsilon})$-set. Then, there exists $T \sim_{\epsilon} 1$ and a $\{2^{-jT}\}_{j = 1}^{m}$-uniform subset $P' \subset P$ so that $|P'| \geq \delta^{\epsilon}|P|$,
\begin{displaymath} 2^{-(m + 1)T} < \delta \leq 2^{-mT}, \end{displaymath}
and $P'$ is also a $(\delta,s,\delta^{-2\epsilon})$-set. \end{cor}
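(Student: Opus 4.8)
The plan is to combine Lemma~\ref{l:uniformization} with a small amount of bookkeeping to track how the $(\delta,s,C)$-set property degrades under passing to the uniform subset. First I would fix $s$ and $\epsilon$, choose $T = T(\epsilon) \in \N$ large enough that $T^{-1}\log(2T) \leq \epsilon$ (which is possible since the left side tends to $0$), and let $\delta_0$ be small enough that for $\delta \leq \delta_0$ we have both $\delta^{-\epsilon} \geq (2T)^m$ for the relevant $m$ (equivalently $m\log(2T) \le \epsilon m T \log 2$, automatic from the choice of $T$) and some further mild smallness needed below. Given $\delta \leq \delta_0$, let $m$ be the unique integer with $2^{-(m+1)T} < \delta \leq 2^{-mT}$, set $\Delta_j := 2^{-jT}$ for $0 \leq j \leq m$, and apply Lemma~\ref{l:uniformization} at scale $\Delta_m = 2^{-mT}$ (note $\Delta_m$ is comparable to $\delta$: $\delta \le \Delta_m < 2^{T}\delta$). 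This produces a $\{2^{-jT}\}_{j=1}^m$-uniform set $P' \subset P$ with $|P'|_{\Delta_m} \geq (2T)^{-m}|P|_{\Delta_m} \geq \delta^{\epsilon}|P|_{\Delta_m}$. Since $P$ is $\delta$-separated, $|P'| = |P'|_{\delta}$ and, up to the bounded factor $2^{T} \sim_\epsilon 1$, also $|P'| \gtrsim_\epsilon |P'|_{\Delta_m} \geq \delta^\epsilon |P|_{\Delta_m} \gtrsim_\epsilon \delta^\epsilon |P|$; absorbing the constant into the exponent at the cost of enlarging $\delta_0$ (or simply into $\epsilon$) gives $|P'| \geq \delta^{\epsilon}|P|$.

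It remains to verify that $P'$ is a $(\delta,s,\delta^{-2\epsilon})$-set. For $x \in \R^d$ and $r \in [\delta,1]$, since $P' \subset P$ and $P$ is a $(\delta,s,\delta^{-\epsilon})$-set,
\[
|P' \cap B(x,r)|_r \leq |P \cap B(x,r)|_r \leq \delta^{-\epsilon} r^s |P|_{\delta}.
\]
So I need $|P|_\delta \leq \delta^{-\epsilon}|P'|_\delta$, i.e. $|P'|_\delta \geq \delta^{\epsilon}|P|_\delta$; but $|P|_\delta = |P|$ (by $\delta$-separation, up to a dimensional constant that can be folded into $\delta_0$) and $|P'|_\delta = |P'|$, so this is exactly the density bound $|P'| \geq \delta^\epsilon |P|$ just established. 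Hence $|P' \cap B(x,r)|_r \leq \delta^{-2\epsilon} r^s |P'|_\delta$ for all admissible $x,r$, which is the desired $(\delta,s,\delta^{-2\epsilon})$-set property.

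The only genuinely delicate point is the interplay between the three scales $\delta$, $\Delta_m = 2^{-mT}$, and $2^{-(m+1)T}$, and making sure all the "$\sim_\epsilon 1$" and "$\lesssim$" constants (from converting covering numbers at comparable scales, from the dimension $d$, and from the $2^T$ factor) are absorbed cleanly; this is routine but is where one must be slightly careful, so I would simply shrink $\delta_0$ at the end so that $\delta^{-\epsilon}$ dominates all of these bounded factors, turning every "$\gtrsim_\epsilon \delta^\epsilon$" into "$\geq \delta^{\epsilon}$" and every "$\delta^{-\epsilon}\cdot(\text{const})$" into "$\delta^{-2\epsilon}$". No substantive obstacle arises beyond this bookkeeping, since the structural content is entirely contained in Lemma~\ref{l:uniformization}.
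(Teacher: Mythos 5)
Your proof is correct and takes essentially the same route as the paper: both reduce the corollary to a single application of Lemma \ref{l:uniformization} with $T$ chosen so that $T^{-1}\log(2T)\lesssim\epsilon$, and then absorb all $O_{d,T}(1)$ factors (and the deficit in the exponent) by shrinking $\delta_{0}$, the $(\delta,s,\delta^{-2\epsilon})$-set property being immediate from $|P'|\geq\delta^{\epsilon}|P|$. The only cosmetic difference is that the paper first passes to a $2^{-mT}$-net of $P$ so that covering numbers coincide with cardinalities at the bottom scale, whereas you handle the mismatch between $\delta$ and $2^{-mT}$ via the trivial inequality $|P'|\geq|P'|_{2^{-mT}}$ together with $|P|_{2^{-mT}}\gtrsim_{d,T}|P|$ from the $\delta$-separation; both are valid.
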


\begin{proof}  Take $T \in \N$ so large that $T^{-1}\log(2T) < \epsilon/2$, and then let $m \in \N$ be the largest number such that $\delta' = 2^{-mT} \geq \delta$. Let $\bar{P} \subset P$ be a $\delta'$-net. Since $\delta'/\delta \leq 2^{T}$, and $P \subset \R^{d}$ is $\delta$-separated, we have
\begin{displaymath} |\bar{P}|_{\delta'} = |\bar{P}| \gtrsim 2^{-dT}|P|. \end{displaymath}
Next, apply Lemma \ref{l:uniformization} to find a $\{2^{-jT}\}_{j = 1}^{m}$-uniform subset $P' \subset \bar{P}$ with
\begin{displaymath} |P'| \geq (\delta')^{\epsilon/2}|\bar{P}| \gtrsim 2^{-dT}\delta^{\epsilon/2}|P|. \end{displaymath}
Now, if $\delta > 0$ is small enough, $|P'| \geq \delta^{\epsilon}|P|$, and $P'$ is the desired subset of $P$.  The fact that it is a $(\delta,s,\delta^{-2\epsilon})$-set follows from the observation made after Definition \ref{def:deltaSSet}.
\end{proof}

A nice feature of every uniform set $P \subset [0,1)^{d}$ is that if $P$ happens to be a $(\delta,s)$-set, and if $\delta \leq \Delta \leq 1$, then $P$ is automatically a $(\Delta,s)$-set:
\begin{lemma}\label{lemma4} Let $\delta \in 2^{-\N}$, and let $P \subset [0,1)^{d}$ be a $(\delta,s,C)$-set, for some $s \in [0,d]$ and $C > 0$. Fix $\Delta \in 2^{-\N} \cap [\delta,1]$, and assume that the map
\begin{displaymath} \mathbf{p} \mapsto |P \cap \mathbf{p}|_{\delta}, \qquad \mathbf{p} \in \mathcal{D}_{\Delta}(P), \end{displaymath}
is constant. Then $P$ is a $(\Delta,s,O_{d}(1)C)$-set.

In particular, assume that $P \subset [0,1)^{d}$ is a $\{2^{-jT}\}_{j = 1}^{m}$-uniform set, with $\delta = 2^{-mT}$. Then $P$ is also a $(\Delta,s,O_{d}(1)C)$-set for every $\Delta = 2^{-jT}$, for $1 \leq j \leq m$. \end{lemma}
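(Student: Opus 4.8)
The plan is to prove the first assertion directly and then deduce the ``in particular'' clause by a single application of it. So fix a $(\delta,s,C)$-set $P \subset [0,1)^d$, a scale $\Delta \in 2^{-\N}\cap[\delta,1]$, and suppose the map $\mathbf{p}\mapsto |P\cap\mathbf{p}|_\delta$ is constant on $\mathcal{D}_\Delta(P)$; call this common value $N$, so that $|P|_\delta = N\cdot|P|_\Delta$ and, more generally, $|P\cap \mathbf{p}|_\delta = N$ for every $\mathbf{p}\in\mathcal{D}_\Delta(P)$. I want to show $|P\cap B(x,r)|_\Delta \le O_d(1)\,C\,r^s\,|P|_\Delta$ for all $x\in\R^d$ and $r\in[\Delta,1]$.

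The key step is the following counting observation. Fix $x$ and $r\in[\Delta,1]$. Every dyadic cube in $\mathcal{D}_\Delta(P\cap B(x,r))$ contributes exactly $N$ cubes of $\mathcal{D}_\delta$ to $P$ that lie inside it (this is where the constancy hypothesis is used), and all such $\delta$-cubes lie within $B(x, r + O_d(\Delta)) \subset B(x, O_d(r))$ since $\Delta \le r$. Hence
\[
N\cdot |P\cap B(x,r)|_\Delta \;\le\; |P\cap B(x,O_d(r))|_\delta \;\le\; C\,(O_d(r))^s\,|P|_\delta \;=\; C\,O_d(1)\,r^s\,N\,|P|_\Delta,
\]
where the middle inequality is the $(\delta,s,C)$-set hypothesis applied to the ball $B(x,O_d(r))$ (covering it by boundedly many balls of radius in $[\delta,1]$ if necessary, which only costs another $O_d(1)$ factor and is harmless since $s\le d$). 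Since $N\ge 1$, dividing by $N$ gives $|P\cap B(x,r)|_\Delta \le O_d(1)\,C\,r^s\,|P|_\Delta$, which is exactly the assertion that $P$ is a $(\Delta,s,O_d(1)C)$-set. The only mild subtlety, and the one place where I would be slightly careful, is the interplay between covering numbers $|\cdot|_r$ and genuine balls $B(x,r)$ in Definition \ref{def:deltaSSet}: one should phrase the chain of inequalities using $r$-mesh cubes rather than balls, noting that a ball of radius $r$ meets $O_d(1)$ cubes of $\mathcal{D}_r$ and conversely, so that all discrepancies are absorbed into the universal constant $O_d(1)$.

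For the ``in particular'' clause, let $P\subset[0,1)^d$ be $\{2^{-jT}\}_{j=1}^m$-uniform with $\delta = 2^{-mT}$, and fix $\Delta = 2^{-jT}$ for some $1\le j\le m$. By the defining property of uniformity (and the remark following Definition \ref{def:uniformity}), for every $\mathbf{p}\in\mathcal{D}_\Delta(P)$ one has $|P\cap\mathbf{p}|_\delta = N_{j+1}\cdots N_m$, a quantity independent of $\mathbf{p}$. Thus the hypothesis of the first part is satisfied at scale $\Delta$, and we conclude that $P$ is a $(\Delta,s,O_d(1)C)$-set. This completes the proof; I do not anticipate any real obstacle, as the argument is purely a bookkeeping exercise in covering numbers, the only thing to watch being the uniform control of the implied constants in terms of $d$ alone.
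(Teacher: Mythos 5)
Your argument is correct and is essentially the paper's own proof: both rest on the observation that the constancy of $\mathbf{p}\mapsto|P\cap\mathbf{p}|_{\delta}$ forces $|P\cap Q|_{\delta}=N\cdot|P\cap Q|_{\Delta}$ (and $|P|_{\delta}=N\cdot|P|_{\Delta}$), after which one applies the $\delta$-scale non-concentration of $P$ and absorbs cube/ball conversions into the $O_{d}(1)$ factor. The only cosmetic difference is that the paper runs the computation over dyadic cubes $Q\in\mathcal{D}_{r}(P)$ with an exact identity, while you run it over balls $B(x,r)$ with an $O_{d}(1)$ enlargement; the "in particular" clause is handled identically via the uniformity remark.
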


\begin{proof}[Proof of Lemma \ref{lemma4}] Fix $\mathbf{p} \in \mathcal{D}_{\Delta}(P)$ arbitrary, and let $r \in 2^{-\N} \cap [\Delta,1]$. The key observation is that if $Q \in \mathcal{D}_{r}(P)$, then
\begin{equation}\label{form30} |P \cap Q|_{\delta} = |P \cap Q|_{\Delta} \cdot |P \cap \mathbf{p}|_{\delta}, \end{equation}
which follows instantly from the constancy of $\mathbf{p} \mapsto |P \cap \mathbf{p}|_{\delta}$. Consequently,
\begin{displaymath} |P \cap Q|_{\Delta} \stackrel{\eqref{form30}}{=} \frac{|P \cap Q|_{\delta}}{|P \cap \mathbf{p}|_{\delta}} \leq C\diam(Q)^{s} \cdot \frac{|P|_{\delta}}{|P \cap \mathbf{p}|_{\delta}} \stackrel{\eqref{form30}}{=} C\diam(Q)^{s}|P|_{\Delta}. \end{displaymath}
This completes the proof (the $O_{d}(1)$-factor in the statement comes from replacing dyadic cubes by balls). \end{proof}

The lemma will mainly be used via the following corollary:

\begin{cor}\label{cor1} Let $\delta \in 2^{-\N}$, and let $P \subset [0,1)^{d}$ be a $(\delta,s,C_{1})$-set, for some $s \in [0,d]$ and $C_{1} > 0$. Fix $\Delta \in 2^{-\N} \cap [\delta,1]$, and assume that the map
\begin{displaymath} \mathbf{p} \mapsto |P \cap \mathbf{p}|_{\delta}, \qquad \mathbf{p} \in \mathcal{D}_{\Delta}(P), \end{displaymath}
is constant. Let $P' \subset P$ be an arbitrary subset satisfying $|P'|_{\delta} \geq |P|_{\delta}/C_{2}$. Then $P'$ is a $(\Delta,s,O_{d}(1)C_{1}C_{2})$-set.

In particular, if $T \in \N$, and $P$ is $\{2^{-jT}\}_{j = 1}^{m}$-uniform, with $\delta = 2^{-mT}$, then $P'$ is a $(\Delta,s,O_{d}(1)C_{1}C_{2})$-set for every $\Delta = 2^{-jT}$, $1 \leq j \leq m$.
\end{cor}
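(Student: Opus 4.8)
The statement is a quick consequence of Lemma \ref{lemma4}, so the plan is simply to reduce Corollary \ref{cor1} to it. The first sentence of the corollary weakens the hypothesis of Lemma \ref{lemma4} in that we no longer ask $P$ itself to satisfy the non-concentration bound at scale $\Delta$ — we only have $P'$, a large subset of $P$. But the quantitative part of Lemma \ref{lemma4} already produces a $(\Delta,s,O_d(1)C_1)$-set structure on $P$, and I only need to transfer this to $P'$ while paying a factor $C_2$. So the first step is: apply Lemma \ref{lemma4} to $P$ to conclude that $P$ is a $(\Delta,s,O_d(1)C_1)$-set.

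\textbf{Key steps.} Next, fix $x \in \R^d$ and $r \in [\Delta,1]$ (it suffices to treat dyadic $r$, absorbing the rest into the $O_d(1)$ constant as usual). I would estimate $|P' \cap B(x,r)|_r$ by the trivial monotonicity $|P' \cap B(x,r)|_r \le |P \cap B(x,r)|_r \le O_d(1) C_1 r^s |P|_\Delta$, using that $P$ is a $(\Delta,s,O_d(1)C_1)$-set. Then I convert $|P|_\Delta$ to $|P'|_\Delta$ using the hypothesis $|P'|_\delta \ge |P|_\delta / C_2$: since for any set $|P|_\delta \le |P|_\Delta \cdot \max_{\mathbf{p} \in \mathcal{D}_\Delta(P)} |P \cap \mathbf{p}|_\delta$, and — crucially — $\mathbf{p} \mapsto |P \cap \mathbf{p}|_\delta$ is constant equal to some $N$, we get $|P|_\delta = N |P|_\Delta$ and likewise $|P'|_\delta \le N |P'|_\Delta$ (the number of points of $P'$ inside a $\Delta$-cube is at most the number of points of $P$ there). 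Hence
\[
|P|_\Delta = \frac{|P|_\delta}{N} \le C_2 \frac{|P'|_\delta}{N} \le C_2 |P'|_\Delta.
\]
Combining, $|P' \cap B(x,r)|_r \le O_d(1) C_1 C_2 r^s |P'|_\Delta$, which is exactly the assertion that $P'$ is a $(\Delta,s,O_d(1)C_1C_2)$-set.

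\textbf{The ``in particular'' clause} is immediate: if $P$ is $\{2^{-jT}\}_{j=1}^m$-uniform with $\delta = 2^{-mT}$, then for each $\Delta = 2^{-jT}$ the map $\mathbf{p} \mapsto |P \cap \mathbf{p}|_\delta$ is constant (equal to $N_{j+1}\cdots N_m$, by the defining property of uniform sets recalled in the remark after Definition \ref{def:uniformity}), so the first part applies verbatim at every such scale $\Delta$.

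\textbf{Main obstacle.} There is no serious obstacle — this is a bookkeeping corollary. The only point requiring a little care is the step $|P'|_\delta \le N |P'|_\Delta$, i.e. that the constancy of the branching $\mathbf{p} \mapsto |P \cap \mathbf{p}|_\delta$ gives an \emph{upper} bound for $P'$ in terms of $|P'|_\Delta$; this is fine because each $\Delta$-cube meeting $P'$ also meets $P$, so it carries at most $N$ points of $P$ and hence at most $N$ points of $P'$. One should also remember to restrict to dyadic radii $r$ and to note that $r \ge \Delta$ guarantees the counts are taken at a scale where $P$'s non-concentration at scale $\Delta$ can be invoked.
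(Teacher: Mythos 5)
Your proposal is correct and follows essentially the same route as the paper: apply Lemma \ref{lemma4} to conclude that $P$ is a $(\Delta,s,O_{d}(1)C_{1})$-set, and use the constancy of $\mathbf{p} \mapsto |P \cap \mathbf{p}|_{\delta}$ together with $|P'|_{\delta} \geq |P|_{\delta}/C_{2}$ to deduce $|P'|_{\Delta} \geq |P|_{\Delta}/C_{2}$, which converts the non-concentration bound for $P$ into one for $P'$ at the cost of the factor $C_{2}$. The paper phrases the last step via the $\Delta$-cube maximising $|P' \cap \mathbf{p}|_{\delta}$ rather than via the constant value $N$, but this is the same computation.
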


\begin{proof} Let $\mathbf{p} \in \mathcal{D}_{\Delta}(P')$ be the dyadic cube maximising $|P' \cap \mathbf{p}|_{\delta}$ (among all cubes in $\mathcal{D}_{\Delta}(P')$). Then,
\begin{displaymath} \frac{|P|_{\delta}}{C_{2}} \leq |P'|_{\delta} \leq |P' \cap \mathbf{p}|_{\delta} \cdot |P'|_{\Delta} \leq |P \cap \mathbf{p}|_{\delta} \cdot |P'|_{\Delta}. \end{displaymath}
Consequently, applying \eqref{form30} with $Q = [0,1)^{d}$, we find
\begin{equation}\label{form33a} |P'|_{\Delta} \geq \frac{|P|_{\delta}}{C_{2}|P \cap \mathbf{p}|_{\delta}} \stackrel{\eqref{form30}}{=} \frac{|P|_{\Delta}}{C_{2}}, \end{equation}
By Lemma \ref{lemma4}, we already know that $P$ is a $(\Delta,s,O_{d}(1)C_{1})$-set, so the claim follows from \eqref{form33a}.
\end{proof}

Recall that logarithms are always base $2$. We now define the branching function of a uniform set:
\begin{definition}[Branching function]\label{branchingFunction} Let $T \in \N$, and let $P \subset [0,1)^{d}$ be a $\{\Delta_{j}\}_{j = 1}^{m}$-uniform set, with $\Delta_{j} := 2^{-jT}$, and let $\{N_{j}\}_{j = 1}^{m} \subset \{1,\ldots,2^{dT}\}^{m}$ be the associated sequence. We define the \emph{branching function} $\beta \colon [0,m] \to [0,dm]$ by setting $\beta(0) = 0$, and
\begin{displaymath} \beta(j) := \frac{\log |P|_{2^{-jT}}}{T} = \frac{1}{T} \sum_{i = 1}^{j} \log N_{i}, \qquad i \in \{1,\ldots,m\}, \end{displaymath}
and then interpolating linearly. \end{definition}
Note that since $N_{i} \in [1,2^{dT}]$, the branching function $\beta$ is a non-decreasing, $d$-Lipschitz function. 

We now define a stronger version of $(\delta,s,C)$-sets that will play a key role in our main theorems:
\begin{definition}[$(\delta,t,C)$-regular set]\label{def:deltaTRegularSet} Let $t > 0$ and $C \geq 1$. A non-empty set $\mathcal{P} \subset \mathcal{D}_{\delta}$ is called \emph{$(\delta,t,C)$-regular} if
\begin{enumerate}[(i)]
\item $\mathcal{P}$ is a $(\delta,t,C)$-set, and
\item $|\mathcal{P} \cap \mathbf{p}|_{r} \leq C(R/r)^{t}$ for all dyadic $\delta \leq r \leq R < \infty$, and for all $\mathbf{p} \in \mathcal{D}_{R}$.
\end{enumerate}
Here $\mathcal{P} \cap \mathbf{p} = \{p \in \mathcal{P} : p \subset \mathbf{p}\}$.
\end{definition}

The following lemma provides a dictionary between properties of the set $P$ and its branching function.
\begin{lemma} \label{l:regular-is-frostman-ahlfors}
Let $P$ be a $(\Delta^i)_{i=1}^m$-uniform set in $[0,1]^d$ with associated branching function $\beta$, and let $\delta=\Delta^m$. Below, all implicit constants may depend on $d$.
\begin{enumerate}[(i)]
  \item If $\beta$ is $(\sigma,\e)$-superlinear on $[0,m]$, then $P$ is a $(\delta,\sigma,O_\Delta(1)\delta^{-\e})$- set. Conversely, if $P$ is a $(\delta,s,\delta^{-\epsilon})$-set for some $s \in [0,d]$, then $\beta$ satisfies
\begin{displaymath}
\beta(x) \geq s x - \epsilon m - O(1) \quad\text{for all } x \in [0,m].
\end{displaymath}
  \item If $\beta$ is $\e$-linear on $[0,m]$ then $P$ is $(s_\beta(0,m),O_{\Delta}(1)\delta^{-\e},O_\Delta(1)\delta^{-\e})$-regular between scales $\delta$ and $1$.
\end{enumerate}
\end{lemma}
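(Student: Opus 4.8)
\textbf{Proof plan for Lemma \ref{l:regular-is-frostman-ahlfors}.}
The plan is to unravel the definitions of the branching function, of $(\delta,\sigma,C)$-sets, and of $(\sigma,C,C)$-regularity between scales $\delta$ and $1$, and to check each implication by a direct estimate using the defining multiplicativity of uniform sets, namely $|P\cap Q|_{\Delta^{m}} = |P\cap Q'|_{\Delta^{m}}|P\cap Q|_{\Delta^{l}}$ for $Q\in\mathcal{D}_{\Delta^{k}}(P)$, $Q'\in\mathcal{D}_{\Delta^{l}}(P)$ with $k\le l\le m$. The key dictionary is that for a dyadic ball $Q$ of side $\Delta^{k}=2^{-kT}$ one has $\log|P\cap Q|_{\Delta^{j}} = T(\beta(j)-\beta(k))$ for every $k\le j\le m$, so statements about the Lipschitz function $\beta$ translate into multiplicative statements about $P$ by exponentiating, and conversely. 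The only subtleties are: the passage from integer arguments $j\in\{0,\dots,m\}$ to real $x\in[0,m]$ (handled by Lipschitzness of $\beta$, which costs $O(1)$ in the exponent and hence an $O_\Delta(1)$ multiplicative constant); the passage from dyadic scales $r=\Delta^{k}$ to arbitrary $r\in[\delta,1]$ (handled by rounding $r$ to the nearest power of $2^{-T}$, again costing an $O_\Delta(1)$ factor); and the replacement of dyadic cubes by balls, which also costs only $O_d(1)$.

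For part (i), first direction: suppose $\beta$ is $(\sigma,\e)$-superlinear on $[0,m]$, i.e. $\beta(x)\ge\sigma x-\e m$ for $x\in[0,m]$. Fix $x\in\R^{d}$ and $r\in[\delta,1]$; choose $k$ with $\Delta^{k}\le r<\Delta^{k-1}$. The ball $B(x,r)$ meets $O_d(1)$ cubes of $\mathcal{D}_{\Delta^{k}}(P)$, and for each such cube $Q$, uniformity gives $|P\cap Q|_{\delta} = \exp(T(\beta(m)-\beta(k))) = |P|_\delta\exp(-T\beta(k)) \le |P|_\delta\exp(-T(\sigma k-\e m)) = |P|_\delta\cdot(\Delta^{k})^{\sigma}\cdot\delta^{-\e}$. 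Since $\Delta^{k}\lesssim_\Delta r$, summing over the $O_d(1)$ cubes yields $|P\cap B(x,r)|_{\delta}\le O_{d,\Delta}(1)\,r^{\sigma}\delta^{-\e}|P|_{\delta}$, which is the $(\delta,\sigma,O_\Delta(1)\delta^{-\e})$-set property. For the converse, if $P$ is a $(\delta,s,\delta^{-\e})$-set, apply the defining inequality with $r=\Delta^{j}$ and any ball $B(x,\Delta^{j})$ containing a cube $Q\in\mathcal{D}_{\Delta^{j}}(P)$: then $|P|_{\delta}\ge|P\cap Q|_{\delta}\cdot|P|_{\Delta^{j}} \ge |P\cap Q|_\delta\cdot c_d\,\Delta^{-js}\delta^{\e}/\,$(normalising)$\,$, which after taking logarithms and dividing by $T$ rearranges to $\beta(j)\ge s j-\e m-O(1)$ at integers $j$; Lipschitzness of $\beta$ extends this to all $x\in[0,m]$ with the same $O(1)$ error.

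For part (ii): $\e$-linearity of $\beta$ on $[0,m]$ means $|\beta(x)-(\beta(0)+s_\beta(0,m)x)|\le\e m$ for all $x\in[0,m]$, i.e. $\beta$ is both $(s_\beta(0,m),\e)$-superlinear and, with the opposite sign, squeezed from above by the same secant line up to $\e m$. The upper bound combined with uniformity gives, for any dyadic ball $Q$ of side $\Delta^{k}$ and any dyadic scale $\Delta^{j}$ with $k\le j$, the two-sided estimate $|P\cap Q|_{\Delta^{j}} = \exp(T(\beta(j)-\beta(k)))$ sandwiched between $(\Delta^{j-k})^{s_\beta(0,m)}\delta^{-\e}$ times constants on one side and $(\Delta^{j-k})^{s_\beta(0,m)}\delta^{\e}$ times constants on the other (one uses the secant-line bound at both $j$ and $k$, picking up $2\e m$, hence the $\delta^{-\e}$ vs.\ $\delta^{\e}$ asymmetry after absorbing into $O_\Delta(1)\delta^{-\e}$). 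Rounding arbitrary $r_1<r_2$ in $[\delta,1]$ to powers of $2^{-T}$ and replacing cubes by balls at cost $O_{d,\Delta}(1)$, this is exactly the assertion that $P$ is $(s_\beta(0,m),O_\Delta(1)\delta^{-\e},O_\Delta(1)\delta^{-\e})$-regular between scales $\delta$ and $1$.

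\textbf{Main obstacle.} There is no deep difficulty here; the lemma is essentially a bookkeeping translation. The one place that requires genuine care is tracking the two independent sources of error and keeping them from degrading the statement: the $\e m$ in the (super)linearity hypotheses becomes a clean $\delta^{\pm\e}$ after dividing the log-count by $T$ (since $\delta=2^{-mT}$, so $\exp(\pm T\e m)=\delta^{\mp\e}$), while the $O(1)$ additive errors from Lipschitz interpolation and from dyadic-to-continuous rounding become $O_\Delta(1)$ multiplicative factors that are harmless relative to $\delta^{-\e}$ for $\delta$ small; one must make sure in part (ii) that these are arranged so that the final regularity constants are genuinely $O_\Delta(1)\delta^{-\e}$ and not, say, $\delta^{-2\e}$, which just means being slightly generous when quoting the exponent or noting that $\e$ may be replaced by $2\e$ at the outset without loss.
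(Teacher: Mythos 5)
Your proof is correct and is essentially the paper's own: the paper cites \cite[Lemma 8.3]{OS23} for the forward direction of (i) and for (ii) (your direct verification is the same standard dictionary $\log_2|P\cap Q|_{\Delta^j}=T(\beta(j)-\beta(k))$ argument), and the only part it proves, the converse in (i), is done by exactly your computation — apply the $(\delta,s,\delta^{-\e})$ bound to a ball containing a cube $Q\in\mathcal{D}_{\Delta^j}(P)$, use $|P|_{\Delta^j}=|P|_{\delta}/|P\cap Q|_{\delta}$, take logarithms, and reduce to integer points via the piecewise-affine structure of $\beta$. Your observation that the two-sided bound in (ii) naturally yields $\delta^{-2\e}$ rather than $\delta^{-\e}$ (fixable by renaming $\e$) is a fair point about the quoted constants and is harmless for every application of the lemma in the paper, which only ever needs $\delta^{-O(\e)}$.
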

\begin{proof}
Other than the "conversely" part  in (i), this is \cite[Lemma 8.3]{OS23}. By the piecewise affine definition of $\beta$, it suffices to prove the lower bounds at all integer points $x = j \in \{1,\ldots,m\}$. By the assumption that $P$ is a $(\delta,s,\delta^{-\epsilon})$-set, and noting that $\delta^{-\epsilon} = \Delta^{-\e m}$, we have $|P \cap Q|_{\delta} \lesssim_{d} \Delta^{-\e m} \Delta^{s j} |P|_{\delta}$ for all $Q \in \mathcal{D}_{\Delta^j}(P)$. Therefore,
\begin{displaymath} |P|_{\Delta^j} = \frac{|P|_{\delta}}{|P \cap Q|_{\delta}} \geq c_{d} \Delta^{\e m}\Delta^{-s j}\quad\text{for all } Q \in \mathcal{D}_{\Delta^j}(P), \end{displaymath}
where $c_{d} > 0$ is a constant depending only on $d$. Consequently,
\begin{displaymath} \beta(j) = \frac{\log |P|_{\Delta^j}}{T} \geq j s - \epsilon m - \log c_{d}, \qquad j \in \{1,\ldots,m\}, \end{displaymath}
which proves the lemma. \end{proof}

\begin{definition}[Renormalised set]\label{def:renormalisation} Let $P \subset [0,1)^{d}$, let $r \in 2^{-\N}$, and let $Q \in \mathcal{D}_{r}$. Let $S_{Q} \colon Q \to [0,1)^{d}$ be the homothety with $S_{Q}(Q) = [0,1)^{d}$. We write
\begin{displaymath} 
  P_{Q} := S_{Q}(P\cap Q). 
\end{displaymath}
The set $P_{Q}$ is the \emph{$Q$-renormalisation of $P$}.
\end{definition}

The following simple lemma will prove very useful:
\begin{lemma}\label{lemma3} 
  Let $\Delta\in 2^{-\N}$, and let $P \subset [0,1)^{d}$ be a $\{\Delta^{j}\}_{j = 1}^{m}$-uniform set. Let $\beta \colon [0,m] \to [0,dm]$ be the associated branching function.

For any $a\in \N\cap [0,m)$ and every $Q \in \mathcal{D}_{\Delta^a}(P)$, the $Q$-renormalisation $P_{Q}$ is $\{\Delta^j\}_{j=1}^{m-a}$-uniform, and the associated branching function $\beta_{Q}$ is the shift
\[
\beta_{Q}(x) = \beta(a+x) - \beta(a)\quad\text{for all } x\in [0,m-a].
\]

Moreover, for any $b\in \N\cap (a,m]$, if the function $(\beta,a,b)$ is $(s,\e)$-superlinear, then $P_{Q}$ is a $(\delta,s,C_{\Delta,d}\delta^{-\e})$-set for $\delta:=\Delta^{b-a}$.

\end{lemma}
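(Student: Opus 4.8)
The plan is to unwind the three assertions of Lemma \ref{lemma3} directly from the definitions, since each is essentially a bookkeeping statement about how uniformity and branching functions behave under renormalisation.

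First I would verify the uniformity of $P_Q$. Fix $a \in \N \cap [0,m)$ and $Q \in \mathcal{D}_{\Delta^a}(P)$. The homothety $S_Q$ maps $\mathcal{D}_{\Delta^{a+j}}(\R^d)$ restricted to $Q$ bijectively onto $\mathcal{D}_{\Delta^j}(\R^d)$, and preserves containment and non-emptiness of intersections with $P$ (resp. $P_Q$). Hence for $1 \le j \le m-a$ and any $Q' \in \mathcal{D}_{\Delta^{j-1}}(P_Q)$, writing $\widetilde{Q}' = S_Q^{-1}(Q') \in \mathcal{D}_{\Delta^{a+j-1}}(P)$ with $\widetilde Q' \subset Q$, we get $|P_Q \cap Q'|_{\Delta^j} = |P \cap \widetilde Q'|_{\Delta^{a+j}} = N_{a+j}$ by $\{\Delta^j\}_{j=1}^m$-uniformity of $P$. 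So $P_Q$ is $\{\Delta^j\}_{j=1}^{m-a}$-uniform with branching sequence $\{N_{a+j}\}_{j=1}^{m-a}$.

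Next, the branching function identity follows immediately: from Definition \ref{branchingFunction} applied to $P_Q$,
\[
\beta_Q(j) = \frac{1}{T}\sum_{i=1}^{j} \log N_{a+i} = \frac{1}{T}\sum_{i=1}^{a+j}\log N_i - \frac{1}{T}\sum_{i=1}^{a}\log N_i = \beta(a+j) - \beta(a)
\]
at integer points $j \in \{0,\ldots,m-a\}$, and both sides interpolate linearly in between, so $\beta_Q(x) = \beta(a+x) - \beta(a)$ for all $x \in [0,m-a]$. (There is a harmless normalisation here: one should set $\beta_Q(0)=0$, which corresponds to subtracting the constant $\beta(a)$; if instead one allows the "un-normalised" convention the constant simply propagates and does not affect $(\sigma,\e)$-superlinearity, which is translation-invariant in the value of $f$.)

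Finally, for the last assertion, suppose $b \in \N \cap (a,m]$ and $(\beta,a,b)$ is $(s,\e)$-superlinear. Translating by $\beta_Q(x) = \beta(a+x) - \beta(a)$, this says precisely that $(\beta_Q, 0, b-a)$ is $(s,\e)$-superlinear on $[0,b-a]$. Now apply part (i) of Lemma \ref{l:regular-is-frostman-ahlfors} to the $\{\Delta^j\}_{j=1}^{m-a}$-uniform set $P_Q$ with $\delta := \Delta^{b-a} = \Delta^{(m-a) \cdot \frac{b-a}{m-a}}$ — more precisely, we restrict attention to the first $b-a$ branching steps, noting that the relevant non-concentration inequality at scales in $[\Delta^{b-a},1]$ only involves $\beta_Q|_{[0,b-a]}$ — to conclude that $P_Q$ is a $(\Delta^{b-a}, s, O_\Delta(1)\Delta^{-\e(b-a)})$-set. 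Since $\Delta^{-\e(b-a)} = \delta^{-\e}$, this is exactly a $(\delta, s, C_{\Delta,d}\delta^{-\e})$-set, as claimed.

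I expect the only genuine subtlety — and the one point worth stating carefully rather than waving through — is the reconciliation of conventions in the branching-function shift (whether or not one subtracts $\beta(a)$), together with the observation that superlinearity of $(\beta,a,b)$ is unaffected by this, and that one may freely ignore the branching of $P_Q$ beyond scale $b-a$ when invoking Lemma \ref{l:regular-is-frostman-ahlfors}, because the $(\delta,s,C)$-set property at scales $r \in [\Delta^{b-a},1]$ is insensitive to finer structure. Everything else is a direct translation of definitions.
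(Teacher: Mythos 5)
Your proof is correct and follows essentially the same route as the paper, whose own proof simply says the first claim is a reworking of the definitions and the second follows from Lemma \ref{l:regular-is-frostman-ahlfors}; you have just supplied those routine details. Your normalised form $\beta_{Q}(x)=\beta(a+x)-\beta(a)$ is indeed the correct reading of the statement (since $\beta_{Q}(0)=0$ by definition), and as you note the additive constant is irrelevant for $(s,\epsilon)$-superlinearity, so the application of Lemma \ref{l:regular-is-frostman-ahlfors} at depth $b-a$ goes through exactly as you describe.
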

\begin{proof}
The first claim is a reworking of the definitions of the renormalised set and the branching function. The second claim follows from the first and Lemma \ref{l:regular-is-frostman-ahlfors}.
\end{proof}

\section{The $ABC$ sum-product problem}\label{s:ABC}

In this section, we prove Theorem \ref{thm:ABCConjecture}. We will actually establish the following "weak" version of it:
\begin{thm} \label{thm:ABCWeak}
Let $0 < \beta \leq \alpha < 1$. Then, for every
\begin{equation*}
  \gamma \in (\alpha - \beta,1], 
\end{equation*}
there exist $\chi,\delta_{0} \in (0,\tfrac{1}{2}]$ such that the following holds. Let $\delta \in 2^{-\N}$ with $\delta \in (0,\delta_{0}]$, and let $A,B \subset \delta\cdot \Z \cap [0,1]$ be  sets satisfying the following hypotheses:
\begin{enumerate}
\item[\textup{(A)}]  $|A| \leq \delta^{-\alpha}$.
\item[\textup{(B)}]  $B \neq \emptyset$, and $|B \cap B(x,r)| \leq \delta^{-\chi} r^{\beta}|B|$ for all $x \in \R$ and $r\in [\delta,1]$.
\end{enumerate}
Further, let $C\subset \delta\cdot \Z\cap [1,2]$ be a nonempty set satisfying $|C\cap B(x,r)|\le \delta^{-\chi}\cdot r^{\gamma}$ for $x \in \R$ and $r\in [\delta,1]$.

Then there exists $c \in C$ such that
\begin{equation}
\label{conclusion} | A+c B|_{\delta} \geq \delta^{-\chi}|A|.
 \end{equation}
Additionally, the constants $\chi,\delta_{0} \in (0,\tfrac{1}{2}]$ stay bounded away from $0$ when $(\alpha,\beta,\gamma)$ range in a compact subset of $\{(\alpha,\beta,\gamma) : 0 < \beta \leq \alpha < 1 \text{ and } \gamma \in (\alpha - \beta,1]\}$.
\end{thm}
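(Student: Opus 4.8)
\textbf{Proof proposal for Theorem \ref{thm:ABCWeak}.}

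The plan is to implement the heuristic sketch from the introduction at the $\delta$-discretised level, making the counter-assumption that the conclusion fails and deriving a contradiction via an iterated application of the Plünnecke–Ruzsa inequalities together with the radial projection theorem of \cite{OSW22}. Suppose \eqref{conclusion} fails, so for \emph{every} $c\in C$ there is a ``large'' $G_c\subset A\times B$ with $|\{a+cb:(a,b)\in G_c\}|_\delta < \delta^{-\chi}|A|$; after pigeonholing (and using the non-concentration on $B$ together with Lemma \ref{l:uniformization} / Corollary \ref{cor:uni} to pass to uniform subsets) one reduces to the cleaner statement that $|A+cB|_\delta \lessapprox_\delta |A|$ for many $c$. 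The first substantive step is an asymmetric, $\delta$-discretised Plünnecke–Ruzsa argument: from $|A+cB|_\delta\lessapprox |A|$ for all $c$ in a large subset of $C$, deduce that $|A+(c-c')B|_\delta\lessapprox |A|$ for many pairs $(c,c')$, and then, combining with a ``transpose'' step (the one referenced near \eqref{form108} in the introduction) that $|A+(b-b')C|_\delta\lessapprox|A|$ for many pairs $(b,b')\in B\times B$. These steps are purely additive-combinatorial and should follow the finite-field template of \cite{2018arXiv180109591O}, transcribed to covering numbers; the technical nuisance is bookkeeping the slowly-growing $\delta^{-O(\chi)}$ factors and the fraction of ``good'' pairs, which is where the quantitative dependence of $\chi$ on $(\alpha,\beta,\gamma)$ enters.

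The second substantive step is to combine the two conclusions: for many quadruples $(b,b',c,c')\in B^2\times C^2$ one gets $|(c-c')B+(b-b')C|_\delta\lessapprox|A|$, and rescaling by $(c-c')^{-1}$ (legitimate since $C\subset[1,2]$, so $c-c'$ is bounded and the rescaling distorts covering numbers by $O(1)$) yields $|B+\tfrac{b-b'}{c-c'}C|_\delta\lessapprox|A|$ for many values of the ratio $(b-b')/(c-c')$. Now I would invoke the radial projection machinery: the non-concentration hypotheses on $B$ and $C$, via the discretised radial projection theorem of \cite{OSW22}, force the quotient set $\tfrac{B-B}{C-C}$ — or rather a large $\delta$-separated piece of it, organised so that the relevant ratios arise from a positive proportion of the quadruples — to contain a $(\delta,s',\delta^{-O(\chi)})$-set of directions with $s'$ close to $\min\{\beta+\gamma,1\}>\alpha$. (One has to be careful here: the pairs $(b,b')$, $(c,c')$ range over large subsets of $B\times B$, $C\times C$, so I need a version of the radial projection statement that survives this restriction — this should be available in \cite{OSW22}, or can be arranged by a further pigeonholing.) Against such a direction set, Kaufman's discretised projection estimate (or, more precisely, the cheap lower bound $|A+\theta B|_\delta\gtrsim \delta^{-u}$ valid for a typical $\theta$ when $B$ is a $(\delta,\beta)$-set and the direction set is $(\delta,s')$-dimensional with $s'>\alpha-\beta$) produces some admissible ratio $\theta=(b-b')/(c-c')$ with $|B+\theta C|_\delta \geq \delta^{-\alpha-\eta_0}\gg |A|$ for a fixed $\eta_0=\eta_0(\alpha,\beta,\gamma)>0$, contradicting the previous paragraph once $\chi$ is chosen small enough.

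The main obstacle, I expect, is not any single one of these steps in isolation but the matching of quantifiers across them: every step that says ``for many pairs/quadruples'' costs a proportion that is a fixed power of $\delta$ with exponent $O(\chi)$, and every Plünnecke–Ruzsa step inflates covering numbers by $\delta^{-O(\chi)}$, so one must verify that a single small $\chi=\chi(\alpha,\beta,\gamma)$ can be chosen making all these losses negligible compared to the fixed gain $\eta_0>0$ coming from the radial-projection/Kaufman input — and that $\chi$, $\delta_0$ stay bounded away from $0$ on compact subsets of the parameter region, which amounts to checking that $\eta_0$ is bounded below and all implied constants are locally uniform (this is where one needs the strict inequality $\gamma>\alpha-\beta$ and $\alpha<1$ to be used quantitatively, via $\min\{\beta+\gamma,1\}-\alpha>0$). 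A secondary subtlety is that the statement of Theorem \ref{thm:ABCWeak} only asserts the \emph{weak} conclusion $|A+cB|_\delta\geq\delta^{-\chi}|A|$ rather than the graph-restricted conclusion \eqref{conclusion2} of Theorem \ref{thm:ABCConjecture}; I would prove the weak version here and defer the upgrade (a standard, if delicate, pigeonhole-plus-Plünnecke argument on the graph $G$) to a separate step, as the phrasing of this section already signals.
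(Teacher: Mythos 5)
Your plan is essentially the paper's own proof: the paper implements exactly this chain (counter-assumption, Pl\"unnecke--Ruzsa, the ``transpose'' step, the radial projection theorem of \cite{OSW22}, and a Kaufman-type discretised projection bound), only packaged differently. Concretely, the radial-projection-plus-Kaufman input is bundled into Proposition \ref{prop:expansion} (via the thin-tubes Proposition \ref{prop:thin-tubes}), which says that $(b_1-b_2)C'+c_1B+c_2B$ is large for all but a $\delta^{\epsilon}$-proportion of quadruples and \emph{all} large subsets $C'\subset C$; Lemma \ref{l:ABC} then uses Pl\"unnecke--Ruzsa and the counter-assumption to turn this into the lower bound \eqref{form107} for $|A'+(b_1-b_2)C'|_{\delta}$, and the contradiction is located there rather than at the level of $|B+\theta C|_{\delta}$ as in your sketch. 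The two formulations are equivalent up to where one chooses to contradict; the ingredients and the quantitative bookkeeping (all losses being $\delta^{-O(\chi)}$ against a fixed gain $\sigma-\alpha>0$, uniform on compact parameter sets) are the same.

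Two points in your sketch need repair, and they are precisely where the paper does real work. First, the transpose step is not ``purely additive-combinatorial Pl\"unnecke--Ruzsa'': from $|A+cB|_{\delta}\lessapprox|A|$ for all $c\in C$ one passes through the additive energy $\mathcal{E}_{2\delta}(A,cB)\approx|A||B|^{2}$ (this is \eqref{form108}), a Cauchy--Schwarz count to find a good pair $(b_1,b_2)$ with $|b_1-b_2|\approx 1$, Remark \ref{rem4}, and the asymmetric Balog--Szemer\'edi--Gowers theorem (Theorem \ref{thm:BSG}); the output is only $|A'+(b_2-b_1)C'|_{\delta}\lessapprox|A|$ for \emph{large subsets} $A'\subset A$, $C'\subset C$, not the full sumset. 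This is why the projection/expansion input must be robust under passing to dense subsets (as \eqref{eq:robust-sum-prod} and \eqref{form107} are); your plan should be phrased at that level of generality throughout. Second, your parenthetical justification that rescaling by $(c-c')^{-1}$ ``distorts covering numbers by $O(1)$ since $C\subset[1,2]$'' is not valid as stated: $C\subset[1,2]$ only bounds $|c-c'|$ from above, and if $|c-c'|$ is close to $\delta$ the passage from $|(c-c')B+(b-b')C|_{\delta}$ to $|B+\tfrac{b-b'}{c-c'}C|_{\delta}$ changes the scale (and the slope can blow up). One needs a lower bound on the relevant differences, which the paper arranges by pigeonholing for separation (the intervals $I_{0}$ and $(3I_{0})^{c}$ in the proof of Proposition \ref{prop:expansion}, giving $|a_1-a_2|\ge\delta^{6\epsilon/s}$, and the choice $|b_1-b_2|\approx 1$ in the main proof). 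With these two fixes your outline matches the paper's argument.
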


Note that the only difference with  Theorem \ref{thm:ABCConjecture} is that \eqref{conclusion2} is only claimed to hold for $G=A\times B$. Let $W(\alpha,\beta,\gamma)$ denote the statement of Theorem \ref{thm:ABCWeak} with parameters $\alpha,\beta,\gamma$, and let $S(\alpha,\beta,\gamma)$ denote the statement of Theorem \ref{thm:ABCConjecture} with the same parameters. It was established in \cite[Section 5.1]{Orponen24} (see also \cite[Remark 1.9]{Orponen24}) that $W(\alpha,\beta,\gamma)$ implies $S(\alpha,\bar{\beta},\gamma)$ for any fixed  $\bar{\beta}<\beta$. (In \cite{Orponen24} this was stated under different assumptions on $\alpha,\beta,\gamma$ but the deduction is agnostic to the values of the parameters.) Since the assumption $\gamma \in (\alpha - \beta,1]$ is open in $\beta$, we do have that Theorem \ref{thm:ABCWeak} implies Theorem \ref{thm:ABCConjecture}. We refer the reader to \cite[Remark 1.9 and Section 5.1]{Orponen24} for more details. In the remainder of this section, our goal is to establish Theorem \ref{thm:ABCWeak}.

\subsection{Refined radial projections for tube-Frostman measures}

Our proof of Theorem \ref{thm:ABCWeak} is based on estimates on radial projections obtained in \cite{OSW24}. We start by introducing the necessary concepts.
\begin{definition}[$(t,C)$-Frostman and $(\delta,t,C)$-Frostman measures]\label{def:FrostmanMeasure} Let $t > 0$ and $C \geq 1$. A Borel measure $\mu$ on $\R^{d}$ is called a $(t,C)$-Frostman measure if $\mu(B(x,r)) \leq C r^{t}$ for all $x \in \R^{d}$ and $r > 0$.

If $\delta \in (0,1]$, and the inequality $\mu(B(x,r)) \leq Cr^{t}$ holds for $r \geq \delta$, we say that $\mu$ is a $(\delta,t,C)$-Frostman measure.
\end{definition}
The concept of $(\delta,t,C)$-Frostman measure is the measure analogue of that of $(\delta,s,C)$-set (Definition \ref{def:deltaSSet}). For example, if $\mathcal{P} \subset \mathcal{D}_{\delta}$ is a $(\delta,s,C)$-set, then the measure \[\mu := \frac{1}{|\mathcal{P}|}\sum_{p \in \mathcal{P}} \frac{\mathcal{L}^{d}|_{p}}{\mathcal{L}^{d}(p)}\] is a $(\delta,s,C')$-Frostman measure for some $C' \sim_{d} C$. 

\begin{definition}[Radial projections]
Given $x\in\R^d$, we let $\pi^x: \R^d \, \setminus \, \{x\}\to S^{d-1}$ be the radial projection with centre $x$. Explicitly:
\[
\pi^x(y) = \frac{y-x}{|y-x|}.
\]
\end{definition}

\begin{definition}[Thin tubes]
Let $(\mu,\nu)$ be Borel probability measures on $\R^d$. We say that $(\mu,\nu)$ have $(s,K,c)$-thin tubes if there exists a (Borel) set $H\subset \R^d\times\R^d$ with $(\mu\times\nu)(H)\ge c$, and such that for any $x \in \R^{d}$, for all $r>0$, and for all balls $B_r$ of radius $r>0$ in $S^{d-1}$, we have
\[
\nu\big\{ (\pi^x)^{-1}(B_r) \cap \{ y: (x,y)\in H\} \big\} \le K\cdot r^s.
\]
\end{definition}

The following is a variant of \cite[Corollary 2.22]{OSW24}, in which a stronger conclusion regarding the parameter "c" in the definition of thin tubes is obtained from a stronger hypothesis. The stronger hypothesis is immediate for Cartesian products of the type we consider. In this proposition, an $r$-tube is the intersection of an infinite tube of radius $r$.
\begin{proposition} \label{prop:thin-tubes}
Given $t, s\in (0,1]$ and $\sigma \in (0,s)$, there are $K=K(t,s,\sigma)>0$, $\delta_0=\delta_0(t,s,\sigma)>0$ and $\e_0=\e_0(t,s,\sigma) > 0$ such that the following holds for $\delta\in (0,\delta_0)$ and $\e\in (0,\e_0)$.

Let $\mu_1, \mu_2$ be $(s,\delta^{-\e})$-Frostman probability measures on $B^2(0,1)$ such that
\[
\mu_i(T) \le \delta^{-\e}\cdot r^t \text{ for all $r$-tubes $T$ }, r\in [\delta,1], \quad i \in \{1,2\}.
\]
Assume further that
\[
\dist(\spt\mu_1,\spt\mu_2) \ge \delta^{\e}.
\]
Then $(\mu_1, \mu_2)$ and $(\mu_2,\mu_1)$ have $(\sigma,\delta^{-K\e},1-K\delta^{\e})$-thin tubes.
\end{proposition}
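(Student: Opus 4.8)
\textbf{Proof plan for Proposition \ref{prop:thin-tubes}.}

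The plan is to deduce this from \cite[Corollary 2.18]{OSW22}, whose hypothesis is an \emph{a priori} radial non-concentration estimate of the form: there exists a set of ``good pairs'' on which, for every $x$ and every tube $T\ni x$, the $\mu_2$-mass of $T$ is at most $\delta^{-\e}r^t$, or a similar averaged quantity (the precise form in \cite{OSW22} being that every $x$ has a large-mass set of $y$'s whose radial projections are non-concentrated at some scale $s'<s$). I would first recall the exact statement of \cite[Corollary 2.18]{OSW22}. The key point promised in the remark preceding the Proposition is that its hypothesis --- a uniform \emph{tube} non-concentration bound $\mu_i(T)\le \delta^{-\e} r^t$ --- is \emph{stronger} than what \cite{OSW22} requires, because a tube through $x$ of width $r$ is, in particular, a set whose radial projection from $x$ has diameter $\lesssim r/\operatorname{dist}(x,\cdot)$; so on the region where $|y-x|\gtrsim \delta^{\e}$ (guaranteed by the separation hypothesis $\operatorname{dist}(\spt\mu_1,\spt\mu_2)\ge\delta^{\e}$), a ball $B_r$ in $S^{d-1}$ pulls back under $\pi^x$ into a tube of width $\lesssim \delta^{-\e} r$, whence $\nu((\pi^x)^{-1}(B_r))\le \delta^{-\e}\cdot(\delta^{-\e}r)^t = \delta^{-2\e}r^t$. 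This already gives $(t,\delta^{-C\e},1)$-thin tubes for \emph{both} orderings $(\mu_1,\mu_2)$ and $(\mu_2,\mu_1)$, but only with exponent $t\le s$, and we are asked for exponent $\sigma\in(0,s)$ which may exceed $t$.

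Second, I would invoke \cite[Corollary 2.18]{OSW22} itself to upgrade the exponent from (something like) $t$ to $\sigma<s$: that corollary is precisely the mechanism that, given the $s$-Frostman property of the measures together with a preliminary radial/tube non-concentration input, outputs thin tubes of exponent arbitrarily close to (but below) $s$, at the cost of worsening the constant $K$ and losing an exceptional set of measure $\le K\delta^{\e}$ (hence the $1-K\delta^{\e}$ in the conclusion). The separation hypothesis $\operatorname{dist}(\spt\mu_1,\spt\mu_2)\ge\delta^{\e}$ is used twice: once as above to convert tube-concentration into radial-concentration, and once to ensure $\pi^x$ is well-defined and bi-Lipschitz with controlled constants on $\spt\mu_2$ for each $x\in\spt\mu_1$. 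The thresholds $\delta_0,\e_0$ come from requiring $\e$ small enough that all the ``$\delta^{-C\e}$'' losses are absorbable and that $\delta$ is past the point where \cite[Corollary 2.18]{OSW22} applies.

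The main obstacle I anticipate is purely bookkeeping: matching the exact shape of the hypothesis in \cite[Corollary 2.18]{OSW22} --- which in \cite{OSW22} is stated either for a single measure with a radial-projection non-concentration assumption, or for a pair, possibly with an integrated rather than pointwise tube bound --- to the clean pointwise tube bound $\mu_i(T)\le\delta^{-\e}r^t$ assumed here, and checking that the symmetric conclusion (for both $(\mu_1,\mu_2)$ and $(\mu_2,\mu_1)$) really does follow, since the hypotheses here are symmetric in $i\in\{1,2\}$ whereas \cite[Corollary 2.18]{OSW22} may be phrased asymmetrically. There is no serious analytic difficulty; the substance is entirely in \cite{OSW22}, and this Proposition is a convenience repackaging. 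I would therefore keep the write-up short: state the reduction to tube-to-radial conversion, cite \cite[Corollary 2.18]{OSW22} for the exponent upgrade, and note that symmetry of the hypotheses gives both orderings.
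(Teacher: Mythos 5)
Your base case is exactly the paper's: the pointwise tube bound $\mu_i(T)\le\delta^{-\e}r^{t}$ immediately gives that $(\mu_1,\mu_2)$ and $(\mu_2,\mu_1)$ have $(t,\delta^{-O(\e)},1)$-thin tubes, since $(\pi^x)^{-1}(B_r)$ intersected with the (bounded) support of the second measure lies in an $O(r)$-tube through $x$. (A small slip here: this conversion uses only the boundedness of the supports, not the lower bound $\dist(\spt\mu_1,\spt\mu_2)\ge\delta^{\e}$; the separation is consumed later, inside the self-improvement machinery of \cite{OSW22}, not in the cone-to-tube step, and your factor $\delta^{-\e}r$ is unnecessary.)

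The gap is in the second step. You propose to cite \cite[Corollary 2.18]{OSW22} as a black box for the upgrade from exponent $t$ to any $\sigma<s$ with constants $(\sigma,\delta^{-K\e},1-K\delta^{\e})$. But the proposition you are proving is explicitly a \emph{strengthened variant} of that corollary: the stronger hypothesis (the uniform tube bound) is used to extract a stronger conclusion, namely the quantified constants $\delta^{-K\e}$ and, crucially, a good set of measure $1-K\delta^{\e}$. Corollary 2.18 as stated does not deliver this quantification, so it cannot serve as the engine; citing it would only reproduce its weaker conclusion. What the paper does instead is go back to the iterative step behind that corollary, \cite[Lemma 2.8]{OSW22}: a single application upgrades $(\sigma_0,\delta^{-K_1\e},1-L\delta^{\e})$-thin tubes (for both orderings simultaneously) to $(\sigma_0+\eta,\delta^{-(K_1/\eta)\e},1-5L\delta^{\e})$-thin tubes, where $\eta=\eta(t,s,\sigma)>0$, provided $\e$ is small enough that the constant $\max\{\delta^{-K_1\e},O(\delta^{-4\e}),4\delta^{-\e}\}^{1/\eta}$ is absorbed into $\delta^{-(K_1/\eta)\e}$. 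Starting from your base case $(t,\delta^{-\e},1-\delta^{\e})$ and iterating $\lceil\eta^{-1}(\sigma-t)\rceil$ times — a number depending only on $t,s,\sigma$ — the constants degrade only to $\delta^{-K\e}$ and $1-K\delta^{\e}$ with $K=K(t,s,\sigma)$, which is the claimed statement. So your plan needs to be repaired by replacing the appeal to Corollary 2.18 with this finite bootstrapping via Lemma 2.8, together with the bookkeeping that each step multiplies the exceptional mass by $5$ and raises the constant to the power $1/\eta$.
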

\begin{proof}
By \cite[Lemma 2.9]{OSW24}, there exists $\eta>0$ depending only on $t,s,\sigma$ such that if $\sigma_0\in [t,\sigma]$ and if $K_1>4, L>0$ then
\[
(\mu_{1},\mu_{2}), (\mu_{2},\mu_{1}) \text{ have }  (\sigma_0,\delta^{-K_1\e},1-L\delta^{\e}) \text{-thin tubes}
\]
\[
\Longrightarrow
\]
\[
(\mu_{1},\mu_{2}), (\mu_{2},\mu_{1}) \text{ have } (\sigma_0+\eta,\delta^{-(K_1/\eta)\e},1-5L\delta^{\e})\text{-thin tubes}.
\]
Note that this self-improves the thin tubes exponent at the cost of worsening the constants.  More precisely, \cite[Lemma 2.8]{OSW24} is applied with ``$\delta^{-\e}$'' in place of ``$C$'' and with ``$L\delta^\e$'' in place of ``$\e$''. Note that the expression
\[
\max\{ \delta^{-K_1\e}, O(\delta^{-4\e}), 4\delta^{-\e}, O_\eta(1) \}^{\sigma/\eta+1}
\]
appearing in the statement of  \cite[Lemma 2.9]{OSW24} is indeed bounded by $\delta^{-(K_1/\eta)\e}$ provided $\e$ is small enough in terms of $K_1$.

By assumption, $(\mu_1,\mu_2)$ and $(\mu_2,\mu_1)$ have $(t,\delta^{-\e},1-\delta^{\e})$-thin tubes (in fact, $1-\delta^{\e}$ could be replaced by $1$, but the $\delta^{\e}$-loss is needed to get the desired estimate). Hence, starting with $\sigma_0=t$ and $K_1=K_2=L=1$, and iterating the above self-improving estimate $\lceil \eta^{-1}(\sigma-t) \rceil$ many times,  we get the claim.
\end{proof}

\subsection{A discretised expansion estimate}

Next, we apply Proposition \ref{prop:thin-tubes} to obtain a robust discretised sum-product estimate that may be of independent interest.

\begin{proposition} \label{prop:expansion}
Given $s,t\in (0,1)$ and $\sigma\in [0,\min\{s+t,1\})$, there exist $\e = \e(s,t,\sigma) > 0$ and $\delta_{0} = \delta_{0}(s,t,\sigma,\e) > 0$  such that the following holds for all $\delta \in (0,\delta_{0}]$.

Let $A_{1},A_{2} \subset \delta\cdot\Z\cap  [-2,2]$ be $(\delta,s,\delta^{-\e})$ sets and let $B_{1},B_{2} \subset \delta\cdot\Z\cap [-2,2]$ be $(\delta,t,\delta^{-\e})$ sets. Let $P \subset B(1) \subset \R^{2}$ be a $(\delta,s + t,\delta^{-\epsilon})$-set. Then there exists a set $\mathcal{G}\subset A_{1} \times A_{2} \times B_{1} \times B_{2}$ with
\[
|(A_{1}\times A_{2}\times B_{1}\times B_{2}) \, \setminus \, \mathcal{G}|\le \delta^{\epsilon} |A_{1}||A_{2}||B_{1}||B_{2}|,
\]
such that if $(a_1,a_2,b_1,b_2)\in \mathcal{G}$ and $X\subset P$ satisfies $|X|_{\delta} \ge \delta^{\e}|P|_{\delta}$, then
\begin{equation} \label{eq:robust-sum-prod}
  |\{ (b_1\pm b_2)a + (a_1\pm a_2) b: (a,b)\in X\}|_{\delta} \ge \delta^{-\sigma}.
\end{equation}
\end{proposition}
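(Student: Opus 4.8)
\textbf{Proof proposal for Proposition \ref{prop:expansion}.}

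The plan is to deduce the discretised expansion estimate \eqref{eq:robust-sum-prod} from the thin-tubes estimate of Proposition \ref{prop:thin-tubes}, via the standard projective/incidence dictionary: an expression of the form $(b_1 \pm b_2) a + (a_1 \pm a_2) b$ is (up to affine normalisation) a linear projection $\pi_{\theta}$ of the point $(a,b) \in P$ in the direction $\theta = \theta(a_1,a_2,b_1,b_2)$ determined by the ratio of the coefficients, and a lower bound on the size of a projected set $\pi_{\theta}(X)$ is equivalent to a lower bound on the radial projection (from a point dual to $\theta$) of a set dual to $X$. So the first step is to set up this dictionary carefully: fix $P$ and for each relevant quadruple record the direction $\theta$, and translate "$|\pi_{\theta}(X)|_{\delta} \geq \delta^{-\sigma}$ for all dense $X \subset P$" into a thin-tubes-type statement for a measure on $P$.

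Second, I would apply Proposition \ref{prop:thin-tubes} with the role of $\mu_1, \mu_2$ played by (normalised counting measures on) suitable $(\delta,s+t)$-dimensional pieces — or rather, feed in the product structure. The key observation (the "stronger hypothesis is immediate for cartesian products" remark preceding Proposition \ref{prop:thin-tubes}) is that the directions $\theta(a_1,a_2,b_1,b_2)$, as the quadruple ranges over $A_1 \times A_2 \times B_1 \times B_2$, form a set with good non-concentration: because $A_i$ are $(\delta,s)$-sets and $B_i$ are $(\delta,t)$-sets, the set of slopes $(a_1 \pm a_2)/(b_1 \pm b_2)$ (or its reciprocal) is, after removing a $\delta^{\epsilon}$-fraction of bad quadruples, contained in a $(\delta, s', \delta^{-O(\epsilon)})$-set for some $s' > 0$ depending on $s,t$; this is where a Plünnecke–Ruzsa or direct counting argument for quotient sets enters. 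More precisely, I expect to choose the Frostman measures in Proposition \ref{prop:thin-tubes} to be $\mu_1$ = measure on (a dense uniform piece of) $P$ and $\mu_2$ = measure on the dual of the slope set, arrange the separation hypothesis by a harmless translation/decomposition of $P$ into $O(1)$ pieces bounded away from the dual points, and conclude $(\sigma_0, \delta^{-K\epsilon}, 1 - K\delta^{\epsilon})$-thin tubes for any $\sigma_0 < \min\{s+t,1\}$, in particular for $\sigma_0 = \sigma$.

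Third, I would unwind the thin-tubes conclusion back to projections: the set $H$ of "good pairs" given by the thin tubes statement, together with a pigeonholing/Fubini argument, produces the exceptional set $\mathcal{G}$ of quadruples — a quadruple is good if its associated direction is "non-exceptional" in the sense that the fibre of $H$ over the dual point is large — and $|\mathcal{G}^{c}| \leq \delta^{\epsilon}|A_1||A_2||B_1||B_2|$ follows from $(\mu\times\nu)(H) \geq 1 - K\delta^{\epsilon}$ after relating the measure $\nu$ on directions to counting on quadruples (this again uses the non-concentration of the slope map, so that the pushforward of uniform counting on quadruples is comparable to a Frostman measure on directions). For a good quadruple and any dense $X \subset P$, the thin-tubes bound on the radial projection translates, via the duality, into $|\pi_{\theta}(X)|_{\delta} \geq \delta^{-\sigma}$, which after affine renormalisation is exactly \eqref{eq:robust-sum-prod} — here one must check that the affine maps relating $\pi_{\theta}$ to $(a,b) \mapsto (b_1\pm b_2)a + (a_1 \pm a_2)b$ have coefficients of size $\approx 1$ (bounded above and below by absolute constants, again after throwing the degenerate quadruples into $\mathcal{G}^c$), so that $\delta$-covering numbers are preserved up to constants.

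The main obstacle I anticipate is the bookkeeping around the slope/quotient set: showing that for all but a $\delta^{\epsilon}$-proportion of quadruples $(a_1,a_2,b_1,b_2)$, the slopes $(a_1 \pm a_2)/(b_1 \pm b_2)$ are simultaneously well-separated from the degenerate values (so the normalisation is non-degenerate) \emph{and} that the induced measure on the space of directions has the non-concentration needed to invoke Proposition \ref{prop:thin-tubes} with a uniform loss $\delta^{-O(\epsilon)}$; handling the $\pm$ cases and the possibility that $b_1 \pm b_2$ is small requires care, and is presumably where one loses a small dimension $\sigma < s+t$ rather than reaching the endpoint. The thin-tubes input itself is a black box, so the real work is in this translation and the two Fubini/pigeonhole steps that produce and estimate $\mathcal{G}$.
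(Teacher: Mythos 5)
Your overall dictionary (slopes of quadruples $\leftrightarrow$ directions of projections of $P$, a Kaufman-type bound in a typical direction, Fubini/pigeonholing to extract $\mathcal{G}$) matches the outer shape of the argument, but there is a genuine gap at its core: you point Proposition \ref{prop:thin-tubes} at the wrong pair of measures and, as a consequence, relegate the only hard step to a hand-wave. In the paper's proof, the thin-tubes proposition is applied to the \emph{product} measures $\rho_{i}=\mu_{i}\times\nu_{i}$ built (after mollification and a pigeonholed separation of $A_{1}$ from $A_{2}$) from $A_{i}\times B_{i}$: these are $(s+t)$-Frostman, satisfy the $r^{\min\{s,t\}}$ bound on $r$-tubes, and have separated supports. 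The sole purpose of that application is to show that, off an exceptional set, the slopes $(b_{1}-b_{2})/(a_{1}-a_{2})$ carry a Frostman measure of exponent $\sigma_{0}$ \emph{arbitrarily close to} $\min\{s+t,1\}$ — this is the radial projection input from \cite{OSW22}, and it cannot be replaced by ``Pl\"{u}nnecke--Ruzsa or direct counting'': Pl\"{u}nnecke--Ruzsa controls sumsets, not quotient sets or non-concentration, and direct counting only shows the slope set has dimension about $\max\{s,t\}$ (it contains affine copies of the $A_{i}$ or $B_{j}$), which is far short of what is needed once $\sigma$ is close to $\min\{s+t,1\}$. Conversely, once the slope measure $\lambda$ is known to be $\sigma_{0}$-Frostman, the projection estimate for $P$ is just a standard quantitative Kaufman theorem; no second thin-tubes application involving $P$ is needed, and your proposed pairing ($\mu_{1}$ on $P$, $\mu_{2}$ on ``the dual of the slope set'') does not satisfy the hypotheses of Proposition \ref{prop:thin-tubes} (both measures must be planar Frostman measures of the same exponent, with tube non-concentration and separated supports), nor could it produce the exponent $\min\{s+t,1\}$ if the slope set is only known to be a $(\delta,s')$-set for some unspecified small $s'>0$.

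The parts you did anticipate correctly are the peripheral ones: the argument is run by contradiction with the bad set $E$ of quadruples, the thin-tubes set $H$ is intersected with $E$, a point $(a_{2},b_{2})$ is fixed so that the fibre of $H\cap E$ over it is large (this is what guarantees the direction eventually produced by Kaufman corresponds to a quadruple of $E$), and one must keep $|a_{1}-a_{2}|\gtrsim\delta^{O(\epsilon)}$ so that passing from $b+\tfrac{b_{1}-b_{2}}{a_{1}-a_{2}}a$ to $(b_{1}-b_{2})a+(a_{1}-a_{2})b$ costs only $\delta^{-O(\epsilon)}$ in covering numbers. Repairing your proposal amounts to rerouting the thin-tubes input from the pair $(P,\text{slope dual})$ to the product measures on $A_{i}\times B_{i}$; as written, the scheme would at best yield \eqref{eq:robust-sum-prod} for $\sigma$ below the elementary slope-set exponent, not for all $\sigma<\min\{s+t,1\}$.
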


The proof of Proposition \ref{prop:expansion} is based not only on Theorem \ref{prop:thin-tubes}, but also the following $\delta$-discretised version of Kaufman's classical projection theorem \cite{Ka}. 
\begin{proposition}\label{prop7} Let $\delta \in 2^{-\N}$, $s \in (0,1)$, $\alpha > 0$, and $C_{1},C_{2} \geq 1$. Let $\mathcal{P} \subset \mathcal{D}_{\delta}$ be a non-empty $(\delta,s,C_{1})$-set, and let $\nu$ be an $(s,C_{2})$-Frostman probability measure on $S^{1}$. Then, there exists $\theta \in \spt \nu$ such that, for every $\epsilon > 0$,
\begin{displaymath} 
    |\pi_{\theta}(\mathcal{P}')|_{\delta} \gtrsim_{s} \frac{\alpha^{2}}{C_{1}C_{2}\log^2(1/\delta)} \cdot \delta^{-s}, \qquad \text{ for all } \mathcal{P}' \subset \mathcal{P} \text{ with } |\mathcal{P}'| \geq \alpha |\mathcal{P}|.
\end{displaymath} 
\end{proposition} 
\begin{proof} In this proof all implicit constants are allowed to depend on $s$. Let 
  \begin{displaymath} \mu := \frac{1}{|P|} \sum_{p \in \mathcal{P}} \delta^{-2}\mathbf{1}_{p} \end{displaymath}
  be the normalised Lebesgue measure on $\mathcal{P}$. Since the conclusion does not change if we replace $\theta$ by $\theta'\in B(\theta,\delta)$, we may replace $\nu$ by a smoothened version of itself, equal to a multiple of Lebesgue measure on each $\delta$-arc. 
   
  Recall that the $s$-dimensional Riesz energy of a measure $\rho$ on $\R^{d}$ (or $S^1$) is defined as
  \[
  I_s(\rho) = \iint \frac{d\rho(x)d\rho(y)}{|x-y|^s} \in (0,\infty].
  \]
  Note that 
   \begin{displaymath} 
      \mu(B(x,r)) \lesssim C_{1}\cdot
        \begin{cases}  \delta^{s - 2}r^{2}, & 0 < r \leq \delta, \\ r^{s }, & \delta \leq r \leq 1 \\ 1 & r \geq 1 
      \end{cases}, \qquad 
      \nu(B(\theta,r)) \lesssim C_{2} \cdot
        \begin{cases}  \delta^{s - 1} r, & 0 < r \leq \delta, \\ r^{s}, & \delta \leq r \leq 1 \\ 1 & r \geq 1.
      \end{cases}
  \end{displaymath}
  For fixed $x\in\R^2 \, \setminus \, \{0\}$ and $r>0$, we have that $\{ \theta\in S^1: |\pi_\theta(x)|\le r\}$ is contained in the union of two balls of radius $\sim r/|x|$. Therefore, for $0<|x|\lesssim 1$, Fubini's Theorem yields
  \begin{align*}
    \int_{S^1} |\pi_{\theta} x|^{-s} d\nu(\theta) &=  \int_0^\infty \nu\{ \theta:|\pi_{\theta}(x)|\le r^{-1/s} \} \, dr \\
    &\le C_2 \left(\int_0^{|x|^{-s}} 1 + |x|^{-s}\int_{|x|^{-s}}^{\delta^{-s}|x|^{-s}} r^{-1} + \delta^{s-1}|x|^{-1}\int_{\delta^{-s}|x|^{-s}}^\infty r^{-1/s} \right) \\
   &\lesssim C_{2} \log(1/\delta)|x|^{-s} .
  \end{align*}
   A similar calculation yields 
  \[
    I_s(\mu) \lesssim C_{1} \log(1/\delta).
  \]
  Applying Fubini's theorem, we obtain
  \begin{align*}
    \int_{S^1} I_s(\pi_\theta \mu)\, d\nu(\theta) &= \iiint  |\pi_{\theta}x-\pi_{\theta}y|^{-s} \,d\nu(\theta) d\mu(x)d\mu(y) 
    \\&\lesssim C_2 \iint \left( |x-y|^{-s}\log(1/\delta) + \log(1/\delta) \right) d\mu(x)d\mu(y) \\
    &\lesssim C_1 C_2 \log(1/\delta)^{2}.
  \end{align*}
   In particular, there is a $\theta\in \spt\nu$ such that $I_{s}(\pi_{\theta}\mu) \lesssim C_{1}C_{2} \log(1/\delta)$ that we fix from now on.

  Now suppose that $\mathcal{P}'\subset \mathcal{P}$ with $|\mathcal{P}'|\ge \alpha |\mathcal{P}|$. Let $\mu'$ be normalised Lebesgue measure on $\mathcal{P}'$. Then the densities of $\mu',\mu$ satisfy $\mu' \le \alpha^{-1}\mu$, and the same bound carries over to the densities of the projections. Hence,
  \[
     I_{s}(\pi_{\theta}\mu') \le \alpha^{-2} I_{s}(\pi_{\theta}\mu) \lesssim \alpha^{-2} C_{1}C_{2}\log(1/\delta)^{2}.
  \]
  Restricting the energy integral to pairs of points $(x,y)$ in the same element of $\mathcal{D}_{\delta}$ and such that $|\pi_{\theta}(x)-\pi_{\theta}(y)|\ge \delta/10$, we get
  \[
    I_{s}(\pi_{\theta}\mu') \ge \sum_{q\in \mathcal{D}_{\delta}} \iint_{x,y\in q,|\pi_{\theta}(x-y)|\ge \delta/10} \frac{d\pi_{\theta}\mu'(x)d\pi_{\theta}\mu'(y)}{|x-y|^{s}} \gtrsim \delta^{-s} \sum_{q\in \mathcal{D}_{\delta}} (\pi_{\theta}\mu'(q))^2 \ge \delta^{-s} |\pi_{\theta}(\mathcal{P}')|_{\delta}^{-1}.
  \]
  The last inequality follows from Cauchy-Schwarz. Combining the last two estimates yields the conclusion. \end{proof}

We are then prepared to prove Proposition \ref{prop:expansion}:

\begin{proof}[Proof of Proposition \ref{prop:expansion}]
We only prove the claim for $\{ (b_1 - b_2)a + (a_1 - a_2)b: (a,b)\in X\}$. The other cases follow from this by considering $\pm A_{i}$ and $\pm B_{i}$ instead. Given a finite set $F$ let $\delta_F = |F|^{-1}\sum_{x\in F}\delta_x$, and set
\[
\mu_{i} = \delta_{A_{i}} * \delta^{-1}\mathbf{1}_{[-\delta/2,\delta/2]},\quad \nu_{j} = \delta_{B_{j}} * \delta^{-1}\mathbf{1}_{[-\delta/2,\delta/2]}, \qquad i,j \in \{1,2\}.
\]
Then $\mu_{i}$ is $(s, O(\delta^{-\e}))$-Frostman, $\nu_{j}$ is $(t, O(\delta^{-\e}))$-Frostman, and it is enough to show that \eqref{eq:robust-sum-prod} holds outside a set of $(a_1,b_1,a_2,b_2)$ of $(\mu_{1} \times\nu_{1} \times\mu_{2} \times\nu_{2})$-measure $\delta^{\e}$ (for $\e$ small enough in terms of $s,t,\sigma$). Suppose this is not the case and let $E$ be the exceptional set of $(a_1,b_1,a_2,b_2)$. Then $(\mu_{1} \times\nu_{1} \times\mu_{2} \times\nu_{2} )(E)\ge \delta^{\e}$.

We will need to arrange some separation between the supports of $\mu_{1}$ and $\mu_{2}$. Note that if $I \subset [-2,2]$ is any interval of length $\delta^{3\epsilon/s}$, then $\mu_{i}(I) \lesssim \delta^{-\epsilon}\delta^{3\epsilon} = \delta^{2\epsilon}$ for $i \in \{1,2\}$. Consequently, summing over a disjoint cover $\mathcal{I}$ of $[-2,2]$ by such intervals, we find
\begin{displaymath} \sum_{I \in \mathcal{I}} (\mu_{1,I} \times \nu_{1} \times \mu_{2,3I} \times \nu_{2})(E) \leq \Big( \sum_{I \in \mathcal{I}} \mu_{1}(I)^{2} \Big)^{1/2} \Big( \sum_{I \in \mathcal{I}} \mu_{2}(3I)^{2} \Big)^{1/2} \lesssim \delta^{2\epsilon}. \end{displaymath}
Here $\mu_{1,I}$ and $\mu_{2,3I}$ denote the restrictions of $\mu_{1},\mu_{2}$ to $I,3I$, respectively. Since however the total product measure of $E$ exceeds $\delta^{\epsilon}$, this implies
\begin{displaymath} \sum_{I \in \mathcal{I}} (\mu_{1,I} \times \nu_{1} \times \mu_{2,(3I)^{c}} \times \nu_{2})(E) \gtrsim \delta^{\epsilon}. \end{displaymath}
Noting that $|\mathcal{I}| \lesssim \delta^{-3\epsilon/s}$, we may pigeonhole an interval $I_{0} \in \mathcal{I}$ with the property
\begin{displaymath} 
  (\mu_{1,I_{0}} \times \nu_{1} \times \mu_{2,(3I_{0})^{c}} \times \nu_{2})(E) \gtrsim \delta^{\epsilon + 3\epsilon/s}. 
\end{displaymath}
In particular, we have $\min\{\mu_{1}(I_{0}),\mu_{2}((3I_{0})^{c})\} \geq \delta^{4\epsilon/s}$, provided $\delta > 0$ is small enough. Let $\bar{\mu}_{1},\bar{\mu}_{2}$ be the restrictions of the measures $\mu_{1},\mu_{2}$ to the sets $I_{0}$ and $(3I_{0})^{c}$, normalised to have unit mass. Thus, $\bar{\mu}_{1},\bar{\mu}_{2}$ are $(s,O(\delta^{-4\epsilon/s}))$-Frostman probability measures. Furthermore, $(\bar{\mu}_{1} \times \nu_{1} \times \bar{\mu}_{2} \times \nu_{2})(E) \geq \delta^{4\epsilon/s}$, and
\begin{equation}\label{sep:mu1mu2} \dist(\spt(\bar{\mu}_{1}),\spt(\bar{\mu}_{2})) \geq \delta^{3\epsilon/s} \geq \delta^{6\epsilon/s}. \end{equation}
Let $\rho_i := \bar{\mu}_{i} \times\nu_{i}$ for $i \in \{1,2\}$. Then $(\rho_{1} \times \rho_{2})(E) \geq \delta^{4\epsilon/s}$ and, making $\delta$ smaller if needed, both $\rho_1, \rho_2$ are $(s+t, \delta^{-6\e/s})$-Frostman measures. Moreover, due to the product structure of $\rho_i$,
\[
\rho_i(T) \leq \delta^{-6\e/s}\cdot r^{\min(s,t)}\quad\text{for all $r$-tubes $T$}.
\]
We then apply Proposition \ref{prop:thin-tubes} to obtain $K\ge 2,\e_1>0,\delta_1>0$ depending on $s,t,\sigma$  such that if $\delta\in (0,\delta_1)$ and $\e\in (0,\e_1)$, then $(\rho_1,\rho_2)$ have $(\sigma_0,\delta^{-6K\e}, 1-K\delta^{6\e/s})$-thin tubes, where
\[
\sigma_0 = \frac{\min\{s+t,1\}+\sigma}{2} \in (\sigma,\min\{s+t,1\}),
\]
Let $H$ be the set in the definition of thin tubes. Since $(\rho_1\times\rho_2)(H)\ge 1- K\delta^{6\e/s}$, we have
\begin{align*}
(\rho_1\times \rho_2)(H\cap E) &\geq (\rho_{1} \times \rho_{2})(H) + (\rho_{1} \times \rho_{2})(E) - 1 \\
&\geq -K\delta^{6\epsilon/s} + \delta^{4\epsilon/s} \gtrsim \delta^{4\epsilon/s}.
\end{align*}
In particular, there exists $(a_2,b_2)\in A_{2} \times B_{2}$ such that
\[
\rho_{1}(F)\gtrsim \delta^{4\e/s} \quad\text{where } F:= \{ (a_1,b_1): (a_1,b_1,a_2,b_2)\in H\cap E\}.
\]
By the definition of thin tubes, the measure $\pi^{(a_{2},b_{2})}((\rho_1)_F)$ is $(\sigma_0,O(\delta^{-K\e}))$-Frostman. By the separation \eqref{sep:mu1mu2} between $a_1$ and $a_2$, the projection $\lambda$ of $(\rho_1)_{F}$ under $(x,y)\mapsto (y-b_2)/(x-a_2)$ is also $(\sigma_0,O(\delta^{-K\e}))$-Frostman (possibly with a larger "$K$", but one which still only depends on $s,t,\sigma$).

Applying Proposition \ref{prop7} to $\mathcal{P} := \mathcal{D}_{\delta}(P)$ and $\lambda$, with $\alpha := \delta^{\epsilon}$, we deduce that there is $(b_1-b_2)/(a_1-a_2)\in\spt(\lambda)$ (in particular, $(a_1,b_1,a_2,b_2)\in E$) such that
\[
\left|\{  b + (b_1-b_2)/(a_1-a_2) a : (a,b)\in X \} \right|_{\delta} \gtrsim_{\epsilon} \delta^{-\sigma_0+O(K)\e} \quad\text{whenever } |X|_{\delta} \ge \delta^{\e}|P|_{\delta}.
\]
Since $|a_1 - a_2| \ge \delta^{6\epsilon/s}$, this is a contradiction provided $\e$ is small enough in terms of $s,t,\sigma$, assuming that $\sigma_0-O(K)\e > \sigma+\e$ and $\delta$ is small enough in terms of all parameters.
\end{proof}

\begin{remark}
The proof actually shows the following stronger fact: if $A_i$ are instead $(\delta,s_i,\delta^{-\e})$-sets and $B_i$ are $(\delta,t_i,\delta^{-\e})$-sets with $s_i,t_i\in (0,1)$ (and $P$ remains as in the statement), then the same conclusion holds, with $\e$ depending on all parameters, provided that
\[
\sigma \in [0, \min\{s_1+t_1,s_2+t_2,s+t,1\}).
\]
\end{remark}

\begin{remark}
The proof of Proposition \ref{prop:expansion} also yields the following measure-theoretic statement: given $s,t\in (0,1)$ and $\sigma\in [0,\min\{s+t,1\})$, there exist $\e = \e(s,t,\sigma) > 0$ and $\delta_{0} = \delta_{0}(s,t,\sigma,\e) > 0$  such that the following holds for all $\delta \in (0,\delta_{0}]$. Assume that $\mu_1,\mu_2$ are  $(\delta,s,\delta^{-\e})$-Frostman measures on $[-2,2]$ and $\nu_1,\nu_2$ are $(\delta,t,\delta^{-\e})$-Frostman measures on $[-2,2]$. Assume that $\boldsymbol{\mu}$ is a $(\delta,s + t,\delta^{-\epsilon})$-Frostman measure on $B(1) \subset \R^{2}$. Then there exists a set $\mathcal{G}\subset\R^4$ with
\[
(\mu_1\times\mu_2\times\nu_1\times\nu_2)(\R^4 \, \setminus \, \mathcal{G})\le \delta^{\e}  
\]
such that if $(a_1,a_2,b_1,b_2)\in \mathcal{G}$ and $\boldsymbol{\mu}(X)\ge \delta^{\e}$, then
\[
\big|\{  (a_1-a_2)b + (b_1-b_2) a : (a,b)\in X \} \big|_{\delta} \ge \delta^{-\sigma} .
\]
\end{remark}

\subsection{Proof of Theorem \ref{thm:ABCWeak}} We start by recalling a key tool in additive combinatorics:
\begin{lemma}[Pl\"{u}nnecke-Ruzsa inequalities]  \label{l:PR}
Let $A, B_1,\ldots, B_n\subset\R$ be finite sets and fix $\delta>0$. Assume that $|A+B_i|_{\delta}\le K_i |A|_{\delta}$ for all $i=1,\ldots,n$. Then there is $A'\subset A$, $|A'|_{\delta}\ge |A|_{\delta}/2$ such that
\[
|A'+B_1+\cdots+B_n|_{\delta} \lesssim_n K_1\cdots K_n \cdot|A|_{\delta}.
\]
\end{lemma}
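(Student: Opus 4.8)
The plan is to reduce the $\delta$-covering formulation to the classical (cardinality) Plünnecke--Ruzsa inequalities by a routine discretisation, and then invoke the latter.

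\emph{Step 1: pass to the grid.} For a bounded $X\subset\R$ let $X^{\sharp}\subset\Z$ be the set of indices $k$ with $[k\delta,(k+1)\delta)\cap X\neq\emptyset$, so that $|X^{\sharp}|=|X|_{\delta}$ (this identifies $\mathcal{D}_{\delta}(X)$ with $X^{\sharp}$). The map $X\mapsto X^{\sharp}$ is an approximate homomorphism: if $a$ lies in the cube indexed by $i$ and each $b_{j}$ in the cube indexed by $i_{j}$, then $a+b_{1}+\cdots+b_{n}$ lies in the cube indexed by one of $i+i_{1}+\cdots+i_{n},\ldots,i+i_{1}+\cdots+i_{n}+n$. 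This gives, on the one hand,
\[
(A+B_{1}+\cdots+B_{n})^{\sharp}\subset A^{\sharp}+B_{1}^{\sharp}+\cdots+B_{n}^{\sharp}+\{0,1,\ldots,n\},
\]
hence $|A+B_{1}+\cdots+B_{n}|_{\delta}\le (n+1)\,|A^{\sharp}+B_{1}^{\sharp}+\cdots+B_{n}^{\sharp}|$; and, applied with a single summand in reverse, $A^{\sharp}+B_{i}^{\sharp}\subset (A+B_{i})^{\sharp}\cup\big((A+B_{i})^{\sharp}-1\big)$, hence $|A^{\sharp}+B_{i}^{\sharp}|\le 2|A+B_{i}|_{\delta}\le 2K_{i}|A|_{\delta}=2K_{i}|A^{\sharp}|$. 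Finally a subset $A''\subset A^{\sharp}$ lifts to $A'\subset A$ by choosing one point of $A$ in each cube indexed by $A''$; then $(A')^{\sharp}=A''$, so $|A'|_{\delta}=|A''|$ and $|A'+B_{1}+\cdots+B_{n}|_{\delta}\le (n+1)|A''+B_{1}^{\sharp}+\cdots+B_{n}^{\sharp}|$. Thus it suffices to prove the purely combinatorial statement: for finite $A,B_{1},\ldots,B_{n}\subset\Z$ with $|A+B_{i}|\le K_{i}|A|$ there is $A''\subset A$ with $|A''|\ge|A|/2$ and $|A''+B_{1}+\cdots+B_{n}|\lesssim_{n}K_{1}\cdots K_{n}|A|$, since we may apply it with $2K_{i}$ in place of $K_{i}$, the extra $2^{n}$ and $(n+1)$ factors being absorbed into $\lesssim_{n}$.

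\emph{Step 2: invoke the classical inequality.} The combinatorial statement is exactly the Plünnecke--Ruzsa inequality for (possibly distinct) summands, in its dense-subset form. I would quote it directly: the asymmetric version (due to Plünnecke and Ruzsa, with streamlined proofs by Petridis and by Gyarmati--Matolcsi--Ruzsa) produces a nonempty $X\subset A$ with $|X+B_{1}+\cdots+B_{n}|\le K_{1}\cdots K_{n}|X|\le K_{1}\cdots K_{n}|A|$, and the standard upgrade to a subset $A''$ of density at least $1/2$ (iterating the inequality on the as-yet-unused part, which still obeys the hypothesis with the $K_{i}$ doubled as long as it has density above $1/2$) costs only a further multiplicative constant depending on $n$. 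Both ingredients are entirely classical and are used in this exact packaging throughout the discretised sum--product and projection literature, so I would treat them as a black box.

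Since the combinatorial input is off the shelf, the only step with any content is the discretisation bookkeeping of Step~1: one must check that the additive $O(n)$ errors incurred by replacing each set by its grid version accumulate to no worse than an $O_{n}(1)$ multiplicative loss in the iterated sumset, and that lifting the good subset from $\Z$ back to $\R$ preserves both its density and the sumset bound. This is routine, and I do not expect any genuine obstacle; the main (mild) point to be careful about is simply keeping the $n$-dependence of all constants uniform, as required by the statement.
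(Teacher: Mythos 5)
Your proposal is correct and matches the paper's treatment: the paper gives no proof of Lemma \ref{l:PR}, simply attributing the cardinality version to Ruzsa \cite{MR2314377} and pointing to \cite[Corollary 3.4]{MR4283564} for exactly the $\delta$-covering reduction you carry out in Step~1. Your discretisation bookkeeping and the iteration upgrading Petridis--Pl\"unnecke--Ruzsa to the dense-subset form are both sound, so nothing is missing.
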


This form of the inequality is due to Ruzsa \cite{MR2314377}.
To be accurate, the statement is not formulated in terms of $\delta$-covering numbers, but the reduction is not difficult: one may consult \cite[Corollary 3.4]{MR4283564}.

\begin{lemma}\label{l:ABC} Let $0 < \beta \leq \alpha < 1$ and $\gamma \in (\alpha - \beta,1]$, and let $A,B,C$ be as in Theorem \ref{thm:ABCWeak}. Suppose the conclusion of Theorem \ref{thm:ABCWeak} does not hold for any $c\in C$, for $\chi,\delta > 0$ sufficiently small in terms of $\alpha,\beta,\gamma$, in particular $\chi \leq (\beta + \gamma - \alpha)/4$. Then there is a set  $E\subset B \times B$ with $|E|\le \delta^{\chi}|B|^2$ such that for each $(b_1,b_2)\in (B \times B) \, \setminus \, E$,
\[
|A'+(b_1-b_2)C'|_{\delta} \ge \delta^{-(\beta+\gamma-\alpha)/4}|A| \geq \delta^{-\chi}|A|
\]
for all subsets $A'\subset A$, $C'\subset C$ with $|A'\times C'|\ge \delta^{\chi}|A||C|$.
\end{lemma}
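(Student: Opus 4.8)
## Proof Proposal for Lemma \ref{l:ABC}

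The plan is to run the heuristic ``Pl\"unnecke--Ruzsa + radial projection'' argument outlined in the introduction, starting from the negation of Theorem \ref{thm:ABCWeak} and producing the claimed lower bound for $|A' + (b_1 - b_2)C'|_{\delta}$ on a large set of pairs $(b_1,b_2)$. First I would record the consequences of the counter-assumption: for \emph{every} $c \in C$ there is a large subset $G_c \subset A \times B$ (i.e.\ $|G_c| \geq \delta^{\chi}|A||B|$) witnessing $|\{a + cb : (a,b) \in G_c\}|_{\delta} < \delta^{-\chi}|A|$; in particular, by pigeonholing the $B$-fibres, for a positive $B$-proportion of $b$ one gets $|A'_b + cb|_{\delta} = |A'_b|_{\delta} \lesssim \delta^{-\chi}|A|$ for a large $A'_b \subset A$, hence $|A + cb|_{\delta} \lesssim \delta^{-O(\chi)}|A|$ after a further covering/refinement step. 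The point is to upgrade this to: for most $(b_1,b_2) \in B \times B$, the set $A$ has small doubling relative to \emph{both} translates by $b_1 C$ and $b_2 C$ simultaneously, after passing to a common refinement. This is the ``$\Hd(A + (b-b')C) = \Hd A$ for most pairs'' step, and it is essentially a double counting / Fubini argument over $B \times B \times C$.

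Next I would feed this into Lemma \ref{l:PR} (Pl\"unnecke--Ruzsa). Fixing a good pair $(b_1,b_2)$, one has (on a common dense subset $A^{*} \subset A$) that $|A^{*} + b_1 C|_{\delta} \lesssim \delta^{-O(\chi)}|A|$ and $|A^{*} + (-b_2 C)|_{\delta} \lesssim \delta^{-O(\chi)}|A|$, so Pl\"unnecke--Ruzsa yields a further subset $A' \subset A$ with $|A'|_{\delta} \geq |A|_{\delta}/2$ and
\[
|A' + b_1 C - b_2 C|_{\delta} \lesssim \delta^{-O(\chi)}|A|,
\]
hence in particular $|A' + (b_1 - b_2)C|_{\delta} \lesssim \delta^{-O(\chi)}|A| \le \delta^{-O(\chi)}\delta^{-\alpha}$. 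So the counter-assumption forces $A' + (b_1 - b_2)C'$ to be \emph{small} for most $(b_1,b_2)$ — which is the opposite of what the Lemma claims. The resolution is that we do \emph{not} actually derive the Lemma's conclusion from the counter-assumption alone; rather, the Lemma is the first half of a proof by contradiction whose contradiction is obtained only later (in the proof of Theorem \ref{thm:ABCWeak} proper, using Proposition \ref{prop:expansion} and the radial projection bound $\Hd \tfrac{B-B}{C-C} \geq \min\{\beta+\gamma,1\} > \alpha$). Re-reading the statement: the Lemma asserts the lower bound $|A' + (b_1-b_2)C'|_{\delta} \geq \delta^{-(\beta+\gamma-\alpha)/4}|A|$ holds \emph{for all} large $A', C'$ — so in fact the Lemma must be proved \emph{directly}, not via the counter-assumption producing smallness. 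Thus the correct plan is: apply Proposition \ref{prop:expansion} (the robust expansion estimate) with the roles $A_1 = A_2 = $ (a uniformized piece of $C$ shifted to $[-2,2]$), $B_1 = B_2 = B$, and $P$ a $(\delta, \beta+\gamma, \delta^{-\e})$-set built from $B \times C$; this produces, for most $(c_1,c_2,b_1,b_2)$, a lower bound of the form $|\{(b_1-b_2)c + (c_1-c_2)b : \ldots\}|_{\delta} \geq \delta^{-\sigma}$ with $\sigma$ slightly below $\min\{\beta+\gamma,1\}$.

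The key steps, in order, are therefore: (1) uniformize $A$, $B$, $C$ via Corollary \ref{cor:uni} so that branching functions are available and one may renormalise to scales where $C$ looks like a $(\delta, \gamma)$-set and $B$ like a $(\delta,\beta)$-set; (2) build the auxiliary $(\delta, \beta + \gamma)$-set $P$ from the product $B \times C$ (using that $B$ is a $(\delta,\beta)$-set with constant $\delta^{-\chi}$ and $C$ a $(\delta,\gamma)$-set, so $B \times C$ is roughly a $(\delta, \beta+\gamma, \delta^{-2\chi})$-set), and apply Proposition \ref{prop:expansion} with $\sigma$ chosen so that $\delta^{-\sigma} \geq \delta^{-(\beta+\gamma-\alpha)/4}|A| = \delta^{-(\beta+\gamma-\alpha)/4 - \alpha}$, which is possible since $(\beta+\gamma-\alpha)/4 + \alpha < \beta+\gamma$ when $\alpha < \beta+\gamma$ (equivalently $3\alpha < 3(\beta+\gamma) $, clear, and more precisely one needs $(\beta+\gamma-\alpha)/4 + \alpha \le \min\{\beta+\gamma,1\} - \e$, which holds for $\chi$ small); (3) translate the conclusion of Proposition \ref{prop:expansion}, applied with $X = $ an appropriate large subset of $P$ coming from $A' \times (\text{piece})$, into the statement $|A' + (b_1-b_2)C'|_{\delta} \geq \delta^{-(\beta+\gamma-\alpha)/4}|A|$ by absorbing the $(c_1-c_2)b$ term via a Ruzsa covering / translation argument and using $|A| \le \delta^{-\alpha}$; the exceptional set $E \subset B \times B$ is exactly the slice of the small exceptional set $\mathcal{G}^c$ from Proposition \ref{prop:expansion}, of measure $\le \delta^{\e} \le \delta^{\chi}$. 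The main obstacle I anticipate is step (3): matching the somewhat rigid algebraic form $\{(b_1-b_2)c + (c_1-c_2)b\}$ coming out of Proposition \ref{prop:expansion} to the clean form $A' + (b_1-b_2)C'$ demanded by the Lemma — this requires a careful change of variables (dividing through by $b_1 - b_2$, using the lower bound $|b_1 - b_2| \gtrsim \delta^{O(\chi)}$ valid off the exceptional set) together with a Pl\"unnecke--Ruzsa step to discard the $A$-dependence of the $(c_1-c_2)b$ term, and bookkeeping the accumulated $\delta^{-O(\chi)}$ losses so that they stay below $\delta^{-\chi}$ with $\chi \le (\beta+\gamma-\alpha)/4$ as stipulated.
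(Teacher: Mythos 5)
You do eventually land on the paper's strategy -- apply Proposition \ref{prop:expansion} with $(s,t)=(\beta,\gamma)$ to the sets $B,C$ and a product set $P\approx B\times C$, pigeonhole a single good pair $(c_1,c_2)\in C^2$, and let $E\subset B\times B$ be the corresponding slice of the exceptional set -- but there is a genuine gap at the point where $A$ enters. The expansion estimate only says that $(b_1-b_2)C'+(c_1\pm c_2)B$ is large; it says nothing about $A$. The bridge to the stated conclusion is precisely the counter-assumption, used for the two pigeonholed elements: since \eqref{conclusion} fails for $c_1$ and $c_2$, one has $|A+c_iB|_{\delta}\le\delta^{-\chi}|A|$, hence $|A'+c_iB|_{\delta}\le\delta^{-2\chi}|A'|$, and feeding these two bounds together with a hypothetical bound $|A'+(b_1-b_2)C'|_{\delta}<\delta^{-(\sigma-\alpha)/2}|A|$ into the Pl\"unnecke--Ruzsa inequality (Lemma \ref{l:PR}) produces a large $A''\subset A'$ with $|A''+c_1B+c_2B+(b_1-b_2)C'|_{\delta}\lesssim\delta^{-O(\chi)-(\sigma-\alpha)/2}|A|\le\delta^{-O(\chi)-(\sigma+\alpha)/2}$, contradicting the expansion lower bound $\delta^{-\sigma}$ once $\chi$ is small. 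Your proposal never articulates this: your only attempted use of the counter-assumption (second paragraph) swaps the roles of $B$ and $C$ -- the hypothesis gives smallness of $|A+cB|_{\delta}$ for $c\in C$, not of $|A+bC|_{\delta}$ for $b\in B$; the latter is exactly the statement obtained much later (in restricted-sumset form, via additive energy and Balog--Szemer\'edi--Gowers) in the proof of Theorem \ref{thm:ABCWeak}, not an input to this lemma -- and your final plan even declares the lemma should be proved ``directly, not via the counter-assumption''. Without the counter-assumption the conclusion is false for suitable parameters and structured $A$ (say $A$ a $\delta$-net of an interval of length $\delta^{1-\alpha}$, which is admissible since $A$ carries no non-concentration hypothesis), so the ``absorption'' in your step (3) cannot be a mere Ruzsa covering / change-of-variables manipulation.

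There is also a quantitative error: you choose $\sigma$ so that $\delta^{-\sigma}\ge\delta^{-(\beta+\gamma-\alpha)/4}|A|$, i.e.\ $\sigma\ge(\beta+\gamma-\alpha)/4+\alpha$, and claim this is compatible with $\sigma<\min\{\beta+\gamma,1\}$ ``for $\chi$ small''. This fails when $\beta+\gamma>1$ and $\alpha>3/4$ (for instance $\alpha=\beta=0.9$, $\gamma=0.5$ would force $\sigma\ge 1.025$). The paper takes $\sigma=\min\{(\alpha+\beta+\gamma)/2,(1+\alpha)/2\}$, which only requires $\sigma>\alpha$: in the Pl\"unnecke--Ruzsa computation sketched above the bound is a \emph{gain of $\delta^{-(\sigma-\alpha)/2}$ over $|A|$}, so $|A|$ cancels and never has to be paid for by $\sigma$. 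Minor further points: the negation of Theorem \ref{thm:ABCWeak} is simply $|A+cB|_{\delta}<\delta^{-\chi}|A|$ for every $c\in C$ (no sets $G_c$ or fibre pigeonholing are needed; that is the strong Theorem \ref{thm:ABCConjecture}), no uniformization of $A,B,C$ is required, and neither a division by $b_1-b_2$ nor a lower bound on $|b_1-b_2|$ plays any role in this lemma.
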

\begin{proof}
Assume that the conclusion \eqref{conclusion} does not hold for any $c\in C$. Recall that $\gamma \in (\alpha - \beta,1]$, and let
\begin{displaymath}
\sigma := \min\left\{\frac{\alpha+\beta+\gamma}{2},\frac{1 + \alpha}{2}\right\} \in (\alpha,\min\{\beta + \gamma,1\}).
\end{displaymath}
Apply Proposition \ref{prop:expansion} with constant $\sigma$, and the sets $B$ and $C$, which are assumed to be a $(\delta,\beta,\delta^{-\chi})$-set and a $(\delta,\gamma,\delta^{-\chi})$-set, respectively, in Theorem \ref{thm:ABCWeak} (more precisely, we apply Proposition \ref{prop:expansion} in the case where the sets $A_{1},A_{2}$ coincide with $B$, the sets $B_{1},B_{2}$ coincide with $C$, and the set $P$ coincides with $C\times B$). Assume that $\chi\le \e(\alpha,\beta,\gamma)$, the constant given by the proposition. It follows from Proposition \ref{prop:expansion} that there are $(c_1,c_2)\in C^2$ and a set $E\subset B \times B$ with $|E|\le \delta^{\chi}|B|^2$ such that
\begin{equation} \label{eq:PR-1}
|(b_1-b_2)C' +  c_1 B+c_2 B |_{\delta}\ge  \delta^{-\sigma}, \quad C'\subset C, |C'|\ge \delta^{\chi}|C|,\, (b_1,b_2)\in (B \times B) \, \setminus \, E.
\end{equation}
Since  \eqref{conclusion} does not hold for either $c_1$ or $c_2$, we know that
\begin{equation} \label{eq:PR-2}
|A + c_i B|_{\delta}\le \delta^{-\chi}|A|, \quad i=1,2.
\end{equation}
Fix $A'\times C'\subset A\times C$ with $|A'\times C'|\ge \delta^{\chi}|A||C|$, and $(b_1,b_2)\in (B \times B) \, \setminus \, E$. Suppose that 
\[
  |A' + (b_1-b_2)C'|_{\delta} < K|A|.
\] 
Combining the Pl\"{u}nnecke-Ruzsa inequality (Lemma \ref{l:PR}) with \eqref{eq:PR-2}, we find $A''\subset A'$ with $|A''|_{\delta}\ge |A'|_{\delta}/2$ such that
\[
   |A'' +(b_1-b_2)C' + c_1 B + c_2 B|_{\delta} \lesssim K \delta^{-2\chi}|A|.
\]
Plugging this into \eqref{eq:PR-1}, we obtain $K\lesssim \delta^{-(\sigma+2\chi)}$. Taking $\chi<(\sigma-\alpha)/20$, we deduce that
\[
|A' + (b_1-b_2)C'|_{\delta} \ge \delta^{-(\sigma-\alpha)/2}|A|.
\]
This is the desired conclusion.
\end{proof}

We are then nearly ready to prove Theorem \ref{thm:ABCWeak}. Before doing this, we recap a few facts about \emph{restricted sumsets} and \emph{additive energies}.

Let $\delta \in (0,1]$ and $A,B \subset \R$. The \emph{$\delta$-discretised additive energy} between $A,B$ is the quantity
\begin{displaymath} \mathcal{E}_{\delta}(A,B) := |\{(a_{1},a_{2},b_{1},b_{2}) \in A^{2} \times B^{2} : |(a_{1} + b_{1}) - (a_{2} + b_{2})| \leq \delta\}|. \end{displaymath}
If $A$ is $\delta$-separated (but $B \subset \R$ may be arbitrary), then $\mathcal{E}_{\delta}(A,B) \leq 2|A||B|^{2}$, since for $(a_{2},b_{1},b_{2}) \in A \times B^{2}$ fixed, the constraint $|(a_{1} + b_{1}) - (a_{2} + b_{2})| \leq \delta$ determines an interval of length $2\delta$ where $a_{1}$ needs to lie.

\begin{remark}\label{rem4}
Assume $A$ is $\delta$-separated, and there exists a subset $\mathcal{G} \subset A \times B$ such that $|\mathcal{G}| \geq |A||B|/K$, and the \emph{restricted sumset} $A +_{\mathcal{G}} B := \{a + b : (a,b) \in \mathcal{G}\}$ satisfies $|A +_{\mathcal{G}} B|_{\delta} \leq K|A|$, $K \geq 1$. Let $H \subset A +_{\mathcal{G}} B$ be a maximal $\delta$-separated set, thus $|H| \leq K|A|$. Then, by the triangle inequality and Cauchy-Schwarz,
\begin{equation}\label{form104} \mathcal{E}_{2\delta}(A,B) \geq \sum_{h \in H} |\{(a,b) \in \mathcal{G} : |(a + b) - h| \leq \delta\}|^{2} \geq |A||B|^{2}/K^{2}. \end{equation}
Conversely, assume that $\mathcal{E}_{\delta}(A,B) \geq |A||B|^{2}/K$, and also that $A$ is $\delta$-separated; this guarantees that
\begin{equation}\label{form34} \sum_{(a_{1},b_{2}) \in \mathcal{G}} |\{(a_{2},b_{2}) \in A \times B : |(a_{1} + b_{1}) - (a_{2} + b_{2})| \leq \delta\}| \leq 2|\mathcal{G}||B| \end{equation}
for all $\mathcal{G} \subset A \times B$. In particular, this implies that the "popular pairs"
\begin{displaymath} \mathcal{G} := \{(a_{1},b_{1}) : |\{(a_{2},b_{2}) \in A \times B : |(a_{1} + b_{1}) - (a_{2} + b_{2})| \leq \delta\}| \geq |B|/(2K)\} \end{displaymath}
satisfy $|\mathcal{G}| \geq |A||B|/(4K)$, since otherwise $\mathcal{E}_{\delta}(A,B) < |A||B|^{2}/K$, using \eqref{form34}. On the other hand, letting again $H \subset A +_{\mathcal{G}} B$ be a maximal $\delta$-separated set, it holds
\begin{equation}\label{form105}|A +_{\mathcal{G}} B|_{\delta} = |H| \leq \frac{2K}{|B|} \sum_{h \in H} |\{(a_{2},b_{2}) \in A \times B : |(a_{2} + b_{2}) - h| \leq \delta\}| \leq 2K|A|.  \end{equation}
Summarising \eqref{form104}-\eqref{form105}, one can roughly conclude that if $A$ is $\delta$-separated, then $\mathcal{E}_{\delta}(A,B) \approx |A||B|^{2}$ if and only if there exists $\mathcal{G} \subset A \times B$ with $|\mathcal{G}| \approx |A||B|$ such that $|A +_{\mathcal{G}} B|_{\delta} \lessapprox |A|$.
\end{remark}

Finally, we will need the asymmetric Balog-Szemer\'edi-Gowers theorem, see the book of Tao and Vu, \cite[Theorem 2.35]{MR2289012}. We state the result in the following slightly weaker form (following \cite[Theorem 3.2]{Sh}):
\begin{thm}[Asymmetric Balog-Szemer\'edi-Gowers theorem]\label{thm:BSG} Given $\eta > 0$, there exists $\zeta > 0$ such that the following holds for $\delta \in 2^{-\N}$ small enough. Let $A,B \subset (\delta \cdot \Z) \cap [0,1]$, and assume that there exist $c \in [\delta^{\zeta},1]$ and $\mathcal{G} \subset A \times B$ satisfying
\begin{equation} \label{eq:BSG-assump}
   |\mathcal{G}| \geq \delta^{\zeta}|A||B| \quad \text{and} \quad |\{a + cb : (a,b) \in \mathcal{G}\}|_{\delta} \leq \delta^{-\zeta}|A|. 
\end{equation}
Then there exist subsets $A' \subset A$ and $B' \subset B$ with the properties
\begin{displaymath} 
    |A'||B'| \geq \delta^{\eta}|A||B| \quad \text{and} \quad |A' + cB'|_{\delta} \leq \delta^{-\eta}|A|. 
\end{displaymath}
\end{thm}
\begin{proof}
  We explain how to derive this formulation from \cite[Theorem 3.2]{Sh}. Given $x\in\R$, let $(x)_{\delta} \in \delta \cdot \Z$ be such that $x \in [(x)_{\delta},(x)_{\delta}+\delta)$. Let
  \[
    \widetilde{B}=\bigl\{ \delta j : [\delta j, \delta (j+1)) \cap cB \neq \emptyset\bigr\} \subset \delta \cdot \Z
  \]
  be the $\delta$-discretisation of $cB$. Likewise, let
  \[
    \widetilde{\mathcal{G}} = \{(a, (cb)_{\delta}) : (a,b) \in \mathcal{G}\} \subset A \times \widetilde{B}.
  \]
  Using the assumptions $c\ge \delta^{\zeta}$ and \eqref{eq:BSG-assump}, one checks that
  \[
    |\widetilde{\mathcal{G}}| \gtrsim \delta^{2\zeta}|A||\widetilde{B}|, \quad |\{a + b : (a,b) \in \widetilde{\mathcal{G}}\}| \lesssim \delta^{-2\zeta}|A|.
  \]
  By Cauchy-Schwarz,
  \[  
    \mathcal{E}(A,\widetilde{B}) \gtrsim \delta^{4\zeta}|A||\widetilde{B}|^{2}.
  \]
  We can now apply \cite[Theorem 3.2]{Sh} to the sets $A,\widetilde{B}$ to conclude that, if $\zeta$ is small enough in terms of $\eta$, and $\delta$ is small enough in terms of all other parameters, then there are $X,H$ such that, setting $A'=A\cap (X+H)$ and $\widetilde{B}'=\widetilde{B}\cap H$, we have 
  \[
    |A'||\widetilde{B}'| \geq \delta^{2\eta}|A||\widetilde{B}|, 
  \]
  and
  \[ 
  |A' + \widetilde{B}'| \leq |X+H+H| \le |X||H+H|  \le \delta^{-\eta}|X||H| \leq \delta^{-2\eta}|A|.
  \]
  One can now check that the conclusion holds with $B' = \{b \in B : (cb)_{\delta} \in \widetilde{B}'\}$ and slightly smaller $\eta$.
\end{proof}

We are then prepared to prove Theorem \ref{thm:ABCWeak}.

\begin{proof}[Proof of Theorem \ref{thm:ABCWeak}] We fix two parameters $0 < \chi \ll \chi_{0} \leq 1$, both depending only on $\alpha,\beta,\gamma$. In fact, $\chi_{0} > 0$ is the constant provided by Lemma \ref{l:ABC} applied to the triple $(\alpha,\beta,\gamma)$. The relationship between $\chi,\chi_{0}$ is determined by Theorem \ref{thm:BSG}: we will need that $C\chi$ is smaller than the constant $\zeta = \zeta(\chi_{0}/2) > 0$ provided by Theorem \ref{thm:BSG} applied with $\eta := \chi_{0}/2$.

We now make a counter assumption:
\begin{equation}\label{form106} |A + cB|_{\delta} \leq \delta^{-\chi}|A| \leq \delta^{-\chi_{0}}|A|\quad\text{for all } c \in C. \end{equation}
By the choice of $\chi_{0}$, Lemma \ref{l:ABC} outputs an exceptional set $E \subset B \times B$ with $|E| \leq \delta^{\chi_{0}}|B|^{2}$ such that
\begin{equation}\label{form107} |A' + (b_{2} - b_{1})C'|_{\delta} \geq \delta^{-\chi_{0}}|A|\quad\text{for all } (b_{1},b_{2}) \in (B \times B) \, \setminus \, E, \end{equation}
and for all subsets $A' \subset A$ and $C' \subset C$ with $|A' \times C'| \geq \delta^{\chi_{0}}|A||C|$.

In the remaining of the proof, we use "$\approx$" notation to hide constants of the form $\delta^{-O(\chi)}$. From \eqref{form104} and \eqref{form106}, we deduce that
\begin{equation}\label{form108} \mathcal{E}_{2\delta}(A,cB) \approx |A||B|^{2}, \quad c \in C. 
\end{equation}

Note that $|(a + cb_{1}) - (a' + cb_{2})| \leq 2\delta$ is equivalent to $|a - (a' + c(b_{2} - b_{1}))| \leq 2\delta$. Therefore,
\begin{displaymath} \sum_{b_{1},b_{2}} |\{(a,a',c) \in A^{2} \times C : |a - (a' + c(b_{2} - b_{1}))| \leq 2\delta\}| \approx |A||B|^{2}|C|. \end{displaymath}
Since we have the uniform upper bound
\begin{displaymath}  |\{(a,a',c) \in A^{2} \times C : |a - (a' + c(b_{2} - b_{1}))| \leq 2\delta\}| \lesssim |A||C|\quad\text{for all } b_{1},b_{2} \in B, \end{displaymath}
by the $\delta$-separation of $A$, we may find a set $G \subset B \times B$ with $|G| \approx |B|^{2}$ such that
\begin{equation}\label{form94} |\{(a,a',c) : |a - (a' + c(b_{2} - b_{1}))| \leq 2\delta\}| \approx |A||C|\quad\text{for all } (b_{1},b_{2}) \in G. \end{equation}
Since $|E| \leq \delta^{\chi_{0}}|B|^{2}$ (recall below \eqref{form107}), and $|G| \geq \delta^{O(\chi)}|B|^{2}$, we may ensure
\begin{displaymath} |G \cap ((B \times B) \, \setminus \, E)| \geq \tfrac{1}{2}|G| \approx |B|^{2} \end{displaymath}
by taking $C\chi \leq \chi_{0}$. We fix $(b_{1},b_{2}) \in G \cap ((B \times B) \, \setminus \, E)$ with $|b_{1} - b_{2}| \approx 1$ for the remainder of the argument. Then, using the $\delta$-separation of $A$ in "$\gtrsim$",
\begin{align*} & \mathcal{E}_{4\delta}(A,(b_{2} - b_{1})C) \stackrel{\mathrm{def.}}{=} |\{(a_{1}',a_{2}',c_{1},c_{2}) : |(a_{1}' + c_{1}(b_{2} - b_{1})) - (a_{2}' + c_{2}(b_{2} - b_{1}))| \leq 4\delta\}|\\
& \gtrsim \sum_{a \in A} |\{(a_{1}',a_{2}',c_{1},c_{2}) : |(a_{1}' + c_{1}(b_{2} - b_{1})) - a| , |(a_{2}' + c_{2}(b_{2} - b_{1})) - a| \leq 2\delta\}|\\
 & = \sum_{a \in A} |\{(a',c) : |a - (a' + c(b_{2} - b_{1}))| \leq 2\delta\}|^{2}\\
& \geq \frac{1}{|A|}\Big( \sum_{a \in A} |\{(a',c) : |a - (a' + c(b_{2} - b_{1}))| \leq 2\delta\}| \Big)^{2} \stackrel{\eqref{form94}}{\approx} |A||C|^{2}. \end{align*}

By Remark \ref{rem4}, now applied to $(A,(b_{2} - b_{1})C)$ instead of $(A,cB)$, this implies that there exists a subset $\mathcal{G} \subset A \times C$ with $|\mathcal{G}| \approx |A||C|$ such that
\[
|\{a + (b_{2} - b_{1})c : (a,c) \in \mathcal{G}\}|_{\delta} \approx |A|.
\]
By Theorem \ref{thm:BSG}, since $|b_{2} - b_{1}| \approx 1$, and the choice of "$\chi$" at the beginning of the proof, this further implies the existence of subsets $A' \subset A$ and $C' \subset C$ such that
\begin{displaymath} |A'| \ge \delta^{\chi_{0}/2}|A| \quad \text{and} \quad |C'| \geq \delta^{\chi_{0}/2}|C|, \end{displaymath}
and $|A' + (b_{2} - b_{1})C'|_{\delta} \leq \delta^{-\chi_{0}/2}|A|$. Since $(b_{1},b_{2}) \in (B \times B) \, \setminus \, E$, this contradicts \eqref{form107}, and completes the proof of Theorem \ref{thm:ABCWeak}. \end{proof}

\section{Projections of regular sets}\label{s:ProjRegular}

\subsection{High multiplicity sets, and a \texorpdfstring{$\delta$}{delta}-discretised version} 

In this section we state a $\delta$-discretised version of the part of Theorem \ref{thm:projection} concerning sets of equal Hausdorff dimension -- Theorem \ref{thm1} below.  The proof of Theorem \ref{thm:projection}, assuming Theorem \ref{thm1}, is deferred to \S\ref{s:proof-of-proj-thm}. 

We repeat the definition of regular sets (Definition \ref{def:deltaTRegularSet}) for the convenience of the reader.
\begin{definition}[$(\delta,t,C)$-regular set]\label{def:deltaTRegularSet2} Let $t > 0$ and $C \geq 1$. A non-empty set $\mathcal{P} \subset \mathcal{D}_{\delta}$ is called \emph{$(\delta,t,C)$-regular} if
\begin{enumerate}[(i)]
\item $\mathcal{P}$ is a $(\delta,t,C)$-set, and
\item $|\mathcal{P} \cap \mathbf{p}|_{r} \leq C(R/r)^{t}$ for all dyadic $\delta \leq r \leq R < \infty$, and for all $\mathbf{p} \in \mathcal{D}_{R}$.
\end{enumerate}
\end{definition}
Note that if property (ii) holds, then $\mathcal{P}$ is automatically a $(\delta,t,C \delta^{-t}|\mathcal{P}|_{\delta}^{-1})$-set. We also need the corresponding notion for measures:
 \begin{definition}[$(t,C)$-regular and $(\delta,t,C)$-regular measures]\label{def:regularity} Let $t > 0$ and $C \geq 1$. A non-trivial Borel measure $\mu$ with $K := \spt \mu \subset \R^{d}$ is called $(t,C)$-regular if
 \begin{enumerate}[(i)]
 \item $\mu$ is a $(t,C)$-Frostman measure, and
 \item $|K \cap B(x,R)|_{r} \leq C (R/r)^{t}$ for all $x \in \R$ and $0 < r \leq R < \infty$.
 \end{enumerate}

 If $\delta \in (0,1]$, and $\mu$ is a $(\delta,t,C)$-Frostman measure (Definition \ref{def:FrostmanMeasure}) satisfying property (ii) for all $\delta \leq r \leq R < \infty$, we say that $\mu$ is $(\delta,t,C)$-regular.
\end{definition}

We note that if $\mathcal{P} \subset \mathcal{D}_{\delta}$ is $(\delta,t,C)$-regular with $t \leq d$, then the unit-normalised Lebesgue measure on $\cup \mathcal{P}$ is a $(t,C')$-regular measure with $C' \sim C$.

\begin{remark}\label{rem:regular} If $\mu$ is a $(t,C)$-regular measure on $\R^{d}$, and $B = B(z,r) \subset \R^{2}$ is a disc, we write 
\begin{displaymath} \mu_{B} := r^{-t} \cdot T_{B}\mu, \end{displaymath}
where $T_{B}(x) = (x - z)/r$ is the homothety mapping $B$ to $B(1)$. We emphasise that we do not restrict the rescaled measure to the unit ball here (in contrast to Definition \ref{def:renormalisation} concerning sets). Then $\mu_{B}$ is also a $(t,C)$-regular measure on $\R^{d}$. More accurate notation would be $\mu_{B,t}$, but the index "$t$" should always be clear from context. Similarly, if $\mu$ is $(\delta,t,C)$-regular with $\delta \in (0,1]$, and $r \in [\delta,1]$, then $\mu_{B}$ is $(\delta/r,t,C)$-regular.

In contrast, if $\mathcal{P} \subset \mathcal{D}_{\delta}$ is a $(\delta,t,C)$-regular set, and $Q \in \mathcal{D}_{\Delta}$ with $\delta \leq \Delta \leq 1$, it is not always true that the renormalisation $\mathcal{P}_{Q} \subset \mathcal{D}_{\delta/\Delta}$ (Definition \ref{def:renormalisation}) is $(\delta/\Delta,t,O(C))$-regular. Property (ii) in Definition \ref{def:deltaTRegularSet2}  is scale-invariant, but property (i) may fail for $\mathcal{P}_{Q}$ in a case where $|\mathcal{P} \cap Q| \ll \Delta^{t}|P|$. In fact, it is easy to check that if $\mathcal{P}$ is $(\delta,t,C)$-regular, and $\mathcal{P} \cap Q \neq \emptyset$, then $\mathcal{P}_{Q}$ is $(\delta/\Delta,t,\bar{C})$-regular with
\begin{displaymath} 
  \bar{C} = C \cdot \frac{\Delta^{t}|\mathcal{P}|}{|\mathcal{P} \cap Q|}. 
\end{displaymath} 
\end{remark}

Next we define the notion of "high multiplicity". This terminology is taken verbatim from \cite{MR4388762}.
\begin{definition}\label{def:mult} Let $K \subset \R^{2}$, let $0 < r \leq R \leq \infty$ be dyadic numbers, and let $x \in K$. Recall that $K^{(r)}$ stands for the $r$-neighbourhood of $K$. For $\theta \in S^{1}$, we define the following \emph{multiplicity number}:
\begin{displaymath} \m_{K,\theta}(x \mid [r,R]) := |B(x,R) \cap K^{(r)} \cap \pi_{\theta}^{-1}\{\pi_{\theta}(x)\}|_{r}.  \end{displaymath}
 Thus, $\m_{K,\theta}(x \mid [r,R])$ keeps track of the (smallest) number of dyadic $r$-squares needed to cover the intersection between $B(x,R) \cap K^{(r)}$ and the line $\pi_{\theta}^{-1}\{\pi_{\theta}(x)\}$. Often the set "$K$" is clear from the context, and we abbreviate $\m_{K,\theta} =: \m_{\theta}$. \end{definition}

\begin{definition}[High multiplicity sets]\label{def:highMult} Let $0 < r \leq R \leq \infty$, $M > 0$, and let $\theta \in S^{1}$. For $K \subset \R^{2}$, we define the \emph{high multiplicity set}
\begin{displaymath} H_{\theta}(K,M, [r,R]) := \{x \in K : \m_{K,\theta}(x \mid [r,R]) \geq M\}. \end{displaymath}  \end{definition}

The next lemma discusses how the high multiplicity sets are affected by scalings. For $z_{0} \in \R^{2}$ and $r_{0} > 0$, we write $T_{z_{0},r_{0}}$ for the homothety which sends $B(z_{0},r_{0})$ to $B(1)$, namely $T_{z_{0},r_{0}}(z) := (z - z_{0})/r_{0}$ for $z \in \R^{2}$.

\begin{lemma}\label{lemma7} Let $K \subset \R^{2}$ be arbitrary, let $0 < r \leq R \leq \infty$, $M > 0$, and $\theta \in [0,1]$. Then,
\begin{displaymath} T_{z_{0},r_{0}}(H_{\theta}(K,M,[r,R])) = H_{\theta}(T_{z_{0},r_{0}}(K),M,[\tfrac{r}{r_{0}},\tfrac{R}{r_{0}}])\quad\text{for all } z_{0} \in \R^{2}, \, r_{0} > 0. \end{displaymath}
\end{lemma}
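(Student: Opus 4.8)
The plan is to observe that the homothety $T := T_{z_{0},r_{0}}$ is a bijective similarity of $\R^{2}$ of ratio $1/r_{0}$, to check that it commutes in the obvious way with every ingredient of Definition \ref{def:mult}, and to note that it merely relabels the scales $r,R$ as $r/r_{0},R/r_{0}$. Writing $\tilde K := T(K)$, the set identity in the lemma will follow at once once I establish that $\m_{\tilde K,\theta}(T(x)\mid[r/r_{0},R/r_{0}]) = \m_{K,\theta}(x\mid[r,R])$ for every $x\in\R^{2}$, since $T$ is a bijection carrying $K$ onto $\tilde K$.

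First I would record three elementary commutation facts. (a) Because $|T(z)-T(w)| = |z-w|/r_{0}$, the map $T$ carries the $r$-neighbourhood $K_{r}$ onto the $(r/r_{0})$-neighbourhood of $\tilde K$, and the ball $B(x,R)$ onto $B(T(x),R/r_{0})$ (with the convention $\infty/r_{0}=\infty$). (b) Because $\pi_{\theta}$ is linear, $\pi_{\theta}(T(z)) = (\pi_{\theta}(z)-\pi_{\theta}(z_{0}))/r_{0}$; in particular $\pi_{\theta}(T(z))=\pi_{\theta}(T(x))$ if and only if $\pi_{\theta}(z)=\pi_{\theta}(x)$, so $T$ maps the fibre $\pi_{\theta}^{-1}\{\pi_{\theta}(x)\}$ onto $\pi_{\theta}^{-1}\{\pi_{\theta}(T(x))\}$. (c) Covering numbers scale compatibly with the rescaling of the gauge: $|T(S)|_{r/r_{0}} = |S|_{r}$ for bounded $S\subset\R^{2}$, since $T$ and $T^{-1}$ carry $r$-covers to $(r/r_{0})$-covers and back. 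Combining (a) and (b) shows that $T$ maps $B(x,R)\cap K_{r}\cap\pi_{\theta}^{-1}\{\pi_{\theta}(x)\}$ onto $B(T(x),R/r_{0})\cap\tilde K_{r/r_{0}}\cap\pi_{\theta}^{-1}\{\pi_{\theta}(T(x))\}$, and (c) then upgrades this to the claimed equality of multiplicity numbers. Finally, since $x\in K\Leftrightarrow T(x)\in\tilde K$, I would conclude $x\in H_{\theta}(K,M,[r,R])$ iff $T(x)\in H_{\theta}(\tilde K,M,[r/r_{0},R/r_{0}])$ and apply $T$ to both sides.

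I do not expect a genuine obstacle; the only point deserving a word of care is fact (c), the scaling of the covering number $|\cdot|_{\rho}$. With the strictly dyadic definition of Definition \ref{def:dyadicCubes}, the identity $|T(S)|_{r/r_{0}}=|S|_{r}$ holds only up to a dimensional multiplicative constant in general (and exactly when $r_{0}\in 2^{\Z}$ and $z_{0}$ is suitably grid-aligned). Since this constant is harmless in all subsequent applications, I would either absorb it, or — more cleanly — read $|\cdot|_{\rho}$ in Definition \ref{def:mult} as the ordinary $\rho$-covering number $N(\cdot,\rho)$, for which (c) is exact; this is anyway consistent with the fact that the multiplicity number is already defined for non-dyadic $r,R$.
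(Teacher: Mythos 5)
Your argument is correct: the paper offers no proof of this lemma at all, simply citing \cite[Lemma 2.11]{MR4388762}, and your direct verification — that the homothety $T_{z_{0},r_{0}}$ commutes with $r$-neighbourhoods, balls, $\pi_{\theta}$-fibres and covering numbers, hence preserves the multiplicity numbers up to the rescaling of $[r,R]$ — is exactly the standard argument one would give (and is what the cited reference does). Your caveat about fact (c) is also the right one to flag: with the strictly dyadic reading of $|\cdot|_{\rho}$ the stated identity only holds up to a dimensional constant for general $z_{0},r_{0}$, so one should either read the multiplicity via ordinary covering numbers $N(\cdot,\rho)$ (consistent with the lemma allowing non-dyadic $r/r_{0},R/r_{0}$) or accept the harmless constant, which is all that is ever used downstream.
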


\begin{proof} This follows from the definitions, or see \cite[Lemma 2.11]{MR4388762}. \end{proof}

Here is the $\delta$-discretised version of Theorem \ref{thm:projection} when $\Hd K=\Pd K$:

\begin{thm}\label{thm1} Let $t \in (0,2)$ and $s \in (0,\min\{t,2 - t\})$. For every $\sigma > (t - s)/2$ there exist $\epsilon,\delta_{0} > 0$ such that the following holds for all $\delta \in (0,\delta_{0}]$. Let $\mu$ be $(\delta,t,\delta^{-\epsilon})$-regular measure, and let $E \subset S^{1}$ be a $(\delta,s,\delta^{-\epsilon})$-set. Then, there exists $\theta \in E$ such that
\begin{equation}\label{form7}
\mu\big(B(1) \cap H_{\theta}(\spt(\mu),\delta^{-\sigma},[\delta,1])\big) \leq \delta^{\epsilon}.
\end{equation}
 \end{thm}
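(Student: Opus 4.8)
The plan is to deduce Theorem \ref{thm1} by iterating a self-improving proposition, following the heuristic from the introduction. For a parameter $u \in [0,(s+t)/2)$, consider the $\delta$-discretised assertion $P(u)$: \emph{for every $\sigma > t - u$ there are $\epsilon,\delta_{0} > 0$ such that, for all $\delta \in (0,\delta_{0}]$, every $(\delta,t,\delta^{-\epsilon})$-regular measure $\mu$ and every $(\delta,s,\delta^{-\epsilon})$-set $E \subset S^{1}$ admit $\theta \in E$ with $\mu(B(1) \cap H_{\theta}(\spt \mu,\delta^{-\sigma},[\delta,1])) \leq \delta^{\epsilon}$.} Up to the usual $\delta$-losses, a high-multiplicity bound at level $\delta^{-\sigma}$ is equivalent to the projection bound $|\pi_{\theta}(\spt \mu)|_{\delta} \gtrsim \delta^{-(t-\sigma)}$, so the family $\{P(u)\}$ with $u$ arbitrarily close to $(s+t)/2$ is precisely Theorem \ref{thm1}: given a target $\sigma > (t-s)/2$, pick $u \in (t - \sigma,(s+t)/2)$ and apply $P(u)$. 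A base case $P(u_{0})$ with $u_{0} > 0$ is available either trivially --- up to $u_{0} = \max\{t-1,0\}$, since any $\theta$-fiber meets at most $\lesssim \delta^{-\epsilon - \min\{t,1\}}$ dyadic $\delta$-squares of the $\delta$-neighbourhood of $\spt\mu$, which makes the high-multiplicity set empty --- or, for a better $u_{0} < s$, from the discretised Kaufman bound.

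The heart of the matter is the inductive step, a version of the proposition referred to as Proposition \ref{prop2} in the introduction: assuming $P(u)$ with $u < (s+t)/2$, one produces $\zeta = \zeta(u,s,t) > 0$ with $P(u + \zeta)$. I would argue by contradiction: suppose there is a $(\delta,t,\delta^{-\epsilon})$-regular $\mu$ with $K := \spt \mu$ and a $(\delta,s,\delta^{-\epsilon})$-set $E$ so that for \emph{every} $\theta \in E$ a $\mu$-mass $> \delta^{\epsilon}$ of points has $\theta$-fiber multiplicity $\geq \delta^{-\sigma}$, with $\sigma := t - u - \zeta$. Passing to uniform subsets of $K$ and $E$ via Corollary \ref{cor:uni} and the branching-function dictionary of Lemma \ref{l:regular-is-frostman-ahlfors}, one analyses the branching functions; combining the high-multiplicity hypothesis with the Ahlfors-regularity of $\mu$, and pigeonholing over scales and over a rich family of $\delta^{1/2}$-tubes --- in the spirit of \cite{MR4055989,MR4388762,OS23} --- one forces a scale $\Delta \in [\delta,1]$ and a \emph{rich} tube $\mathbf{T}$ on which the renormalised set $K \cap \mathbf{T}$ resembles a product $A \times B$ at scale $\delta$, where $|A| \approx \delta^{-\alpha}$, $B$ is a Frostman set of dimension $\beta$, $\alpha + \beta \approx t$, and, crucially, $\alpha \leq u$. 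This last inequality is where $P(u)$ enters: if $A$ had dimension $> u$, then $\pi_{\theta}(K \cap \mathbf{T})$ in a suitable direction of $E$ would already have dimension $> u$, contradicting $P(u)$ --- equivalently, the largest projection dimension of $K$ over $E$ is at most $u$.

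Since $\alpha \leq u < (s+t)/2$, we get $\gamma := s > 2u - t \geq 2\alpha - (\alpha + \beta) = \alpha - \beta$, so the hypotheses of the $\delta$-discretised $ABC$ Theorem \ref{thm:ABCConjecture} are met, with the slopes of the lines through $E$, restricted to the scales relevant to $\mathbf{T}$, serving as the Frostman set $C$ of dimension $\gamma$. Theorem \ref{thm:ABCConjecture} then yields a slope $c$ --- i.e. a direction $\theta \in E$ --- with $|\{a + cb : (a,b) \in G\}|_{\delta} \geq \delta^{-\chi}|A|$ for all large $G \subset A \times B$; undoing the renormalisation, $\pi_{\theta}(K \cap \mathbf{T})$ exceeds its expected size $\approx \delta^{-\alpha}$ by a definite power of $\delta$, so the $\theta$-fiber multiplicity on a large part of $K \cap \mathbf{T}$ is $< \delta^{-\sigma}$, contradicting the counter-assumption once $\zeta$ is chosen small in terms of $u,s,t$ and the $ABC$-gain $\chi$. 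This establishes $P(u+\zeta)$.

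Finally, one iterates: from the base case $u_{0}$, apply the inductive step $N \sim ((s+t)/2 - u_{0})/\zeta_{\min}$ times, where $\zeta_{\min} > 0$ lower-bounds $\zeta(u,s,t)$ for $u$ in a compact subinterval of $[u_{0},(s+t)/2)$; this reaches any prescribed $u < (s+t)/2$, hence any $\sigma > (t-s)/2$. Each step degrades the admissible Frostman exponent by a bounded factor and shrinks $\delta_{0}$, so it suffices to take the initial $\epsilon$ and $\delta_{0}$ small depending on $N$, i.e. on $s,t,\sigma$. The main obstacle --- and the reason this is the longest argument in the paper --- is the middle step: extracting, from the high-multiplicity hypothesis together with Ahlfors-regularity alone, a genuine single-scale product structure $A \times B$ inside a rich tube, with the size bound on $A$, the Frostman bound on $B$, and the non-concentration of the restricted direction set all simultaneously under control and matched to the hypotheses of the $ABC$ theorem (in particular keeping the renormalised measure $(\cdot,t,\delta^{-O(\epsilon)})$-regular, cf. Remark \ref{rem:regular}). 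The branching-function formalism of \S\ref{s:uniform} and careful bookkeeping of scales are the tools for this.
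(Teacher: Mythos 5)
Your overall plan is the one the paper actually follows: a bootstrapping proposition (the paper's Proposition \ref{prop2}), a base case, a contradiction argument that extracts an approximate product structure $A\times B$ inside a $\delta^{1/2}$-tube, and an application of the discretised $ABC$ theorem (Theorem \ref{thm:ABCConjecture}) with the renormalised slope set of $E$ playing the role of $C$. However, two load-bearing points in your sketch are off. First, the base case and the admissible range of the induction: Theorem \ref{thm:ABCConjecture} requires $0<\beta\le\alpha<1$, and in the product structure one has $\alpha\approx t-\sigma\approx u$ and $\beta\approx\sigma\approx t-u$, so the $ABC$ step is only legitimate when $u\ge t/2$ (equivalently $\sigma\le t/2$). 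This is exactly why the paper seeds the iteration with Bourgain's discretised projection theorem (Theorem \ref{thm:Bourgain-Proj}), giving Projection$(s,t/2-\eta,t)$, and states Proposition \ref{prop2} only for $(t-s)/2<\sigma<t/2$. Your proposed base cases (the trivial $u_0=\max\{t-1,0\}$, or Kaufman's $u_0<s$) can start with $u<t/2$ whenever $s<t/2$, and then ``the hypotheses of Theorem \ref{thm:ABCConjecture} are met'' is false as stated: you would have $\beta>\alpha$. That regime can in fact be handled (it is essentially trivial, or one can weaken $B$'s non-concentration exponent to $\alpha$), but your plan does not notice the constraint, and as written the key step would fail in the early iterations.

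Second, your mechanism for the crucial bound $\alpha\le u$ is backwards. $P(u)$ is an existential lower-bound statement (some $\theta\in E$ has small high-multiplicity set, i.e.\ a large projection); it places no upper bound on the dimension of any projection, so ``if $A$ had dimension $>u$ this would contradict $P(u)$'' is not a valid deduction. In the actual argument the upper bound on $|A|$ comes from the counter-assumption itself: points of $K_\theta$ lie on $\delta^{-\sigma+\zeta}$-rich fibres, and together with the $t$-regularity of $\mu$ and the local multiplicity control this caps $|\pi_{\theta_0}(K_\theta\cap\mathbf{T}_0)|_\delta$ (Lemma \ref{lemma6}). The inductive hypothesis is spent elsewhere: through the ``small slices imply sparse slices'' upgrade (Theorem \ref{thm2}), applied at all intermediate scales and locations, which furnishes the local non-concentration underlying the Frostman property of $B$, the lower bound on $|A_\theta|$, and the construction of the sets $G_\theta$; a further non-trivial ingredient you gloss over is the choice of the intermediate scale dictated by the branching of $E$ (Corollary \ref{cor:lemma2}, Proposition \ref{prop4}), needed because $E$ itself is not assumed regular. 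So the architecture is right, but the two places where your sketch commits to a specific mechanism --- the $ABC$ hypotheses along your iteration path, and the source of the inequality $\alpha\le u$ --- are precisely where it would break without the corrections above.
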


The formulation of Theorem \ref{thm1} is convenient to prove, but the following version will be more convenient to apply (e.g. in the context of Furstenberg sets):
\begin{cor}\label{cor2} Let $s \in (0,1]$ and $t \in [s,2]$. For every $0 \leq u < \min\{(s + t)/2,1\}$, there exist $\epsilon,\delta_{0} > 0$ such that the following holds for all $\delta \in (0,\delta_{0}]$. Let $\mathcal{P} \subset \mathcal{D}_{\delta}$ be a $(\delta,t,\delta^{-\epsilon})$-regular set, and let $E \subset S^{1}$ be a $(\delta,s,\delta^{-\epsilon})$-set. Then, there exists $\theta \in E$ such that
\begin{equation}\label{eq:cor-proj} 
  |\pi_{\theta}(\mathcal{P}')|_{\delta} \geq \delta^{-u}\quad\text{for all } \mathcal{P}' \subset \mathcal{P}, \, |\mathcal{P}'| \geq \delta^{\epsilon}|\mathcal{P}|. 
\end{equation}
\end{cor}

\begin{remark} \label{rem:cor2} In applications, it is sometimes useful to notice that Corollary \ref{cor2} is formally equivalent to the superficially stronger conclusion that \eqref{eq:cor-proj} holds for all $\theta \in E'$, where $E' \subset E$ is subset with $|E'|_{\delta} \geq \tfrac{1}{2}|E|_{\delta}$. The reason is simply that if this failed, we might re-apply Theorem \ref{thm1} to $E'$. \end{remark}

\begin{proof}[Proof of Corollary \ref{cor2}] Fix $s \in [0,2]$, $t \in [s,2]$, and $0 \leq u < \min\{(s + t)/2,1\}$. Let $\mathcal{P},E$ be as in the statement, with $\epsilon,\delta > 0$ sufficiently small (the constraints will be essentially those inherited from Theorem \ref{thm1}).

We first dispose of a special case, where $\min\{(s + t)/2,1\} = 1$, or in other words $s \geq 2 - t$. In that case $u < 1$, so it is possible to choose a new parameter $\bar{s} < s$ with $\bar{s} < 2 - t$ such that still $u < (\bar{s} + t)/2$. Now, we note that $E$ is also a $(\delta,\bar{s},\delta^{-\epsilon})$-set. So, it suffices to consider the case where $s < 2 - t$. With similar arguments, it suffices to consider the case where we have a strict inequality $s < t$.

Now, pick parameters $\sigma' > \sigma > (t - s)/2$ so that $u < t - \sigma'$. Then, apply Theorem \ref{thm1} to the measure $\mu$ obtained by unit-normalising Lebesgue measure to $\cup \mathcal{P}$, and with the parameter $\sigma$. The outcome is a vector $\theta \in E$ such that
\begin{displaymath} |\mathcal{P} \cap H_{\theta}(\cup \mathcal{P},\delta^{-\sigma},[\delta,1])| \leq \delta^{\epsilon}|\mathcal{P}|. \end{displaymath}
Here, we may assume $\epsilon > 0$ is so small that $t - \sigma - 2\epsilon > t - \sigma' > u$.

Finally, let $\mathcal{P}' \subset \mathcal{P}$ be any subset with $|\mathcal{P}'| \geq \delta^{\epsilon/2}|\mathcal{P}|$. Then, writing
\begin{displaymath} \mathcal{G}' := \mathcal{P}' \, \setminus \, H_{\theta}(\cup \mathcal{P},\delta^{-\sigma},[\delta,1]), \end{displaymath}
we have $|\mathcal{G}'| \geq \tfrac{1}{2}|\mathcal{P}'| \gtrsim \delta^{\epsilon/2}|\mathcal{P}|$. As a technical point, we may assume that $\diam(\mathcal{G}') \leq \tfrac{1}{2}$ by selecting a square $Q_{0} \in \mathcal{D}_{1/8}$ with $|\mathcal{G}' \cap Q_{0}| \sim |\mathcal{G}'|$.

We claim that $|\pi_{\theta}(\mathcal{G}')|_{\delta} \geq \delta^{-u}$, which will complete the proof of the corollary. To see this, fix $x \in \mathcal{G}'$. Then,
\begin{equation}\label{form111} |(\cup \mathcal{G}') \cap \pi_{\theta}^{-1}\{\pi_{\theta}(x)\}|_{\delta} \leq \mathfrak{m}_{\cup \mathcal{P},\theta}(x \mid [\delta,1]) \leq \delta^{-\sigma} \end{equation}
(We used the assumption $\diam(\mathcal{G}') \leq \tfrac{1}{2}$ to be able to omit the intersection with $B(x,1)$ on the left-hand side.) The inequality \eqref{form111} implies that every fibre of the projection $\pi_{\theta}$ intersects $\lesssim \delta^{-\sigma}$ squares in $\mathcal{G}'$. Since $|\mathcal{G}'| \gtrsim \delta^{\epsilon/2}|\mathcal{P}| \geq \delta^{2\epsilon - t}$, this yields
\begin{displaymath} |\pi_{\theta}(\mathcal{G}')|_{\delta} \gtrsim \delta^{\sigma}|\mathcal{G}'| \geq \delta^{2\epsilon + \sigma - t}. \end{displaymath}
Since $t - \sigma - 2\epsilon > t - \sigma' > u$, we in particular have $|\pi_{\theta}(\mathcal{G}')|_{\delta} \geq \delta^{-u}$, if $\delta > 0$ is small enough. \end{proof}
\subsection{An inductive scheme to prove Theorem \ref{thm1}}\label{s:induction}

We then begin the proof of Theorem \ref{thm1} by setting up the statement that will be iterated.
\begin{terminology}[Projection$(s,\sigma,t)$]\label{term1} Let $s,\sigma \in (0,1]$ and $t \in (0,2]$. We say that 
\begin{displaymath} \text{Projection}(s,\sigma,t) \end{displaymath}
holds if the conclusion of Theorem \ref{thm1} holds with the parameters $s,\sigma,t$. In other words, there exists $\epsilon,\delta_{0} > 0$ such that whenever $\delta \in (0,\delta_{0}]$, and $\mu$ is a $(\delta,t,\delta^{-\epsilon})$-regular measure, and $E \subset S^{1}$ is a $(\delta,s,\delta^{-\epsilon})$-set, then there exists $\theta \in E$ such that \eqref{form7} is valid. \end{terminology}

\begin{remark}\label{rem2} It is clear that
\begin{displaymath} \mathrm{Projection}(s,\sigma,t) \quad \Longrightarrow \quad \mathrm{Projection}(s,\sigma',t) \text{ for all } \sigma' \geq \sigma, \end{displaymath}
in fact with the same implicit constants "$\delta_{0},\epsilon$". This follows immediately from the inclusion $H_{\theta}(K,\delta^{-\sigma'},[\delta,1]) \subset H_{\theta}(K,\delta^{-\sigma},[\delta,1])$ for $\sigma' \geq \sigma$.  \end{remark}

The proof of Theorem \ref{thm1} will be based on iterating the following proposition:

\begin{proposition}\label{prop2} Let $s,t,\sigma$ satisfy
  \begin{equation} \label{eq:params}
    0 < t < 2, \quad 0 < s < \min\{t,2 - t\},  \quad \text{and} \quad \frac{t - s}{2} < \sigma < \frac{t}{2}.
  \end{equation}
Then, there exists $\zeta = \zeta(s,\sigma,t) > 0$, which is bounded away from $0$ on compact subsets of the parameter space defined by \eqref{eq:params}, such that
\begin{displaymath} \mathrm{Projection } (s,\sigma,t) \quad \Longrightarrow \quad \mathrm{Projection } (s,\sigma - \zeta,t). \end{displaymath}
More precisely, assume that there exist $\epsilon_{0},\Delta_{0} > 0$ such that whenever $\Delta\in (0,\Delta_0)$, $\mu$ is a $(\Delta,t,\Delta^{-\epsilon_{0}})$-regular measure, and $E \subset S^{1}$ is a  $(\Delta,s,\Delta^{-\epsilon_{0}})$-set, then there exists $\theta \in E$ such that
\begin{displaymath}
\mu\big(B(1) \cap H_{\theta}(\spt(\mu),\Delta^{-\sigma},[\Delta,1])\big) \leq \Delta^{\epsilon_{0}}.
\end{displaymath}
Then, there exist $\epsilon = \epsilon(\epsilon_{0},s,\sigma,t) > 0$ and $\delta_{0} = \delta_{0}(\epsilon,\Delta_{0},s,\sigma,t) > 0$ such whenever $\delta\in (0,\delta_0)$, $\mu$ is a $(\delta,t,\delta^{-\epsilon})$-regular measure, and $E \subset S^{1}$ is a  $(\delta,s,\delta^{-\epsilon})$-set, then there exists $\theta \in E$ such that
\begin{displaymath}
\mu\big(B(1) \cap H_{\theta}(\spt(\mu),\Delta^{-\sigma + \zeta},[\delta,1])\big) \leq \delta^{\epsilon}.
\end{displaymath}
\end{proposition}

We then complete the proof of Theorem \ref{thm1} assuming Proposition \ref{prop2}.

\begin{proposition}[Base case] Let $t \in (0,2]$ and $s > 0$. Then, there exists $\eta := \eta(s,t) > 0$ such that $\mathrm{Projection}(s,\tfrac{t}{2} - \eta,t)$ holds. \end{proposition}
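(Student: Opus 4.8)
The plan is to observe that the Base Case is essentially Bourgain's discretised projection theorem (Theorem~\ref{thm:Bourgain-Proj}) reformulated in the language of high‑multiplicity sets. Before starting I would make two harmless reductions: since a $(\delta,s,\delta^{-\e})$-set is automatically a $(\delta,s',\delta^{-\e})$-set for $s'\le s$, and since the truth of $\mathrm{Projection}(s,\sigma,t)$ only strengthens as $s$ decreases, I may assume $0<s\le\min\{t,1\}$; and since the Base Case is only needed to prove Theorem~\ref{thm1}, which is used with $t<2$, I may assume $t<2$. Let $\rho=\rho(s,t)>0$ be the gain ``$\e$'' provided by Theorem~\ref{thm:Bourgain-Proj} for the pair $(s,t)$, so that for any $(\delta,s,\delta^{-\rho})$-set $E$ and any $(\delta,s,\delta^{-\rho})$-set $K\subset B(0,1)$ with $|K|_\delta=\delta^{-t}$ there is $\theta\in E$ with $|\pi_\theta(K')|_\delta\ge\delta^{-\rho-t/2}$ for all $K'\subset K$ of density $|K'|_\delta\ge\delta^{\rho}|K|_\delta$. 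I would then set $\eta:=\rho/2$, shrinking it so that $0<\eta<t/2$ (hence $\sigma:=t/2-\eta\in(0,1)$), and prove $\mathrm{Projection}(s,\sigma,t)$; by the monotonicity in $\sigma$ recorded in Remark~\ref{rem2} this yields $\mathrm{Projection}(s,t/2-\eta,t)$.

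The key steps, with $\e=\e(s,t)>0$ a small parameter fixed at the end (small compared with $\rho$) and $\delta_0$ correspondingly small: \textbf{(1)} Given a $(\delta,t,\delta^{-\e})$-regular measure $\mu$ and a $(\delta,s,\delta^{-\e})$-set $E\subset S^{1}$, dispose of the trivial case $\mu(B(1))\le\delta^{\e}$, in which \eqref{form7} holds for every $\theta\in E$; so assume $\mu(B(1))>\delta^{\e}$. \textbf{(2)} Writing $K=\spt\mu$, use property~(2) of Definition~\ref{def:regularity} to get $|K\cap B(0,2)|_\delta\lesssim\delta^{-t-2\e}$, and combine it with the Frostman bound $\mu(B(x,\delta))\le\delta^{-\e}\delta^{t}$ and $\mu(B(1))>\delta^{\e}$ to check that a maximal $\delta$-separated subset $K_0$ of $K\cap B(0,1)$ is a $(\delta,s,\delta^{-O(\e)})$-set with $|K_0|_\delta=\delta^{-t+O(\e)}$ (here $s\le t$ is used). \textbf{(3)} Feed $K_0$ and $E$ into Theorem~\ref{thm:Bourgain-Proj}, either in its standard robust form or, to make the hypotheses exact, after replacing $K_0$ by a uniform and hence still regular dense subset via Corollary~\ref{cor:uni} and Lemma~\ref{l:regular-is-frostman-ahlfors}; this produces $\theta\in E$ with $|\pi_\theta(K')|_\delta\ge\delta^{-\rho-t/2+O(\e)}$ for all $K'\subset K_0$ with $|K'|_\delta\ge\delta^{\rho}|K_0|_\delta$. \textbf{(4)} Show this $\theta$ witnesses \eqref{form7}: if not, put $H:=B(1)\cap H_\theta(K,\delta^{-\sigma},[\delta,1])$, so $\mu(H)>\delta^{\e}$; the Frostman bound gives $|H|_\delta\ge\delta^{2\e-t}$, so the set of points of $K_0$ within $2\delta$ of $H$ is a dense subset of $K_0$ (for $\e<\rho/4$), whence $|\pi_\theta(H)|_\delta\gtrsim\delta^{-\rho-t/2+O(\e)}$ by step~(3). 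On the other hand, for each dyadic $\delta$-interval $I$ meeting $\pi_\theta(H)$ pick $x_I\in H$ with $\pi_\theta(x_I)\in I$; since $x_I\in B(1)$, Definition~\ref{def:mult} gives $\delta^{-\sigma}\le\m_{K,\theta}(x_I\mid[\delta,1])\le|B(0,2)\cap K_\delta\cap\pi_\theta^{-1}(I)|_\delta$, and summing over the pairwise-disjoint $\delta$-slabs $\pi_\theta^{-1}(I)$ yields $\delta^{-\sigma}|\pi_\theta(H)|_\delta\le|B(0,2)\cap K_\delta|_\delta\lesssim\delta^{-t-2\e}$, i.e.\ $|\pi_\theta(H)|_\delta\lesssim\delta^{-t/2-\eta-2\e}$. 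Comparing the two estimates forces $\rho\le\eta+O(\e)=\rho/2+O(\e)$, a contradiction once $\e$ is small enough (and $\delta_0$ small enough to absorb the implied constants).

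The step I expect to require the most care is the translation in step~(4) between the high-multiplicity hypothesis and a projection lower bound: one must truncate the line fibres in Definition~\ref{def:mult} to the bounded ball $B(0,2)$, exploit the disjointness of the $\delta$-slabs $\pi_\theta^{-1}(I)$, and — crucially — invoke property~(2) of regularity to bound $|(\spt\mu)_\delta\cap B(0,2)|_\delta\lesssim\delta^{-t-O(\e)}$; it is this last point, not the Frostman property alone, that makes the fibre-counting argument close. A secondary, purely technical, matter is reconciling the $\delta^{O(\e)}$-discrepancies between $|K_0|_\delta$ and $\delta^{-t}$ (and between the non-concentration constant of $K_0$ and $\delta^{-\rho}$) with the precise hypotheses of Theorem~\ref{thm:Bourgain-Proj}, which is handled either by citing the theorem in its robust form or by a preliminary uniformisation. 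Beyond this there is no genuine obstacle: the Base Case deliberately asks only for \emph{some} positive gain $\eta=\eta(s,t)$ over the exponent $t/2$ — just enough to start the iteration of Proposition~\ref{prop2} — and such a gain is precisely the content of Bourgain's theorem.
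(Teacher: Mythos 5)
Your argument is correct and is essentially the paper's own proof: the paper likewise bounds $|\pi_{\theta}(B(1)\cap H_{\theta}(\spt\mu,\delta^{-t/2+\eta},[\delta,1]))|_{\delta}\lesssim\delta^{-t/2-2\eta}$ via the same fibre-counting-plus-regularity argument you give in step (4) (cf.\ Lemma \ref{lemma6}), and then obtains the contradiction from Bourgain's discretised projection theorem, exactly as in your steps (2)--(3). Your preliminary reductions ($s\le\min\{t,1\}$, $t<2$, passing to the $\delta$-separated set $K_{0}$ and handling the $\delta^{O(\epsilon)}$ slack in the hypotheses of Theorem \ref{thm:Bourgain-Proj}) are just the details the paper leaves to the reader.
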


\begin{proof} We claim that if $\eta,\delta_0 > 0$ are small enough (depending on $s,t$), the following holds for all $0<\delta\le \delta_0$, all  $(\delta,t,\delta^{-\eta})$-regular measures $\mu$, and all  $(\delta,s,\delta^{-\eta})$-sets $E$:
\begin{equation}
    \label{form92}
\mu\big(B(1) \cap H_{\theta}(\spt(\mu),\delta^{-t/2 + \eta},[\delta,1])\big) \leq \delta^{\eta} 
\end{equation}
for at least one direction $\theta \in E$. Let $B_{\theta} := B(1) \cap H_{\theta}(\spt(\mu),\delta^{-t/2 + \eta},[\delta,1])$. It is fairly straightforward to check from the definition of $H_{\theta}(\ldots)$ and the $(\delta,t,\delta^{-\eta})$-regularity of $\mu$ that
\begin{equation}\label{form91} |\pi_{\theta}(B_{\theta})|_{\delta} \lesssim \delta^{-t/2 - 2\eta}\quad\text{for all } \theta \in E. \end{equation}
We leave this to the reader, although virtually the same details in a slightly more advanced context will be recorded in Lemma \ref{lemma6}. Now, it follows from Bourgain's projection theorem \cite[Theorem 3]{Bo2} (or Theorem \ref{thm:Bourgain-Proj}) that \eqref{form91} is not possible if $\delta$ is small enough, $\mu(B_{\theta}) > \delta^{\eta}$ for all $\theta \in E$, and $\eta = \eta(s,t) > 0$ is small enough. In other words, there exists $\theta \in E$ such that \eqref{form92} holds.  \end{proof}

We point out that the regularity assumption is not actually used in the base case, since it is not needed in Bourgain's projection theorem, but it is crucial in the proof of Proposition \ref{prop2}.

\begin{proof}[Proof of Theorem \ref{thm1} assuming Proposition \ref{prop2}] Fix $t \in (0,2)$ and $s \in (0,\min\{t,2 - t\})$. Let $\Sigma(s,t) \geq (t - s)/2$ be the infimum of the parameters $\sigma > (t - s)/2$ for which $\mathrm{Projection}(s,\sigma,t)$ holds (for all $\sigma' \geq \sigma$, recall Remark \ref{rem2}). In other words, for $\sigma > \Sigma(s,t)$ there exist constants $\Delta_{0}(s,t,\sigma) > 0$ and $\epsilon_{0} = \epsilon_{0}(s,t,\sigma) > 0$ such that the hypothesis of Proposition \ref{prop2} holds. We already know from the base case recorded above that $\Sigma \leq \sigma_{1} := t/2 - \eta(s,t)$ for some $\eta(s,t) > 0$, and we claim that $\Sigma = (t - s)/2$.

We make the counter assumption that $\Sigma > (t - s)/2$. Now, fix $\sigma > \Sigma$ so close to $\Sigma$ that $\sigma - \zeta < \Sigma$, where $\zeta := \zeta(s,\sigma,t) > 0$ is the constant provided by Proposition \ref{prop2}. Such a choice of "$\sigma$" is possible, since $\zeta(s,\sigma,t)$ stays bounded away from $0$ on the interval $[\Sigma,\sigma_{1}]$ (using the counter assumption $\Sigma > (t - s)/2$). But now Proposition \ref{prop2} tells us that $\mathrm{Projection}(s,\sigma - \zeta,t)$ holds, and this contradicts the definition of "$\Sigma$", since $\sigma - \zeta < \Sigma$.
\end{proof}

In the remainder of this section, we tackle the proof of  Proposition \ref{prop2}.

\subsection{Small slices imply sparse slices} This section contains an auxiliary result (Theorem \ref{thm2}) which allows us to "upgrade" the hypothesis of Proposition \ref{prop2} into a stronger one. In the sense of Terminology \ref{term1}, $\mathrm{Projection}(s,\sigma,t)$ tells us something about the "slices" of $(\delta,t)$-regular measures in directions perpendicular to the directions $\theta \in E$, where $E$ is a $(\delta,s)$-set. Namely, for typical $\theta \in E$, only a $\delta^{\epsilon}$-proportion of $\mu|_{B(1)}$ can be lie on "slices" with multiplicity $\geq \delta^{-\sigma}$. More informally still, the typical slices of $\mu$ have multiplicity $\leq \delta^{-\sigma}$. How is $\mu$ distributed along these typical slices? To answer this question, we define the following "local" variant of the high multiplicity sets:

\begin{definition}[Local high multiplicity sets] Let $\delta \in (0,\tfrac{1}{2}]$, $\rho \in 2^{-\N}$, $\sigma \in (0,1]$, and let $\theta \in S^{1}$. For $K \subset \R^{2}$, we define the \emph{local high multiplicity set}
\begin{equation*}
  H_{\theta,\mathrm{loc}}(K,\sigma,\delta,\rho) := \bigcup_{\delta \leq r \leq R \leq 8} H_{\theta}(K,4(R/r)^{\sigma},[r,R]), 
\end{equation*}
where the union ranges over \textbf{dyadic} radii $r,R \in 2^{-\N} \cap [\delta,8]$ satisfying $r/R \leq \rho$.
\end{definition}

Unwrapping the notation, we have $x \in H_{\theta,\mathrm{loc}}(K,\sigma,\delta,\rho)$ if and only if there exist radii $r= r_{x},R = R_{x} \in [\delta,8]$ satisfying $r \leq \rho R$ and with the property
\begin{displaymath} |B(x,R) \cap K^{(r)} \cap \pi_{\theta}^{-1}\{\pi_{\theta}(x)\}|_{r} = \mathfrak{m}_{K,\theta}(x, \mid [r,R]) \geq 4\left(\tfrac{R}{r} \right)^{\sigma}. \end{displaymath}
(The constants "$4,8$" are a little arbitrary, but it turns out useful to keep here constants somewhat bigger than "$1$".) The definition of $H_{\theta,\mathrm{loc}}(K,\sigma,\delta,\rho)$ is mainly useful if $\rho\ge\delta^{\e}$ for some small fixed $\e$, so the ratio $R/r$ is fairly large for any admissible $r,R$ (without this requirement, the local high multiplicity set can easily be "everything"). Perhaps another helpful observation is that if $\rho \leq \delta/8$, then
\begin{displaymath} H_{\theta,\mathrm{loc}}(K,\sigma,\delta,\rho) = H_{\theta}(K,C_{\sigma}\delta^{-\sigma},[\delta,8]). \end{displaymath}
Indeed, the only possible radii $\delta \leq r \leq R \leq 8$ satisfying $r \leq \rho R$ are $(r,R) = (\delta,8)$. On the other hand, if $\rho \gg \delta$, then the set $H_{\theta,\mathrm{loc}}(K,\sigma,\delta,\rho)$ is \emph{a priori} much larger than $H_{\theta}(K,\delta^{-\sigma},[\delta,8])$. This motivates the next theorem.

\begin{thm}\label{thm2} Let $s,\sigma \in [0,1]$ and $t \in [0,2]$, and assume that $\mathrm{Projection}(s,\sigma,t)$ holds with constants $\Delta_{0},\epsilon_{0} > 0$. In other words, whenever $\Delta \in (0,\Delta_{0}]$, $\mu$ is a $(\Delta,t,\Delta^{-\epsilon_{0}})$-regular measure, and $E \subset S^{1}$ is a $(\Delta,s,\Delta^{-\epsilon_{0}})$-set, then there exists $\theta \in E$ such that
\begin{displaymath}
\mu\big(B(1) \cap H_{\theta}(\spt(\mu),\Delta^{-\sigma},[\Delta,1])\big) \leq \Delta^{\epsilon_{0}}.
\end{displaymath}
Then, for every $\eta \in (0,1]$, there exist $\epsilon = \epsilon(\eta,\epsilon_{0}) > 0$ and $\delta_{0} = \delta_{0}(\Delta_{0},\epsilon,\eta) > 0$, such that the following holds for all $\delta \in (0,\delta_{0}]$. Let $\mu$ be a $(\delta,t,\delta^{-\epsilon})$-regular measure, and let $E \subset S^{1}$ be a $(\delta,s,\delta^{-\epsilon})$-set. Then, there exists $\theta \in E$ such that
\begin{equation}\label{form29}
\mu\big(B(1) \cap H_{\theta,\mathrm{loc}}(\spt(\mu),\sigma,\delta,\delta^{\eta})\big) \leq \delta^{\epsilon}.
\end{equation}
\end{thm}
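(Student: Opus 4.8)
The plan is to deduce \eqref{form29} from the hypothesis $\mathrm{Projection}(s,\sigma,t)$ by a pigeonholing argument over the (logarithmically many) admissible pairs of scales $(r,R)$ with $\delta \le r \le R \le 8$ and $r \le \delta^{\eta} R$. The key point is that for each \emph{fixed} such pair, the set $H_{\theta}(\spt\mu, 4(R/r)^{\sigma},[r,R])$ can be controlled by \emph{rescaling} to the unit ball and applying the hypothesis. Concretely, suppose towards a contradiction that for every $\theta \in E$ we have $\mu(B(1) \cap H_{\theta,\mathrm{loc}}(\spt\mu,\sigma,\delta,\delta^{\eta})) > \delta^{\epsilon}$. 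Since $H_{\theta,\mathrm{loc}}$ is a union of $O((\log(1/\delta))^2)$ sets $H_\theta(\spt\mu,4(R/r)^\sigma,[r,R])$, pigeonholing gives, for each $\theta$, a pair $(r_\theta,R_\theta)$ with $\mu(B(1)\cap H_\theta(\spt\mu,4(R_\theta/r_\theta)^\sigma,[r_\theta,R_\theta])) \gtrsim \delta^{\epsilon}/(\log(1/\delta))^2 \ge \delta^{2\epsilon}$. Pigeonholing once more (the map $\theta \mapsto (r_\theta,R_\theta)$ takes $O((\log(1/\delta))^2)$ values), we may pass to a subset $E' \subset E$ with $|E'|_\delta \gtrsim |E|_\delta/(\log(1/\delta))^2 \ge \delta^{\epsilon}|E|_\delta$ on which the pair $(r,R) := (r_\theta,R_\theta)$ is constant. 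Note $E'$ is still a $(\delta,s,\delta^{-2\epsilon})$-set.

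Next I would localise in space. By Fubini/pigeonholing over the $O(R^{-2})$ balls $B$ of radius $R$ in a fixed cover of $B(1)$, for each $\theta \in E'$ there is a ball $B_\theta = B(x_\theta,R)$ with $\mu(B_\theta \cap H_\theta(\spt\mu,4(R/r)^\sigma,[r,R])) \gtrsim R^2 \cdot \delta^{2\epsilon}\mu(B_\theta) \gtrsim \delta^{O(\epsilon)} R^t \mu(B_\theta)$ — using the upper regularity $\mu(B_\theta) \lesssim \delta^{-\epsilon}R^t$, this says the high-multiplicity set carries a $\delta^{O(\epsilon)}$-fraction of $\mu|_{B_\theta}$. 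Here one must again pigeonhole so that $x_\theta$ lies in a fixed ball, but actually it is cleaner to just fix one $\theta$ at a time and argue per-direction. Now rescale: let $\nu := \mu_{B_\theta}$ be the renormalisation of $\mu$ to $B(x_\theta,R)$ as in Remark \ref{rem:regular}; since $\mu$ is $(\delta,t,\delta^{-\epsilon})$-regular and $R \ge \delta$, $\nu$ is $(\delta/R, t, \delta^{-\epsilon})$-regular. By Lemma \ref{lemma7}, the homothety $T_{x_\theta,R}$ carries $H_\theta(\spt\mu, 4(R/r)^\sigma, [r,R])$ onto $H_\theta(\spt\nu, 4(R/r)^\sigma, [r/R,1])$. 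Writing $\Delta := r/R \le \delta^{\eta}$, we have $4(R/r)^\sigma = 4\Delta^{-\sigma}$, and the rescaled bound reads $\nu(B(1) \cap H_\theta(\spt\nu, 4\Delta^{-\sigma},[\Delta,1])) \gtrsim \delta^{O(\epsilon)} \ge \Delta^{\epsilon_0}$ provided $\epsilon$ is small enough relative to $\epsilon_0$ and $\eta$ (using $\Delta \le \delta^{\eta}$, so $\delta^{O(\epsilon)} \ge \Delta^{O(\epsilon/\eta)} \ge \Delta^{\epsilon_0}$).

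It remains to check the directions. The set $E'' := \{\,$directions entering the above construction with this fixed $(r,R)\,\}$ — after the spatial pigeonholing, we do need $E''$ to retain enough structure; the honest route is to fix the pair $(r,R)$ as a \emph{global} choice (paying a $(\log(1/\delta))^2$ loss once), keep $E' \subset E$ of size $\ge \delta^{\epsilon}|E|_\delta$, and then for \emph{every} $\theta \in E'$ run the spatial step separately, obtaining its own ball $B_\theta$ and its own rescaled measure $\nu_\theta$, all at the common scale $\Delta = r/R$. Since $E'$ is a $(\delta,s,\delta^{-2\epsilon})$-set it is in particular a $(\Delta,s,\Delta^{-\epsilon_0})$-set once $\epsilon$ is small (again using $\Delta \le \delta^\eta$ to convert the exponent). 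Applying the hypothesis $\mathrm{Projection}(s,\sigma,t)$ at scale $\Delta$ to the pair $(\nu_\theta, E')$ — note $\nu_\theta$ is $(\delta/R,t,\delta^{-\epsilon})$-regular and $\delta/R \le \Delta$, so it is a fortiori $(\Delta,t,\Delta^{-\epsilon_0})$-regular — yields \emph{some} $\theta^* \in E'$ with $\nu_{\theta}(B(1)\cap H_{\theta^*}(\spt\nu_\theta,\Delta^{-\sigma},[\Delta,1])) \le \Delta^{\epsilon_0}$; but the conclusion of the hypothesis only produces one good direction, whereas we want to contradict the bound for \emph{that specific} $\theta$ attached to $\nu_\theta$. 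This is the crux: one must arrange that the \emph{same} direction is both "bad" (carries lots of high-multiplicity mass, by our contradiction hypothesis) and "good" (output of $\mathrm{Projection}(s,\sigma,t)$).

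The fix, and the main obstacle, is to decouple the measure from the direction: instead of rescaling $\mu$ by a ball depending on $\theta$, one should first pigeonhole a \emph{single} ball $B = B(x_0,R)$ and a \emph{single} scale pair $(r,R)$ that work simultaneously for a $\delta^{O(\epsilon)}$-fraction of directions $\theta \in E$ (possible because both the pair $(r,R)$ and the ball $B$ range over $O(\mathrm{polylog}(1/\delta) \cdot R^{-2}) \le \delta^{-O(\epsilon)}$ choices, and $|E|_\delta \ge \delta^{-s+\epsilon}$ is much larger — here one uses that $s > 0$, or more precisely one absorbs the $R^{-2}$ loss using the non-concentration of $E$ together with $\Delta=r/R$ not too small). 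Call the resulting direction set $E_0 \subset E$, still a $(\delta,s,\delta^{-O(\epsilon)})$-set, hence (rescaling) a $(\Delta,s,\Delta^{-\epsilon_0})$-set. Then $\nu := \mu_B$ is a \emph{fixed} $(\Delta,t,\Delta^{-\epsilon_0})$-regular measure, independent of $\theta$, and by Lemma \ref{lemma7} every $\theta \in E_0$ satisfies $\nu(B(1)\cap H_\theta(\spt\nu,4\Delta^{-\sigma},[\Delta,1])) \ge \Delta^{\epsilon_0}$ (after checking the numerology $4\Delta^{-\sigma} \le \Delta^{-\sigma-\epsilon_0}$, i.e. $\Delta$ small — absorb the harmless constant $4$ by shrinking $\epsilon_0$ or applying Remark \ref{rem2}). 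This directly contradicts $\mathrm{Projection}(s,\sigma,t)$ applied to $(\nu, E_0)$ at scale $\Delta$. Tracking the dependence, one takes $\epsilon = \epsilon(\eta,\epsilon_0)$ small enough that all the "$\delta^{O(\epsilon)} \Rightarrow \Delta^{\epsilon_0}$" conversions go through (legitimate since $\Delta \le \delta^\eta$ forces $\log(1/\Delta) \le \eta^{-1}\log(1/\delta)$ to be comparable — actually one needs $\log(1/\Delta) \ge \eta \log(1/\delta)$, which holds), and $\delta_0$ small enough that $\Delta = r/R \le \delta^\eta \le \Delta_0$ and all the polylog losses are dominated by $\delta^{-\epsilon}$. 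I expect the bookkeeping of these scale conversions — ensuring $\Delta$ is neither too large (need $\le \Delta_0$) nor, implicitly, causing the $(\Delta,s)$ and $(\Delta,t)$ conditions to degrade — to be the only genuinely delicate point; the geometric content is entirely in Lemma \ref{lemma7} and Remark \ref{rem:regular}.
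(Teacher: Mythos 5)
Your overall architecture is the same as the paper's: argue by contradiction, pigeonhole the (logarithmically many) scale pairs $(r,R)$, localise to a single ball $B$ of radius $R$, rescale by Lemma \ref{lemma7} and Remark \ref{rem:regular} to a fixed $(\Delta,t)$-regular measure with $\Delta \approx r/R \leq \delta^{\eta}$, and contradict $\mathrm{Projection}(s,\sigma,t)$ at scale $\Delta$; you also correctly identify the crux, namely that the rescaled measure must not depend on $\theta$. However, your justification of precisely that step is wrong. You claim the ball $B$ can be fixed because it ranges over $O(\mathrm{polylog}(1/\delta)\cdot R^{-2}) \leq \delta^{-O(\epsilon)}$ choices: but $R$ can be as small as $\delta^{1-\eta}$, so the number of $R$-balls is of order $\delta^{-2(1-\eta)}$ (or $\delta^{-\epsilon}R^{-t}$ for balls meeting $\spt\mu$), vastly larger than $\delta^{-O(\epsilon)}$, and comparing this count against $|E|_{\delta}$ or invoking the non-concentration of $E$ does not repair it — if you only keep one ``best'' ball per direction, no pigeonholing produces a common ball for a $\delta^{O(\epsilon)}$-fraction of $E$. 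The correct mechanism, which the paper uses, is a double count in which the success threshold is measured relative to $R^{t}$ rather than relative to the total mass: from $\sum_{B}\sum_{\theta\in E}\mu(K_{\theta}\cap B)\geq \delta^{2\epsilon}|E|$ and $|\mathcal{B}_{R}|\leq \delta^{-\epsilon}R^{-t}$ one ball satisfies $\sum_{\theta}\mu(K_{\theta}\cap B)\gtrsim \delta^{3\epsilon}R^{t}|E|$, and then the Frostman bound $\mu(K_{\theta}\cap B)\leq \delta^{-\epsilon}R^{t}$ (Markov's inequality) yields a subset $E'$ with $|E'|\geq \delta^{4\epsilon}|E|$ and $\mu(K_{\theta}\cap B)\gtrsim\delta^{4\epsilon}R^{t}$ for all $\theta\in E'$ — which is exactly the bound $\geq \Delta^{O(\epsilon/\eta)}$ needed after renormalising by $R^{t}$. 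The loss $R^{-t}$ from the number of balls is thus absorbed by the normalisation, and no bound on the number of balls is ever needed; as written, your step fails.

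There is a second genuine gap in the scale conversion for the direction set. You assert that $E'$, being a $(\delta,s,\delta^{-2\epsilon})$-set, ``is in particular a $(\Delta,s,\Delta^{-\epsilon_{0}})$-set once $\epsilon$ is small''. This is false in general: the non-concentration condition at scale $\Delta$ is relative to $|E'|_{\Delta}$, which can be smaller than $|E'|_{\delta}$ by a factor as large as $\Delta/\delta$, so coarsening the scale can degrade the constant by an uncontrolled factor — and the problem is compounded because $E'$ is only a $\delta^{O(\epsilon)}$-dense subset of $E$ selected after the ball pigeonholing. The paper handles this by first replacing $E$ with a $\{2^{-jT}\}$-uniform subset (Corollary \ref{cor:uni}) before the ball selection, so that Corollary \ref{cor1} applies: large subsets of uniform $(\delta,s)$-sets remain $(\Delta,s)$-sets with comparable constants at every coarser scale $\Delta$ of the uniformisation, after which the constant $\delta^{-O(\epsilon)}\leq \Delta^{-O(\epsilon/\eta)}\leq\Delta^{-\epsilon_{0}}$ conversion you describe is legitimate. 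Without this uniformisation step your application of the hypothesis $\mathrm{Projection}(s,\sigma,t)$ to $E'$ at scale $\Delta$ is not justified. (The remaining points — absorbing the constant $4$ in the multiplicity threshold, the $4B$-enlargement so that the rescaled high-multiplicity set sits inside $B(1)$, and the constant conversions using $\Delta\leq\delta^{\eta}$ — are handled correctly or are harmless.)
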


\begin{remark} This theorem crucially uses the regularity assumption on $\mu$. \end{remark}

\begin{remark}\label{rem1} The proof shows that it is enough to take $\epsilon < \eta \epsilon_{0}/C$ for a certain absolute constant $C \geq 1$. \end{remark}

\begin{proof}[Proof of Theorem \ref{thm2}] Assume to the contrary that \eqref{form29} fails for every $\theta \in E$. We may and will assume that $E$ is $\delta$-separated. Abbreviate $K := \spt(\mu)$, and
\begin{displaymath} K_{\theta} := B(1) \cap H_{\theta,\mathrm{loc}}(K,\sigma,\delta,\delta^{\eta}). \end{displaymath}
Thus, $\mu(K_{\theta}) \geq \delta^{\epsilon}$ for all $\theta \in E$. By definition, for every $x \in K_{\theta}$, there exist dyadic radii $\delta \leq r \leq \delta^{\eta}R \leq \delta^{\eta}8$, depending on both $x$ and $\theta$, such that
$x \in H_{\theta}\bigl(K,4\left(\tfrac{R}{r} \right)^{\sigma},[r,R]\bigr)$ or, equivalently, 
  \begin{displaymath}  \mathfrak{m}_{K,\theta}(x \mid [r,R]) \geq 4\left(\tfrac{R}{r} \right)^{\sigma}. \end{displaymath}
Note that both $r,R$ take $\lesssim \log(1/\delta)$ possible values. By dyadic pigeonholing, and at the cost of replacing the lower bound $\mu(K_{\theta}) \geq \delta^{\epsilon}$ by $\mu(K_{\theta}) \geq \delta^{2\epsilon}$, we may assume that $r_{x,\theta} = r_{\theta}$ and $R_{x,\theta} = R_{\theta}$ for all $x \in K_{\theta}$. Note that we are pruning $K_{\theta}$ while leaving $K$ unchanged.

Similarly, by replacing $E$ by a subset which remains a $(\delta,s,\delta^{-2\epsilon})$-set, we may assume that $r_{\theta} = r \in [\delta,8]$ and $R_{\theta} = R \in [r,8]$ for all $\theta \in E$ (we keep the notation "$E$" for this subset). Applying Corollary \ref{cor:uni} (replacing $E$ by a further $(\delta,s,\delta^{-2\epsilon})$-subset), we may assume that $E$ is $\{2^{-jT}\}_{j = 1}^{m}$-uniform for some $T \sim_{\epsilon} 1$, and that $\delta = 2^{-mT}$.

Next, let $\mathcal{B}_{R}$ be a minimal cover of $B(1) \cap K$ by discs of radius $R$. By the $(\delta,t,\delta^{-\epsilon})$-regularity of $\mu$, we have $|\mathcal{B}_{R}| \leq \delta^{-\epsilon}R^{-t}$. Notice that
\begin{displaymath} \sum_{B \in \mathcal{B}_{R}} \sum_{\theta \in E} \mu(K_{\theta} \cap B) = \sum_{\theta \in E} \sum_{B \in \mathcal{B}_{R}} \mu(K_{\theta} \cap B) \geq \sum_{\theta \in E} \mu(K_{\theta}) \geq \delta^{\epsilon}|E|. \end{displaymath}
Consequently, there exists a disc $B \in \mathcal{B}_{R}$ with the property
\begin{displaymath} \sum_{\theta \in E} \mu(K_{\theta} \cap B) \geq \frac{\delta^{\epsilon}|E|}{|\mathcal{B}_{R}|} \geq \delta^{2\epsilon}|E|R^{t}. \end{displaymath}
As a further consequence, and using the Frostman bound $\mu(K_{\theta} \cap B) \leq \delta^{-\epsilon}R^{t}$, there exists a subset $E' \subset E$ of cardinality $|E'| \geq \delta^{4\epsilon}|E|$ such that
\begin{equation}\label{form32} \mu(K_{\theta} \cap B) \geq 16 \cdot \delta^{4\epsilon}R^{t}\quad\text{for all } \theta \in E'. \end{equation}
(The "$16$" is a useful factor, understandable in a moment.) We note that
\begin{equation}\label{form31} K_{\theta} \cap B \subset H_{\theta}(K \cap 2B,4\left(\tfrac{R}{r}\right)^{\sigma},[r,R]), \end{equation}
where $2B$ is the disc concentric with $B$ and radius $2R$. This nearly follows from the definition of $K_{\theta}$ (and our pigeonholing of $r,R$), but we added the intersection with the disc $2B$. This is legitimate, since (recalling that $B$ is a disc of radius $R$)
\begin{displaymath} 4\left(\tfrac{R}{r} \right)^{\sigma} \leq \mathfrak{m}_{K,\theta}(x \mid [r,R]) \stackrel{\mathrm{def.}}{=} |B(x,R) \cap K^{(r)} \cap \pi_{\theta}^{-1}\{\pi_{\theta}(x)\}|_{r}, \quad x \in B \cap K_{\theta}, \end{displaymath}
and the right-hand side does not change if we replace "$K$" by "$K \cap 2B$".
Now, as in Remark \ref{rem:regular}, the measure $\mu_{4B} = (4R)^{-t} T_{4B}(\mu)$ is $(\delta/(4R),t,\delta^{-\epsilon})$-regular. Since $r/R \leq \delta^{\eta}$, we see that $\mu_{4B}$ is also $(\Delta,t,\Delta^{-\epsilon/\eta})$-regular with $\Delta := r/(4R) \leq \delta^{\eta}$. Furthermore, by Lemma \ref{lemma7} applied with $r_{0} = 4R$, we have
\begin{align*} T_{4B}(H_{\theta}(K \cap 2B,M,[r,R])) & = H_{\theta}(B(\tfrac{1}{2}) \cap T_{4B}(K),M,\left[\tfrac{r}{4R},\tfrac{1}{4} \right]) \\
& \subset B(1) \cap H_{\theta}(T_{4B}(K),M,\left[\tfrac{r}{4R},1\right]), \end{align*}
where we abbreviated $M := 4(R/r)^{\sigma} \geq (4R/r)^{\sigma}$ (recall that $\sigma \leq 1$). As a consequence,
\begin{align} \mu_{4B}\big(B(1) \cap & H_{\theta}(T_{4B}(K), \left(\tfrac{4R}{r} \right)^{\sigma},\left[\tfrac{r}{4R},1\right])\big) \notag\\
& \geq (4R)^{-t}\mu(H_{\theta}(K \cap 2B,M,[r,R])) \notag\\
&\label{form33} \stackrel{\eqref{form31}}{\geq} \frac{\mu(K_{\theta} \cap B)}{16R^{t}} \stackrel{\eqref{form32}}{\geq} \delta^{4\epsilon} \geq \left(\tfrac{r}{4R} \right)^{4\epsilon/\eta}\quad\text{for all } \theta \in E'. \end{align}
We now claim that \eqref{form33} violates our assumption that $\mathrm{Projection}(s,\sigma,t)$ holds at scale $\Delta = r/(4R)$. Namely, since $E$ is a $\{2^{-jT}\}_{j = 1}^{m}$-uniform $(\delta,s,\delta^{-\epsilon})$-set (with $T \sim_{\epsilon} 1$), and $|E'| \geq \delta^{4\epsilon}|E|$, we infer from Corollary \ref{cor1} that $E'$ is a $(\Delta,s,\delta^{-C\epsilon})$-set, assuming that $\delta > 0$ is small enough in terms of $\epsilon$. Further, recalling that $\Delta \leq \delta^{\eta}$, we see that $E'$ is in fact a $(\Delta,s,\Delta^{-C\epsilon/\eta})$-set.

Finally, recall that $\mu_{4B}$ is a $(\Delta,t,\Delta^{-\epsilon/\eta})$-regular measure. Now, if $\epsilon = \epsilon(\eta,\epsilon_{0}) > 0$ is small enough, the inequality \eqref{form33} (for all $\theta \in E'$) violates our assumption that $\mathrm{Projection}(s,\sigma,t)$ holds at scale $\Delta$. To be precise, recalling the parameters "$\Delta_{0},\epsilon_{0}$" in the statement of the theorem, the contradiction will ensue if we have taken $\delta > 0$ so small that $r/(4R) \leq \delta^{\eta} \leq \Delta_{0}$, and $\epsilon > 0$ so small that $C\epsilon/\eta \leq \epsilon_{0}$. \end{proof}

\subsection{Fixing parameters} We may now begin the proof of Proposition \ref{prop2} in earnest. Fix the triple $(s,\sigma,t)$ as in Proposition \ref{prop2}:
\begin{displaymath} t \in (0,2), \quad s \in (0,\min\{t,2 - t\}) \quad \text{and} \quad \tfrac{t - s}{2} < \sigma_{0} \leq \sigma \leq \sigma_{1} < \tfrac{t}{2}. \end{displaymath}
The role of "$\sigma_{0},\sigma_{1}$" is to quantify that $\sigma$ stays bounded away from both $(t - s)/2$ and $t/2$. The plan of proof is to make a counter assumption to Proposition \ref{prop2} with these parameters, and eventually find a contradiction to the $\delta$-discretised $ABC$ sum-product theorem, Theorem \ref{thm:ABCConjecture}. We now discuss the relevant parameters "$\alpha,\beta,\gamma$" to which Theorem \ref{thm:ABCConjecture} will be applied. A good starting point is
\begin{displaymath} \alpha' = t - \sigma, \quad \beta' = \sigma, \quad \text{and} \quad \gamma' := s. \end{displaymath}
We make a few remarks about these parameters:
\begin{itemize}
\item $0 < \beta'$ since $\sigma > (t - s)/2 > 0$.
\item $\beta' < \alpha'$, since $\sigma < t/2$.
\item $\alpha' < 1$, since $t - \sigma < t - (t - s)/2 = (s + t)/2 < 1$ by the assumption $s < 2 - t$.
\item $\gamma' > \alpha' - \beta'$ (as required to apply Theorem \ref{thm:ABCConjecture}) since $\sigma > (t - s)/2$.
\end{itemize}

Evidently, there exists a constant $\zeta_{0} = \zeta_{0}(s,\sigma_{0},\sigma_{1},t) > 0$ such that all the inequalities above remain valid if $s$ is replaced by $s - \zeta_{0}$, and $\alpha',\beta',\gamma'$ are replaced by
\begin{equation}\label{eq:parameters} 
  \alpha := \alpha' + \zeta_{0}, \quad \beta := \beta' - \zeta_{0}, \quad \text{and} \quad \gamma := \gamma' - \zeta_{0} = s - \zeta_{0}, 
\end{equation}
and for all $\sigma_{0} \leq \sigma \leq \sigma_{1}$. We now apply Theorem \ref{thm:ABCConjecture} with the parameters $\alpha,\beta,\gamma$. The conclusion is that there exist constants $\chi,\delta_{0} > 0$ such that \eqref{conclusion2} holds for all $\delta \in (0,\delta_{0}]$: if $A,B,\nu$ satisfy the hypotheses of Theorem \ref{thm:ABCConjecture} (thus $\nu(B(x,r)) \leq \delta^{-\chi}r^{s - \zeta_{0}}$ for $r \in [\delta,1]$, as in Remark \ref{rem5}), then there exists $c \in \spt(\nu)$ with
\begin{equation}\label{form82} 
  |\pi_{c}(G)|_{\delta} \geq \delta^{-\chi}|A|\quad\text{for all } G \subset A \times B, \, |G| \geq \delta^{\chi}|A||B|. 
\end{equation}
Let us emphasise that
\begin{equation*}
  \chi = \chi(\alpha,\beta,\gamma) = \chi(t - \sigma + \zeta_{0},\sigma - \zeta_{0},s - \zeta_{0}) > 0, 
\end{equation*}
and as long as $\sigma_{0} \leq \sigma \leq \sigma_{1}$, the triple $(t - \sigma + \zeta_{0},\sigma - \zeta_{0},s)$ ranges in a compact subset of the domain $\Omega_{\mathrm{ABC}}$ in Theorem \ref{thm:ABCConjecture}. Therefore,
\begin{equation}\label{form85} \chi \geq \chi_{0}  := \chi_{0}(s,\sigma_{0},\sigma_{1},t) > 0. \end{equation}
Now, we fix small parameters $\delta_{0},\epsilon,\zeta \in (0,\tfrac{1}{2}]$ (whose values will be discussed in a moment), and then make our main counter assumption: there exists a $(\delta,t,\delta^{-\epsilon})$-regular measure $\mu$ with support $K$, and a $(\delta,s,\delta^{-\epsilon})$-set $E \subset S^{1}$ with the property
\begin{equation}\label{form9}
\mu\big(B(1) \cap H_{\theta}(K,\delta^{-\sigma + \zeta},[\delta,1])\big) \geq \delta^{\epsilon} \quad\text{for all } \theta \in E.
\end{equation}
By passing to a maximal $\delta$-separated subset of $E$ if necessary, we may and will assume that $E$ is a $\delta$-separated $(\delta,s,\delta^{-\epsilon})$-set.

Whichever is more convenient, we will either view $E$ as a subset of $S^{1}$, or as a subset of $[0,1)$: the latter interpretation will be applied when we ask whether $E$, or a subset thereof, is $\{\Delta_{j}\}_{j = 1}^{m}$-uniform for a suitable sequence of scales $\delta = \Delta_{m} < \ldots < \Delta_{1} \leq \Delta_{0} = 1$.

Let us discuss the parameters $\zeta,\epsilon,\delta$ further. The parameter "$\zeta$" is the most important one. We will see that \eqref{form9} implies a contradiction against \eqref{eq:parameters} if $\zeta > 0$ is chosen sufficiently small in terms of both the constant $\zeta_{0} > 0$ in \eqref{eq:parameters} and the constant $\chi_{0} > 0$ introduced at \eqref{form85}. Thus, the contradiction will ensue if
\begin{equation}\label{form84} \zeta = o_{\zeta_{0},\chi_{0}}(1) = o_{s,\sigma_{0},\sigma_{1},t}(1). \end{equation}
Here $o_{p_{1},\ldots,p_{n}}(1)$ refers -- and will refer -- to a function of the parameters $p_{1},\ldots,p_{n}$ which is continuous at $0$ and vanishes at $0$. This will show that Proposition \ref{prop2} actually holds with some (maximal) constant "$\zeta$" satisfying \eqref{form84}. The constant $\epsilon > 0$ in Proposition \ref{prop2} is allowed to depend on both $\zeta$, and also the constants $\epsilon_{0}$ for which the "inductive hypothesis" in Proposition \ref{prop2} holds. The constant $\delta_{0} > 0$ is additionally allowed to depend on $\epsilon$ and $\Delta_{0}$. Thus, to reach a contradiction, we will need that
\begin{equation}\label{parameters} \epsilon = o_{\zeta,\epsilon_{0}}(1) \quad \text{and} \quad \delta_{0} = o_{\Delta_{0},\epsilon,\zeta}(1). \end{equation}
When in the sequel we write "...by choosing $\delta > 0$ sufficiently small" we in fact mean "...by choosing the upper bound $\delta_{0}$ for $\delta$ sufficiently small". Also, we may write \emph{Note that $\epsilon \leq \zeta^{10}\epsilon_{0}$ by \eqref{parameters}}, or something similar.  This simply means that the requirement "$\epsilon \leq \zeta^{10}\epsilon_{0}$" should -- at that moment -- be added to the list of restrictions for the function $o_{\zeta,\epsilon_{0}}(1)$. Finally, we will often use the following abbreviation: an upper bound of the form $C_{\epsilon,\zeta}\delta^{-C\epsilon}$ will be abbreviated to $\delta^{-O(\epsilon)}$. Indeed, if $\delta = o_{\zeta,\epsilon}(1)$ is sufficiently small, then $C_{\epsilon,\zeta} \leq \delta^{-\epsilon}$, and hence $C_{\epsilon,\zeta}\delta^{-C\epsilon} \leq \delta^{-(C + 1)\epsilon}$.

\subsection{Finding a branching scale for \texorpdfstring{$E$}{E}} We need to discuss the "branching numbers" of the set $E$ from the counter assumption \eqref{form9}, so we need to know that $E$ is uniform. Given the constant $\epsilon > 0$ as in \eqref{parameters}, we fix $T \sim_{\epsilon} 1$ as in Corollary \ref{cor:uni}. Then, we may find a $\{2^{-jT}\}_{j = 1}^{m}$-uniform subset $E' \subset E$ with $|E'| \geq \delta^{\epsilon}|E|$, which is automatically a $(\delta,s,\delta^{-2\epsilon})$-set. We replace $E$ by this subset without changing notation: thus, we assume that $E$ is $\{2^{-jT}\}_{j = 1}^{m}$-uniform with $2^{-(m + 1)T} < \delta \leq 2^{-mT}$, and $T \sim_{\epsilon} 1$. It is easy to reduce matters to the case $\delta = 2^{-mT}$, and we will make this assumption in the sequel.

What now follows would be unnecessary if the set $E$ happened to be $s$-regular. More precisely, to skip this section, we would need to know that if $I \in \mathcal{D}_{\delta^{1/2}}(E)$, then the renormalisation $E_{I}$ is a $(\delta^{1/2},s)$-set. This may not be the case. The problem will be solved by "replacing" $\delta$ by a larger scale $\bar{\delta} \in [\delta,\delta^{\sqrt{\zeta}/12}]$. The scale $\bar{\delta}$ will be chosen in such a way that if $I \in \mathcal{D}_{\bar{\delta}^{1/2}}(E)$, then the renormalisation $E_{I}$ is a $(\bar{\delta}^{1/2},s)$-set. Lemma \ref{l:regular-is-frostman-ahlfors} plays a central role. The main technicality caused by the "replacement" action is that our main counter assumption \eqref{form9} concerns the scale "$\delta$", not "$\bar{\delta}$". However, by virtue of the regularity of $\mu$, it turns out that a sufficiently strong version of \eqref{form9} will remain valid at scale $\bar{\delta}$. After these observations have been consolidated, we may assume "without loss of generality" that $\delta = \bar{\delta}$, and thus $E_{I}$, $I \in \mathcal{D}_{\delta^{1/2}}(E)$, is a $(\delta^{1/2},s)$-set, as initially desired.

Recall that after our reductions, $E\subset [0,1)$ is now a $(\delta,s,\delta^{-2\epsilon})$-set which is also $(2^{-j T})_{j=1}^m$-uniform (and $\delta=2^{-jm}$), while $\mu$ continues to be the $(\delta,t,\delta^{-\epsilon})$-regular measure from Theorem \ref{thm1}. Here is precisely what we claim:
\begin{proposition}\label{prop4}
  
  There exist a scale $\bar{\delta} =2^{-2\bar{m}T}\in [\delta,\delta^{\sqrt{\zeta}/12}]$ and a $\bar{\delta}$-separated $(\bar{\delta},s,\bar{\delta}^{-O_{\zeta}(\epsilon)})$-subset $E_{\bar{\delta}} \subset E$ which is $\{2^{-jT}\}_{j = 1}^{\bar{m}}$-uniform and has the following properties.
\begin{itemize}
\item[(a)] For $\bar{\Delta} := \bar{\delta}^{1/2}$ and $I \in \mathcal{D}_{\bar{\Delta}}(E_{\bar{\delta}})$, the set $(E_{\bar{\delta}})_{I}$ is a $(\bar{\Delta},s - \sqrt{\zeta},\bar{\Delta}^{-O_{\zeta}(\epsilon)})$-set.
\item[(b)] We have
\begin{equation}\label{form36}
\mu\big(B(1) \cap H_{\theta}(K,\bar{\delta}^{-\sigma + \bar{\zeta}},[\bar{\delta},5])\big) \geq \bar{\delta}^{O_{\zeta}(\epsilon)}\quad\text{for all } \theta \in E_{\bar{\delta}}.
\end{equation}
\end{itemize}
Here in fact $O_{\zeta}(\epsilon) = 12\epsilon/\sqrt{\zeta}$, and $\bar{\zeta} = 7\sqrt{\zeta}$.
\end{proposition}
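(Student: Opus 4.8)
The goal is to pass from the scale $\delta$ to a (not too much) larger scale $\bar\delta$ at which the $(\delta^{1/2},s)$-Frostman condition holds on $\delta^{1/2}$-renormalisations of $E$, while keeping a usable remnant of the counter-assumption \eqref{form9}. The natural tool is the dictionary between the branching function $\beta$ of the uniform set $E$ and the Frostman properties of $E$ (Lemma \ref{l:regular-is-frostman-ahlfors}), combined with Corollary \ref{cor:lemma2}. Concretely, since $E$ is $\{2^{-jT}\}_{j=1}^m$-uniform and a $(\delta,s,\delta^{-2\epsilon})$-set, its branching function $\beta\colon[0,m]\to[0,m]$ satisfies $\beta(x)\ge sx-2\epsilon m - O(1)$ on $[0,m]$, so $(\beta,0,m)$ is $(s,O(\epsilon))$-superlinear (after absorbing the additive $O(1)$ into the $\epsilon m$ term, using $\delta$ small). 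Apply Corollary \ref{cor:lemma2} to $\beta/1$ (it is $1$-Lipschitz, as $d=1$), with the parameter $\zeta$ there taken to be $\sqrt\zeta$ and $\e$ there a suitable multiple of $\epsilon$: this produces a point $a\in[\tfrac{\sqrt\zeta}{12},\tfrac13]m$ (after rescaling back by $m$) such that $(\beta,a,m)$ is $(s-\sqrt\zeta,0)$-superlinear. Set $\bar m:=\lceil a\rceil$ (or $\lfloor a\rfloor$, whichever is cleaner for the integer scale structure) and $\bar\delta:=2^{-\bar m T}$; then $\bar\delta\in[\delta,\delta^{\sqrt\zeta/12}]$ because $a\ge \tfrac{\sqrt\zeta}{12}m$, i.e. $\bar m\ge \tfrac{\sqrt\zeta}{12}m$ gives $\bar\delta = 2^{-\bar m T}\le 2^{-(\sqrt\zeta/12) mT}=\delta^{\sqrt\zeta/12}$.

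Next I would extract the uniform subset $E_{\bar\delta}\subset E$. Because $E$ is $\{2^{-jT}\}_{j=1}^m$-uniform, every $I\in\mathcal D_{\bar\delta}(E)$ contains the same number $|E|_{\delta}/|E|_{\bar\delta}$ of surviving $\delta$-intervals. Choose $E_{\bar\delta}$ to be a $\bar\delta$-net of $E$ (one representative $\delta$-interval per $\bar\delta$-interval, then truncated to a $\bar\delta$-separated set); it is $\{2^{-jT}\}_{j=1}^{\bar m}$-uniform with branching function $\beta|_{[0,\bar m]}$. Since $(\beta,0,\bar m)$ is still $(s,O(\epsilon))$-superlinear (it is the restriction of a superlinear function through $0$, using $\bar m\ge(\sqrt\zeta/12)m$ so that $O(1)\le O(\epsilon/\sqrt\zeta)\bar m$), Lemma \ref{l:regular-is-frostman-ahlfors}(i) gives that $E_{\bar\delta}$ is a $(\bar\delta,s,\bar\delta^{-O_\zeta(\epsilon)})$-set with $O_\zeta(\epsilon)=O(\epsilon/\sqrt\zeta)$. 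For part (a): fix $I\in\mathcal D_{\bar\Delta}(E_{\bar\delta})$ where $\bar\Delta=\bar\delta^{1/2}=2^{-(\bar m/2)T}$ (reduce to $\bar m$ even by adjusting $\bar m$ by one if needed). By Lemma \ref{lemma3}, the renormalisation $(E_{\bar\delta})_I$ has branching function $x\mapsto \beta(\bar m/2 + x)$ on $[0,\bar m/2]$, and $(\beta,\bar m/2,\bar m)$ is $(s-\sqrt\zeta,0)$-superlinear since $\bar m/2\ge a$ would fail in general — so instead one should choose $\bar m$ so that $\bar m/2 \ge \lceil a\rceil$, i.e. take $\bar m = 2\lceil a\rceil$, which still lies in $[\tfrac{\sqrt\zeta}{6}m, \tfrac23 m]$, hence $\bar\delta\in[\delta,\delta^{\sqrt\zeta/12}]$; then $\bar m/2=\lceil a\rceil\ge a$, and $(\beta, a, m)$ superlinear with slope $s-\sqrt\zeta$ restricts to $(\beta,\bar m/2,\bar m)$ superlinear with the same slope. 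Lemma \ref{lemma3} then gives that $(E_{\bar\delta})_I$ is a $(\bar\Delta, s-\sqrt\zeta, C_{\bar\Delta}\bar\Delta^{0})=(\bar\Delta,s-\sqrt\zeta,\bar\Delta^{-O_\zeta(\epsilon)})$-set, absorbing the constant $C_{\bar\Delta}\sim_\epsilon 1$ into $\bar\Delta^{-O_\zeta(\epsilon)}$ for $\delta$ small. (One also needs $s-\sqrt\zeta>0$, which holds as $\sqrt\zeta = o_{s}(1)$.)

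The remaining point, part (b), is transferring the counter-assumption \eqref{form9} from scale $\delta$ to scale $\bar\delta$, and this is the step I expect to be the main obstacle. The high-multiplicity set at scale $[\delta,1]$ does not obviously control the one at scale $[\bar\delta,5]$, since $\bar\delta\gg\delta$ means fewer fibres are "resolved". The key is the $(\delta,t,\delta^{-\epsilon})$-regularity of $\mu$: regularity controls how mass along a fibre distributes across scales. Quantitatively, if $x\in H_\theta(K,\delta^{-\sigma+\zeta},[\delta,1])$, then the fibre $B(x,1)\cap K_\delta\cap\pi_\theta^{-1}\{\pi_\theta(x)\}$ meets at least $\delta^{-\sigma+\zeta}$ many $\delta$-squares; by the upper regularity bound $|K\cap B\,|_{\delta}\le \delta^{-\epsilon}(\bar\delta/\delta)^t\cdot|K\cap B|_{\bar\delta}$ applied inside a $\bar\delta$-ball along the fibre (really: the fibre segment inside a $\bar\delta$-ball meets $\le\bar\delta^{-\epsilon}(\bar\delta/\delta)^{t-1}$ or so $\delta$-squares — here $t-1$ vs $t$ depends on how one counts squares meeting a line, and this must be done carefully), one deduces the fibre meets at least $\delta^{-\sigma+\zeta}/(\text{that bound})\ge \bar\delta^{-\sigma+\bar\zeta}$ many $\bar\delta$-squares, provided $\bar\delta\le\delta^{\sqrt\zeta/12}$ so that the loss $(\bar\delta/\delta)^{O(1)}$ is absorbed into the gap between $\zeta$ and $\bar\zeta=7\sqrt\zeta$. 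Thus $x\in H_\theta(K,\bar\delta^{-\sigma+\bar\zeta},[\bar\delta,5])$ (the upper radius $5$ rather than $1$ is harmless slack to accommodate $B(x,1)\subset B(\text{center},5)$-type containments after the diameter reductions). This gives the inclusion of high-multiplicity sets and hence \eqref{form36} with the stated lower bound $\bar\delta^{O_\zeta(\epsilon)}$, after noting $\mu(B(1)\cap H_\theta(K,\delta^{-\sigma+\zeta},[\delta,1]))\ge\delta^\epsilon\ge\bar\delta^\epsilon\ge\bar\delta^{O_\zeta(\epsilon)}$. Finally, one checks $\bar\zeta=7\sqrt\zeta$ and $O_\zeta(\epsilon)=12\epsilon/\sqrt\zeta$ are consistent with the bookkeeping by tracking the exponents through each step; this is the routine-but-delicate part.
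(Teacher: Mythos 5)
The first half of your plan (choosing the branching scale via Corollary \ref{cor:lemma2} applied to the branching function, taking $\bar{\delta}=2^{-2\bar m T}$ so that the superlinearity of $\beta$ on $[\bar m,2\bar m]$ gives property (a) through Lemma \ref{lemma3}, and tracking the constant $12\epsilon/\sqrt\zeta$) is essentially the paper's argument, and your self-correction to $\bar\delta=2^{-2\lceil a\rceil T}$ lands on the right choice of scale. The problem is part (b), which you yourself flag as the main obstacle: the fibre-counting argument you sketch does not work, and no argument of that pointwise type can. The only control that $(\delta,t)$-regularity gives on a fibre inside a $\bar\delta$-ball is that it meets at most $\min\{\bar\delta/\delta,\ \delta^{-\epsilon}(\bar\delta/\delta)^{t}\}$ many $\delta$-squares of $K_\delta$. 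Dividing $\delta^{-\sigma+\zeta}$ by this bound and asking the quotient to exceed $\bar\delta^{-\sigma+\bar\zeta}$ forces, writing $\delta=\bar\delta^{K}$ and $c=\min\{t,1\}>\sigma$, the inequality $K\le \frac{c-\sigma+\bar\zeta}{c-\sigma+\zeta}=1+O_{\sigma,t}(\sqrt\zeta)$. But by construction $\bar m\le m/3$, so $\bar\delta\ge\delta^{2/3}$, i.e.\ $K\ge 3/2$ (and $K$ can be as large as $6/\sqrt\zeta$). So the loss $(\bar\delta/\delta)^{c}\ge\delta^{-c/3}$ is a genuine power of $\delta$ and cannot be absorbed into the gap $\bar\zeta-\zeta=O(\sqrt\zeta)$; your claim that the discrepancy is absorbed "provided $\bar\delta\le\delta^{\sqrt\zeta/12}$" goes in the wrong direction. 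More fundamentally, a point of $H_{\theta}(K,\delta^{-\sigma+\zeta},[\delta,1])$ may owe all of its multiplicity to scales between $\delta$ and $\bar\delta$ (all $\delta^{-\sigma+\zeta}$ fibre squares packed into a handful of $\bar\delta$-balls), so no inclusion of the form $H_{\theta}(K,\delta^{-\sigma+\zeta},[\delta,1])\subset H_{\theta}(K,\bar\delta^{-\sigma+\bar\zeta},[\bar\delta,5])$ holds; note also that such an inclusion would yield \eqref{form36} for \emph{every} $\theta\in E$, which is stronger than the proposition claims and a warning sign in itself.

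What the paper does instead is a two-step argument that your proposal is missing entirely. First, Proposition \ref{prop3} (quoted from \cite{MR4388762}) splits the multiplicity: the counter-assumption \eqref{form9} at scale $[\delta,1]$ implies, for each $\theta\in E$, that either the coarse-scale multiplicity set $H_{\theta}(K,\bar\delta^{-\sigma+\bar\zeta},[\bar\delta,5])$ or the fine-scale set $H_{\theta}(K,(\delta/\bar\delta)^{-\sigma},[4\delta,4\bar\delta])$ carries measure $\gtrsim\delta^{\epsilon}$. Second — and this is where the standing inductive hypothesis of Proposition \ref{prop2} is used, which your proof never invokes — one covers $K\cap B(4)$ by $4\bar\delta$-balls $B$, renormalises $\mu$ on each $B$ to a $(\Delta,t,\Delta^{-\epsilon_0})$-regular measure $\mu_B$ with $\Delta=\delta/\bar\delta\le\delta^{1/3}$, checks via Corollary \ref{cor1} that large subsets of $E$ remain $(\Delta,s)$-sets, and applies $\mathrm{Projection}(s,\sigma,t)$ at scale $\Delta$ to conclude that the fine-scale term is $\le\delta^{2\epsilon}|E|$ on average over $\theta\in E$. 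This kills the fine-scale alternative for a proportion $\ge\delta^{2\epsilon}$ of directions, which is exactly why \eqref{form36} is only asserted for a subset $E_{\bar\delta}\subset E$ (followed by a re-uniformisation step to restore property (a)). Without this use of the inductive hypothesis, part (b) is not provable from regularity alone, so the gap in your proposal is a missing idea rather than a fixable computation.
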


\begin{remark} Note that \eqref{form36} is an analogue of our initial counter assumption \eqref{form9}, except that the scale $\delta$ has been replaced by $\bar{\delta}$, and we have gained the property (a). Since $\zeta > 0$ is a constant depending only on $s,\sigma_{0},\sigma_{1},t$, the constant $O_{\zeta}(\epsilon)$ can still be made arbitrarily small compared to $\epsilon_{0}$ by choosing $\epsilon$ small enough, in terms of $\epsilon_{0},s,\sigma,\zeta$.  \end{remark}

We then prove Proposition \ref{prop4}. Since $E$ is $\{\Delta_{j}\}_{j = 1}^{m}$-uniform, and a subset of $[0,1)$, we may associate to it the $1$-Lipschitz branching function $\beta \colon [0,m] \to [0,m]$ as in Definition \ref{branchingFunction}. Recall that $\beta$ is the linear interpolation between the conditions $\beta(0) = 0$, and
\begin{displaymath} \beta(j) := \frac{\log |E|_{2^{-jT}}}{T}, \qquad j \in \{1,\ldots,m\}. \end{displaymath}
Since $E$ is a $(\delta,s,\delta^{-\epsilon})$-set, it follows from Lemma \ref{l:regular-is-frostman-ahlfors} that
\begin{displaymath} \beta(x) \geq sx - \epsilon m - C\quad\text{for all } x \in [0,m]. \end{displaymath}
Therefore, the renormalised function $f(x) := \tfrac{1}{m}\beta(mx)$, defined on $[0,1]$, is also $1$-Lipschitz and satisfies
\begin{displaymath} f(x) \geq \tfrac{1}{m}(smx - \epsilon m - C) \geq sx - \epsilon - C/m\quad\text{for all } x \in [0,1]. \end{displaymath}
Since $\delta = 2^{-m T}$ with $T \sim_{\epsilon} 1$, we may assume that $C/m \leq \epsilon$ by choosing $\delta = o_{\epsilon}(1)$.  Therefore, $f(x) \geq sx - 2\epsilon$ for $x \in [0,1]$. Since $2\epsilon \in (0,\sqrt{\zeta}/6]$ by \eqref{parameters}, Corollary \ref{cor:lemma2} allows us to find a point $a \in [\sqrt{\zeta}/12,\tfrac{1}{3}]$ with the property
\begin{displaymath} f(x) - f(a) \geq (s - \sqrt{\zeta})(x - a)\quad\text{for all } x \in [a,1]. \end{displaymath}
In terms of the original branching function $\beta$, this means that there exists a point $\bar{m} := am \in [\sqrt{\zeta} m/12,m/3]$ with the property
\begin{equation}\label{form15} 
    \beta(x) - \beta(\bar{m}) \geq (s - \sqrt{\zeta})(x - \bar{m})\quad\text{for all } x \in [\bar{m},m]. 
\end{equation}
If $\bar{m}$ is not an integer, then the inequality still holds for $\floor{\bar{m}}$ in place of $\bar{m}$, since the slope of the function $\beta$ on $[\floor{\bar{m}},\ceil{\bar{m}}]$ is at least $s-\sqrt{\zeta}$, as seen by taking $x=\ceil{\bar{m}}$ in \eqref{form15}. We therefore assume from now on that $\bar{m}$ is an integer.  We only need \eqref{form15} for $x \in [\bar{m},2\bar{m}]$. 

Set
\begin{displaymath} \bar{\delta} := 2^{-2\bar{m}T} \quad \text{and} \quad \bar{\Delta} := \bar{\delta}^{1/2} = 2^{-\bar{m}T}. \end{displaymath}
It now follows from \eqref{form15} and Lemma \ref{lemma3} that if $I \in \mathcal{D}_{\bar{\Delta}}(E)$, then $E_{I}$ is a $(\bar{\Delta},s - \sqrt{\zeta},C_{\epsilon})$-set, where $C_{\epsilon} \lesssim 2^{T} \lesssim_{\epsilon} 1$. Since $\bar{m} \geq \sqrt{\zeta} m/12$, we notice that
\begin{equation}\label{form17} \delta = 2^{-mT} \geq (2^{-2\bar{m}T})^{6/\sqrt{\zeta}} = \bar{\delta}^{6/\sqrt{\zeta}}, \end{equation}
as desired in Proposition \ref{prop4}. On the other hand, since $\bar{m} \leq m/3$, we have $2\bar{m} \leq 2m/3$, and therefore $\bar{\delta}$ is also substantially larger than $\delta$:
\begin{equation}\label{form19} \delta/\bar{\delta} = 2^{(2\bar{m} - m)T} \leq 2^{-mT/3} = \delta^{1/3}. \end{equation}
The ratio $\delta/\bar{\delta}$ will appear in our calculations in a moment, and \eqref{form19} will allow us to assume that it is "as small as needed" by choosing $\delta > 0$ small enough.

The scale $\bar{\delta} \in [\delta,\delta^{\sqrt{\zeta}/12}]$ has now been fixed, and simply the choice $E_{\bar{\delta}} := E$ (or at least a $\bar{\delta}$-net inside $E$) would satisfy Proposition \ref{prop4}(a). To reach (b), we need to work a little more, and eventually replace $E$ by the final subset $E_{\bar{\delta}}$. The following auxiliary result is \cite[Proposition 5.1]{MR4388762}.

\begin{proposition}\label{prop3} Let $\theta \in S^{1}$, $1 \leq M \leq N < \infty$, let $\delta < r \leq R \leq 1$, and let $\mu$ be a $(\delta,t,C_{\mathrm{reg}})$-regular measure with $t \in [0,2]$, $C_{\mathrm{reg}} > 0$, and $K := \spt \mu \subset \R^{2}$. Abbreviate $\mu_{\rho} := \mu|_{B(\rho)}$ for $\rho > 0$. Then, there exist absolute constants $c,C > 0$ such that
\begin{equation}\label{form14}
\mu_{1}\big(H_{\theta}(CN,[r,1])\big) \leq \mu_{1}\big(H_{\theta}(cM,[4R,5])\big) + CC_{\mathrm{reg}}^{2} \cdot \mu_{4}\big(H_{\theta}(c\tfrac{N}{M},[4r,7R])\big).  \end{equation}
\end{proposition}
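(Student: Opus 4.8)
The plan is to exploit the near-submultiplicativity of the multiplicity function $\m_{K,\theta}(\cdot\mid\cdot)$ across the intermediate scale $R$. Morally, for $x\in B(1)$ the count $\m_{K,\theta}(x\mid[r,1])$ is controlled, up to absolute constants, by the product of a \emph{coarse count} (the number of $R$-balls along the fibre $\ell_x:=\pi_\theta^{-1}\{\pi_\theta(x)\}$ inside $B(x,1)$ that meet $K$) and a \emph{fine count} (the largest number of $r$-balls meeting $K_r$ on any single such $R$-piece). This yields a pointwise dichotomy: either the coarse count already places $x$ in $H_\theta(K,cM,[4R,5])$, or some $R$-piece along $\ell_x$ carries $\gtrsim N/M$ of $K_r$, which forces a point of $K$ near that piece into $H_\theta(K,cN/M,[4r,7R])$. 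The remaining task is to upgrade this pointwise statement to the stated inequality for $\mu$-measure; this is where the regularity of $\mu$ — and the two powers of $C_{\mathrm{reg}}$ — enter, and I expect it to be the main obstacle.

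For the dichotomy: fix $x\in K\cap B(1)$ with $\m_{K,\theta}(x\mid[r,1])\ge CN$ and partition the segment $\ell_x\cap B(x,1)$ (of length $2$) into $\lesssim R^{-1}$ consecutive arcs of length $\sim R$, each lying in an $R$-ball $B'$; write $n(B')$ for the number of dyadic $r$-balls meeting $K_r$ on $\ell_x\cap B'$. Then $\sum_{B'}n(B')\gtrsim CN$, while choosing a point of $\ell_x\cap K_r$ in each active $B'$ produces points of $\ell_x\cap K_{4R}$ inside $B(x,5)$, at most $O(1)$ of which lie in any $4R$-ball; hence the number of active $B'$ is $\lesssim\m_{K,\theta}(x\mid[4R,5])$. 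So if $x\notin H_\theta(K,cM,[4R,5])$, some $B'$ is heavy, $n(B')\gtrsim(C/c)\,N/M$, and for any $w\in K$ within $r$ of $\ell_x\cap B'$ one has $B(w,7R)\supseteq B'$ and $\ell_w$ within $r$ of $\ell_x$; coarsening from $r$-balls to $4r$-balls and thickening $K_r$ to $K_{4r}$ at the cost of absolute constants — which is precisely why the constants $4,7$ sit in the statement — gives $\m_{K,\theta}(w\mid[4r,7R])\gtrsim N/M$. Taking $C/c$ large and renaming $c$: every $x\in B(1)$ with $\m_{K,\theta}(x\mid[r,1])\ge CN$ and $x\notin H_\theta(K,cM,[4R,5])$ admits $w(x)\in K\cap B(x,3)\subseteq B(4)$ with $w(x)\in H_\theta(K,cN/M,[4r,7R])$ and $|\pi_\theta(w(x))-\pi_\theta(x)|\lesssim r$, so $x$ and $w(x)$ share a common $\theta^\perp$-fibre up to width $\sim r$, and that fibre contains a heavy $R$-ball.

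Now set $G:=B(1)\cap H_\theta(K,CN,[r,1])$, $G_0:=G\cap H_\theta(K,cM,[4R,5])$ and $G_1:=G\setminus G_0$, so that $\mu_1\big(H_\theta(K,CN,[r,1])\big)=\mu(G)\le\mu_1\big(H_\theta(K,cM,[4R,5])\big)+\mu(G_1)$; it remains to bound $\mu(G_1)\lesssim C_{\mathrm{reg}}^2\,\mu_4\big(H_\theta(K,cN/M,[4r,7R])\big)$. The plan is a covering argument. Cover $G_1$ by a boundedly overlapping family $\{Q_i\}$ of $R$-balls meeting $K$; by regularity property (2), $|\{Q_i\}|=|G_1|_R\le|K\cap B(1)|_R\le C_{\mathrm{reg}}R^{-t}$ — the first power of $C_{\mathrm{reg}}$. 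For each $i$ pick $x_i\in G_1\cap Q_i$; by the previous step there is a heavy $R$-ball $B_i'\subseteq B(4)$ on the fibre through $x_i$ with $2B_i'\cap\tilde H\ne\emptyset$, where $\tilde H:=H_\theta(K,cN/M,[4r,7R])\cap B(4)$. Bound $\mu(G_1\cap Q_i)\le\mu(Q_i)\le C_{\mathrm{reg}}R^t$ by the Frostman property — the second power of $C_{\mathrm{reg}}$ — and charge this to $\mu(\tilde H\cap 2B_i')$; since $Q_i$ and $B_i'$ lie in a common $R\times 1$ fibre-strip, each dyadic $R$-ball occurs as $B_i'$ for only $O(1)$ indices $i$, and summing over the distinct heavy $R$-balls — using that each is $\mu$-substantial at scale $R$, after renormalising $\mu$, which remains regular with the same constant $C_{\mathrm{reg}}$ by Remark \ref{rem:regular} — yields $\mu(G_1)\lesssim C_{\mathrm{reg}}^2\mu_4(\tilde H)$. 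Renaming constants gives \eqref{form14}. The delicate point, the one I would have to work out in full, is exactly this last charging step: since thin tubes are \emph{not} controlled by Frostman regularity alone, one must genuinely use the internal structure of the heavy $R$-balls and organise the sum over fibre-strips so that only two powers of $C_{\mathrm{reg}}$, and otherwise absolute constants, are spent.
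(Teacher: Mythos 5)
Your first half is fine: the two--scale pigeonholing along the fibre, giving either $\m_{K,\theta}(x\mid[4R,5])\ge cM$ or a heavy $R$-segment $J_x\subset\ell_x$ carrying $\gtrsim N/M$ of the $r$-squares, together with the observation that every $w\in K$ within $O(r)$ of $J_x$ has $\m_{K,\theta}(w\mid[4r,7R])\gtrsim N/M$, is correct, and the constants $4,7$ indeed only serve that bookkeeping. Note, however, that the paper does not prove Proposition \ref{prop3} at all: it is quoted from \cite[Proposition 5.1]{MR4388762}, so there is no internal argument to match yours against; what you call the "delicate last step" is in fact the entire content of the proposition.

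That step, as you sketch it, cannot be completed. To absorb $\mu(Q_i)\le C_{\mathrm{reg}}R^{t}$ you need a matching lower bound on $\mu\big(H_\theta(cN/M,[4r,7R])\cap 2B_i'\big)$, of size roughly $C_{\mathrm{reg}}^{-1}R^{t}$ (and summable with bounded overlap). But Definition \ref{def:regularity} consists of upper bounds only -- a Frostman bound for $\mu$ and a covering bound for $K$ -- so the existence of one point, or even of $\sim N/M$ many $r$-separated points, of the fine-scale high-multiplicity set inside $2B_i'$ gives no lower bound whatsoever on its $\mu$-measure: your ball $B_i'$ is heavy for covering numbers of $K_r$ along a single line, not for $\mu$-mass. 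Concretely, take $t=1$, let $\mu$ be length measure on a horizontal unit segment, and add a vertical column of $N\sim R/r$ atoms spaced $r$ apart, of total mass $\epsilon$, at height $1/2$ above the segment (with $R\le 1/100$, $M=\sqrt N$). Both regularity properties hold with absolute constants; the base points in an $r$-strip below the column (measure $\sim r$) have $[r,1]$-multiplicity $\sim N$ and $[4R,5]$-multiplicity $O(1)$; yet the only points with $[4r,7R]$-multiplicity $\ge cN/M$ are the atoms, of measure $\epsilon$, which may be taken arbitrarily small. So no charging scheme that uses only the stated upper bounds can produce \eqref{form14}; this configuration in fact meets every hypothesis as literally stated, which indicates that the quotation suppresses a lower-regularity input that the proof in \cite{MR4388762} genuinely uses, and which your argument never invokes. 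Separately, your bounded-multiplicity claim that "each dyadic $R$-ball occurs as $B_i'$ for only $O(1)$ indices $i$" is unjustified and in general false: all $\sim R^{-1}$ balls $Q_i$ stacked inside one fibre-strip of width $R$ may share one and the same heavy ball (in the example above they do), so the assignment $i\mapsto B_i'$ can be $\Omega(1/R)$-to-one; repairing this by charging strip-by-strip runs straight into the missing lower bound again. So the gap you flag is genuine, and it is not closable along the route you propose.
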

Here we have omitted the set "$K$" from the $H_{\theta}$-notation, but it should appear in all three instances of $H_{\theta}$. We will apply the proposition to the $(\delta,t,\delta^{-\epsilon})$-regular measure $\mu$ with the parameters $M \leq N$ satisfying
\begin{displaymath} CN = \delta^{-\sigma + \zeta} \quad \text{and} \quad c \cdot \tfrac{N}{M} = \left(\delta/\bar{\delta} \right)^{-\sigma}, \end{displaymath}
from which we may solve that
\begin{equation*} 
  c M = c^{2}\left(\tfrac{\delta}{\bar{\delta}} \right)^{\sigma} \cdot N = \tfrac{c^{2}}{C} \left(\tfrac{\delta}{\bar{\delta}} \right)^{\sigma} \cdot \delta^{-\sigma + \zeta} = \tfrac{c^{2}}{C} \cdot \bar{\delta}^{-\sigma}\cdot \delta^{\zeta} \stackrel{\eqref{form17}}{\geq} \bar{\delta}^{-\sigma + 7\sqrt{\zeta}}. 
\end{equation*}
In the final inequality, we also took $\delta$ so small that $(c^{2}/C) \geq \delta^{\sqrt{\zeta}}$. In the sequel we abbreviate $\bar{\zeta} := 7\sqrt{\zeta}$.

We have now defined the parameters $M,N$ relevant for the application of Proposition \ref{prop3}, but we still need to specify the radii $0 < r \leq R \leq 1$. We set $r := \delta$, and $R := \bar{\delta}/4$. (We have $R > r$ by \eqref{form19}.) Let us then rewrite the conclusion \eqref{form14} with these parameters. Before doing this, notice that the left-hand side is lower bounded by $\delta^{\epsilon}$ by our counter assumption \eqref{form9}, for $\theta \in E$. Therefore, for $\theta \in E$ fixed, we obtain
\begin{align}
\delta^{\epsilon} & \leq \mu\big(B(1) \cap H_{\theta}(K,\delta^{-\sigma + \zeta},[\delta,1])\big) \notag\\
& \leq \mu\big(B(1) \cap H_{\theta}(K,\bar{\delta}^{-\sigma + \bar{\zeta}},[\bar{\delta},5])\big) \notag\\
&\label{form20}\qquad + C\delta^{-2\epsilon}\mu\big(B(4) \cap H_{\theta}(K,\left(\delta/\bar{\delta}\right)^{-\sigma},[4\delta,4\bar{\delta}])\big).
\end{align}
The plan is, next, to use the validity of $\mathrm{Projection}(s,\sigma,t)$ to show that for typical $\theta \in E$, the first term must dominate, or in other words the term \eqref{form20} is substantially smaller than $\delta^{\epsilon}$. More precisely, we claim that if $\epsilon > 0$ is sufficiently small relative to $\epsilon_{0}$, then
\begin{equation}\label{form23} \sum_{\theta \in E} C\delta^{-2\epsilon}\mu\big(B(4) \cap H_{\theta}(K,\left(\delta/\bar{\delta}\right)^{-\sigma},[4\delta,4\bar{\delta}])\big) \leq \delta^{2\epsilon}|E|. \end{equation}
To prove \eqref{form23}, let $\mathcal{K}$ be a minimal cover of $K \cap B(4)$ by discs of radius $4\bar{\delta}$. By the $(t,\delta^{-\epsilon})$-regularity of $\mu$, we have
\begin{equation}\label{form26} |\mathcal{K}| \leq \delta^{-\epsilon} \cdot \bar{\delta}^{-t}. \end{equation}
Then, we decompose
\begin{equation*}
  \eqref{form20} \leq C\delta^{-2\epsilon} \cdot \sum_{B \in \mathcal{K}} \mu\left(B \cap H_{\theta} \left(K,\left(\tfrac{\delta}{\bar{\delta}} \right)^{-\sigma},[4\delta,4\bar{\delta}] \right) \right), \quad \theta \in E. 
\end{equation*}
To treat the individual terms on the right-hand side, we consider the rescaled and renormalised measures $\mu_{B} = (4\bar{\delta})^{-t} \cdot T_{B}\mu$ familiar from Remark \ref{rem:regular}, and we write
\begin{equation}\label{form22} \mu\left(B \cap H_{\theta} \left(K,\left(\tfrac{\delta}{\bar{\delta}} \right)^{-\sigma},[4\delta,4\bar{\delta}] \right) \right) = (4\bar{\delta})^{t} \mu_{B}\left(B(1) \cap H_{\theta}\left(T_{B}(K),\Delta^{-\sigma},[\Delta,1] \right) \right), \end{equation}
for any $\theta \in [0,1]$, where $\Delta := \delta/\bar{\delta}$. This equation is easily deduced from Lemma \ref{lemma7} with $r_{0} = 4\bar{\delta}$. In \eqref{form22}, the measure $\mu_{B}$ is $(\Delta,t,\delta^{-\epsilon})$-regular, where
\begin{displaymath} \delta^{-\epsilon} \leq (\delta/\bar{\delta})^{-3\epsilon} = \Delta^{-3\epsilon} \end{displaymath}
by \eqref{form19}. In particular, $\mu_{B}$ is $(\Delta,t,\Delta^{-\epsilon_{0}})$-regular, assuming $\epsilon \leq \epsilon_{0}/3$. Therefore, since we may assume that $\Delta = \delta/\bar{\delta} \leq \delta^{1/3} \leq \Delta_{0}$, the hypothesis of Proposition \ref{prop2} is applicable to the measure $\mu_{B}$. We claim, as a corollary of this hypothesis applied to $\mu_{B}$, that
\begin{equation}\label{form24}
\sum_{\theta \in E} \mu_{B}\big(B(1) \cap H_{\theta}(T_{B}(K),\Delta^{-\sigma},[\Delta,1])\big) \leq \delta^{10\epsilon}|E|.
\end{equation}
Assume for a moment that \eqref{form24} fails. Then, since $\mu_{B}(B(1)) \leq \delta^{-\epsilon}$, there exists a subset $E' \subset E$ with the properties $|E'| \geq \delta^{20\epsilon}|E|$ and
\begin{equation}\label{form25}
\mu_{B}\big(B(1) \cap H_{\theta}(T_{B}(K),\Delta^{-\sigma},[\Delta,1])\big) \geq \delta^{20\epsilon}\quad\text{for all } \theta \in E'.
\end{equation}
Since $E$ was a $\{2^{-jT}\}_{j = 1}^{m}$-uniform $(\delta,s,\delta^{-\epsilon})$-set, it now follows from Corollary \ref{cor1} that $E'$ is a $(\Delta,s,\delta^{-O(\epsilon)})$-set, and since $\Delta = \delta/\bar{\delta} \leq \delta^{1/3}$ by \eqref{form19}, in fact $E'$ is a $(\Delta,s,\Delta^{-O(\epsilon)})$-set. Therefore, if we take $\epsilon \leq \epsilon_{0}/C$ for an absolute constant $C > 0$, our hypothesis implies that
\begin{displaymath}
\mu_{B}\big(B(1) \cap H_{\theta}(T_{B}(K),\Delta^{-\sigma},[\Delta,1])\big) \leq \Delta^{\epsilon_{0}} = (\delta/\bar{\delta})^{\epsilon_{0}} \leq \delta^{\epsilon_{0}/3}
\end{displaymath}
for some $\theta \in E'$. This violates \eqref{form25} if $20\epsilon < \epsilon_{0}/3$, and the ensuing contradiction shows that \eqref{form24} must be valid. Consequently,
\begin{align*} \sum_{\theta \in E} & C\delta^{-2\epsilon} \sum_{B \in \mathcal{K}} \mu\left(B \cap H_{\theta} \left(K,\left(\tfrac{\delta}{\bar{\delta}} \right)^{-\sigma},[4\delta,4\bar{\delta}] \right) \right)\\
& \stackrel{\eqref{form22}}{=} (4\bar{\delta})^{t}\sum_{B \in \mathcal{K}} C\delta^{-2\epsilon} \sum_{\theta \in E} \mu_{B}\left(B(1) \cap H_{\theta}\left(T_{B}(K),\Delta^{-\sigma},[\Delta,1] \right) \right)\\
& \stackrel{\eqref{form24}}{\leq} (4\bar{\delta})^{t} \sum_{B \in \mathcal{K}} C\delta^{-2\epsilon}\delta^{10\epsilon}|E| \stackrel{\eqref{form26}}{\lesssim} \delta^{7\epsilon}|E|.  \end{align*}
The left-hand side of this chain is an upper bound for the left-hand side of \eqref{form23}, so we have now established \eqref{form23}. Now, inspecting \eqref{form20}, and plugging in \eqref{form23}, we have
\begin{align*}
\delta^{\epsilon}|E| &\leq \sum_{\theta \in E} \mu\big(B(1) \cap H_{\theta}(K,\delta^{-\sigma + \zeta},[\delta,1])\big)\\
& \leq \sum_{\theta \in E} \mu\big(B(1) \cap H_{\theta}(K,\bar{\delta}^{-\sigma + \bar{\zeta}},[\bar{\delta},5])\big) + \delta^{2\epsilon}|E|,
\end{align*}
and consequently
\begin{displaymath}
\sum_{\theta \in E} \mu\big(B(1) \cap H_{\theta}(K,\bar{\delta}^{-\sigma + \bar{\zeta}},[\bar{\delta},5])\big) \geq \tfrac{1}{2}\delta^{\epsilon}|E|. \end{displaymath}
It follows that there exists a subset $E' \subset E$ with $|E'| \geq \delta^{2\epsilon}|E| \geq \bar{\delta}^{12\epsilon/\sqrt{\zeta}}|E|$ with the property
\begin{equation}\label{form28}
\mu\big(B(1) \cap H_{\theta}(K,\bar{\delta}^{-\sigma + \bar{\zeta}},[\bar{\delta},5])\big) \geq \delta^{2\epsilon} \geq \bar{\delta}^{12\epsilon/\sqrt{\zeta}}\quad\text{for all } \theta \in E'.
\end{equation}
This verifies \eqref{form36}, and therefore Proposition \ref{prop4}(b). A small technicality remains: the scale $\bar{\delta}$ was chosen so that $E_{I}$ is a $(\bar{\Delta},s - \sqrt{\zeta},C(\epsilon))$-set for all $I \in \mathcal{D}_{\bar{\Delta}}(E)$ (with $\bar{\Delta} = \bar{\delta}^{1/2}$), but since $E' \subset E$ is only a subset with $|E'| \geq \delta^{2\epsilon}$, this property may now be violated. To fix this, apply Corollary \ref{cor:uni} to find a further $\{2^{-jT}\}_{j = 1}^{m}$-uniform subset $E'' \subset E'$ with $|E''| \geq \delta^{\epsilon}|E'|$. Now, let $I \in \mathcal{D}_{\bar{\Delta}}(E'')$. Then $|E'' \cap I| \geq \delta^{3\epsilon}|E \cap I|$, since otherwise
\begin{displaymath} |E''| = \sum_{I \in \mathcal{D}_{\bar{\Delta}}(E'')} |E'' \cap I| < \delta^{3\epsilon} \sum_{I \in \mathcal{D}_{\bar{\Delta}}(E)} |E \cap I| = \delta^{3\epsilon}|E| \leq \delta^{\epsilon}|E'|. \end{displaymath}
Now it follows easily from a combination of the $(\bar{\Delta},s - \sqrt{\zeta},C(\epsilon))$-set property of $E_{I}$, and $|E'' \cap I| \geq \delta^{3\epsilon}|E \cap I|$, that $E_{I}''$ is a $(\bar{\Delta},s - \sqrt{\zeta},\delta^{-O(\epsilon)})$-set for all $I \in \mathcal{D}_{\Delta}(E'')$. Finally, since $\delta^{O(\epsilon)} \leq \bar{\Delta}^{-O_{\zeta}(\epsilon)}$, we see that $E_{I}''$ is a $(\bar{\Delta},s - \sqrt{\zeta},\bar{\Delta}^{-O_{\zeta}(\epsilon)})$-set for all $I \in \mathcal{D}_{\bar{\Delta}}(E'')$.

In Proposition \ref{prop4}, we desired the set $E_{\bar{\delta}}$ to be $\bar{\delta}$-separated. This is finally achieved by choosing one point from each interval $J \in \mathcal{D}_{\bar{\delta}}(E'')$, and calling the result $E_{\bar{\delta}}$. This does not violate the property of the blow-ups $E_{I}''$ established just above, since the $(\bar{\Delta},s - \sqrt{\zeta})$-set property of $E_{I}''$ only cares about the behaviour of $E''$ between the scales $\bar{\delta}$ and $\bar{\Delta}$. The proof of Proposition \ref{prop4} is complete.

\begin{notation}\label{not1} Now that Proposition \ref{prop4} has been established, we remove the "bars" from the notation. We assume that $\bar{\delta} = \delta$ (thus $\bar{\Delta} = \sqrt{\delta}$) and $E_{\bar{\delta}} = E$. We also rename $\bar{\zeta} =: \zeta$. The only difference to our starting position is, then, that some constants of the form $\delta^{\epsilon}$ have to be replaced by $\delta^{O_{\zeta}(\epsilon)}$. Notably, $E$ is a $(\delta,s,\delta^{-O_{\zeta}(\epsilon)})$-set, and $\mu$ is $(t,\delta^{-O_{\zeta}(\epsilon)})$-regular. We are seeking a contradiction if $\zeta > 0$ is small enough in terms of $(s,\sigma,t)$, and $\epsilon$ is small enough in terms of $(\epsilon_{0},s,\sigma,t,\zeta)$, so this difference will be completely irrelevant. Additionally, $E_{I}$ is a $(\Delta,s - \zeta,\Delta^{-O_{\zeta}(\epsilon)})$-set for all $I \in \mathcal{D}_{\Delta}(E)$.  \end{notation}

\subsection{Defining the sets $K_{\theta}$} We start again with a brief heuristic discussion. Recall that by assumption \eqref{form9}, or more precisely \eqref{form36}, we have
\begin{equation}\label{form48}
\mu\big(B(1) \cap H_{\theta}(K,\delta^{-\sigma + \zeta},[\delta,1])\big) \geq \delta^{C_{\zeta}\epsilon}\quad\text{for all } \theta \in E,
\end{equation}
where $C_{\zeta} \geq 1$ is a constant depending only on $\zeta$. On the other hand, by the hypothesis that $\mathrm{Projection}(s,\sigma,t)$ is valid, we have the (nearly) opposite inequality
\begin{displaymath}
\mu\big(B(1) \cap H_{\theta}(K,\delta^{-\sigma},[\delta,1])\big) \leq \delta^{\epsilon_{0}} \ll \delta^{C_{\zeta}\epsilon}
\end{displaymath}
for at least $\tfrac{1}{2}$ of the points in $E$, assuming that $\epsilon < \epsilon_{0}/C_{\zeta}$, simply because any such half is a $(\delta,s,\delta^{-\epsilon_{0}})$-set. In particular, for $\tfrac{1}{2}$ of the points in $E$, the difference set
\begin{equation}\label{form53} B(1) \cap H_{\theta}(K,\delta^{-\sigma + \zeta},[\delta,1]) \, \setminus \, H_{\theta}(K,\delta^{-\sigma},[\delta,1]) \end{equation}
has $\mu$ measure at least $\tfrac{1}{2}\delta^{C_{\zeta}\epsilon}$. In this section, we apply the same idea to remove from $H_{\theta}(K,\delta^{-\sigma + \zeta},[\delta,1])$ a more complicated set of high multiplicity.

In fact, we apply Theorem \ref{thm2} with parameter $\eta := \sqrt{\epsilon}$. Recall also Remark \ref{rem1}, and note that with our notation $\eta = \sqrt{\epsilon}$, we have $2C_{\zeta}\epsilon < \eta\epsilon_{0}/C$ as soon as $\epsilon < (\epsilon_{0}/C_{\zeta}')^{2}$ -- as we may assume. Now, Theorem \ref{thm2} yields the following conclusion for at least $\tfrac{1}{2}$ of the points in $E$:
\begin{equation}\label{form51}
\mu\big(B(1) \cap H_{\theta,\mathrm{loc}}(K,\sigma,\delta,\delta^{\sqrt{\epsilon}})\big) \leq \delta^{2C_{\zeta}\epsilon}.
\end{equation}
We then replace $E$ by the acquired subset without changing notation. At this point, the $(\Delta,s - \zeta,\Delta^{-O_{\zeta}(\epsilon)})$-property of the renormalisations $E_{I}$ might have failed, but this property can be restored by replacing the "new" $E$ by a further $\{2^{-jT}\}_{j = 1}^{m}$-regular subset, just like we did in the argument after \eqref{form28}.

For the remaining $\theta \in E$, we now define the set
\begin{equation}\label{form54} K_{\theta} := B(1) \cap H_{\theta}(K,\delta^{-\sigma + \zeta},[\delta,1]) \, \setminus \, H_{\theta,\mathrm{loc}}(K,\sigma,\delta,\delta^{\sqrt{\epsilon}}). \end{equation}
Comparing \eqref{form48} and \eqref{form51}, we obtain
\begin{equation}\label{form52} \mu(K_{\theta}) \geq \delta^{C_{\zeta}\epsilon} - \delta^{2C_{\zeta}\epsilon} = \delta^{O_{\zeta}(\epsilon)}\quad\text{for all } \theta \in E. \end{equation}
It is easy to check from the definitions that $K_{\theta} \cap H_{\theta}(K,4\delta^{-\sigma},[\delta,1]) = \emptyset$, so removing $H_{\theta,\mathrm{loc}}(\ldots)$ is a strictly more powerful manoeuvre than what we initially discussed at \eqref{form53} (up to the irrelevant constant "$4$").

\subsection{Projecting the sets $K_{\theta}$} We record the following useful lemma whose proof is a good exercise in applying the definition of $H_{\theta,\mathrm{loc}}$:
\begin{lemma}\label{lemma5} Let $B \subset \R^{2}$ be a disc of radius $\Delta \in [\delta^{1 - \sqrt{\epsilon}},1]$. Then,
\begin{equation}\label{form59} |\pi_{\theta}(B \cap K_{\theta})|_{\delta} \gtrsim \delta^{O_{\zeta}(\epsilon) - t} \cdot \left(\tfrac{\delta}{\Delta} \right)^{\sigma}\mu(B \cap K_{\theta})\quad\text{for all } \theta \in E. \end{equation}
\end{lemma}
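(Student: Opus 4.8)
The plan is to unwind the definition of $K_\theta$ in \eqref{form54}, which forbids certain high-multiplicity behaviour, and use this to show that the fibres of $\pi_\theta$ over $B \cap K_\theta$ are ``sparse'' at the right scale, then combine this with the regularity of $\mu$ to produce the claimed lower bound on the projection.

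First I would fix a disc $B$ of radius $\Delta \in [\delta^{1-\sqrt{\e}},1]$ and a direction $\theta \in E$, and rescale: by Lemma \ref{lemma7} and Remark \ref{rem:regular}, the measure $\mu_B = \Delta^{-t} T_{B}\mu$ is $(\delta/\Delta,t,\delta^{-O_\zeta(\e)})$-regular, and $T_B(B \cap K_\theta) \subset B(1)$ has $\mu_B$-mass $\Delta^{-t}\mu(B \cap K_\theta)$. The key point is to understand the fibres $\pi_\theta^{-1}\{\pi_\theta(x)\}$ for $x \in B \cap K_\theta$. Since $x \notin H_{\theta,\mathrm{loc}}(K,\sigma,\delta,\delta^{\sqrt{\e}})$, for every admissible dyadic pair $\delta \le r \le R \le 8$ with $r/R \le \delta^{\sqrt{\e}}$ we have $\mathfrak{m}_{K,\theta}(x \mid [r,R]) < 4(R/r)^{\sigma}$. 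Applying this with $R \sim \Delta$ (legitimate since $\Delta \ge \delta^{1-\sqrt{\e}}$ forces $\delta/\Delta \le \delta^{\sqrt{\e}}$) and $r = \delta$ gives
\begin{displaymath}
|B(x,\Delta) \cap K_\delta \cap \pi_\theta^{-1}\{\pi_\theta(x)\}|_\delta < 4(\Delta/\delta)^{\sigma},
\end{displaymath}
so each fibre of $\pi_\theta$ meets at most $O((\Delta/\delta)^{\sigma})$ of the $\delta$-squares composing $B \cap K_\theta$ (up to passing to $2B$ and absorbing constants).

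Next I would convert this fibrewise bound into the projection bound by a covering/counting argument. Cover $B \cap K_\theta$ by $N := |B \cap K_\theta|_\delta$ dyadic $\delta$-squares; since each $\pi_\theta$-fibre hits $\lesssim (\Delta/\delta)^{\sigma}$ of them, we get $|\pi_\theta(B \cap K_\theta)|_\delta \gtrsim N \cdot (\delta/\Delta)^{\sigma}$. It remains to bound $N$ from below by $\delta^{O_\zeta(\e)}\,\delta^{-t}\mu(B \cap K_\theta)$: this is where regularity enters, since each $\delta$-square carries $\mu$-mass $\lesssim \delta^{-O_\zeta(\e)}\delta^{t}$ by the Frostman property of $\mu$ (and $B$ has radius $\le 1$), so $\mu(B \cap K_\theta) \le \delta^{-O_\zeta(\e)}\delta^{t} N$, i.e. $N \ge \delta^{O_\zeta(\e)}\delta^{-t}\mu(B\cap K_\theta)$. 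Plugging this in yields \eqref{form59}.

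The main obstacle I anticipate is bookkeeping around the dyadic vs. continuous radii in the definition of $H_{\theta,\mathrm{loc}}$: one has to choose a dyadic $R \asymp \Delta$ (and remember the constants $4$, $8$ built into the definition, and the passage from $B$ to $2B$), and check carefully that the ratio $r/R = \delta/R \le \delta^{\sqrt{\e}}$ really holds for all disc radii in the allowed range $\Delta \in [\delta^{1-\sqrt{\e}},1]$, so that the local-multiplicity exclusion genuinely applies at the scale pair $(\delta,R)$. None of this is deep, but it is the step where the precise quantitative hypotheses must be matched up; everything else is the elementary fibre-counting and Frostman estimate sketched above.
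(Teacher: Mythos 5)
Your proposal is correct and follows essentially the same route as the paper: exclude membership in $H_{\theta,\mathrm{loc}}$ at the scale pair $(\approx\delta,\approx\Delta)$ to get the fibre/tube multiplicity bound $\lesssim(\Delta/\delta)^{\sigma}$ (the paper phrases this via a cover of $B\cap K_{\theta}$ by $\delta$-tubes parallel to $\pi_{\theta}^{-1}\{0\}$, with the same triangle-inequality absorption of constants that your parenthetical handles), and then use the Frostman bound $\mu(Q)\lesssim\delta^{t-O_{\zeta}(\epsilon)}$ per $\delta$-square to convert mass into covering number. The preliminary rescaling via $\mu_{B}$ is unnecessary but harmless, and your bookkeeping worry about $\delta/\Delta\leq\delta^{\sqrt{\epsilon}}$ and the constants $4,8$ is resolved exactly as in the paper (the scales used there are $[8\delta,8\Delta]$).
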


\begin{remark} Eventually the lemma will be applied with $\Delta = \delta^{1/2}$. \end{remark}

\begin{proof} Let $\mathcal{T}_{\delta}$ be a minimal cover of $B \cap K_{\theta}$ by $\delta$-tubes perpendicular to the projection $\pi_{\theta}$, i.e. parallel to $\pi_{\theta}^{-1}\{0\}$. Then evidently
\begin{equation}\label{form58} |\pi_{\theta}(B \cap K_{\theta})|_{\delta} \gtrsim |\mathcal{T}_{\delta}|. \end{equation}
We now claim that
\begin{equation}\label{form56} |(B \cap K) \cap T|_{\delta} \lesssim \left(\tfrac{\Delta}{\delta} \right)^{\sigma}\quad\text{for all } T \in \mathcal{T}_{\delta}. \end{equation}
To see this, fix $T \in \mathcal{T}_{\delta}$. Note that since $T$ is a minimal cover of $B \cap K_{\theta}$, the tube $T$ contains at least one point $x_{0} \in B \cap K_{\theta}$, and by the definition of "$K_{\theta}$" we have
\begin{displaymath} x_{0} \notin H_{\theta,\mathrm{loc}}(K,\sigma,\delta,\delta^{\sqrt{\epsilon}}) \quad \Longrightarrow \quad x_{0} \notin H_{\theta}(K,4\left(\tfrac{\Delta}{\delta} \right)^{\sigma},[8\delta,8\Delta]). \end{displaymath}
(Here we also used $(8\delta)/(8\Delta) \leq \delta^{\sqrt{\epsilon}}$.) Unwrapping the definitions even further,
\begin{equation}
      \label{form57} |B(x_{0},8\Delta) \cap K^{(8\delta)} \cap \pi_{\theta}^{-1}\{\pi_{\theta}(x_{0})\}|_{8\delta} \leq 4\left(\tfrac{\Delta}{\delta} \right)^{\sigma}. 
\end{equation}
Now, notice that since $B$ is a disc of radius $\Delta$ containing $x_{0}$, we have $B \subset B(x_{0},3\Delta)$. If the intersection $(B \cap K) \cap T$ contained $\gg (\Delta/\delta)^{\sigma}$ points which are at least $\delta$-separated (i.e. \eqref{form56} failed), then a little argument using the triangle inequality would show that the line $\pi_{\theta}^{-1}\{\pi_{\theta}(x_{0})\} \subset T$ would intersect $B(x_{0},8\Delta) \cap K^{(8\delta)}$ in many more than $(\Delta/\delta)^{\sigma}$ points which are $8\delta$-separated. In other words, a failure of \eqref{form56} leads to the failure of \eqref{form57}. Thus \eqref{form56} holds.

We have now shown that each intersection $(B \cap K) \cap T$, $T \in \mathcal{T}_{\delta}$, can be covered by $\lesssim (\Delta/\delta)^{\sigma}$ discs of radius $\delta$. It follows from the $(\delta,t,\delta^{-O_{\zeta}(\epsilon)})$-regularity of $\mu$ that
\begin{displaymath} \mu(B \cap K_{\theta}) \leq \sum_{T \in \mathcal{T}_{\delta}} \mu((B \cap K) \cap T) \lesssim |\mathcal{T}_{\delta}| \cdot \delta^{t - O_{\zeta}(\epsilon)}\left(\tfrac{\Delta}{\delta} \right)^{\sigma}, \end{displaymath}
and finally,
\begin{displaymath} |\pi_{\theta}(K_{\theta} \cap B)|_{\delta} \stackrel{\eqref{form58}}{\gtrsim} |\mathcal{T}_{\delta}| \gtrsim \delta^{O_{\zeta}(\epsilon) - t} \cdot \left(\tfrac{\delta}{\Delta} \right)^{\sigma}\mu(B \cap K_{\theta}). \end{displaymath}
This completes the proof of \eqref{form59}. \end{proof}

The lower bound in Lemma \ref{lemma5} was based on the fact that $K_{\theta}$ is disjoint from the set $H_{\theta,\mathrm{loc}}(\ldots)$. In similar spirit, the fact that $K_{\theta}$ is a subset of $H_{\theta}(K,\delta^{-\sigma + \zeta},[\delta,1])$ yields an upper bound the $\delta$-covering number of $\pi_{\theta}(K_{\theta})$, as follows:

\begin{lemma}\label{lemma6} Let $\theta \in E$, let $\Delta \in [\delta^{1 - \sqrt{\epsilon}},1]$, and let $\mathbf{T}$ be a tube of width $\Delta$ parallel to $\pi_{\theta}^{-1}\{0\}$. Then,
\begin{equation}\label{form61} |\pi_{\theta}(K_{\theta} \cap \mathbf{T})|_{\delta} \lesssim \delta^{-O_{\zeta}(\epsilon) - \zeta} \cdot \left(\tfrac{\Delta}{\delta} \right)^{t - \sigma}.  \end{equation}
In particular, $|\pi_{\theta}(K_{\theta})|_{\delta} \lesssim \delta^{-O_{\zeta}(\epsilon) - \zeta} \cdot \delta^{\sigma - t}$.
\end{lemma}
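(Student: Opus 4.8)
The plan is to bound $N:=|\pi_\theta(K_\theta\cap\mathbf T)|_\delta$ by a covering argument in the double tube $2\mathbf T$. Pick a maximal $\delta$-separated subset of $\pi_\theta(K_\theta\cap\mathbf T)$; it gives $N$ distinct fibres $L_1,\dots,L_N$ of $\pi_\theta$ (lines parallel to the axis of $\mathbf T$) together with points $x_i\in K_\theta\cap\mathbf T\cap L_i$. Two elementary observations: since $L_i$ is parallel to the axis of $\mathbf T$ and passes within $\Delta$ of it, the set $L_i\cap\spt\mu$ lies in $2\mathbf T$ (using $\spt\mu\subset B(1)$); and since $x_i\in H_\theta(K,\delta^{-\sigma+\zeta},[\delta,1])$, the fibre $L_i$ carries at least $\delta^{-\sigma+\zeta}$ dyadic $\delta$-squares of $K$ inside $B(x_i,1)$. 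As the $L_i$ are pairwise $\delta$-separated, the associated square families are disjoint, so
\[
  N\cdot\delta^{-\sigma+\zeta}\ \le\ \sum_i|K_\delta\cap L_i\cap 2\mathbf T|_\delta\ \le\ |K\cap 2\mathbf T|_\delta .
\]
Thus it suffices to prove $|K\cap 2\mathbf T|_\delta\lesssim\delta^{-O_\zeta(\epsilon)}\,\Delta^{t-\sigma}\delta^{-t}$; dividing then gives the first assertion, and the case $\Delta=1$, $\mathbf T\supset B(1)$, gives the ``in particular'' clause.

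This is where the avoidance of $H_{\theta,\mathrm{loc}}(K,\sigma,\delta,\delta^{\sqrt\epsilon})$ is used. Viewed at scale $\Delta$, the tube $2\mathbf T$ is a union of $O(1)$ ``fibre rows'' of width $2\Delta$, and only those rows that meet some $x_i$ contribute squares lying on an $L_i$; each such row $\mathbf R$ contains a point $x_i\notin H_{\theta,\mathrm{loc}}$. Applying the defining inequality of $H_{\theta,\mathrm{loc}}$ with the window $[\Delta,8]$ — admissible once $\Delta\le 8\delta^{\sqrt\epsilon}$, in particular for the value $\Delta=\delta^{1/2}$ used in the application (for $\epsilon<\tfrac14$) — gives $\mathfrak m_{K,\theta}(x_i\mid[\Delta,8])<4(8/\Delta)^\sigma$. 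Since $\mathfrak m_{K,\theta}(x_i\mid[\Delta,8])$ counts the $\Delta$-squares of $K$ along the fibre $L_{x_i}$ inside $B(x_i,8)\supset\mathbf R$, and $\mathbf R$ is essentially a $\Delta$-neighbourhood of $L_{x_i}$, this yields $|K\cap\mathbf R|_\Delta\lesssim\Delta^{-\sigma}$. Feeding this into
\[
  |K\cap\mathbf R|_\delta\ \le\ |K\cap\mathbf R|_\Delta\cdot\max_{B}|K\cap B|_\delta\ \le\ \Delta^{-\sigma}\cdot\delta^{-O_\zeta(\epsilon)}(\Delta/\delta)^t
\]
— the maximum over discs $B$ of radius $\Delta$, estimated by property (2) of the $(\delta,t,\delta^{-O_\zeta(\epsilon)})$-regularity of $\mu$ — and summing over the $O(1)$ relevant rows produces $|K\cap 2\mathbf T|_\delta\lesssim\delta^{-O_\zeta(\epsilon)}\Delta^{t-\sigma}\delta^{-t}$. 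Hence $N\lesssim\delta^{-O_\zeta(\epsilon)-\zeta}(\Delta/\delta)^{t-\sigma}$, the factor $\delta^{-\zeta}$ arising precisely from the mismatch between $\delta^{-\sigma+\zeta}$ and $\delta^{-\sigma}$.

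The one genuinely delicate point is the range $\Delta\in(8\delta^{\sqrt\epsilon},1]$, where $[\Delta,8]$ is no longer an admissible window for $H_{\theta,\mathrm{loc}}(K,\sigma,\delta,\delta^{\sqrt\epsilon})$; I would handle it by a bootstrap over scales. One cannot simply subdivide $\mathbf T$ into width-$(8\delta^{\sqrt\epsilon})$ sub-tubes and sum, because $|\pi_\theta(\cdot)|_\delta$ is additive over the sub-tubes while the target bound is sublinear in the width; instead one bounds the number of \emph{active} width-$(8\delta^{\sqrt\epsilon})$ sub-rows of $2\mathbf T$ — which is $|\pi_\theta(K_\theta\cap\mathbf T)|_{8\delta^{\sqrt\epsilon}}$ — by the lemma at the coarser resolution $8\delta^{\sqrt\epsilon}$ (the points of $K_\theta$ retain high-multiplicity / no-local-multiplicity behaviour there, via the scale splitting $[\delta,1]=[\delta,8\delta^{\sqrt\epsilon}]\cup[8\delta^{\sqrt\epsilon},1]$ and the $H_{\theta,\mathrm{loc}}$-bound on the first block), and then estimates each active sub-row by the clean case of the previous paragraph. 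This self-improving step, iterated a bounded (depending only on $\epsilon$) number of times and absorbing all losses into the $\delta^{-O_\zeta(\epsilon)-\zeta}$ slack, closes the argument; for every application of the lemma in the paper (where $\Delta=\delta^{1/2}$) only the clean case is needed, and that is where the bulk of the content lies.
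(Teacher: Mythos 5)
Your main estimate is proved exactly as in the paper: you double count the $\delta$-squares of $K$ lying on the fibres inside a thickening of $\mathbf{T}$, getting the lower bound $N\cdot\delta^{-\sigma+\zeta}$ from $K_{\theta}\subset H_{\theta}(K,\delta^{-\sigma+\zeta},[\delta,1])$, and the upper bound $\lesssim\Delta^{-\sigma}\cdot\delta^{-O_{\zeta}(\epsilon)}(\Delta/\delta)^{t}$ from the avoidance of $H_{\theta,\mathrm{loc}}(K,\sigma,\delta,\delta^{\sqrt{\epsilon}})$ at a window comparable to $[\Delta,8]$ combined with property (2) of the regularity of $\mu$. Two small inaccuracies in your write-up are harmless but worth fixing: exact $\delta$-separation of the $\pi_{\theta}(x_{i})$ does not literally make the families of dyadic $\delta$-squares on distinct fibres disjoint (take a $C\delta$-separated set, or cover $K_{\theta}\cap\mathbf{T}$ by $\delta$-tubes as the paper does, losing only a constant), and $\spt\mu\subset B(1)$ is not part of the hypotheses (only $K_{\theta}\subset B(1)$ is), so one should keep the intersection with $B(2)$ on both sides of the double count, as the paper does.

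The only place you diverge from the paper is the range $\Delta\in(8\delta^{\sqrt{\epsilon}},1]$, and there your bootstrap sketch has an unjustified step: to run the lemma at the coarser resolution $8\delta^{\sqrt{\epsilon}}$ you would need points of $K_{\theta}$ to have multiplicity $\gtrsim(8\delta^{\sqrt{\epsilon}})^{-\sigma+\zeta'}$ for the window $[8\delta^{\sqrt{\epsilon}},1]$, and this does \emph{not} follow from the splitting $[\delta,1]=[\delta,8\delta^{\sqrt{\epsilon}}]\cup[8\delta^{\sqrt{\epsilon}},1]$: the $H_{\theta,\mathrm{loc}}$-avoidance controls fine multiplicity only in balls centred at points of $K_{\theta}$, whereas the coarse squares along the fibre which you would have to de-duplicate are typically centred at points of $K\setminus K_{\theta}$, about which nothing is known. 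Transferring multiplicity between scale blocks is exactly the nontrivial content of Proposition \ref{prop3}, so this cannot be waved through. Fortunately the issue is moot: the paper's own written argument likewise only licenses $\Delta\leq\delta^{\sqrt{\epsilon}}$ (the implication to $x_{0}\notin H_{\theta}(K,4\Delta^{-\sigma},[8\Delta,8])$ needs that admissibility), and the lemma is only ever applied with $\Delta=\delta^{1/2}$ and, via the ``in particular'' clause, with $\Delta\sim1$; in the latter case your upper-bound step is trivial, since $B(2)$ is covered by $O(1)\lesssim\Delta^{-\sigma}$ discs of radius $\Delta\sim1$, so no $H_{\theta,\mathrm{loc}}$ information and no bootstrap are needed. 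With the ``in particular'' clause handled this way, your proof of every case that is actually used is complete and coincides with the paper's.
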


\begin{proof} We may assume that $K_{\theta} \cap \mathbf{T} \neq \emptyset$, otherwise there is nothing to prove. Let $\mathcal{T}_{\delta}$ be a minimal cover of $K_{\theta} \cap \mathbf{T}$ by $\delta$-tubes $T \subset \mathbf{T}$ parallel to $\pi_{\theta}^{-1}\{0\}$. It suffices to show that
\begin{equation}\label{form62} |\mathcal{T}_{\delta}| \lesssim \delta^{-O_{\zeta}(\epsilon) - \zeta} \cdot \left(\tfrac{\Delta}{\delta} \right)^{t - \sigma}. \end{equation}
To see this, fix $T \in \mathcal{T}_{\delta}$ and $x \in K_{\theta} \cap T$. Thus $x \in H_{\theta}(K,\delta^{-\sigma + \zeta},[\delta,1])$, so
\begin{displaymath} |B(2) \cap K^{(\delta)} \cap \pi_{\theta}^{-1}\{\pi_{\theta}(x)\}|_{\delta} \geq \mathfrak{m}_{K,\theta}(x \mid [\delta,1]) \geq \delta^{-\sigma + \zeta}. \end{displaymath}
Summing over the tubes $T \in \mathcal{T}_{\delta}$, we infer that
\begin{equation}\label{form60} |B(2) \cap K^{(\delta)} \cap \mathbf{T}|_{\delta} \gtrsim |\mathcal{T}_{\delta}| \cdot \delta^{-\sigma + \zeta}. \end{equation}
To find a useful upper bound for the left-hand side, fix $x_{0} \in K_{\theta} \cap \mathbf{T}$ arbitrary, and recall that
\begin{displaymath} x_{0} \notin H_{\theta,\mathrm{loc}}(K,\sigma,\delta,\delta^{\sqrt{\epsilon}}) \quad \Longrightarrow \quad x_{0} \notin H_{\theta}(K,4\Delta^{-\sigma},[8\Delta,8]), \end{displaymath}
or in other words
\begin{displaymath} |B(x_{0},8) \cap K^{(8\Delta)} \cap \pi_{\theta}^{-1}\{\pi_{\theta}(x_{0})\}|_{8\Delta} \leq 4\Delta^{-\sigma}. \end{displaymath}
This easily implies that $B(2) \cap K^{(\delta)} \cap \mathbf{T}$ can be covered by $\lesssim \Delta^{-\sigma}$ discs "$B$" of radius $\Delta$. Since $|B \cap K|_{\delta} \leq \delta^{-O_{\zeta}(\epsilon)}(\Delta/\delta)^{t}$ for each "$B$" by the $(\delta,t,\delta^{-O_{\zeta}(\epsilon)})$-regularity of $\mu$, we obtain
\begin{displaymath} |\mathcal{T}_{\delta}| \cdot \delta^{-\sigma + \zeta} \stackrel{\eqref{form60}}{\lesssim} |B(2) \cap K^{(\delta)} \cap \mathbf{T}|_{\delta} \lesssim \delta^{-O_{\zeta}(\epsilon)}\Delta^{-\sigma} \cdot \left(\tfrac{\Delta}{\delta} \right)^{t}. \end{displaymath}
Dividing by $\delta^{-\sigma + \zeta}$ implies \eqref{form62} and therefore \eqref{form61}. \end{proof}

\subsection{Choosing a good \texorpdfstring{$\Delta$}{Delta}-tube} Recall the sets $K_{\theta}$ defined at \eqref{form54}, which had measure $\mu(K_{\theta}) \geq \delta^{O_{\zeta}(\epsilon)}$ for all $\theta \in E$ by \eqref{form52}. Further, recall that if $I \subset \mathcal{D}_{\Delta}(E)$ is arbitrary, then $E_{I}$ is a $(\Delta,s - \zeta,\Delta^{-O_{\zeta}(\epsilon)})$-set (Proposition \ref{prop4}(a)). Here $\Delta = \delta^{1/2}$. From $\mu(B(1)) \leq \delta^{-\epsilon}$ and Cauchy-Schwarz, it easily follows that
\begin{displaymath} \sum_{\theta,\theta' \in E \cap I} \mu(K_{\theta} \cap K_{\theta'}) \geq \delta^{O_{\zeta}(\epsilon)}|E \cap I|^{2}, \end{displaymath}
and in particular there exists $\theta_{0} \in E \cap I$ with the property
\begin{displaymath} \sum_{\theta \in E \cap I} \mu(K_{\theta_{0}} \cap K_{\theta}) \geq \delta^{O_{\zeta}(\epsilon)}|E \cap I|. \end{displaymath}
Further, from this inequality it follows that there exists a subset of the form $E' \cap I \subset E \cap I$ with $|E' \cap I| \geq \delta^{O_{\zeta}(\epsilon)}|E \cap I|$ such that $\mu(K_{\theta} \cap K_{\theta_{0}}) \geq \delta^{O_{\zeta}(\epsilon)}$ for all $\theta \in E' \cap I$. Since the renormalisation $E_{I}'$ remains a $(\Delta,s - \zeta,\Delta^{-O_{\zeta}(\epsilon)})$-set, the difference between $E \cap I$ and $E' \cap I$ will be irrelevant to us, and we simplify notation by assuming that
\begin{equation}\label{form37} \mu(K_{\theta_{0}} \cap K_{\theta}) \geq \delta^{O_{\zeta}(\epsilon)}\quad\text{for all } \theta \in E \cap I. \end{equation}
The arc $I \in \mathcal{D}_{\Delta}(E)$ and the point $\theta_{0} \in E \cap I$ will remain fixed for the remainder of the proof. Since our problem is rotation-invariant, we may assume that $\theta_{0} = (1,0)$, so the projection $\pi_{\theta_{0}}(x,y) = x$ is the projection to the $x$-axis, and $I$ is an arc of length $\Delta$ around $(1,0)$. For technical convenience, it will be useful to re-parametrise the projections $\pi_{\theta}$, $\theta \in I$, in the following standard way:
\begin{equation}\label{form55} \pi_{\theta}(x,y) = x + \theta y\quad\text{for all } \theta \in I = [0,\Delta] = [0,\delta^{1/2}]. \end{equation}
So, when we apply the definition of $K_{\theta}$ in the near future and write "$\pi_{\theta}^{-1}\{\pi_{\theta}(x)\}$" we refer precisely to the maps in \eqref{form55}. We abbreviate
\begin{displaymath} K_{0} := K_{\theta_{0}} \quad \text{and} \quad \pi := \pi_{\theta_{0}}, \end{displaymath}
so the lines $\pi^{-1}\{\pi(x)\}$, $x \in \R$, are parallel to the $y$-axis. For $\theta \in I$, the lines $\pi_{\theta}^{-1}\{\pi_{\theta}(x)\}$ make an angle $\leq \Delta$ with the $y$-axis.

The plan is, next, to investigate the intersection of $K_{0}$ with a "typical" vertical tube $\mathbf{T}$ of width $\Delta$. Roughly speaking, it turns out that the minimal cover of $K_{0} \cap \mathbf{T}$ with $\Delta$-discs consists of $\approx \Delta^{-\sigma}$ discs satisfying a $\sigma$-dimensional non-concentration condition. Once this has been verified, we (still roughly speaking) restrict attention to one of these "typical" tubes $\mathbf{T}_{0}$ for the remainder of the argument.

Let $\mathcal{B}_{\Delta}$ be a minimal cover of $B(1) \cap K$ with discs of radius $\Delta$, satisfying $|\mathcal{B}_{\Delta}| \leq \Delta^{-O_{\zeta}(\epsilon) - t}$.
We note that
\begin{equation}\label{form38} \sum_{\theta \in E \cap I} \sum_{B \in \mathcal{B}_{\Delta}} \mu(K_{0} \cap K_{\theta} \cap B) \geq \sum_{\theta \in E \cap I} \mu(K_{0} \cap K_{\theta}) \stackrel{\eqref{form37}}{\geq} \delta^{O_{\zeta}(\epsilon)}|E \cap I|. \end{equation}
A disc $B \in \mathcal{B}_{\Delta}$ is called \emph{light} (denoted $B \in \mathcal{B}_{\Delta}^{\mathrm{light}}$) if
\begin{displaymath} \frac{1}{|E \cap I|} \sum_{\theta \in E \cap I} \mu(K_{0} \cap K_{\theta} \cap B) \leq \Delta^{t + C_{\zeta}\epsilon}, \end{displaymath}
where $C_{\zeta} \geq 1$ is a constant to be determined momentarily. Observe that
\begin{displaymath} \sum_{\theta \in E \cap I} \sum_{B \in \mathcal{B}_{\Delta}^{\mathrm{light}}} \mu(K_{0} \cap K_{\theta} \cap B) \leq |\mathcal{B}_{\Delta}||E \cap I| \Delta^{t + C_{\zeta}\epsilon} \leq \Delta^{C_{\zeta}\epsilon - O_{\zeta}(\epsilon)}|E \cap I|, \end{displaymath}
so in particular if $\mathcal{B}_{\Delta}^{\mathrm{heavy}} := \mathcal{B}_{\Delta} \, \setminus \, \mathcal{B}_{\Delta}^{\mathrm{light}}$, then
\begin{equation}\label{form43} \sum_{\theta \in E \cap I} \sum_{B \in \mathcal{B}_{\Delta}^{\mathrm{heavy}}} \mu(K_{0} \cap K_{\theta} \cap B) \stackrel{\eqref{form38}}{\geq} (\delta^{O_{\zeta}(\epsilon)} - \Delta^{C_{\zeta}\epsilon - O_{\zeta}(\epsilon)})|E \cap I| \geq \delta^{O_{\zeta}(\epsilon)}|E \cap I|, \end{equation}
assuming that the constant "$C_{\zeta}$" in the definition of "lightness" was chosen five times larger than the "$O_{\zeta}(\epsilon)$" constants.

We make the following simple observation about the heavy discs:
\begin{equation}\label{form39} \mu(K_{0} \cap B) \geq \frac{1}{|E \cap I|} \sum_{\theta \in E \cap I} \mu(K_{0} \cap K_{\theta} \cap B) \geq \delta^{O_{\zeta}(\epsilon)}\Delta^{t}\quad\text{for all } B \in \mathcal{B}_{\Delta}^{\mathrm{heavy}}. \end{equation}
Consequently, it follows from Lemma \ref{lemma5} (and $\delta/\Delta = \Delta$) that
\begin{equation}\label{form40} |\pi(B \cap K_{0})|_{\delta} \geq \delta^{O_{\zeta}(\epsilon)}\Delta^{\sigma - t}\quad\text{for all } B \in \mathcal{B}_{\Delta}^{\mathrm{heavy}}. \end{equation}
Next, let $\mathcal{T}_{\Delta}$ be a minimal cover of the heavy discs by disjoint $\Delta$-tubes perpendicular to $\theta_{0}$ (that is, parallel to the $y$-axis). In particular, every tube in $\mathcal{T}_{\Delta}$ meets at least one disc in $\mathcal{B}_{\Delta}^{\mathrm{heavy}}$. We claim that
\begin{equation}\label{form42} |\mathcal{T}_{\Delta}| \leq \Delta^{\sigma - t - 3\zeta}. \end{equation}
Since each of the tubes $\mathbf{T} \in \mathcal{T}_{\Delta}$ meets at least one disc $B \in \mathcal{B}_{\Delta}^{\mathrm{heavy}}$ (and each of these discs can meet at most $3$ tubes), we deduce that
\begin{displaymath} |\pi(K_{0})|_{\delta} \stackrel{\eqref{form40}}{\gtrsim} \delta^{O_{\zeta}(\epsilon)}|\mathcal{T}_{\Delta}|\Delta^{\sigma - t}. \end{displaymath}
On the other hand, a special case of Lemma \ref{lemma6} states that $|\pi(K_{0})|_{\delta} \lesssim \delta^{-O_{\zeta}(\epsilon) - \zeta} \cdot \delta^{\sigma - t}$. The upper bound \eqref{form42} follows by combining these two inequalities (note that $\delta/\Delta = \Delta$), and choosing $\epsilon = o_{\zeta}(1)$ so small that $O_{\zeta}(\epsilon) \leq \zeta$.

Next, for every $\mathbf{T} \in \mathcal{T}_{\Delta}$, write
\begin{displaymath} \mathcal{B}(\mathbf{T}) := \{B \in \mathcal{B}_{\Delta}^{\mathrm{heavy}} : B \cap \mathbf{T} \neq \emptyset\}. \end{displaymath}
Since the union of the tubes in $\mathcal{T}_{\Delta}$ cover all the heavy discs, we have
\begin{equation}\label{form44} \sum_{T \in \mathcal{T}_{\Delta}} \sum_{\theta \in E \cap I} \sum_{B \in \mathcal{B}(\mathbf{T})} \mu(K_{0} \cap K_{\theta} \cap B) \stackrel{\eqref{form43}}{\geq} \delta^{O_{\zeta}(\epsilon)}|E \cap I|. \end{equation}
A tube $\mathbf{T} \in \mathcal{T}_{\Delta}$ is called \emph{heavy} if
\begin{equation}\label{form64} \sum_{\theta \in E \cap I} \sum_{B \in \mathcal{B}(\mathbf{T})} \mu(K_{0} \cap K_{\theta} \cap B) \geq \Delta^{t - \sigma + 4\zeta}|E \cap I|. \end{equation}
The heavy tubes are denoted $\mathcal{T}_{\Delta}^{\mathrm{heavy}}$. With this notation,
\begin{displaymath} \sum_{T \in \mathcal{T}_{\Delta} \, \setminus \, \mathcal{T}_{\Delta}^{\mathrm{heavy}}} \sum_{\theta \in E \cap I} \sum_{B \in \mathcal{B}(\mathbf{T})} \mu(K_{0} \cap K_{\theta} \cap B) \leq |\mathcal{T}_{\Delta}| \cdot \Delta^{t - \sigma + 4\zeta}|E \cap I| \end{displaymath}
Combining this estimate with the upper bound $|\mathcal{T}_{\Delta}| \leq \Delta^{\sigma - t - 3\zeta}$ established in \eqref{form42}, and inspecting \eqref{form44}, we see that the sum over the light tubes is less than half the total value of the sum in \eqref{form44}. As a consequence, the set of heavy tubes is non-empty. For the remainder of the whole proof, we fix one heavy tube
\begin{equation*} 
  \mathbf{T}_{0} \in \mathcal{T}_{\Delta}^{\mathrm{heavy}}. 
\end{equation*}
We record the following consequence of Lemma \ref{lemma6}:
\begin{equation} \label{form46} 
    |\pi_{\theta}(K_{\theta} \cap \mathbf{T}_{0})|_{\delta} \leq \delta^{-O_{\zeta}(\epsilon) - \zeta}\Delta^{\sigma - t} \leq \Delta^{\sigma - t - 4\zeta}\quad\text{for all } \theta \in E \cap I. 
\end{equation}
For the second inequality we took $\epsilon > 0$ so small depending on $\zeta$ that $O_{\zeta}(\epsilon) \leq \zeta$. Inequality \eqref{form46} looks like an immediate consequence of \eqref{form61} with $\Delta = \delta^{1/2}$, but the tube $\mathbf{T}_{0}$ is not exactly parallel to $\pi_{\theta}^{-1}\{0\}$. However, $\mathbf{T}_{0}$ is parallel to $\pi^{-1}\{0\} = \pi_{\theta_{0}}^{-1}\{0\}$, and since $|\theta - \theta_{0}| \leq \Delta$, we have $K_{\theta} \cap \mathbf{T}_{0} \subset K_{\theta} \cap 2\mathbf{T}_{\theta}$, where $\mathbf{T}_{\theta}$ is a $\Delta$-tube parallel to $\pi_{\theta}^{-1}\{0\}$. Thus, \eqref{form46} follows from Lemma \ref{lemma6} applied to $2\mathbf{T}_{\theta}$.

We record a $\sigma$-dimensional non-concentration condition for $\mathcal{B}(\mathbf{T}_{0})$:
\begin{lemma}\label{lemma8} We have
\begin{equation}\label{form47} |\mathcal{B}(\mathbf{T}_{0}) \cap B(x,R)| \lesssim \left(\tfrac{R}{\Delta} \right)^{\sigma}\quad\text{for all } x \in \R^{2}, \, R \in [\delta^{-\sqrt{\epsilon}}\Delta,1]. \end{equation}
Here $\mathcal{B}(\mathbf{T}_{0}) \cap B(x,R) := \{B \in \mathcal{B}(\mathbf{T}_{0}) : B \cap B(x,R) \neq \emptyset\}$. In particular, $|\mathcal{B}(\mathbf{T}_{0})| \lesssim \Delta^{-\sigma}$. \end{lemma}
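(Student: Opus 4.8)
The statement to prove is Lemma \ref{lemma8}: the tube $\mathbf{T}_{0}$, which lies in $\mathcal{T}_{\Delta}^{\mathrm{heavy}}$, has the property that the heavy discs it meets, namely $\mathcal{B}(\mathbf{T}_{0})$, satisfy the $\sigma$-dimensional non-concentration bound $|\mathcal{B}(\mathbf{T}_{0}) \cap B(x,R)| \lesssim (R/\Delta)^{\sigma}$ for all $R \in [\delta^{-\sqrt{\epsilon}}\Delta, 1]$. The natural approach is to exploit heaviness of $\mathbf{T}_{0}$ (inequality \eqref{form64}) together with the upper bound \eqref{form46} on $|\pi_{\theta}(K_{\theta} \cap \mathbf{T}_{0})|_{\delta}$, playing these off against each other inside a ball $B(x,R)$. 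The key point is that $\mathbf{T}_{0}$ is a $\Delta$-tube parallel to the $y$-axis, so a ball $B(x,R)$ intersects $\mathbf{T}_{0}$ in a sub-tube of length $\approx R$ and width $\Delta$; the discs of $\mathcal{B}(\mathbf{T}_{0})$ meeting $B(x,R)$ all lie in (a slight enlargement of) this sub-tube.

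\emph{Key steps.} First I would suppose, toward a contradiction, that \eqref{form47} fails badly for some $x$ and some admissible $R$, i.e. $|\mathcal{B}(\mathbf{T}_{0}) \cap B(x,R)| \geq \lambda (R/\Delta)^{\sigma}$ for a large constant $\lambda$ (which will eventually be absorbed into implied constants). Denote by $\mathcal{B}_0 = \mathcal{B}(\mathbf{T}_{0}) \cap B(x,R)$ this sub-collection and by $\mathbf{T}_0' := \mathbf{T}_0 \cap B(x, 2R)$ the corresponding sub-tube of length $\approx R$ and width $\Delta$; each $B \in \mathcal{B}_0$ is heavy, so by \eqref{form43}-type reasoning (more precisely from the definition of heaviness and \eqref{form39}) it carries a definite amount of the double sum $\sum_{\theta \in E \cap I} \mu(K_0 \cap K_\theta \cap B)$. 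Summing over $B \in \mathcal{B}_0$, there must be a single direction $\theta \in E \cap I$ for which $\sum_{B \in \mathcal{B}_0} \mu(K_0 \cap K_\theta \cap B)$ is large, namely $\gtrsim \delta^{O_\zeta(\epsilon)} \Delta^{t} |\mathcal{B}_0| \gtrsim \delta^{O_\zeta(\epsilon)} \lambda \Delta^{t} (R/\Delta)^{\sigma}$. Next, apply Lemma \ref{lemma5} with the disc $B(x, 2R)$ (radius $\Delta' := 2R \in [\delta^{1-\sqrt\epsilon}, 1]$, which is the admissible range in Lemma \ref{lemma5} precisely because $R \geq \delta^{-\sqrt\epsilon}\Delta$ forces $R \geq \delta^{1/2-\sqrt{\epsilon}} = \delta^{1-\sqrt\epsilon}/\Delta \cdot \Delta \geq \delta^{1-\sqrt\epsilon}$ up to constants): this gives $|\pi_\theta(B(x,2R) \cap K_\theta)|_\delta \gtrsim \delta^{O_\zeta(\epsilon) - t}(\delta/R)^{\sigma}\mu(B(x,2R) \cap K_\theta)$. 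Since $K_0 \cap K_\theta \cap B \subset B(x,2R) \cap K_\theta$ for $B \in \mathcal{B}_0$, and the discs in $\mathcal{B}_0$ are essentially disjoint, this lower-bounds $|\pi_\theta(B(x,2R) \cap K_\theta)|_\delta$ from below by $\gtrsim \delta^{O_\zeta(\epsilon)-t}(\delta/R)^\sigma \cdot \delta^{O_\zeta(\epsilon)}\lambda \Delta^t (R/\Delta)^\sigma = \delta^{O_\zeta(\epsilon)} \lambda \delta^\sigma \Delta^{t - \sigma} R^{0}$... — here I should be careful with the bookkeeping, but the point is that the $R$-powers should cancel, leaving a bound $\gtrsim \delta^{O_\zeta(\epsilon)} \lambda \Delta^{\sigma - t}$ up to the $\delta$-vs-$\Delta$ substitution ($\delta/\Delta = \Delta$ since $\Delta = \delta^{1/2}$). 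On the other hand, since $\pi_\theta(B(x,2R) \cap K_\theta) \subset \pi_\theta(K_\theta \cap \mathbf{T}_0')$ and $\mathbf{T}_0'$ is contained in (twice) a $\Delta$-tube parallel to $\pi_{\theta_0}^{-1}\{0\}$, inequality \eqref{form46} gives $|\pi_\theta(K_\theta \cap \mathbf{T}_0)|_\delta \leq \Delta^{\sigma - t - 2\zeta}$. Comparing the lower bound $\delta^{O_\zeta(\epsilon)}\lambda\Delta^{\sigma-t}$ with the upper bound $\Delta^{\sigma - t - 2\zeta}$ forces $\lambda \lesssim \delta^{-O_\zeta(\epsilon)}\Delta^{-2\zeta}$, which is $\lesssim 1$ once $\lambda$ is taken as a suitable absolute (or $\zeta$-dependent) constant only after noting that $\Delta^{-2\zeta}$ must actually be beaten — so in fact the correct conclusion is not a contradiction for fixed $\lambda$ but rather the desired inequality with implied constant $\delta^{-O_\zeta(\epsilon)}\Delta^{-2\zeta}$, which can be absorbed since this is all happening inside proofs where such losses are acceptable. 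Let me instead phrase the argument directly (not by contradiction): for any $x, R$, the displayed chain gives $|\mathcal{B}(\mathbf{T}_0) \cap B(x,R)| \lesssim \delta^{-O_\zeta(\epsilon)} \Delta^{-2\zeta}(R/\Delta)^\sigma$, and since the lemma as stated allows implicit constants, and $O_\zeta(\epsilon), \zeta$-losses are what the ambient proof carries throughout, this is the claim. The "in particular" statement follows by taking $R = 1$.

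\emph{Main obstacle.} The delicate point is the admissibility of the scale in the application of Lemma \ref{lemma5}: Lemma \ref{lemma5} requires the disc radius to lie in $[\delta^{1-\sqrt\epsilon}, 1]$, and I must check that $2R$ falls in this range precisely when $R \in [\delta^{-\sqrt\epsilon}\Delta, 1]$ — since $\Delta = \delta^{1/2}$, we have $\delta^{-\sqrt\epsilon}\Delta = \delta^{1/2 - \sqrt\epsilon} \geq \delta^{1 - \sqrt\epsilon}$ (for $\delta$ small), so the lower endpoint is fine; the upper endpoint $R \leq 1$ matches. The other technical nuisance is that $\mathbf{T}_{0}$ is parallel to $\pi_{\theta_0}^{-1}\{0\}$ but not exactly to $\pi_\theta^{-1}\{0\}$ for $\theta \in I$; this costs only a factor of $2$ in the tube width (as already handled in the derivation of \eqref{form46}), so $K_\theta \cap \mathbf{T}_0 \subset K_\theta \cap 2\mathbf{T}_\theta$ and $B(x,2R) \cap \mathbf{T}_0$ sits inside a $2\Delta$-tube parallel to $\pi_\theta^{-1}\{0\}$ of length $\lesssim R$ — small enough to apply the tube version \eqref{form61} of Lemma \ref{lemma6} if I want the local bound, though for \eqref{form47} the global form \eqref{form46} suffices after intersecting with the sub-tube $B(x,2R) \cap \mathbf{T}_0$.
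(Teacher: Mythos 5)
Your route is genuinely different from the paper's, and it is not quite sufficient as written. The paper proves Lemma \ref{lemma8} in one step, without ever touching $\mu$, projections at scale $\delta$, or \eqref{form46}: every $B \in \mathcal{B}(\mathbf{T}_{0}) \cap B(x,R)$ is heavy, hence meets $K_{0}$ by \eqref{form39}; picking $x_{0} \in B \cap K_{0}$ and using $x_{0} \notin H_{\theta_{0},\mathrm{loc}}(K,\sigma,\delta,\delta^{\sqrt{\epsilon}})$ at the scale pair $[8\Delta,8R]$ (admissible since $\Delta/R \leq \delta^{\sqrt{\epsilon}}$) gives $\mathfrak{m}_{\theta_{0}}(x_{0} \mid [8\Delta,8R]) \leq 4(R/\Delta)^{\sigma}$, i.e. the vertical line through $x_{0}$ meets at most $\lesssim (R/\Delta)^{\sigma}$ many $8\Delta$-squares of $K_{8\Delta}$ inside $B(x_{0},8R)$; since $\mathcal{B}_{\Delta}$ is a minimal cover and all discs of $\mathcal{B}(\mathbf{T}_{0}) \cap B(x,R)$ meet both $K$ and the width-$\Delta$ vertical tube $\mathbf{T}_{0}$, this directly counts them, with an absolute constant. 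Your argument instead converts disc counts into $\mu$-measure via heaviness and then plays Lemma \ref{lemma5} against \eqref{form46}; the exponent bookkeeping you do ($R$-powers cancelling, leaving $\delta^{O_{\zeta}(\epsilon)}\lambda\Delta^{\sigma-t}$ against $\Delta^{\sigma-t-2\zeta}$) is correct, but it is a longer detour through the $\delta$-scale that re-derives, with losses, what the local high-multiplicity exclusion gives for free at the scale pair $(\Delta,R)$.

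Two concrete problems remain in your write-up. First, the containment $\pi_{\theta}(B(x,2R)\cap K_{\theta}) \subset \pi_{\theta}(K_{\theta}\cap \mathbf{T}_{0}')$ is false: $B(x,2R)\cap K_{\theta}$ is not contained in the tube, and the upper bound \eqref{form46} only controls the tube portion. To make your comparison legitimate you must run the lower bound on the tube-restricted set $K_{\theta}\cap 3\mathbf{T}_{0}\cap B(x,2R)$ (whose measure still dominates $|\mathcal{B}_{0}|\,\delta^{O_{\zeta}(\epsilon)}\Delta^{t}$, using the bounded overlap of the minimal cover $\mathcal{B}_{\Delta}$, which itself deserves a sentence). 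Lemma \ref{lemma5} as stated does not apply to such a set -- it is formulated for $B\cap K_{\theta}$ with $B$ a disc -- so you would have to reprove its fibre-counting argument for an arbitrary subset of $B(x,2R)\cap K_{\theta}$; the proof does go through verbatim, but this cannot be cited as a black box. Second, even after this fix your conclusion is $|\mathcal{B}(\mathbf{T}_{0})\cap B(x,R)| \lesssim \delta^{-O_{\zeta}(\epsilon)}\Delta^{-2\zeta}(R/\Delta)^{\sigma}$, not the stated bound with an absolute constant. You assert the loss can be absorbed downstream; that is true in spirit, but it would force re-tuning the explicit $\zeta$-multiples that the paper tracks (e.g.\ in \eqref{form64}, \eqref{form67}, \eqref{form70}, \eqref{form72}), so as written your argument proves a weaker statement than Lemma \ref{lemma8}, whereas the paper's one-line use of $H_{\theta_{0},\mathrm{loc}}$ gives the clean bound directly.
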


\begin{proof} To prove \eqref{form47}, fix $x \in \R^{2}$ and $R \in [\delta^{-\sqrt{\epsilon}}\Delta,1]$, and let $B \in \mathcal{B}(\mathbf{T}_{0}) \cap B(x,R)$. Then in particular $B \in \mathcal{B}_{\Delta}^{\mathrm{heavy}}$, so $B \cap K_{0} \neq \emptyset$ according to \eqref{form39}. Fix $x_{0} \in B \cap K_{0}$, and recall that
\begin{displaymath} x_{0} \notin H_{\theta_{0},\mathrm{loc}}(K,\sigma,\delta,\delta^{\sqrt{\epsilon}}) \quad \Longrightarrow \quad x_{0} \notin H_{\theta_{0}}(K,4\left(\tfrac{R}{\Delta} \right)^{\sigma},[8\Delta,8R]). \end{displaymath}
The implication is valid since $\Delta/R \leq \delta^{\sqrt{\epsilon}}$, and $8R \leq 8$. Now, by the definition of $H_{\theta_{0}}(\ldots)$, we deduce that
\begin{displaymath} |B(x_{0},8R) \cap K^{(8\Delta)} \cap \pi^{-1}\{\pi(x_{0})\}|_{8\Delta} = \mathfrak{m}_{\theta_{0}}(x_{0} \mid [8\Delta,8R]) \leq 4\left(\tfrac{R}{\Delta} \right)^{\sigma}. \end{displaymath}
Note that $B(x,R) \subset B(x_{0},8R)$ since $x_{0} \in B$ and $B \cap B(x,R) \neq \emptyset$. Recalling (above \eqref{form38}) that $\mathcal{B}_{\Delta}$ is a minimal cover of $B(1) \cap K$, the previous inequality even shows that
\begin{equation*}
  |\{B \in \mathcal{B}_{\Delta} : B \cap \mathbf{T}_{0} \cap B(x,R) \neq \emptyset\}| \lesssim \left(\tfrac{R}{\Delta} \right)^{\sigma}, 
\end{equation*}
and this implies \eqref{form47}. \end{proof}

By the definition of heaviness, the tube $\mathbf{T}_{0}$ satisfies the lower bound \eqref{form64}. We claim that, as a consequence, there exists a subset $E' \cap I \subset E \cap I$ of cardinality $|E' \cap I| \geq \Delta^{5\zeta}|E \cap I|$ and such that
\begin{equation}\label{form67} \sum_{B \in \mathcal{B}(\mathbf{T}_{0})} \mu(B \cap K_{0} \cap K_{\theta}) \geq \Delta^{t - \sigma + 5\zeta}\quad\text{for all } \theta \in E' \cap I. \end{equation}
This is a straightforward consequence of \eqref{form64}, and the following inequality which is based on Lemma \ref{lemma8} and the $(t,\delta^{-O_{\zeta}(\epsilon)})$-regularity of $\mu$:
\begin{displaymath} \sum_{B \in \mathcal{B}(\mathbf{T}_{0})} \mu(B \cap K_{0} \cap K_{\theta}) \leq |\mathcal{B}(\mathbf{T}_{0})| \cdot \Delta^{t - O_{\zeta}(\epsilon)} \lesssim \Delta^{t - \sigma - O_{\zeta}(\epsilon)} \qquad \theta \in E \cap I. \end{displaymath}
This shows that in order for \eqref{form64} to be true, the inequality \eqref{form67} must hold for all $\theta \in E' \cap I$ with $|E' \cap I| \geq \Delta^{5\zeta}|E \cap I|$. Now, as we have done many times before, we replace $E \cap I$ by $E' \cap I$ without changing notation: the only property of $E' \cap I$ we will need eventually is that $E_{I}'$ is a $(\Delta,s - \zeta,\Delta^{-O(\zeta)})$-set.\footnote{This is a little weaker than the information we have had so far that $E_{I}'$ is a $(\Delta,s - \zeta,\Delta^{-O_{\zeta}(\epsilon)})$-set. Fortunately, this is all we will need in the sequel.} Thus, we assume in the sequel that
\begin{equation}\label{form68} \sum_{B \in \mathcal{B}(\mathbf{T})} \mu(B \cap K_{0} \cap K_{\theta}) \geq \Delta^{t - \sigma + 5\zeta}\quad\text{for all } \theta \in E \cap I. \end{equation}

\subsection{The sets $\mathcal{A}$ and $\mathcal{A}_{\theta}$} 
Let
\begin{displaymath} \mathcal{A} := \mathcal{D}_{\delta}(\pi(K_{0} \cap \mathbf{T}_{0})). \end{displaymath}
We record that
\begin{equation}\label{form71} |\mathcal{A}| = |\pi(K_{0} \cap \mathbf{T}_{0})|_{\delta} \stackrel{\eqref{form46}}{\leq} \Delta^{\sigma - t - 4\zeta}. \end{equation}
Fix $\theta \in E \cap I$, and define the following subset $\mathcal{A}_{\theta} \subset \mathcal{A}$. We declare that $I \in \mathcal{A}_{\theta}$ if $I \in \mathcal{A}$, and
\begin{equation}\label{form75} |\{B \in \mathcal{B}(\mathbf{T}_{0}) : \pi^{-1}(I) \cap (B \cap K_{0} \cap K_{\theta}) \neq \emptyset\}| \geq \Delta^{-\sigma + 11\zeta}. \end{equation}
We claim that
\begin{equation}\label{form70} |\mathcal{A}_{\theta}| \geq \Delta^{\sigma - t + 6\zeta}. \end{equation}
The proof is, once again, based on the fact that $K_{0}$ lies in the complement of $H_{\theta_{0},\mathrm{loc}}(\ldots)$. This is used via the following lemma:
\begin{lemma} Let $B \in \mathcal{B}(\mathbf{T}_{0})$. Then,
\begin{equation}\label{form65} \mu(\pi^{-1}(I) \cap (B \cap K_{0})) \lesssim \Delta^{-\sigma} \cdot \delta^{t - O_{\zeta}(\epsilon)} \quad\text{for all } I \in \mathcal{D}_{\delta}(\R). \end{equation}
\end{lemma}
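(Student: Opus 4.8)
The plan is to argue exactly as in the proofs of Lemmas~\ref{lemma5} and~\ref{lemma6}, exploiting that every point of $K_{0}$ avoids the local high-multiplicity set $H_{\theta_{0},\mathrm{loc}}(K,\sigma,\delta,\delta^{\sqrt{\epsilon}})$. The point is that $\pi^{-1}(I)\cap B$ is essentially a $\delta\times\Delta$ rectangle centred on a vertical line, so its intersection with $K$ sits inside a slice of $K$ at the scale pair $(8\delta,8\Delta)$, and the avoidance property caps the covering number of such a slice at $\lesssim\Delta^{-\sigma}$ rather than the trivial $\Delta/\delta=\Delta^{-1}$; feeding this into the $(\delta,t,\delta^{-O_{\zeta}(\epsilon)})$-Frostman bound for $\mu$ then yields~\eqref{form65}.

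Concretely, I would first discard the trivial case $\pi^{-1}(I)\cap B\cap K_{0}=\emptyset$, and otherwise fix $x_{0}\in\pi^{-1}(I)\cap B\cap K_{0}$. Since $B$ has radius $\Delta=\delta^{1/2}$ and $I\in\mathcal{D}_{\delta}(\R)$ has length $\delta$, every $y\in\pi^{-1}(I)\cap B$ satisfies $|y-x_{0}|\le 2\Delta$ and $\dist(y,\pi^{-1}\{\pi(x_{0})\})\le\delta$; hence the horizontal projection $y^{\ast}$ of $y$ onto the vertical line $\pi^{-1}\{\pi(x_{0})\}$ lies in $B(x_{0},8\Delta)\cap\pi^{-1}\{\pi(x_{0})\}$, and if moreover $y\in K$ then $y^{\ast}\in K_{\delta}\subset K_{8\delta}$. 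As $y\mapsto y^{\ast}$ moves points by at most $\delta$, a cover of $B(x_{0},8\Delta)\cap K_{8\delta}\cap\pi^{-1}\{\pi(x_{0})\}$ by $8\delta$-balls pulls back to a cover of $\pi^{-1}(I)\cap B\cap K$ by $O(1)$ times as many $\delta$-balls, so
\[
\big|\pi^{-1}(I)\cap B\cap K\big|_{\delta}\ \lesssim\ \big|B(x_{0},8\Delta)\cap K_{8\delta}\cap\pi^{-1}\{\pi(x_{0})\}\big|_{8\delta}\ =\ \mathfrak{m}_{K,\theta_{0}}\big(x_{0}\mid[8\delta,8\Delta]\big).
\]
Now I use $x_{0}\in K_{0}$, hence $x_{0}\notin H_{\theta_{0},\mathrm{loc}}(K,\sigma,\delta,\delta^{\sqrt{\epsilon}})$: the dyadic pair $(r,R)=(8\delta,8\Delta)$ is admissible because $\delta\le 8\delta\le 8\Delta\le 8$ and $r/R=\delta/\Delta=\Delta\le\delta^{\sqrt{\epsilon}}$ (using $\sqrt{\epsilon}\le\tfrac12$), whence $\mathfrak{m}_{K,\theta_{0}}(x_{0}\mid[8\delta,8\Delta])<4(\Delta/\delta)^{\sigma}=4\Delta^{-\sigma}$.

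Combining the two estimates, $\pi^{-1}(I)\cap B\cap K_{0}\subset\pi^{-1}(I)\cap B\cap K$ is covered by $\lesssim\Delta^{-\sigma}$ balls of radius $\delta$, each of $\mu$-mass $\lesssim\delta^{t-O_{\zeta}(\epsilon)}$ by the Frostman part of the $(\delta,t,\delta^{-O_{\zeta}(\epsilon)})$-regularity of $\mu$ (Notation~\ref{not1}); summing over these balls gives $\mu(\pi^{-1}(I)\cap B\cap K_{0})\lesssim\Delta^{-\sigma}\delta^{t-O_{\zeta}(\epsilon)}$, which is~\eqref{form65}. I do not expect a genuine obstacle here: the argument is a direct analogue of Lemmas~\ref{lemma5}--\ref{lemma6}, and the only points requiring care are the bookkeeping of the constants $4$ and $8$ in the definition of $H_{\theta_{0},\mathrm{loc}}$ and the verification that $(8\delta,8\Delta)$ is an admissible scale pair, both immediate once $\Delta=\delta^{1/2}$ is recalled and $\epsilon$ is taken small.
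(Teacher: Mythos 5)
Your proposal is correct and follows essentially the same route as the paper: fix a point $x_{0}\in\pi^{-1}(I)\cap B\cap K_{0}$, use $x_{0}\notin H_{\theta_{0},\mathrm{loc}}(K,\sigma,\delta,\delta^{\sqrt{\epsilon}})$ at the admissible dyadic scale pair $(8\delta,8\Delta)$ to bound the slice multiplicity by $\lesssim\Delta^{-\sigma}$, cover $\pi^{-1}(I)\cap B\cap K_{0}$ by $\lesssim\Delta^{-\sigma}$ balls of radius $\delta$, and conclude with the Frostman bound from the $(\delta,t,\delta^{-O_{\zeta}(\epsilon)})$-regularity of $\mu$. Your explicit projection $y\mapsto y^{\ast}$ onto the fibre $\pi^{-1}\{\pi(x_{0})\}$ simply spells out the step the paper dismisses as "easily implies", so there is no gap.
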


\begin{proof} Fix $I \in \mathcal{D}_{\delta}(\R)$ and write $T := \pi^{-1}(I)$. If $T \cap (B \cap K_{0}) = \emptyset$, there is nothing to prove, so assume that there exists at least one point $x_{0} \in T \cap (B \cap K_{0})$. In particular,
\begin{displaymath} x_{0} \notin H_{\theta_{0},\mathrm{loc}}(K,\sigma,\delta,\delta^{\sqrt{\epsilon}}) \quad \Longrightarrow \quad x_{0} \notin H_{\theta_{0}}(K,4\left(\tfrac{\Delta}{\delta} \right)^{\sigma},[8\delta,8\Delta]), \end{displaymath}
or in other words
\begin{displaymath} |B(x_{0},8\Delta) \cap K^{(8\delta)} \cap \pi^{-1}\{\pi(x_{0})\}|_{8\delta} \leq 4\left(\tfrac{\Delta}{\delta} \right)^{\sigma} \sim \Delta^{-\sigma}, \end{displaymath}
using $\Delta = \delta^{1/2}$. Since $\pi^{-1}\{\pi(x_{0})\} \subset T$ and $B \subset B(x_{0},8\Delta)$, this easily implies that the intersection $T \cap (B \cap K_{0})$ can be covered by $\lesssim \Delta^{-\sigma}$ discs of radius $\delta$, and now the inequality \eqref{form65} follows from the $(\delta,t,\delta^{-O_{\zeta}(\epsilon)})$-regularity of $\mu$. \end{proof}

To proceed with the proof of \eqref{form70}, let $\mathcal{T}_{\theta} := \{\pi^{-1}(I) : I \in \mathcal{A} \, \setminus \, \mathcal{A}_{\theta}\}$. Thus, the tubes $\pi^{-1}(I)$, $I \in \mathcal{A} \, \setminus \, \mathcal{A}_{\theta}$, can intersect $B \cap K_{0} \cap K_{\theta}$ for at most $\leq \Delta^{-\sigma + 11\zeta}$ different discs $B \in \mathcal{B}(\mathbf{T}_{0})$. Applying \eqref{form65} for each of those discs individually leads to
\begin{displaymath}
\sum_{B \in \mathcal{B}(\mathbf{T}_{0})} \mu\big(\pi^{-1}(I) \cap (B \cap K_{0} \cap K_{\theta})\big) \leq \Delta^{10\zeta} \delta^{t - \sigma}\quad\text{for all } I \in \mathcal{A} \, \setminus \, \mathcal{A}_{\theta},
\end{displaymath}
assuming that $O_{\zeta}(\epsilon) < \zeta$. Summing over $I \in \mathcal{A} \, \setminus \, \mathcal{A}_{\theta}$, it follows that
\begin{equation}\label{form69}
\sum_{I \in \mathcal{A} \, \setminus \, \mathcal{A}_{\theta}} \sum_{B \in \mathcal{B}(\mathbf{T}_{0})} \mu\big(\pi^{-1}(I) \cap (B \cap K_{0} \cap K_{\theta})\big) \leq |\mathcal{A}| \cdot \Delta^{10\zeta}\delta^{t - \sigma} \stackrel{\eqref{form71}}{\leq} \Delta^{t - \sigma + 6\zeta}.
\end{equation}
On the other hand, the "full sum" over $I \in \mathcal{A}$ has the lower bound
\begin{displaymath}
\sum_{I \in \mathcal{A}} \sum_{B \in \mathcal{B}(\mathbf{T}_{0})} \mu\big(\pi^{-1}(I) \cap (B \cap K_{0} \cap K_{\theta})\big) \geq \sum_{B \in \mathcal{B}(\mathbf{T}_{0})} \mu(B \cap K_{0} \cap K_{\theta}) \stackrel{\eqref{form68}}{\geq} \Delta^{t - \sigma + 5\zeta},
\end{displaymath}
so by \eqref{form69} the full sum cannot be dominated by the part over $I \in \mathcal{A} \, \setminus \, \mathcal{A}_{\theta}$. Consequently,
\begin{align*} \Delta^{t - \sigma + 5\zeta} & \leq 2\sum_{I \in \mathcal{A}_{\theta}} \sum_{B \in \mathcal{B}(\mathbf{T}_{0})} \mu(\pi^{-1}(I) \cap (B \cap K_{0} \cap K_{\theta}))\\
& \stackrel{\eqref{form65}}{\lesssim} |\mathcal{A}_{\theta}| \cdot |\mathcal{B}(\mathbf{T}_{0})| \cdot \Delta^{-\sigma} \cdot \delta^{t - O_{\zeta}(\epsilon)} \stackrel{\eqref{form47}}{\lesssim} |\mathcal{A}_{\theta}| \cdot \Delta^{-2\sigma} \cdot \delta^{t - O_{\zeta}(\epsilon)},  \end{align*}
and therefore $|\mathcal{A}_{\theta}| \geq \Delta^{\sigma - t + 6\zeta}$, as claimed in \eqref{form70}.

\subsection{Violating the \texorpdfstring{$ABC$}{ABC} sum-product theorem} Let $A$ be the left end-points of the intervals in the collection $\mathcal{A} = \mathcal{D}_{\delta}(\pi(K_{0} \cap \mathbf{T}_{0}))$. Therefore $A$ is a $\delta$-separated subset of the interval $\pi(\mathbf{T}_{0})$. This interval has length $\Delta$, and there is no loss of generality in assuming that
\begin{displaymath} A \subset \pi(\mathbf{T}_{0}) = [0,\Delta]. \end{displaymath}
Next, we define the set "$B$" to consist of the $y$-coordinates of the centres of the discs in $\mathcal{B}(\mathbf{T}_{0})$. (We are aware of the risk of notational confusion, but we will make sure that the writing is clear; we prefer the notation "$B$" to make the connection with the $ABC$ theorem transparent.) Since $\mathbf{T}_{0}$ is a vertical tube of width $\Delta$, and the discs in $\mathcal{B}(\mathbf{T}_{0})$ all intersect $B(1)$, there is no loss of generality in assuming that $B$ is a $\Delta$-separated subset of $[0,1]$. Moreover, the "non-concentration" of the discs in $\mathcal{B}(\mathbf{T}_{0})$ recorded in Lemma \ref{lemma8} is inherited by the set $B$. We claim the following corollary:

\begin{cor} The set $B$ satisfies the following non-concentration condition if $\delta,\zeta > 0$ are sufficiently small:
\begin{equation}\label{form72} |B \cap B(x,r)|_{\Delta} \leq r^{\sigma - 6\sqrt{\zeta}}|B| \quad\text{for all } x \in \R, \, r \in [\Delta,\Delta^{\sqrt{\zeta}}], 
\end{equation}
\end{cor}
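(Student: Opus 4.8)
The plan is to deduce the non-concentration condition \eqref{form72} for $B$ directly from the non-concentration estimate \eqref{form47} for the disc family $\mathcal{B}(\mathbf{T}_{0})$ recorded in Lemma \ref{lemma8}, by translating "number of discs in a ball" into "$\Delta$-covering number of the set of $y$-coordinates". Since $\mathbf{T}_{0}$ is a vertical $\Delta$-tube, the centres of the discs in $\mathcal{B}(\mathbf{T}_{0})$ have $x$-coordinates confined to an interval of length $O(\Delta)$, so for any ball $B(x,r)\subset\R^{2}$ the set $\{B\in\mathcal{B}(\mathbf{T}_{0}):B\cap B(x,r)\neq\emptyset\}$ is, up to the irrelevant $x$-direction, the same as the set of discs whose $y$-coordinate lies within $O(r)$ of some fixed value; conversely, a $\Delta$-ball in $B$ corresponds to $O(1)$ discs of $\mathcal{B}(\mathbf{T}_{0})$. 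Hence $|B\cap B(x,r)|_{\Delta}\lesssim |\mathcal{B}(\mathbf{T}_{0})\cap B(x',O(r))|$ for a suitable $x'$, and $|B|\gtrsim|\mathcal{B}(\mathbf{T}_{0})|$ (in fact $|B|\sim|\mathcal{B}(\mathbf{T}_{0})|$, since distinct discs in a minimal cover have essentially distinct $y$-coordinates once we are inside a single vertical $\Delta$-tube).

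The key steps, in order, would be as follows. First, I would record the two elementary comparisons just described: $|\mathcal{B}(\mathbf{T}_{0})|\gtrsim\Delta^{-\sigma+O(\zeta)}$ and in fact $|\mathcal{B}(\mathbf{T}_{0})|\lesssim\Delta^{-\sigma}$ from Lemma \ref{lemma8}, together with $|B|\sim|\mathcal{B}(\mathbf{T}_{0})|$ and $|B\cap B(x,r)|_{\Delta}\lesssim|\mathcal{B}(\mathbf{T}_{0})\cap B(x',Cr)|$. Actually I should be careful: \eqref{form47} only holds for $r\ge \delta^{-\sqrt{\epsilon}}\Delta$, whereas \eqref{form72} is claimed for $r\ge\Delta$; for the small range $r\in[\Delta,\delta^{-\sqrt{\epsilon}}\Delta]$ the bound $|B\cap B(x,r)|_{\Delta}\lesssim (r/\Delta)\lesssim\delta^{-\sqrt{\epsilon}}$ follows trivially from $\Delta$-separation of $B$, and one checks this is $\le r^{\sigma-6\sqrt{\zeta}}|B|$ because $|B|\gtrsim\Delta^{-\sigma+O(\zeta)}$ is large, $r^{\sigma-6\sqrt{\zeta}}\ge\Delta^{\sigma}$, and $\delta^{-\sqrt{\epsilon}}$ is negligible provided $\epsilon$ is small relative to $\zeta$. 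Second, for $r$ in the main range $[\delta^{-\sqrt{\epsilon}}\Delta,\Delta^{\sqrt{\zeta}}]$, I would combine \eqref{form47} with the lower bound $|B|\gtrsim\Delta^{-\sigma+O_{\zeta}(\epsilon)}$ (which follows from \eqref{form70}, since $|\mathcal{A}_{\theta}|\le|\mathcal{B}(\mathbf{T}_{0})|\cdot$ something, or more simply from the heaviness of $\mathbf{T}_{0}$ via \eqref{form68}; in any case we have $|\mathcal{B}(\mathbf{T}_{0})|\gtrsim\Delta^{-\sigma+O(\zeta)}$). Then
\[
|B\cap B(x,r)|_{\Delta}\lesssim\Big(\tfrac{r}{\Delta}\Big)^{\sigma}=r^{\sigma}\Delta^{-\sigma}\lesssim r^{\sigma}\Delta^{-O(\zeta)}|B|,
\]
and since $r\le\Delta^{\sqrt{\zeta}}$ we have $r^{-5\sqrt{\zeta}}\ge\Delta^{-5\zeta}$, so $\Delta^{-O(\zeta)}\lesssim r^{-5\sqrt{\zeta}}$ once $\zeta$ is small, and swallowing the remaining constant into $r^{-\sqrt{\zeta}}$ yields $|B\cap B(x,r)|_{\Delta}\le r^{\sigma-6\sqrt{\zeta}}|B|$ for $\delta$ small.

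The main obstacle — though it is really bookkeeping rather than a genuine difficulty — is matching the exponents: one must verify that the various $O_{\zeta}(\epsilon)$ losses coming from the regularity of $\mu$ and the heaviness selection, together with the constant factors from the disc-to-coordinate comparison, are all absorbed by the gap between $\sigma$ and $\sigma-6\sqrt{\zeta}$, uniformly over the stated range of $r$. This forces the constraint $\epsilon=o_{\zeta}(1)$ (already in force, see \eqref{parameters}) and the smallness of $\zeta$, and one has to check the boundary cases $r\sim\Delta$ and $r\sim\Delta^{\sqrt{\zeta}}$ separately since the trivial separation bound and the Lemma \ref{lemma8} bound are each sharp only at one end. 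None of this requires a new idea; the corollary is essentially a restatement of Lemma \ref{lemma8} in the language of the $ABC$ theorem, set up so that the set $B$ has precisely the non-concentration hypothesis (B) required to apply Theorem \ref{thm:ABCConjecture} with exponent $\beta=\sigma-6\sqrt{\zeta}$ (which, after renaming, will be the $\beta$ fixed in \eqref{eq:parameters} once $\zeta$ is chosen small enough).
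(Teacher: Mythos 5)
Your proposal follows the paper's own proof essentially step for step: in the main range $r\in[\delta^{-\sqrt{\epsilon}}\Delta,\Delta^{\sqrt{\zeta}}]$ you combine Lemma \ref{lemma8} with the lower bound $|B|=|\mathcal{B}(\mathbf{T}_{0})|\geq\Delta^{-\sigma+6\zeta}$ coming from the heaviness of $\mathbf{T}_{0}$ (via \eqref{form67}/\eqref{form68} and the regularity of $\mu$) and absorb $\Delta^{-O(\zeta)}$ into $r^{-6\sqrt{\zeta}}$, while in the range $r\in[\Delta,\delta^{-\sqrt{\epsilon}}\Delta]$ you use the trivial $\Delta$-separation bound together with $\epsilon=o_{\zeta}(1)$, exactly as in the paper. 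The only imprecision is the line ``$r^{\sigma-6\sqrt{\zeta}}\geq\Delta^{\sigma}$'', which is too lossy as written (it only gives $r^{\sigma-6\sqrt{\zeta}}|B|\gtrsim\Delta^{O(\zeta)}<1\leq\delta^{-\sqrt{\epsilon}}$); replacing it by $r^{\sigma-6\sqrt{\zeta}}\geq\Delta^{\sigma-6\sqrt{\zeta}}$, or arguing as the paper does via $\delta^{-\sqrt{\epsilon}}\leq r^{-\zeta}\leq r^{\sigma-7\zeta}|B|$, closes the small-$r$ case with no further changes.
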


\begin{proof} To begin with, we observe that
\begin{equation}\label{form73} |B| = |\mathcal{B}(\mathbf{T}_{0})| \geq \Delta^{-\sigma + 6\zeta} \end{equation}
by \eqref{form67} and the $(t,\delta^{-O_{\zeta}(\epsilon)})$-regularity of $\mu$. Therefore, the non-concentration condition recorded in Lemma \ref{lemma8} implies that
\begin{displaymath} 
  |B \cap B(x,r)|_{\Delta} \lesssim \left(\tfrac{r}{\Delta} \right)^{\sigma} \leq \Delta^{-6\zeta}r^{\sigma}|B| \quad\text{for all } r \in [\delta^{-\sqrt{\epsilon}}\Delta,1]. 
\end{displaymath}
For $r \leq \Delta^{\sqrt{\zeta}}$, we have $\Delta^{-6\zeta} \leq r^{-6\sqrt{\zeta}}$. Thus, the inequality \eqref{form72} holds at least for $r \in [\delta^{-\sqrt{\epsilon}}\Delta,\Delta^{\sqrt{\zeta}}]$.  For $r \in [\Delta,\delta^{-\sqrt{\epsilon}}\Delta]$, we can simply use the trivial estimate
\begin{displaymath} |B \cap B(x,r)|_{\Delta} \lesssim r/\Delta \leq \delta^{-\sqrt{\epsilon}} \leq (\delta^{-\sqrt{\epsilon}}\Delta)^{-\zeta} \leq r^{-\zeta} \stackrel{\eqref{form73}}{\leq}r^{\sigma - 7\zeta}|B|. \end{displaymath}
 Since $7\zeta \leq 6\sqrt{\zeta}$ for $\zeta > 0$ small (as we assume), this proves \eqref{form72}. \end{proof}

In summary, $A \times B$ is a $(\delta \times \Delta)$-separated product subset of $[0,\Delta] \times [0,1]$ with the properties
\begin{equation}\label{form74} \Delta^{\sigma - t + 6\zeta} \stackrel{\eqref{form70}}{\leq} |A| = |\mathcal{A}| \stackrel{\eqref{form71}}{\leq} \Delta^{\sigma - t - 2\zeta} \quad \text{and} \quad |B| \stackrel{\eqref{form73}}{\geq} \Delta^{-\sigma + 6\zeta}, \end{equation}
and such that $B$ satisfies the $(\sigma - o_{\zeta}(1))$-dimensional non-concentration condition recorded in \eqref{form72}. For each $\theta \in E$, we next proceed to define a substantial subset $G_{\theta} \subset A \times B$ with a small $\pi_{\theta}$-projection at scale $\delta$. The starting point is the interval collection $\mathcal{A}_{\theta} \subset \mathcal{A}$ defined at \eqref{form75}. Let $A_{\theta} \subset A$ be the left end-points of the intervals in $\mathcal{A}_{\theta}$. Then, for each $I \in \mathcal{A}_{\theta}$, consider the subset $B_{I,\theta} \subset B$ defined by
\begin{displaymath} B_{I,\theta} := \{B \in \mathcal{B}(\mathbf{T}_{0}) : \pi^{-1}(I) \cap (B \cap K_{0} \cap K_{\theta}) \neq \emptyset\}. \end{displaymath}
(To be accurate, $B_{I,\theta}$ consists of the $y$-coordinates of the centres of the indicated discs.) There is a $1$-to-$1$ correspondence between the points $x \in A_{\theta}$ and $I \in \mathcal{A}_{\theta}$, so we may denote $B_{I,\theta} =: B_{x,\theta}$ for $x \in A_{\theta}$, and define
\begin{equation}\label{form87} G_{\theta} := \{(x,y) : x \in A_{\theta} \text{ and } y \in B_{x,\theta}\}. \end{equation}
By the defining property \eqref{form75} of the family $\mathcal{A}_{\theta}$, we have $|B_{x,\theta}| \geq \Delta^{-\sigma + 11\zeta}$ for all $x \in A_{\theta}$, and therefore
\begin{equation}\label{form88} |G_{\theta}| = \sum_{x \in A_{\theta}} |B_{x,\theta}| \stackrel{\eqref{form70}}{\geq} \Delta^{-t + 17\zeta} \stackrel{\eqref{form74}}{\geq} \Delta^{29\zeta}|A \times B|\quad\text{for all } \theta \in E \cap I. \end{equation}
On the other hand, it turns out that the $\pi_{\theta}$-projection of $G_{\theta}$ is controlled by the $\pi_{\theta}$-projection of $K_{\theta} \cap \mathbf{T}_{0}$:
\begin{lemma} We have
\begin{equation}\label{form76} |\pi_{\theta}(G_{\theta})|_{\delta} \lesssim |\pi_{\theta}(K_{\theta} \cap \mathbf{T}_{0})|_{\delta} \stackrel{\eqref{form46}}{\leq} \Delta^{\sigma - t - 2\zeta} \stackrel{\eqref{form74}}{\leq} \Delta^{-8\zeta}|A|\quad\text{for all } \theta \in E \cap I. \end{equation}
\end{lemma}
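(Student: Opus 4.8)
The plan is as follows. The last two inequalities in \eqref{form76} are precisely \eqref{form46} and the lower bound on $|A|$ in \eqref{form74}, so the only point to establish is the first estimate $|\pi_{\theta}(G_{\theta})|_{\delta} \lesssim |\pi_{\theta}(K_{\theta} \cap \mathbf{T}_{0})|_{\delta}$, for a fixed $\theta \in E \cap I$. The idea is to attach to every point of $G_{\theta}$ a ``witness'' point of $K_{\theta}$ lying $O(\delta)$-close to it in the $\pi_{\theta}$-direction, so that the $\delta$-covering number of $\pi_{\theta}(G_{\theta})$ is controlled by that of $\pi_{\theta}(K_{\theta} \cap \mathbf{T}_{0})$ (up to harmlessly dilating the tube).

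Concretely, given $(x,y) \in G_{\theta}$, I would unwind the definitions \eqref{form75} and \eqref{form87}: $x \in A_{\theta}$ corresponds to an interval $I_{x} \in \mathcal{A}_{\theta}$ having $x$ as one of its endpoints, and $y$ is the $y$-coordinate of the centre of a disc $B \in \mathcal{B}(\mathbf{T}_{0})$ for which $\pi^{-1}(I_{x}) \cap (B \cap K_{0} \cap K_{\theta}) \neq \emptyset$. Choosing any $z = (z_{1},z_{2})$ in this intersection, one has $z \in K_{\theta}$, $|z_{1} - x| \lesssim \delta$ (since $\pi(z) = z_{1} \in I_{x}$ and $|I_{x}| = \delta$), $|z_{2} - y| \le \Delta$ (since $z$ lies in the radius-$\Delta$ disc $B$ whose centre has height $y$), and — since $z$ lies in a radius-$\Delta$ disc meeting the width-$\Delta$ tube $\mathbf{T}_{0}$ — $z \in C\mathbf{T}_{0}$ for an absolute constant $C$. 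Using the parametrisation \eqref{form55} together with the two crucial scale relations $\theta \le \Delta$ and $\Delta^{2} = \delta$ (recall $\Delta = \delta^{1/2}$),
\[
|\pi_{\theta}(x,y) - \pi_{\theta}(z)| \le |x - \pi(z)| + \theta\,|y - z_{2}| \lesssim \delta + \Delta\cdot\Delta \lesssim \delta,
\]
so $\pi_{\theta}(G_{\theta})$ lies in the $O(\delta)$-neighbourhood of $\pi_{\theta}(K_{\theta} \cap C\mathbf{T}_{0})$, whence $|\pi_{\theta}(G_{\theta})|_{\delta} \lesssim |\pi_{\theta}(K_{\theta} \cap C\mathbf{T}_{0})|_{\delta}$.

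To finish I would bound $|\pi_{\theta}(K_{\theta} \cap C\mathbf{T}_{0})|_{\delta}$ exactly as in the derivation of \eqref{form46}: since $|\theta - \theta_{0}| \le \Delta$ we have $K_{\theta} \cap C\mathbf{T}_{0} \subset K_{\theta} \cap C'\mathbf{T}_{\theta}$ for an absolute constant $C'$ and a width-$\Delta$ tube $\mathbf{T}_{\theta}$ parallel to $\pi_{\theta}^{-1}\{0\}$; covering $C'\mathbf{T}_{\theta}$ by $O(1)$ width-$\Delta$ tubes and applying Lemma \ref{lemma6} (i.e.\ \eqref{form61}) to each gives $|\pi_{\theta}(K_{\theta} \cap C\mathbf{T}_{0})|_{\delta} \lesssim \delta^{-O_{\zeta}(\epsilon) - \zeta}\Delta^{\sigma - t} \le \Delta^{\sigma - t - 2\zeta}$ once $\epsilon$ is small enough that $O_{\zeta}(\epsilon) \le \zeta$, and then $\Delta^{\sigma - t - 2\zeta} \le \Delta^{-8\zeta}|A|$ by the lower bound in \eqref{form74}.

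I do not expect a genuine obstacle here; the only thing requiring care is the bookkeeping around the slightly dilated tube $C\mathbf{T}_{0}$ in place of $\mathbf{T}_{0}$, which is harmless since the bound of Lemma \ref{lemma6} is insensitive to bounded dilations of the tube (and any resulting constant is absorbed into $\delta^{-O_{\zeta}(\epsilon)}$). All the geometric content is the elementary observation that $\theta \le \Delta = \sqrt{\delta}$ forces a vertical displacement of size $\Delta$ to contribute only $O(\delta)$ to $\pi_{\theta}$ — precisely the compatibility between the tube width $\Delta$ and the resolution $\delta$ on which the whole argument is built.
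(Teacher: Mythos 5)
Your argument is correct and is essentially the paper's proof: pick a witness point in $\pi^{-1}(I_{x}) \cap (B \cap K_{0} \cap K_{\theta})$, note the displacements are $\leq \delta$ horizontally and $\leq \Delta$ vertically, and use $|\theta| \leq \Delta = \delta^{1/2}$ so that $\pi_{\theta}(G_{\theta})$ lies in an $O(\delta)$-neighbourhood of $\pi_{\theta}(K_{\theta} \cap \mathbf{T}_{0})$. The only (harmless) detour is your dilated tube $C\mathbf{T}_{0}$ and the re-application of Lemma \ref{lemma6}: since $I_{x} \subset \pi(\mathbf{T}_{0}) = [0,\Delta]$, the witness already lies in $\pi^{-1}(I_{x}) \subset \mathbf{T}_{0}$, so the paper concludes directly with \eqref{form46} and no enlargement is needed.
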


\begin{proof} Let $(x,y) \in G_{\theta}$. This implies, by definition, that $x \in I \in \mathcal{A}_{\theta}$ and $\pi^{-1}(I) \cap (B \cap K_{\theta}) \neq \emptyset$ for some $\Delta$-disc $B \in \mathcal{B}(\mathbf{T}_{0})$ whose centre has second coordinate "$y$". In particular, there exists a point $(x',y') \in \pi^{-1}(I) \cap K_{\theta}$ with the properties
\begin{displaymath} |x' - x| \leq \delta \quad \text{and} \quad |y' - y| \leq \Delta. \end{displaymath}
Now, observe that
\begin{displaymath} |\pi_{\theta}(x,y) - \pi_{\theta}(x',y')| \leq |x' - x| + |\theta||y' - y| \leq 2\delta\quad\text{for all } \theta \in E \cap I = E \cap [0,\Delta]. \end{displaymath}
In other words $\pi_{\theta}(G_{\theta})$ is contained in the $(2\delta)$-neighbourhood of $\pi_{\theta}(K_{\theta} \cap \mathbf{T}_{0})$, for every $\theta \in E \cap I$. This proves the lemma. \end{proof}

This is nearly what we need in order to apply -- or rather violate -- the $ABC$ sum-product theorem, Theorem \ref{thm:ABCConjecture}. To make the conclusion of the argument precise, we apply the dilation $(x,y) \mapsto D(x,y) := (\Delta^{-1}x,y)$ to the set $A \times B$, and also to the subsets $G_{\theta}$. Then, writing $A' := \Delta^{-1}A$, we find that $A' \times B = D(A \times B)$ is a $\Delta$-separated product set, and $G_{\theta}' := D(G_{\theta}) \subset A' \times B$ is a subset satisfying $|A'| = |A|$, and
\begin{equation}\label{form86} |G_{\theta}'| = |D(G_{\theta})| \stackrel{\eqref{form88}}{\geq} \Delta^{29\zeta}|A' \times B|\quad\text{for all } \theta \in E \cap I. \end{equation}
Moreover,
\begin{align*} \pi_{\Delta^{-1}\theta}(G_{\theta}') & = \{x + (\Delta^{-1}\theta)y : (x,y) \in G_{\theta}'\}\\
& = \Delta^{-1}\{\Delta x + \theta y : (x,y) \in G_{\theta}'\} = \Delta^{-1}\pi_{\theta}(G_{\theta})\quad\text{for all } \theta \in E \cap I.  \end{align*}
Since the renormalisation $E_{I}$ consists exactly of the points $\Delta^{-1}\theta$ with $\theta \in E \cap I$, the previous equation yields
\begin{equation}\label{form89} |\pi_{\theta}(G_{\theta}')|_{\Delta} = |\pi_{\Delta\theta}(G_{\theta})|_{\delta} \stackrel{\eqref{form76}}{\leq} \Delta^{-9\zeta}|A'|\quad\text{for all } \theta \in E_{I}. \end{equation}
Proposition \ref{prop4}(a) suggests that the renormalisation $E_{I}$ is a $(\Delta,s - \zeta,\Delta^{-O_{\zeta}(\epsilon)})$-set, but keeping in mind the various refinements to $E \cap I$, and in particular the latest one above \eqref{form68}, the correct conclusion is that $E_{I}$ is a $(\Delta,s - \zeta,\Delta^{-O(\zeta)})$-set.

It is time to apply Theorem \ref{thm:ABCConjecture} to the values
\begin{displaymath} \alpha := t - \sigma + \zeta_{0}, \quad \text{and} \quad \beta := \sigma - \zeta_{0}, \end{displaymath}
as announced at \eqref{eq:parameters}. We will apply the theorem at the scale $\Delta$. According to \eqref{form74}, the sets $A'$ and $B$ are $\Delta$-separated sets satisfying $|A'| \leq \Delta^{-\alpha}$ and $|B| \geq \Delta^{-\beta}$, assuming that $\zeta > 0$ was taken sufficiently smaller than $\zeta_{0} = \zeta_{0}(s,\sigma_{1},\sigma_{2},t) > 0$. Also, according to \eqref{form72}, the set $B$ satisfies a $\beta$-dimensional non-concentration condition if $6\sqrt{\zeta} < \zeta_{0}$.\footnote{Since the current "$\zeta$" actually stands for $\bar{\zeta} = 7\sqrt{\zeta}$ in the original notation of \eqref{form9}, recall Notation \ref{not1}, it would be more accurate to require here that $C\zeta^{1/4} < \zeta_{0}$.} We already noted above \eqref{eq:parameters} that with this notation,
\begin{displaymath} \gamma = s - \zeta_{0} > \alpha - \beta. \end{displaymath}
Moreover, the set $E_{I}$ is a $(\Delta,s - \zeta,\Delta^{-O(\zeta)})$-set, or in other words the normalised counting measure $\nu = |E_{I}|^{-1}\mathcal{H}^{0}|_{E_{I}}$ satisfies the Frostman condition
\begin{displaymath} \nu(B(x,r)) \leq \Delta^{-O(\zeta)}r^{s - \zeta}\quad\text{for all } x \in S^{1}, \, \Delta \leq r \leq 1. \end{displaymath}
Consequently, if $\zeta > 0$ is small enough depending on $\zeta_{0}$ and $\chi_{0} = \chi_{0}(s,\sigma_{0},\sigma_{1},t) > 0$, namely the constant from \eqref{form85}, then $\nu$ also satisfies
\begin{displaymath} \nu(B(x,r)) \leq r^{s - O(\sqrt{\zeta})} \leq r^{s - \zeta_{0}}\quad\text{for all } x \in S^{1}, \, \Delta \leq r \leq \Delta^{\chi_{0}} \leq \Delta^{\sqrt{\zeta}}. \end{displaymath}
In other words, $\nu$ satisfies the hypothesis of Theorem \ref{thm:ABCConjecture}, in the notation of \eqref{form82}. As a final piece of information, recall that the sets $G_{\theta}' \subset A' \times B$ defined above \eqref{form86} (see also \eqref{form87}) satisfy $|G_{\theta}'| \geq \Delta^{\chi_{0}}|A'||B|$, assuming that $29\zeta < \chi_{0}$.

Therefore, by Theorem \ref{thm:ABCConjecture}, or more precisely \eqref{form82}, if $\delta > 0$ (hence $\Delta > 0$) is sufficiently small, there exists $\theta \in \spt(\nu) = E_{I}$ with the property
\begin{displaymath} |\pi_{\theta}(G_{\theta})|_{\Delta} \geq \Delta^{-\chi_{0}}|A'|. \end{displaymath}
However, this lower bound contradicts \eqref{form89} if $9\zeta < \chi_{0}$. This completes the proof of Proposition \ref{prop2}.

\section{Furstenberg set estimates} \label{s:Furstenberg}

We recommend that the reader reviews the definitions of dyadic tubes and their slopes from Section \ref{s:tube-preliminaries}. We will use the following terminology of \emph{nice configurations}.
\begin{definition}
Fix $\delta\in 2^{-\mathbb{N}}$, $s\in [0,1]$, $C>0$,  $M\in\mathbb{N}$. We say that a pair $(\mathcal{P},\mathcal{T}) \subset \mathcal{D}_{\delta} \times \mathcal{T}^{\delta}$ is a \emph{$(\delta,s,C,M)$-nice configuration} if for every $p\in\mathcal{P}$ there exists a $(\delta,s,C)$-set $\mathcal{T}(p) \subset\mathcal{T}$ with $ |\mathcal{T}(p)| = M$ and such that $T \cap p \neq\emptyset$ for all $T\in\mathcal{T}(p)$.
\end{definition}

Theorem \ref{thm:Furstenberg-Ahlfors} below is the discretised version of Theorem \ref{thm:FurstIntro}, or to be accurate, a "dual version" of Theorem \ref{thm:FurstIntro}. Theorem \ref{thm:FurstIntro} can be deduced from Theorem \ref{thm:Furstenberg-Ahlfors} with the aid of Proposition \ref{prop5}, see Remark \ref{rem3}.
\begin{thm}\label{thm:Furstenberg-Ahlfors} For every $s \in (0,1]$, $t \in [s,2]$, and $0 \leq u < \min\{(s + t)/2,1\}$, there exist $\epsilon = \epsilon(s,t,u) > 0$ and $\delta_{0} := \delta_{0}(s,t,u) > 0$ such that the following holds for all $\delta \in (0,\delta_{0}]$. Let $(\mathcal{P},\mathcal{T})$ be a $(\delta,s,\delta^{-\e},M)$-nice configuration, where $\mathcal{P}$ is a  $(\delta,t,\delta^{-\e})$-regular set. Then,
\begin{equation}\label{eq:FurstenbergSTU}
\Big| \bigcup_{p \in \mathcal{P}} \mathcal{T}(p) \Big| \ge M\cdot \delta^{-u}.
\end{equation}
\end{thm}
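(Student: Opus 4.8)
The plan is to deduce Theorem~\ref{thm:Furstenberg-Ahlfors} from the projection estimate for regular sets, Corollary~\ref{cor2}, using a duality/iteration argument due to Hong Wang. The point is that a nice configuration $(\mathcal{P},\mathcal{T})$ with $\mathcal{P}$ a $(\delta,t,\delta^{-\e})$-regular set is, by projective duality (Definition~\ref{def:dyadicTubes}), essentially the same data as a set of dual points $\mathcal{P}^{*}\subset\mathcal{D}_{\delta}$ together with, for each dual point $p^{*}$, a $(\delta,s)$-set $\mathcal{T}(p)^{*}$ of lines through $p^{*}$; and $\big|\bigcup_{p}\mathcal{T}(p)\big|$ corresponds to the number of $\delta$-tubes needed to cover $\bigcup_{p^{*}}\mathcal{T}(p)^{*}$. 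In this dual picture, for a fixed slope $\theta$, the $\delta$-tubes of slope near $\theta$ in $\bigcup_{p^{*}}\mathcal{T}(p)^{*}$ are controlled by $|\pi_{\theta}(\mathcal{P}^{*})|_{\delta}$ (with a loss governed by how concentrated the slope sets are). So the heuristic ``Fubini'' from the introduction becomes rigorous at the $\delta$-discretised level: integrating the projection bound over the $(\delta,s)$-set of available slopes should produce the factor $M\cdot\delta^{-u}$.

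The first step I would carry out is to pass to a \emph{uniform} refinement of all the relevant sets. Using Lemma~\ref{l:uniformization}/Corollary~\ref{cor:uni}, I would replace $\mathcal{P}$ by a $\{2^{-jT}\}_{j=1}^{m}$-uniform $(\delta,t,\delta^{-O(\e)})$-regular subset, and similarly uniformize (by pigeonholing over a bounded number of branching patterns) the slope sets $\sigma(\mathcal{T}(p))$ so that they are all $\{2^{-jT}\}$-uniform $(\delta,s,\delta^{-O(\e)})$-sets with essentially the same branching function. Next, I would pigeonhole a single scale decomposition: decompose $[\delta,1]$ into two blocks at a well-chosen intermediate scale $\Delta$, so that on the coarse block $[\Delta,1]$ the set $\mathcal{P}$ is regular of some dimension and on the fine block $[\delta,\Delta]$ the slope sets behave like a $(\delta/\Delta,s)$-set — the exact choice of $\Delta$ being dictated by the desired bound $\min\{(s+t)/2,1\}$. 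For this kind of bookkeeping Lemma~\ref{l:regular-is-frostman-ahlfors} and Lemma~\ref{lemma3} (renormalisation of uniform sets and their branching functions) are the workhorses.

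The core step is then the application of Corollary~\ref{cor2}: after renormalising inside a typical coarse cube $Q\in\mathcal{D}_{\Delta}(\mathcal{P})$, the set $\mathcal{P}_{Q}$ is (up to the renormalisation loss discussed in Remark~\ref{rem:regular}) a $(\delta/\Delta,t,(\delta/\Delta)^{-O(\e)})$-regular set, and the slopes available inside $Q$ still form a $(\delta/\Delta,s)$-set, so Corollary~\ref{cor2} (in the strengthened ``half of $E$'' form of Remark~\ref{rem:cor2}) yields, for most slopes $\theta$ in that $(\delta/\Delta,s)$-set, a projection $\pi_{\theta}(\mathcal{P}_{Q})$ of size $\ge(\delta/\Delta)^{-u'}$ for any $u'<\min\{(s+t)/2,1\}$. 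Translating back through duality, each such $\theta$ contributes $(\delta/\Delta)^{-u'}$ essentially-distinct tubes of slope $\approx\theta$ passing through $Q$, and these are distinct for distinct $\theta$ by the transversality of the dual lines. Summing over the $(\delta/\Delta)^{-s+O(\e)}$ typical slopes and then over the $\Delta^{-t+O(\e)}$ coarse cubes $Q$, and being careful not to overcount tubes shared between different $Q$'s, one arranges that the total count of distinct tubes in $\bigcup_{p}\mathcal{T}(p)$ is at least $M\cdot\delta^{-u}$ after optimising the split at $\Delta$ and shrinking $u'$ to $u$ and $\e$ as needed.

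The main obstacle I expect is precisely the \emph{overcounting / double-counting control} in the last summation: a single tube $T\in\bigcup_{p}\mathcal{T}(p)$ can pass through many cubes $Q\in\mathcal{D}_{\Delta}(\mathcal{P})$ and can be assigned to many slope-directions at different cubes, so naively summing the per-cube lower bounds does not immediately give a lower bound on the \emph{union}. The resolution (this is the content of Hong Wang's argument) is to fix one heavy coarse cube (or to work with the multiplicity function counting, for each tube, how many cubes it meets, and use the regularity of $\mathcal{P}$ to bound this multiplicity by $\Delta^{-t}\cdot M^{-1}\cdot(\text{size of }\mathcal{T})$ or similar), so that the per-cube projection gains genuinely aggregate. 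A secondary technical nuisance is the renormalisation loss in Remark~\ref{rem:regular} — when $|\mathcal{P}\cap Q|\ll\Delta^{t}|\mathcal{P}|$ the renormalised constant $\bar C$ blows up — but this is handled by the $(\delta,t,\delta^{-\e})$-regularity itself (property (2) of Definition~\ref{def:deltaTRegularSet}), which forces $|\mathcal{P}\cap Q|\gtrsim\Delta^{t}|\mathcal{P}|\cdot\delta^{O(\e)}$, keeping $\bar C\le\delta^{-O(\e)}$. Once these counting issues are settled, the rest is the routine pigeonholing and parameter-chasing that the paper's preliminary lemmas are designed to streamline.
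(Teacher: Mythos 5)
You have correctly identified the external input (Corollary~\ref{cor2} applied, in its dense-subset-robust form of Remark~\ref{rem:cor2}, to renormalised copies of the regular set inside coarse squares) and the general duality framing, but the heart of the matter --- how the per-square projection gains aggregate into a bound on $|\bigcup_p\mathcal{T}(p)|$ --- is missing, and the one-intermediate-scale scheme you propose does not close. The problematic step is ``each such $\theta$ contributes $(\delta/\Delta)^{-u'}$ essentially-distinct tubes of slope $\approx\theta$ passing through $Q$'': a large projection $\pi_\theta(\mathcal{P}_Q)$ produces tubes only through those points $p$ whose \emph{own} family $\mathcal{T}(p)$ contains a tube of slope $\approx\theta$. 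So you must apply Corollary~\ref{cor2} to the subset $\mathcal{Q}_\theta=\{p\in\mathcal{P}_Q:\theta\in\sigma(\mathcal{T}(p))\}$, and this is legitimate only if $\mathcal{Q}_\theta$ is a $\delta^{O(\epsilon)}$-dense subset of $\mathcal{P}_Q$ for many $\theta$. By a trivial double count, that density holds on average only when the \emph{total} slope set of tubes through $Q$ (at the relative scale) has cardinality comparable to the \emph{per-point} slope count $M_Q$; at a single, arbitrarily chosen intermediate scale $\Delta$ there is no reason for this, and when the total slope set is much richer than the per-point sets the sets $\mathcal{Q}_\theta$ are sparse and the projection theorem gives nothing. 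Your proposed remedies do not repair this: bounding the multiplicity of tubes across coarse cubes is essentially the incidence problem itself (circular), while ``fixing one heavy coarse cube'' only makes sense together with a mechanism guaranteeing that the chosen cube and scale pair is a \emph{good} one.

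This is exactly why the paper's proof is genuinely multiscale rather than a two-block split. It iterates the factorisation of \cite[Proposition 5.2]{OS23} (Corollary~\ref{cor:ShWa}-type) along a nested chain of heavy squares over $\sim\epsilon^{-3}$ blocks of relative size $\Delta=\delta^{\epsilon^{3}}$, obtaining $|\mathcal{T}_0|/M\gtrapprox\prod_j |\mathcal{T}_{\mathbf{q}_j}|/N_j$; this product formula is what replaces your ``sum over cubes and control overcounting''. The per-point slope counts $N_j$ along the chain are essentially increasing and bounded by $\Delta^{-1}$, so by pigeonholing at most $O(\epsilon^{-1})$ of the $\epsilon^{-3}$ blocks can see the slope count jump by $\Delta^{-\epsilon}$; at every other block the per-point slope sets are automatically dense in the square's total slope set, the sets $\mathcal{Q}_\theta$ are dense for a dense set of $\theta$, and Corollary~\ref{cor2} yields $|\mathcal{T}_{\mathbf{q}_j}|/N_j\geq\Delta^{-v+2\epsilon}$ with $v=\tfrac12(u+\min\{(s+t)/2,1\})$; multiplying the good blocks gives \eqref{eq:FurstenbergSTU}. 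Without this increasing-$N_j$ pigeonholing (or some substitute for it) your argument fails precisely in the case where tubes through the chosen cube have many more directions than any single point owns, so as written the proposal has a genuine gap. (Two smaller points: the coarsened per-point slope sets need not remain $(\delta/\Delta,s)$-sets without the refinements built into \cite[Proposition 5.2]{OS23}, and the split scale is not ``dictated by $\min\{(s+t)/2,1\}$'' --- that threshold enters only through the choice of $v$ in the projection estimate.)
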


\subsubsection*{Choosing parameters} We start the proof by fixing some parameters. Let
\begin{equation}\label{form146}
v = \tfrac{1}{2}\big(u+ \min\{\tfrac{s+t}{2},1\}\big) \in [0,1). \end{equation}
We apply Corollary \ref{cor2} (and Remark \ref{rem:cor2}) with $v$ in place of $u$; we let $\epsilon,\delta_{0} > 0$ be so small that the conclusion of Corollary \ref{cor2} holds with constants $10\epsilon$ and $\delta_{0}$. We may assume that $\epsilon > 0$ here is so small that
\begin{equation}\label{form90} v - 6\epsilon > u. \end{equation}

In this proof, the notation $A\lessapprox_{\delta} B$ stands for $A \le C\, \log(1/\delta)^C\, B$, where $C$ is a constant that may depend (only) on $s,t,u$. We define $A\approx_{\delta} B$, $A\gtrapprox_{\delta} B$ similarly.

We choose $\delta'_0,\e' \in (0,\tfrac{1}{2}]$ so that
\begin{align}
\delta'_0 &\le (\delta_0)^{\e^{-3}}, \label{eq:delta'_0} \\
\e' &\le \tfrac{1}{10}\e^{4}.  \label{eq:eps_0}
\end{align}
Then $\delta'_0$, $\e'$ depend only on $s,t,u$. In particular, the constants in the "$\approx_{\delta}$" notation may depend on $\epsilon'$. In fact, there will be several further (implicit) requirements for the smallness of $\delta_{0}'$, but we will not attempt to keep track of them explicitly; when they occur, we will use the phrase "assuming that $\delta$ is small enough". We now claim that \eqref{eq:FurstenbergSTU} holds whenever $(\mathcal{P},\mathcal{T})$ is a $(\delta,s,\delta^{-\epsilon'},M)$-nice configuration, where $\mathcal{P} \subset \mathcal{D}_{\delta}$ is $(\delta,t,\delta^{-\epsilon'})$-regular.

For notational purposes, let $(\mathcal{P}_0,\mathcal{T}_0)$ be the given $(\delta,s,\delta^{-\e'},M_0)$-nice configuration, where $M_{0} := M$, $0<\delta\le \delta'_0$, and $\mathcal{P}_0$ is $(\delta,t,\delta^{-\e'})$-regular. Write
\begin{align*}
\Delta &= \delta^{\e^3} \stackrel{\eqref{eq:delta'_0}}{\leq} \delta_{0},\\
\Delta_j &= \Delta^{-j}\cdot\delta, \quad 0\le j \le \e^{-3}.
\end{align*}
Notice that
\begin{equation}\label{form134} \Delta_{j} \leq \Delta\quad\text{for all } 0 \leq j \leq \epsilon^{-3} - 1 =: J. \end{equation}

\subsubsection*{A recursive construction} We will inductively construct a sequence $(\mathcal{P}_j,\mathcal{T}_j)_{j=0}^J$ such that:
\begin{itemize}
\item $\mathcal{P}_j$ is a $(\Delta_j,s,\lessapprox_{\delta}\delta^{-\e'},M_j)$-nice configuration and, further, 
\item $\mathcal{P}_j$ is $(\Delta_j,t,\lessapprox_{\delta}\delta^{-\e'})$-regular.
\end{itemize}   
The configuration $(\mathcal{P}_0,\mathcal{T}_0)$ is given. There is no loss of generality in assuming that the families $\mathcal{T}_{0}(p)$, $p \in \mathcal{P}_{0}$ have the following uniformity property at scale $\Delta$: there exists an integer $H_{0} \in \{1,\ldots,\sim \Delta/\delta\}$ such that
\begin{equation}\label{form142} H_{0} = |\mathcal{T}_{0}(p) \cap \mathbf{T}|\quad\text{for all } p \in \mathcal{P}_{0}, \, \mathbf{T} \in \mathcal{T}^{\Delta}(\mathcal{T}_{0}(p)). \end{equation}
Indeed, this property can be obtained by pigeonholing, at the cost of reducing the cardinalities of $\mathcal{P}_{0},\mathcal{T}_{0}(p)$ by a factor of $\approx_{\delta} 1$.

Suppose the  $(\Delta_j,s,\lessapprox_{\delta}\delta^{-\e'},M_j)$-nice configuration $(\mathcal{P}_j,\mathcal{T}_j)$ has been defined for some $j \leq J - 1$, and $\mathcal{P}_j$ is $(\Delta_j,t,\lessapprox_{\delta}\delta^{-\e'})$-regular. We begin the construction of $(\mathcal{P}_{j + 1},\mathcal{T}_{j + 1})$ by selecting a subset $\overline{\mathcal{P}}_{j} \subset \mathcal{P}_{j}$ with the following properties:
\begin{enumerate}[(a)]
\item \label{it-a} $|\overline{\mathcal{P}}_{j}| \approx_{\Delta_{j}} |\mathcal{P}_{j}|$,
\item \label{it-b} $|\overline{\mathcal{P}}_{j} \cap \mathbf{q}| \equiv A_{j}$ for some constant $A_{j}\in \N$, and for all $\mathbf{q} \in \mathcal{D}_{\Delta_{j + 1}}(\overline{\mathcal{P}}_{j})$.
\end{enumerate}
Such a subset can be located by elementary pigeonholing. We then apply \cite[Proposition 5.2]{OS23} to $(\overline{\mathcal{P}}_{j},\mathcal{T}_{j})$, at scales $\Delta_{j}$ and $\Delta_{j + 1}$. (Notice that both scales are smaller than $\Delta$ by \eqref{form134}.) The argument in \cite{OS23} is a careful inductive pigeonholing. We find:
\begin{itemize}
\item A "refinement" configuration $(\widehat{\mathcal{P}}_{j},\widehat{\mathcal{T}}_{j})\subset\mathcal{D}_{\Delta_{j}}\times \mathcal{T}^{\Delta_{j}}$ with $\widehat{\mathcal{P}}_{j} \subset \overline{\mathcal{P}}_{j}\subset \mathcal{P}_{j}$, and $\widehat{\mathcal{T}}_{j}(\mathbf{p}) \subset \mathcal{T}_{j}(\mathbf{p})$ for all $\mathbf{p} \in \widehat{\mathcal{P}}_{j}$,
\item a "covering" configuration $(\mathcal{P}_{j + 1},\mathcal{T}_{j + 1}) \subset \mathcal{D}_{\Delta_{j + 1}} \times \mathcal{T}^{\Delta_{j + 1}}$,
\end{itemize}
with the following more precise properties:
\begin{enumerate}[(\rm i)]
\item \label{it-induction-i-body} $|\mathcal{D}_{\Delta_{j+1}}(\widehat{\mathcal{P}}_{j})| \approx_{\delta} |\mathcal{D}_{\Delta_{j+1}}(\overline{\mathcal{P}}_{j})|$, and $|\widehat{\mathcal{P}}_{j}\cap \mathbf{q}| \approx_{\delta} |\overline{\mathcal{P}}_{j} \cap \mathbf{q}|$ for all $\mathbf{q}\in\mathcal{D}_{\Delta_{j+1}}(\widehat{\mathcal{P}}_{j})$. It follows from the properties \eqref{it-a}-\eqref{it-b} of the set $\overline{\mathcal{P}}_{j}$ that
\begin{displaymath} |\widehat{\mathcal{P}}_{j}| \approx_{\delta} |\mathcal{D}_{\Delta_{j + 1}}(\overline{\mathcal{P}}_{j})| \cdot A_{j} = |\overline{\mathcal{P}}_{j}| \approx_{\delta} |\mathcal{P}_{j}|. \end{displaymath}
\item \label{it-induction-ii-body} $|\widehat{\mathcal{T}}_{j}(\mathbf{p})| \approx_{\delta} |\mathcal{T}_{j}(\mathbf{p})|$ for $\mathbf{p}\in\widehat{\mathcal{P}}_{j}$.
\item \label{it-induction-iii-body} The configuration $(\mathcal{P}_{j + 1},\mathcal{T}_{j + 1})$ is $(\Delta_{j + 1},s,\lessapprox_{\delta} \delta^{-\epsilon'},M_{j + 1})$-nice for some $M_{j + 1} \geq 1$, and  $\mathcal{P}_{j + 1} = \mathcal{D}_{\Delta_{j + 1}}(\widehat{\mathcal{P}}_{j})$. Moreover,
\begin{equation}\label{form98} \mathcal{T}^{\Delta_{j + 1}}(\widehat{\mathcal{T}}_{j}(\mathbf{p})) \subset \mathcal{T}_{j + 1}(\mathbf{q})\quad\text{for all } \mathbf{p} \in \widehat{\mathcal{P}}_{j} \cap \mathbf{q}, \, \mathbf{q} \in \mathcal{P}_{j + 1}. \end{equation}

\item \label{it-induction-iv-body} For each $\mathbf{q}\in\mathcal{P}_{j+1}$ there exists $M_\mathbf{q}\ge 1$,  and a family of tubes $\mathcal{T}_{\mathbf{q}}\subset\mathcal{T}^{\Delta}$ such that $(S_{\mathbf{q}}(\widehat{\mathcal{P}}_{j} \cap \mathbf{q}),\mathcal{T}_\mathbf{q}) \subset \mathcal{D}_{\Delta} \times \mathcal{T}^{\Delta}$ is $(\Delta,s,\lessapprox_{\delta} \delta^{-\e'},M_{\mathbf{q}})$-nice, where
\begin{displaymath} S_{\mathbf{q}}(\widehat{\mathcal{P}}_{j} \cap \mathbf{q}) := \{S_{\mathbf{q}}(\mathbf{p}) : \mathbf{p} \in \widehat{\mathcal{P}}_{j} \cap \mathbf{q}\}. \end{displaymath}
 (We remind that by definition $M_{\mathbf{q}} \equiv |\mathcal{T}_{\mathbf{q}}(S_{\mathbf{q}}(\mathbf{p}))|$ is independent of $\mathbf{p} \in \widehat{\mathcal{P}}_{j}\cap\mathbf{q}$.) Moreover,
     \begin{equation} \label{eq:slopes-rescaled-tubes}
    \mathcal{D}_{\Delta} \left[ \sigma\big(\mathcal{T}_\mathbf{q}(S_{\mathbf{q}}(\mathbf{p}))\big) \right]  = \mathcal{D}_{\Delta}\left[\sigma\big(\widehat{\mathcal{T}}_{j}(\mathbf{p})\big)\right]\quad\text{for all } \mathbf{p}\in\widehat{\mathcal{P}}_{j}\cap \mathbf{q}.
    \end{equation}
\item \label{it-induction-v-body}
\begin{equation*} 
\frac{|\mathcal{T}_{j}|}{M_{j}} \gtrapprox_{\delta} \frac{|\mathcal{T}_{j+1}|}{M_{j+1}}\cdot \max_{\mathbf{q}\in \mathcal{P}_{j+1} }\frac{|\mathcal{T}_{\mathbf{q}}|}{M_{\mathbf{q}}}.
\end{equation*}
\end{enumerate}
In fact, above, we apply \cite[Proposition 5.2]{OS23} in the slightly stronger form stated in \cite[Remark 5.5]{OS23}, applied at scale $\bar{\Delta} := \Delta = \delta^{\epsilon^{3}} \geq \Delta_{j + 1}$. This allows us to ensure that the following number is well-defined, that is, depends only on $j \in \{0,\ldots,J - 1\}$:
\begin{equation}\label{form133} H_{j + 1} := |\mathcal{T}_{j + 1}(\mathbf{q}) \cap \mathbf{T}|\quad\text{for all } \mathbf{q} \in \mathcal{P}_{j + 1}, \, \mathbf{T} \in \mathcal{T}^{\Delta}(\mathcal{T}_{j + 1}(\mathbf{q})). \end{equation}
(Note the analogue with \eqref{form142}.) This uniformity has the following useful consequence: whereas \eqref{it-induction-ii-body} says that $|\widehat{\mathcal{T}}_{j}(\mathbf{q})| \approx |\mathcal{T}_{j}(\mathbf{q})|$ for $\mathbf{q} \in \widehat{\mathcal{P}}_{j}$, we may use \eqref{it-induction-i-body} and \eqref{form133} (or \eqref{form142} when $j = 0$) to upgrade this to
\begin{equation}\label{form138} |\widehat{\mathcal{T}}_{j}(\mathbf{q})|_{\Delta} \approx_{\delta} |\mathcal{T}_{j}(\mathbf{q})|_{\Delta}\quad\text{for all } j \in \{0,\ldots,J\}, \, \mathbf{q} \in \widehat{\mathcal{P}}_{j}. \end{equation}
Indeed, this follows from rearranging the following inequality:
\begin{displaymath} H_{j} \cdot |\mathcal{T}_{j}(\mathbf{q})|_{\Delta} = |\mathcal{T}_{j}(\mathbf{q})| \approx_{\delta} |\widehat{\mathcal{T}}_{j}(\mathbf{q})| \leq H_{j} \cdot |\widehat{\mathcal{T}}_{j}(\mathbf{q})|_{\Delta}. \end{displaymath}
Often, without separate remark, we will use the fact that cardinalities of a tube family and its slope sets are comparable if the tubes in the family all intersect a common square. For instance, the numbers in \eqref{form138} could be replaced by $|\sigma(\widehat{\mathcal{T}}_{j}(\mathbf{q}))|_{\Delta}$ and $|\sigma(\mathcal{T}_{j}(\mathbf{q}))|_{\Delta}$ without affecting the conclusion.

We also record at this point that for $0 \leq j \leq J - 1$ and $\mathbf{q} \in \mathcal{P}_{j + 1}$ we have
\begin{equation}\label{form140} M_{\mathbf{q}} \stackrel{\mathrm{def.}}{=} |\mathcal{T}_{\mathbf{q}}(S_{\mathbf{q}}(\mathbf{p}))| \stackrel{\eqref{eq:slopes-rescaled-tubes}}{\sim} |\sigma(\widehat{\mathcal{T}}_{j}(\mathbf{p}))|_{\Delta} \stackrel{\eqref{form138}}{\approx_{\delta}} |\mathcal{T}_{j}(\mathbf{p})|_{\Delta}\quad\text{for all } \mathbf{p} \in \widehat{\mathcal{P}}_{j} \cap \mathbf{q}.  \end{equation}

\subsubsection*{Regularity estimates}

\begin{lemma}
For $0 \leq j \leq J$, the family $\mathcal{P}_{j}$ is $(\Delta_{j},t,\lessapprox_{\delta}\delta^{-\e'})$-regular.
\end{lemma}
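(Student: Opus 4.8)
The claim is that $\mathcal{P}_{j}$ is $(\Delta_{j},t,\lessapprox_{\delta}\delta^{-\epsilon'})$-regular for all $0 \leq j \leq J$. I would prove this by induction on $j$. The base case $j = 0$ is given by hypothesis: $\mathcal{P}_{0}$ is $(\delta,t,\delta^{-\epsilon'})$-regular, and $\delta = \Delta_{0}$. For the inductive step, suppose $\mathcal{P}_{j}$ is $(\Delta_{j},t,\lessapprox_{\delta}\delta^{-\epsilon'})$-regular; I need to deduce the same (with a possibly larger, but still $\lessapprox_{\delta}$, constant) for $\mathcal{P}_{j+1} = \mathcal{D}_{\Delta_{j+1}}(\widehat{\mathcal{P}}_{j})$.

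The key observation is that regularity is inherited through the passage $\mathcal{P}_{j} \rightsquigarrow \overline{\mathcal{P}}_{j} \rightsquigarrow \widehat{\mathcal{P}}_{j} \rightsquigarrow \mathcal{P}_{j+1}$, at the cost of accumulating $\approx_{\delta}$ factors. Here is the chain. First, $\overline{\mathcal{P}}_{j} \subset \mathcal{P}_{j}$ has $|\overline{\mathcal{P}}_{j}| \approx_{\Delta_{j}} |\mathcal{P}_{j}|$ by property \eqref{it-a}, so by Corollary \ref{cor1} (applied with $\Delta = \Delta_{j}$ in the role of the full scale and any intermediate dyadic scale in $[\delta, \Delta_j]$ --- here using that $\mathcal{P}_j$ is uniform, which we have arranged, so that the hypothesis of Corollary \ref{cor1} holds) the set $\overline{\mathcal{P}}_{j}$ remains a $(\Delta_{j}, t, \lessapprox_{\delta}\delta^{-\epsilon'})$-set; and property (2) of regularity (Definition \ref{def:deltaTRegularSet}) passes to subsets for free since it is an upper bound on $|\mathcal{P} \cap \mathbf{p}|_r$ which only decreases when $\mathcal{P}$ shrinks. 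Hence $\overline{\mathcal{P}}_{j}$ is $(\Delta_j, t, \lessapprox_\delta \delta^{-\epsilon'})$-regular. Next, $\widehat{\mathcal{P}}_{j}$ is, by property \eqref{it-induction-i-body}, a set with $|\mathcal{D}_{\Delta_{j+1}}(\widehat{\mathcal{P}}_{j})| \approx_{\delta} |\mathcal{D}_{\Delta_{j+1}}(\overline{\mathcal{P}}_{j})|$ and $|\widehat{\mathcal{P}}_{j} \cap \mathbf{q}| \approx_{\delta} |\overline{\mathcal{P}}_{j} \cap \mathbf{q}|$ for every $\mathbf{q}$; together these say $\widehat{\mathcal{P}}_{j}$ is a $\approx_{\delta}$-dense subset of $\overline{\mathcal{P}}_{j}$, so again by Corollary \ref{cor1} (and the trivial inheritance of property (2)), $\widehat{\mathcal{P}}_{j}$ is $(\Delta_{j}, t, \lessapprox_{\delta}\delta^{-\epsilon'})$-regular.

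Finally I pass from the scale-$\Delta_{j}$ set $\widehat{\mathcal{P}}_{j}$ to the scale-$\Delta_{j+1}$ set $\mathcal{P}_{j+1} = \mathcal{D}_{\Delta_{j+1}}(\widehat{\mathcal{P}}_{j})$. Regularity between scales $\Delta_{j+1}$ and $1$ is a coarsening of regularity between $\Delta_{j}$ and $1$: property (1), the $(\Delta_{j+1}, t, \cdot)$-set condition, follows from the $(\Delta_{j}, t, \cdot)$-set condition for $\widehat{\mathcal{P}}_{j}$ by summing the defining inequality over the $\Delta_{j+1}$-cubes inside each ball (using $\Delta_{j+1} > \Delta_j$ and that $|\mathcal{D}_{\Delta_{j+1}}(\cdot)|_{r} \le |\cdot|_r$), so $\mathcal{P}_{j+1}$ is a $(\Delta_{j+1}, t, \lessapprox_\delta \delta^{-\epsilon'})$-set; and property (2) for $\mathcal{P}_{j+1}$ (covering numbers at dyadic scales $\Delta_{j+1} \le r \le R$) is a restriction of property (2) for $\widehat{\mathcal{P}}_{j}$ (which controls scales $\Delta_{j} \le r \le R$, a superset of scales). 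This completes the inductive step. I expect the only genuinely delicate point is bookkeeping: checking that Corollary \ref{cor1} really does apply at each stage (one needs the relevant set to be uniform at the appropriate scales, which is why the construction arranges \eqref{form142}, \eqref{form133} and the uniformization in \eqref{it-b}) and verifying that the $\approx_{\delta}$ losses compound only $J \lesssim \epsilon^{-3}$-many times, so the final constant is still $\lessapprox_{\delta} \delta^{-\epsilon'}$ with the implied logarithmic powers depending only on $s, t, u$ --- this is exactly why $\Delta = \delta^{\epsilon^3}$ and $J = \epsilon^{-3} - 1$ were chosen, and why one works with the $\lessapprox_\delta$ notation throughout rather than tracking exponents.
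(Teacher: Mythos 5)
There is a genuine gap at the final coarsening step, and it occurs exactly at the point where the real work lies. You claim that the $(\Delta_{j+1},t,\cdot)$-set condition for $\mathcal{P}_{j+1}=\mathcal{D}_{\Delta_{j+1}}(\widehat{\mathcal{P}}_{j})$ ``follows from the $(\Delta_{j},t,\cdot)$-set condition for $\widehat{\mathcal{P}}_{j}$ by summing the defining inequality over the $\Delta_{j+1}$-cubes inside each ball''. This only controls the numerator: indeed $|\mathcal{P}_{j+1}\cap B(x,r)|_{r}\lesssim|\widehat{\mathcal{P}}_{j}\cap B(x,r)|_{r}$. But the normalisation in Definition \ref{def:deltaSSet} changes with the scale: the hypothesis gives a bound against $r^{t}\,|\widehat{\mathcal{P}}_{j}|_{\Delta_{j}}$, while the conclusion requires a bound against $r^{t}\,|\widehat{\mathcal{P}}_{j}|_{\Delta_{j+1}}$, and these covering numbers can differ by a large factor. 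In general a $(\delta,t)$-set is \emph{not} a $(\Delta,t)$-set for $\Delta>\delta$ --- this is precisely why Lemma \ref{lemma4} and Corollary \ref{cor1} carry the hypothesis that $\mathbf{q}\mapsto|P\cap\mathbf{q}|_{\delta}$ is constant at the coarser scale. Even if you additionally invoke the Ahlfors-type upper bound (2) (inherited from $\mathcal{P}_{0}$) to bound $|\widehat{\mathcal{P}}_{j}|_{\Delta_{j}}\lesssim\delta^{-\epsilon'}(\Delta_{j+1}/\Delta_{j})^{t}|\widehat{\mathcal{P}}_{j}|_{\Delta_{j+1}}$, the coarsening still loses a factor $(\Delta_{j+1}/\Delta_{j})^{t}=\Delta^{-t}=\delta^{-t\epsilon^{3}}$, which is far larger than the allowed constant $\lessapprox_{\delta}\delta^{-\epsilon'}$ (recall $\epsilon'\leq\epsilon^{4}/10$ by \eqref{eq:eps_0}), and iterating over $\sim\epsilon^{-3}$ steps it becomes useless.

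The missing input is exactly property \eqref{it-b}: $|\overline{\mathcal{P}}_{j}\cap\mathbf{q}|\equiv A_{j}$ for $\mathbf{q}\in\mathcal{D}_{\Delta_{j+1}}(\overline{\mathcal{P}}_{j})$. The correct route (and the paper's) is: by \eqref{it-a} and the induction hypothesis, $\overline{\mathcal{P}}_{j}$ is a $(\Delta_{j},t,\lessapprox_{\delta}\delta^{-\epsilon'})$-set; then apply Corollary \ref{cor1} at scale $\Delta_{j+1}$, with $P=\overline{\mathcal{P}}_{j}$ (constant branching by \eqref{it-b}) and $P'=\widehat{\mathcal{P}}_{j}$ (density by \eqref{it-induction-i-body}), to conclude that $\widehat{\mathcal{P}}_{j}$, hence $\mathcal{P}_{j+1}$, is a $(\Delta_{j+1},t,\lessapprox_{\delta}\delta^{-\epsilon'})$-set with no power loss. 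Your proposal mentions uniformity only in a bookkeeping aside, citing \eqref{form142} and \eqref{form133}, which concern the tube families at scale $\Delta$ and are irrelevant here, and it invokes Corollary \ref{cor1} instead at the step $\mathcal{P}_{j}\rightsquigarrow\overline{\mathcal{P}}_{j}$, where it is not needed (a $\approx_{\delta}$-dense subset of a $(\Delta_{j},t)$-set is trivially a $(\Delta_{j},t)$-set at the same scale, and no uniformity of $\mathcal{P}_{j}$ has in fact been arranged). Two minor points are fine: property (2) of Definition \ref{def:deltaTRegularSet} does pass to subsets and to coarser scales (the paper inherits it directly from $\mathcal{P}_{0}$), and the overall induction structure matches the paper's.
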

\begin{proof}
We argue by induction. If $j=0$ this holds by assumption. Assume true for $j<J$ and consider $j+1$. Since $\mathcal{P}_{j+1} \subset \mathcal{D}_{\Delta_{j+1}}(\mathcal{P}_{0})$, the scale-invariant upper bound in Definition \ref{def:deltaTRegularSet}(2) is simply inherited from $\mathcal{P}_{0}$. We therefore only need to check that $\mathcal{P}_{j+1}$ is a $(\Delta_{j+1},t,\lessapprox_{\delta} \delta^{-\epsilon'})$-set. By property \eqref{it-a} of $\overline{\mathcal{P}}_{j}$ and the induction assumption, $\overline{\mathcal{P}}_{j}$ is a $(\Delta_{j},t,\lessapprox_{\delta} \delta^{-\epsilon'})$-set. Using property \eqref{it-b} of $\overline{\mathcal{P}}_{j}$ and Claim \eqref{it-induction-i-body}, we deduce from Corollary \ref{cor1} that $\widehat{\mathcal{P}}_j$ is a $(\Delta_{j+1},t,\lessapprox_{\delta} \delta^{-\epsilon'})$-set. Since $\mathcal{P}_{j+1}=\mathcal{D}_{\Delta_{j+1}}(\widehat{\mathcal{P}}_j)$ by definition, so is $\mathcal{P}_{j+1}$, as claimed.
\end{proof}

As a corollary, we next check that also the "blow-ups" $S_{\mathbf{q}}(\widehat{\mathcal{P}}_{j} \cap \mathbf{q})$ introduced in part \eqref{it-induction-iv-body} are $t$-regular:

\begin{cor}\label{cor5} Let $0 \leq j \leq J - 1$ and $\mathbf{q} \in \mathcal{P}_{j + 1}$. Then, $S_{\mathbf{q}}(\widehat{\mathcal{P}}_{j} \cap \mathbf{q})$ is $(\Delta,t,\lessapprox_{\delta} \delta^{-4\epsilon'})$-regular. In particular, $S_{\mathbf{q}}(\widehat{\mathcal{P}}_{j} \cap \mathbf{q})$ is $(\Delta,t,\Delta^{-\epsilon})$-regular, assuming that $\delta > 0$ is small enough. \end{cor}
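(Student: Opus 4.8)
Here the plan is to reduce everything to the explicit renormalisation estimate recorded in Remark \ref{rem:regular}. The first step is to upgrade $\widehat{\mathcal{P}}_{j}$ from a mere subset of $\mathcal{P}_{j}$ to a set that is $(\Delta_{j},t,\lessapprox_{\delta}\delta^{-\epsilon'})$-regular in its own right. For the scale-invariant upper bound of Definition \ref{def:deltaTRegularSet}(2) there is nothing to do: it is inherited by arbitrary subsets, hence from $\mathcal{P}_{j}$ (which is $(\Delta_{j},t,\lessapprox_{\delta}\delta^{-\epsilon'})$-regular by the previous lemma). For the $(\Delta_{j},t,\cdot)$-set property I would use Claim \eqref{it-induction-i-body}, which gives $\widehat{\mathcal{P}}_{j}\subset\mathcal{P}_{j}$ with $|\widehat{\mathcal{P}}_{j}|\approx_{\delta}|\mathcal{P}_{j}|$, together with the trivial observation that a subset of relative density $\approx_{\delta}1$ inside a $(\Delta_{j},t,C)$-set is again a $(\Delta_{j},t,\lessapprox_{\delta}C)$-set.

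Next, since $\Delta_{j}/\Delta_{j+1}=\Delta$, $\mathbf{q}\in\mathcal{D}_{\Delta_{j+1}}$ and $\widehat{\mathcal{P}}_{j}\cap\mathbf{q}\neq\emptyset$, I would apply the quantitative renormalisation formula of Remark \ref{rem:regular} (with ``$\delta$'' $=\Delta_{j}$ and ``$\Delta$'' $=\Delta_{j+1}$) to the regular set $\widehat{\mathcal{P}}_{j}$ and the cube $\mathbf{q}$, obtaining that $S_{\mathbf{q}}(\widehat{\mathcal{P}}_{j}\cap\mathbf{q})=(\widehat{\mathcal{P}}_{j})_{\mathbf{q}}$ is $(\Delta,t,\bar{C})$-regular with
\[
\bar{C}\lessapprox_{\delta}\delta^{-\epsilon'}\cdot\frac{\Delta_{j+1}^{t}\,|\widehat{\mathcal{P}}_{j}|}{|\widehat{\mathcal{P}}_{j}\cap\mathbf{q}|}.
\]
The crux, and the only step requiring genuine care, is to show that the quotient on the right is $\lessapprox_{\delta}\delta^{-\epsilon'}$ rather than something large — Remark \ref{rem:regular} explicitly warns that for a general regular set this factor need not be controlled. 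This is precisely where the approximate uniformity of $\widehat{\mathcal{P}}_{j}$ between scales $\Delta_{j+1}$ and $\Delta_{j}$ enters: by Claim \eqref{it-induction-i-body} and property \eqref{it-b} of $\overline{\mathcal{P}}_{j}$, every cube $\mathbf{q}'\in\mathcal{D}_{\Delta_{j+1}}(\widehat{\mathcal{P}}_{j})=\mathcal{P}_{j+1}$ satisfies $|\widehat{\mathcal{P}}_{j}\cap\mathbf{q}'|\approx_{\delta}A_{j}$, so summing over $\mathbf{q}'$ gives $|\widehat{\mathcal{P}}_{j}|\approx_{\delta}|\mathcal{P}_{j+1}|A_{j}$, whence $\Delta_{j+1}^{t}|\widehat{\mathcal{P}}_{j}|/|\widehat{\mathcal{P}}_{j}\cap\mathbf{q}|\approx_{\delta}\Delta_{j+1}^{t}|\mathcal{P}_{j+1}|$. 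Finally, $\mathcal{P}_{j+1}\subset\mathcal{D}_{\Delta_{j+1}}(\mathcal{P}_{0})$, so the scale-invariant bound of Definition \ref{def:deltaTRegularSet}(2) for the $(\delta,t,\delta^{-\epsilon'})$-regular set $\mathcal{P}_{0}$ (with $R=1$, $r=\Delta_{j+1}$, $\mathbf{p}=[0,1)^{2}$) yields $|\mathcal{P}_{j+1}|=|\mathcal{P}_{j+1}|_{\Delta_{j+1}}\le\delta^{-\epsilon'}\Delta_{j+1}^{-t}$. Combining these gives $\bar{C}\lessapprox_{\delta}\delta^{-2\epsilon'}\le\delta^{-4\epsilon'}$, which is the first assertion.

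The ``in particular'' clause is then a routine comparison of exponents. Here $\Delta^{-\epsilon}=\delta^{-\epsilon^{3}\epsilon}=\delta^{-\epsilon^{4}}$, while by \eqref{eq:eps_0} one has $4\epsilon'\le\tfrac{2}{5}\epsilon^{4}<\epsilon^{4}$, so the slowly growing factor hidden inside ``$\lessapprox_{\delta}\delta^{-4\epsilon'}$'' is absorbed by $\delta^{-(\epsilon^{4}-4\epsilon')}$ once $\delta$ is small enough, giving $\bar{C}\le\Delta^{-\epsilon}$. I do not anticipate any real obstacle: apart from the uniformity/cardinality bookkeeping that controls the quotient above — which the recursive construction was set up precisely to furnish — everything is a direct appeal to Remark \ref{rem:regular} and the preceding lemma.
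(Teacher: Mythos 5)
Your proposal is correct and follows essentially the same route as the paper: upgrade $\widehat{\mathcal{P}}_{j}$ to a $(\Delta_{j},t,\lessapprox_{\delta}\delta^{-\epsilon'})$-regular set, invoke the renormalisation formula of Remark \ref{rem:regular}, and control the quotient $\Delta_{j+1}^{t}|\widehat{\mathcal{P}}_{j}|/|\widehat{\mathcal{P}}_{j}\cap\mathbf{q}|$ via the approximate uniformity $|\widehat{\mathcal{P}}_{j}\cap\mathbf{q}'|\approx_{\delta}A_{j}$ together with the covering bound $|\mathcal{P}_{j+1}|\leq\delta^{-\epsilon'}\Delta_{j+1}^{-t}$ inherited from the regularity of $\mathcal{P}_{0}$. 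The only (harmless) difference is bookkeeping: you cancel $A_{j}$ directly rather than first deriving the lower bound $A_{j}\gtrapprox_{\delta}\delta^{2\epsilon'}\Delta^{-t}$ as the paper does, which even yields the slightly better constant $\delta^{-2\epsilon'}$ within the claimed $\delta^{-4\epsilon'}$.
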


\begin{proof} In Remark \ref{rem:regular}, we generally observed that if $\mathcal{P} \subset \mathcal{D}_{\delta_{1}}$ is $(\delta_{1},t,C)$-regular, and $\mathbf{q} \in \mathcal{D}_{\delta_{2}}(\mathcal{P})$ with $\delta_{1} \leq \delta_{2} \leq 1$, then the renormalisation $S_{\mathbf{q}}(\mathcal{P} \cap \mathbf{q})$ is $(\delta_{1}/\delta_{2},t,\bar{C})$-regular with constant
\begin{equation}\label{form149} \bar{C} = C \cdot \frac{\delta_{2}^{t}|\mathcal{P}|}{|\mathcal{P} \cap \mathbf{q}|}. \end{equation}
We apply this to the set $\widehat{\mathcal{P}}_{j}$, at scales $\{\delta_{1},\delta_{2}\} = \{\Delta_{j},\Delta_{j + 1}\}$, and to the given square $\mathbf{q} \in \mathcal{P}_{j + 1} = \mathcal{D}_{\Delta_{j + 1}}(\widehat{\mathcal{P}}_{j})$. It follows from the previous lemma and Property \eqref{it-induction-i-body} that $\widehat{\mathcal{P}}_{j}$ is $(\Delta_{j},t,\lessapprox_{\delta} \delta^{-\epsilon'})$-regular. Therefore, the main task is to find a lower bound for $|\widehat{\mathcal{P}}_{j} \cap \mathbf{q}|$. First of all, since $\widehat{\mathcal{P}}_{j} \subset \mathcal{P}_{\Delta_{j}}(\mathcal{P}_{0})$, we have
\begin{equation}\label{form148} |\widehat{\mathcal{P}}_{j}|_{\Delta_{j}} \leq \delta^{-\epsilon'}\Delta_{j}^{-t} \quad \text{and} \quad |\widehat{\mathcal{P}}_{j}|_{\Delta_{j + 1}} \leq \delta^{-\epsilon'}\Delta_{j +1}^{-t} \end{equation}
by the $(\delta,t,\delta^{-\epsilon'})$-regularity of $\mathcal{P}_{0}$. Second, again by \eqref{it-induction-i-body}, we have
\begin{displaymath} |\widehat{\mathcal{P}}_{j} \cap \mathbf{q}'| \approx_{\delta} |\overline{\mathcal{P}}_{j} \cap \mathbf{q}'| = A_{j}\quad\text{for all } \mathbf{q}' \in \mathcal{D}_{\Delta_{j + 1}}(\widehat{\mathcal{P}}_{j}). \end{displaymath}
Finally, from \eqref{form148} and the $(\Delta_{j},t,\lessapprox_{\delta} \delta^{-\epsilon'})$-set property of $\widehat{\mathcal{P}}_{j}$, we deduce
\begin{displaymath} \delta^{\epsilon'}\Delta_{j}^{-t} \lessapprox_{\delta} |\widehat{\mathcal{P}}_{j}|_{\Delta_{j + 1}} \cdot A_{j} \leq \delta^{-\epsilon'}\Delta_{j + 1}^{-t} \cdot A_{j},  \end{displaymath}
and rearranging this gives
\begin{displaymath}
|\widehat{\mathcal{P}}_{j} \cap \mathbf{q}| \approx_{\delta} A_{j} \gtrapprox_{\delta} \delta^{2\epsilon'}\Delta^{-t}.
\end{displaymath}
According to \eqref{form149}, we may now conclude that $S_{\mathbf{q}}(\widehat{\mathcal{P}}_{j} \cap \mathbf{q})$ is $(\Delta,t,\bar{C})$-regular with constant
\begin{displaymath}
\bar{C} \approx_{\delta} \delta^{-\epsilon'} \frac{\Delta_{j + 1}^{t}|\widehat{\mathcal{P}}_{j}|}{|\widehat{\mathcal{P}}_{j} \cap \mathbf{q}|} \stackrel{\eqref{form148}}{\lessapprox_{\delta}} \delta^{-\epsilon'} \frac{\Delta_{j + 1}^{t} \cdot \delta^{-\epsilon'} \cdot \Delta_{j}^{-t}}{\delta^{2\epsilon'}\Delta^{-t}} = \delta^{-4\epsilon'}.
\end{displaymath}

Finally, since we assumed in \eqref{eq:eps_0} that $\epsilon' \leq \tfrac{1}{10}\epsilon^4$, this shows that $\bar{C} \leq \delta^{-\epsilon^4} =\Delta^{-\epsilon}$, assuming that $\delta > 0$ is small enough.
\end{proof}

\subsubsection*{Reduction to the quasi-product case} 

Our next goal is to reduce the proof of \eqref{eq:FurstenbergSTU} to the simpler estimate \eqref{eq:claim-to-finish-proof} below.

Let $\mathbf{q}_j\in \mathcal{D}_{\Delta_j}$, $0 \leq j \leq J$, be a nested sequence of squares such that
\begin{equation}\label{form99} \mathbf{q}_{j}\in \widehat{\mathcal{P}}_{j}\quad\text{for all } 0 \leq j \leq J.  \end{equation}
(This can be achieved by property \eqref{it-induction-iii-body}.) For $1 \leq j \leq J$, abbreviate $N_j = M_{\mathbf{q}_j}$ (the number introduced in \eqref{it-induction-iv-body}). For future reference, we record the following consequence of \eqref{form140} applied to $\mathbf{p} = \mathbf{q}_{j - 1}$:
\begin{equation}\label{form141} N_{j} = M_{\mathbf{q}_{j}} \stackrel{\eqref{form140}}{\approx_{\delta}} |\mathcal{T}_{j - 1}(\mathbf{q}_{j - 1})|_{\Delta}\quad\text{for all } 1 \leq j \leq J. \end{equation}
Iterating property \eqref{it-induction-v-body} (and using the trivial estimate $|\mathcal{T}_{J}| \geq M_{J}$ at the last step of the iteration), we get
\begin{equation} \label{eq:product-incidences}
\frac{|\mathcal{T}_{0}|}{M_{0}} \gtrapprox_{\delta} \prod_{j = 1}^{J} \frac{|\mathcal{T}_{\mathbf{q}_{j}}|}{N_{j}}.
\end{equation}
From the inclusion \eqref{form98}, and since $\Delta_{j + 1} \leq \Delta$ for all $0 \leq j \leq J - 1$, we may deduce that
\begin{equation*}
  |\widehat{\mathcal{T}}_{j}(\mathbf{q}_j)|_{\Delta} \le |\mathcal{T}_{j + 1}(\mathbf{q}_{j+1})|_{\Delta}\quad\text{for all } 0 \leq j \leq J - 1. 
\end{equation*}
Consequently, for $1 \leq j \leq J - 1$, we have
\begin{equation}\label{eq-Njincreasing} N_{j} \stackrel{\eqref{form141}}{\approx_{\delta}} |\mathcal{T}_{j - 1}(\mathbf{q}_{j - 1})|_{\Delta} \stackrel{\eqref{form138}}{\approx_{\delta}} |\widehat{\mathcal{T}}_{j - 1}(\mathbf{q}_{j - 1})|_{\Delta} \leq |\mathcal{T}_{j}(\mathbf{q}_{j})|_{\Delta} \stackrel{\eqref{form141}}{\approx_{\delta}} N_{j + 1}. \end{equation}
Thus, the sequence $\{N_{j}\}_{j = 1}^{J}$ is increasing, at least in the sense $N_{j} \lessapprox_{\delta} N_{j + 1}$. In particular, if $\delta$ chosen small enough, it follows (as we will next check) that the "exceptional set" $E = \big\{ j\in \{1,\ldots,J - 1\}: N_{j+1}\ge \Delta^{-\e} N_j \big\}$ has 
\begin{equation}\label{eq:small-exceptional-indices}
|E| \le 2\e^{-1}. \end{equation}
To see this, recall that $J \leq \epsilon^{-3}$, $\Delta = \delta^{\epsilon^{3}}$, $N_{J} = M_{{\bf q}_{J}} \leq C\Delta^{-1}$ for an absolute constant $C > 0$, and that $N_{j+1} \gtrapprox_\delta N_j$ for every $j \in \{0,\ldots,J - 1\}$ by \eqref{eq-Njincreasing}. Using these facts,
\begin{align*}
C\Delta^{-1} &\geq N_{J} \geq \prod_{j=1}^{J-1} \frac{N_{j+1}}{N_j} = \prod_{j\in E} \frac{N_{j+1}}{N_j} \prod_{j\not\in E}\frac
{N_{j+1}}{N_j} \\
&\geq (\Delta^{-\e})^{|E|}(C_{s} \log (1/\delta)^{-C})^{\epsilon^{-3}}\geq C\Delta^{1 -\e|E|},
\end{align*}
provided $\delta>0$ is small enough in terms of $\epsilon,s$. The bound $|E|\leq 2\e^{-1}$ follows.

We will show that
  \begin{equation}\label{eq:claim-to-finish-proof}
\frac{|\mathcal{T}_{\mathbf{q}_{j}}|}{N_{j}} \geq \Delta^{-v + 2\epsilon}\quad\text{for all } j\in\{1,\ldots,J - 1\} \, \setminus \, E.
\end{equation}
\begin{remark} Why is \eqref{eq:claim-to-finish-proof} easier to prove than our main claim \eqref{eq:FurstenbergSTU} in full generality? Recall from \eqref{it-induction-iv-body} that $\mathcal{T}_{\mathbf{q}_{j}}$ is a family of $\Delta$-tubes with the property that $(S_{\mathbf{q}_{j}}(\widehat{\mathcal{P}}_{j - 1} \cap \mathbf{q}_{j}),\mathcal{T}_{\mathbf{q}_{j}})$ is a $(\Delta,s,\lessapprox_{\delta} \delta^{-\epsilon'},M_{\mathbf{q}_{j}})$-nice configuration, and we have abbreviated $N_{j} := M_{\mathbf{q}_{j}}$. Thus, \eqref{eq:claim-to-finish-proof} looks \emph{a priori} very similar to \eqref{eq:FurstenbergSTU}. 

The extra information we have available is that $N_{j} \approx_{\delta} N_{j + 1}$ for $j \notin E$. This translates to the fact that the slope sets (viewed at scale $\Delta$) of the families $\mathcal{T}_{\mathbf{q}_{j}}(S_{\mathbf{q}_{j}}(\mathbf{p}))$, $\mathbf{p} \in \widehat{\mathcal{P}}_{j - 1} \cap \mathbf{q}_{j}$, are all contained in a single, common, slope set with $\Delta$-covering number comparable to each of the individual numbers $|\mathcal{T}_{\mathbf{q}_{j}}(S_{\mathbf{q}_{j}}(\mathbf{p}))|_{\Delta}$. See \eqref{Theta-p-subset-Theta}-\eqref{form143} for a rigorous justification of these claims.

More informally still, the proof of \eqref{eq:claim-to-finish-proof} corresponds to the special case of \eqref{eq:FurstenbergSTU} where all the slope sets of the families $\mathcal{T}(p)$ (in the notation of Theorem \ref{thm:Furstenberg-Ahlfors}) are "roughly the same". But this special case is equivalent to the projection theorem we have already obtained in Corollary \ref{cor2}. So, in brief, Corollary \ref{cor2} implies \eqref{eq:claim-to-finish-proof}. 

\end{remark}
In light of \eqref{eq:product-incidences} and \eqref{eq:small-exceptional-indices}, and recalling that $\Delta = \delta^{\epsilon^{3}}$, \eqref{eq:claim-to-finish-proof} will show that
\begin{displaymath} \frac{|\mathcal{T}_{0}|}{M_{0}} \gtrapprox_{\delta} (\Delta^{-v + 2\epsilon})^{\epsilon^{-3} - 2 - 2\epsilon^{-1}} \geq \delta^{2(\epsilon + \epsilon^{2} + \epsilon^{3}) -  v} \geq \delta^{6\epsilon - v}. \end{displaymath}
Recalling from \eqref{form90} that $v - 6\epsilon > u$, this will finish the proof of \eqref{eq:FurstenbergSTU} (taking $\delta'_0$ smaller if needed).

\subsubsection*{Proof of \eqref{eq:claim-to-finish-proof}}

Fix $j\in\{1,\ldots,J - 1\} \, \setminus \, E$ for the rest of the proof, so $N_{j+1}\le \Delta^{-\e}N_j$. Abbreviate $\mathbf{q} := \mathbf{q}_{j}$, and $\mathcal{Q} := \widehat{\mathcal{P}}_{j - 1} \cap \mathbf{q}$. Let
\begin{align*}
\Theta&:=\mathcal{D}_{\Delta}\big(\sigma(\mathcal{T}_{j}(\mathbf{q}))\big) ,\\
\Theta_{\mathbf{p}}&:= \mathcal{D}_{\Delta}\left[ \sigma(\mathcal{T}_{\mathbf{q}}(S_{\mathbf{q}}(\mathbf{p})))\right]  \stackrel{\eqref{eq:slopes-rescaled-tubes}}{=} \mathcal{D}_{\Delta}\left[\sigma(\widehat{\mathcal{T}}_{j - 1}(\mathbf{p})) \right] ,\quad \mathbf{p}\in \mathcal{Q}.
\end{align*}
Thus, $\Theta$ and $\Theta_{\mathbf{p}}$ are collections of dyadic intervals, but we identify them with the left end-points of the respective intervals (and so as subsets of $\Delta\cdot \Z$) and view them as sets of slopes. We make two observations. First,
\begin{equation} \label{Theta-p-subset-Theta}
\Theta_{\mathbf{p}} = \mathcal{D}_{\Delta}\left[ \sigma(\widehat{\mathcal{T}}_{j - 1}(\mathbf{p})) \right]  \stackrel{\eqref{form98}}{\subset} \Theta\quad\text{for all } \mathbf{p} \in \mathcal{Q}.
\end{equation}
Second,
\begin{align} |\Theta| & \stackrel{\eqref{form141}}{\approx_{\delta}} N_{j + 1} \leq \Delta^{-\epsilon}N_{j} \stackrel{\eqref{form141}}{\approx_{\delta}} \Delta^{-\epsilon}|\mathcal{T}_{j - 1}(\mathbf{p})|_{\Delta} \notag\\
&\label{form143} \stackrel{\eqref{form138}}{\approx_{\delta}} \Delta^{-\epsilon}|\widehat{\mathcal{T}}_{j - 1}(\mathbf{p})|_{\Delta} \sim \Delta^{-\epsilon}|\Theta_{\mathbf{p}}|\quad\text{for all } \mathbf{p} \in \mathcal{Q}. \end{align}
In other words, $\Theta_{\mathbf{p}}$ is a "dense" subset of $\Theta$ for all $\mathbf{p} \in \mathcal{Q}$. As a consequence, the sets
\begin{displaymath} \mathcal{Q}_{\theta} := \{\mathbf{p} \in \mathcal{Q} : \theta \in \Theta_{\mathbf{p}}\}\quad\text{for all } \theta \in \Theta, \end{displaymath}
are dense subsets of $\mathcal{Q}$ for a dense subset of values $\theta \in \Theta$, see Figure \ref{fig1}. More precisely, noting that $\sum_{\theta \in \Theta} |\mathcal{Q}_{\theta}| = \sum_{\mathbf{p} \in \mathcal{Q}} |\Theta_{\mathbf{p}}|$, and applying \eqref{Theta-p-subset-Theta}--\eqref{form143}, there exists a subset $\Theta' \subset \Theta$ of cardinality $|\Theta'| \gtrapprox_{\delta} \Delta^{\epsilon}|\Theta|$ such that $|\mathcal{Q}_{\theta}| \gtrapprox_{\delta} \Delta^{\epsilon}|\mathcal{Q}|$ for all $\theta \in \Theta'$. In particular,
\begin{equation}\label{form145}
|\mathcal{Q}_{\theta}| \geq \Delta^{2\epsilon}|\mathcal{Q}|\quad\text{for all } \theta \in \Theta',
\end{equation}
assuming that $\delta$ is small enough.
\begin{figure}[h!]
\begin{center}
\begin{overpic}[scale = 0.9]{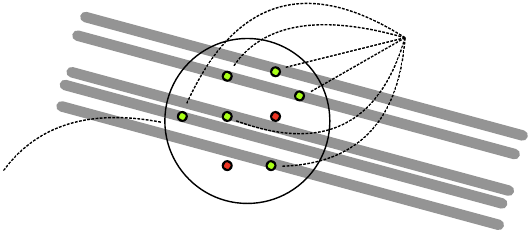}
\put(78,37){$\mathbf{p} \in \mathcal{Q}_{\theta}$}
\put(-1,9){$\mathbf{q}$}
\end{overpic}
\caption{For each $\mathbf{p} \in \mathcal{Q}_{\theta}$, there exists a $\Delta_{j - 1}$-tube in the collection $\widehat{\mathcal{T}}_{j - 1}(\mathbf{p})$ pointing in the direction $\theta$. For $\theta \in \Theta'$, the set $\mathcal{Q}_{\theta}$ (the small green discs) is a dense subset of $\mathcal{Q}$ (all the small discs).}\label{fig1}
\end{center}
\end{figure}

The next key observation is that $\Theta$ is a $(\Delta,s,\lessapprox_{\delta} \delta^{-\epsilon'})$-set. We already know by \eqref{it-induction-iii-body} that $\mathcal{T}_{j}(\mathbf{q})$ is a $(\Delta_{j},s,\lessapprox_{\delta} \delta^{-\epsilon'})$-set, and the same is true for the slope set $\sigma(\mathcal{T}_{j}(\mathbf{q}))$ by Remark \ref{rem:slopes}. On the other hand, recall from \eqref{form133} that
\begin{displaymath} |\mathcal{T}_{j}(\mathbf{q}) \cap \mathbf{T}| = H_{j}\quad\text{for all } \mathbf{T} \in \mathcal{T}^{\Delta}(\mathcal{T}_{j}(\mathbf{q})). \end{displaymath}
This implies by Lemma \ref{lemma4} that $\Theta$ is a $(\Delta,s,\lessapprox_{\delta} \delta^{-\epsilon'})$-set, as desired. Therefore, $\Theta'$ is a $(\Delta,s,\lessapprox_{\delta} \Delta^{-\epsilon}\delta^{-\epsilon'})$-set. Since $\epsilon'  \leq \epsilon^{4}$ according to \eqref{eq:eps_0}, and $\delta = \Delta^{\epsilon^{-3}}$, we see that $\Delta^{-\epsilon}\delta^{-\epsilon'} \leq \Delta^{-2\epsilon}$. Thus, assuming $\delta$ small enough, we conclude that $\Theta'$ is a $(\Delta,s,\Delta^{-3\epsilon})$-set.

We plan to apply Corollary \ref{cor2} to the rescaled set $\overline{\mathcal{Q}} := S_{\mathbf{q}}(\mathcal{Q}) \subset \mathcal{D}_{\Delta}$ and its dense subsets $\overline{\mathcal{Q}}_{\theta} := S_{\mathbf{q}}(\mathcal{Q}_{\theta}) \subset \mathcal{D}_{\Delta}$, for $\theta \in \Theta'$. We restate the corollary (combining the statement with Remark \ref{rem:cor2}):
\begin{cor}\label{cor6} Let $s \in (0,1]$ and $t \in [s,2]$. For every $0 \leq u < \min\{(s + t)/2,1\}$, there exist $\epsilon,\Delta_{0} > 0$ such that the following holds for all $\Delta \in (0,\Delta_{0}]$. Let $\overline{\mathcal{Q}} \subset \mathcal{D}_{\Delta}$ be a $(\Delta,t,\Delta^{-\epsilon})$-regular set, and let $E \subset S^{1}$ be a $(\Delta,s,\Delta^{-\epsilon})$-set. Then, there exists a subset $E' \subset E$ with $|E'|_{\Delta} \geq \tfrac{1}{2}|E|_{\Delta}$ such that for all $\theta \in E'$,
\begin{equation}\label{form144} |\pi_{\theta}(\overline{\mathcal{Q}}')|_{\Delta} \geq \Delta^{-u}\quad\text{for all } \overline{\mathcal{Q}}' \subset \overline{\mathcal{Q}}, \, |\overline{\mathcal{Q}}'| \geq \Delta^{\epsilon}|\overline{\mathcal{Q}}|. \end{equation}
\end{cor}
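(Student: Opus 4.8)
The plan is to derive Corollary~\ref{cor6} as a purely formal consequence of Corollary~\ref{cor2}, exactly via the self-improvement mechanism flagged in Remark~\ref{rem:cor2}. First I would apply Corollary~\ref{cor2} with the parameters $s,t,u$ to obtain constants $\epsilon_{0},\Delta_{0}>0$ with the property that, for every $\Delta\in(0,\Delta_{0}]$, every $(\Delta,t,\Delta^{-\epsilon_{0}})$-regular $\overline{\mathcal{Q}}\subset\mathcal{D}_{\Delta}$ and every $(\Delta,s,\Delta^{-\epsilon_{0}})$-set $E\subset S^{1}$, there is \emph{some} $\theta\in E$ with $|\pi_{\theta}(\overline{\mathcal{Q}}')|_{\Delta}\ge\Delta^{-u}$ for all $\overline{\mathcal{Q}}'\subset\overline{\mathcal{Q}}$ with $|\overline{\mathcal{Q}}'|\ge\Delta^{\epsilon_{0}}|\overline{\mathcal{Q}}|$. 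I would then set $\epsilon:=\epsilon_{0}/2$ (and shrink $\Delta_{0}$ further if needed), so that any hypothesis phrased with $\Delta^{-\epsilon}$ or $\Delta^{\epsilon}$ in Corollary~\ref{cor6} is at least as strong as the corresponding one with $\epsilon_{0}$, once $\Delta$ is small.

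The one genuine step is the passage from ``one good direction'' to ``a $\tfrac{1}{2}$-density subset of good directions''. Given $\overline{\mathcal{Q}},E$ as in Corollary~\ref{cor6}, I would let $E_{\mathrm{bad}}\subset E$ be the set of $\theta\in E$ for which \eqref{form144} fails, i.e.\ for which some $\overline{\mathcal{Q}}'\subset\overline{\mathcal{Q}}$ with $|\overline{\mathcal{Q}}'|\ge\Delta^{\epsilon}|\overline{\mathcal{Q}}|$ has $|\pi_{\theta}(\overline{\mathcal{Q}}')|_{\Delta}<\Delta^{-u}$, and argue by contradiction. If $|E_{\mathrm{bad}}|_{\Delta}>\tfrac{1}{2}|E|_{\Delta}$, then since $E$ is a $(\Delta,s,\Delta^{-\epsilon})$-set and $E_{\mathrm{bad}}$ carries at least half of its covering number, $E_{\mathrm{bad}}$ is a $(\Delta,s,2\Delta^{-\epsilon})$-set, hence a $(\Delta,s,\Delta^{-\epsilon_{0}})$-set once $\Delta\le 2^{-2/\epsilon_{0}}$. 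Applying Corollary~\ref{cor2} to $\overline{\mathcal{Q}}$ and $E_{\mathrm{bad}}$ then produces a $\theta\in E_{\mathrm{bad}}$ satisfying \eqref{form144}, contradicting the definition of $E_{\mathrm{bad}}$. Hence $|E_{\mathrm{bad}}|_{\Delta}\le\tfrac{1}{2}|E|_{\Delta}$, and $E':=E\setminus E_{\mathrm{bad}}$ satisfies $|E'|_{\Delta}\ge|E|_{\Delta}-|E_{\mathrm{bad}}|_{\Delta}\ge\tfrac{1}{2}|E|_{\Delta}$, with \eqref{form144} holding for all $\theta\in E'$ by construction.

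I do not expect any real obstacle: the only thing to track is that replacing $E$ by its sub-collection $E_{\mathrm{bad}}$ inflates the non-concentration constant by a fixed factor $2$, which is absorbed by the $\Delta^{-\epsilon_{0}}$ tolerance of Corollary~\ref{cor2} for $\Delta$ small. In particular there is no need to re-enter the proof of Corollary~\ref{cor2} (and thus of Theorem~\ref{thm1} and Theorem~\ref{thm:ABCConjecture}); the statement is obtained by quoting Corollary~\ref{cor2} twice, which is precisely the content of Remark~\ref{rem:cor2} made explicit for use in the Furstenberg argument.
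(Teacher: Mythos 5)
Your proof is correct and matches the paper's own treatment: Corollary \ref{cor6} is presented there simply as Corollary \ref{cor2} combined with Remark \ref{rem:cor2}, whose content is exactly your self-improvement step (re-apply Corollary \ref{cor2} to the bad half of $E$ to get a contradiction). The bookkeeping you add — taking $\epsilon=\epsilon_{0}/2$ so the factor $2$ in the non-concentration constant of $E_{\mathrm{bad}}$ is absorbed for small $\Delta$ — is the right way to make that remark precise.
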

Recall from around \eqref{form146} that we started the current proof by choosing $\epsilon > 0$ so small that the conclusion of this corollary holds with parameter $v = \tfrac{1}{2}(u + \min\{[s + t]/2,1\})$ in place of $u$, and constant "$10\epsilon$".

The slopes $\Theta' \subset \Delta \cdot \Z$ play the role of the set $E$, and we are keen to apply \eqref{form144} to the subsets $\mathcal{Q}_{\theta}$, $\theta \in \Theta' \subset \Delta \cdot \Z$. For $\theta \in \Theta'$, let $\pi_{\theta} \colon \R^{2} \to \R$ (here) denote the orthogonal projection along the slope $\theta$. Formally $\pi_{\theta}(x,y) := -\theta x + y$, but the following informal description is more useful: if $T \in \mathcal{T}^{\Delta}$ with $\sigma(T) = \theta$, then $\pi_{\theta}(T \cap B(1))$ is contained in an interval of length $\sim \Delta$.

Now, recall from Corollary \ref{cor5} that $\overline{\mathcal{Q}} = S_{\mathbf{q}}(\mathcal{Q})$ is $(\Delta,t,\Delta^{-\epsilon})$-regular. Since moreover \eqref{form145} holds, and $\Theta'$ is a $(\Delta,s,\Delta^{-3\epsilon})$-set we may deduce from Corollary \ref{cor6} that there exists a further subset $\Theta'' \subset \Theta'$ of cardinality $|\Theta''| \geq \tfrac{1}{2}|\Theta'|$ such that
\begin{equation}\label{form147} |\pi_{\theta}(\overline{\mathcal{Q}}_{\theta})|_{\Delta} \geq \Delta^{-v}\quad\text{for all } \theta \in \Theta''. \end{equation}
To complete the proof, let us deduce \eqref{eq:claim-to-finish-proof} from \eqref{form147}. First, notice that
\begin{displaymath} |\Theta''| \sim |\Theta'| \gtrapprox_{\delta} \Delta^{\epsilon}|\Theta| \stackrel{\eqref{form138}}{\approx_{\delta}} \Delta^{\epsilon} N_{j + 1} \gtrapprox_{\delta} \Delta^{\epsilon} N_{j}. \end{displaymath}
Therefore, in order to prove \eqref{eq:claim-to-finish-proof}, it suffices to show that $|\mathcal{T}_{\mathbf{q}}^{\theta}| \gtrsim \Delta^{-v}$ for all $\theta \in \Theta''$, where $\mathcal{T}_{\mathbf{q}}^{\theta} := \{T \in \mathcal{T}_{\mathbf{q}} : \sigma(T) = \theta\}$. This is virtually a restatement of \eqref{form147}, but let us unwrap the notation carefully.

Fix $\theta \in \Theta''$ and $\bar{\mathbf{p}} = S_{\mathbf{q}}(\mathbf{p}) \in \overline{\mathcal{Q}}_{\theta}$, where $\mathbf{p} \in \mathcal{Q}_{\theta}$. This means by definitions that
\begin{displaymath} \theta \in \Theta_{\mathbf{p}} = \mathcal{D}_{\Delta}(\sigma(\mathcal{T}_{\mathbf{q}}(\bar{\mathbf{p}}))) . \end{displaymath}
In other words, the family
\begin{displaymath} \mathcal{T}_{\mathbf{q}}^{\theta}(\bar{\mathbf{p}}) := \{T \in \mathcal{T}_{\mathbf{q}}(\bar{\mathbf{p}}) : \sigma(T) = \theta\} \subset \mathcal{T}_{\mathbf{q}}^{\theta} \end{displaymath}
is non-empty for every $\bar{\mathbf{p}} \in \overline{\mathcal{Q}}_{\theta}$. Additionally, observe the following: if $\bar{\mathbf{p}}_{1},\bar{\mathbf{p}}_{2} \in \overline{\mathcal{Q}}$, and $\pi_{\theta}(\bar{\mathbf{p}}_{1}),\pi_{\theta}(\bar{\mathbf{p}}_{2})$ are $C\Delta$-separated for a sufficiently large absolute constant $C > 0$, then
\begin{displaymath} \mathcal{T}_{\mathbf{q}}^{\theta}(\bar{\mathbf{p}}_{1}) \cap \mathcal{T}_{\mathbf{q}}^{\theta}(\bar{\mathbf{p}}_{2}) = \emptyset. \end{displaymath}
Finally, deduce from \eqref{form147} that $\overline{\mathcal{Q}}_{\theta}$ contains a subset $\{\bar{\mathbf{p}}_{1},\ldots,\bar{\mathbf{p}}_{k}\}$ with $k \gtrsim \Delta^{-v}$ whose $\pi_{\theta}$-projections are $C\Delta$-separated. Consequently, $|\mathcal{T}^{\theta}_{\mathbf{q}}| \geq k \gtrsim \Delta^{-v}$, as claimed. The proof of Theorem \ref{thm:Furstenberg-Ahlfors} is complete.

\section{From \texorpdfstring{$\delta$}{delta}-discretised to continuous results}\label{s:continuousResults}

We have already established the $\delta$-discretised analogues of  Theorems  \ref{thm:projection} and \ref{thm:FurstIntro}. In this section, we discuss the "continuous" versions involving Hausdorff and packing dimension. The next lemma will be needed. It roughly says that uniform $(\delta,t)$-sets with box dimension $\leq t$ are automatically $(\delta,t)$-regular.

\begin{lemma}\label{lemma9} Let $\delta = 2^{-mT}$ for some $m,T \in \N$, and let $\mathcal{P} \subset \mathcal{D}_{\delta}$ be a $\{2^{-jT}\}_{j = 1}^{m}$-uniform set satisfying
\begin{equation}\label{form118} c\Delta^{-t} \leq |\mathcal{P}|_{\Delta} \leq C\Delta^{-t}\quad\text{for all } \delta \leq \Delta \leq \Delta_{0}, \end{equation}
where $t \in [0,d]$, $\epsilon > 0$, and $\Delta_{0} \in (\delta,1]$. Then, for all dyadic $\delta \leq r \leq R \leq 1$, we have
\begin{displaymath} |\mathcal{P} \cap Q|_{r} \lesssim_{d,T,\Delta_{0}} (C/c)(R/r)^{t}\quad\text{for all } Q \in \mathcal{D}_{R}. \end{displaymath}
In particular, if $\mathcal{P}$ is a $\{2^{-jT}\}_{j = 1}^{m}$-uniform $(\delta,t,\delta^{-\epsilon})$-set satisfying  \eqref{form118} with $C = \delta^{-\epsilon}$ and $c=\delta^{\epsilon}$, then $\mathcal{P}$ is automatically $(\delta,t,O_{d,T,\Delta_{0}}(1)\delta^{-2\epsilon})$-regular.
\end{lemma}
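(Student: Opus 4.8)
The plan is to exploit the uniformity of $\mathcal{P}$ together with the two-sided box-counting bound \eqref{form118}, via the multiplicativity relation for uniform sets. Fix dyadic radii $\delta \leq r \leq R \leq 1$ and a cube $Q \in \mathcal{D}_{R}$. Since $T \sim 1$ and $\Delta_{0}$ is fixed, there is no loss of generality (up to $O_{d,T,\Delta_{0}}(1)$ factors) in assuming that $r = 2^{-iT}$ and $R = 2^{-jT}$ with $0 \leq j \leq i \leq m$; indeed, a general dyadic scale lies within a bounded power of $2^{T}$ of one of the scales $2^{-iT}$, and replacing $r,R$ by the nearest such scales changes all covering numbers by at most a constant depending on $d,T$. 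Similarly, one may first assume $Q \in \mathcal{D}_{2^{-jT}}$ is one of the cubes in the uniform hierarchy, again at the cost of an $O_{d,T}(1)$ factor (a general dyadic cube of side $R$ meets $O_{d}(1)$ cubes of the grid $\mathcal{D}_{2^{-jT}}$).

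Next I would invoke the key feature of uniform sets recorded in the Remark following Definition \ref{def:uniformity}: for a $\{2^{-jT}\}_{j=1}^{m}$-uniform set one has $|\mathcal{P}|_{2^{-iT}} = |\mathcal{P} \cap Q|_{2^{-iT}} \cdot |\mathcal{P}|_{2^{-jT}}$ for every $Q \in \mathcal{D}_{2^{-jT}}(\mathcal{P})$. Rearranging, $|\mathcal{P} \cap Q|_{2^{-iT}} = |\mathcal{P}|_{2^{-iT}} / |\mathcal{P}|_{2^{-jT}}$, which is in particular \emph{independent of} $Q$. Now apply the hypothesis \eqref{form118} at the two scales $r = 2^{-iT}$ and $R = 2^{-jT}$, both of which lie in $[\delta,\Delta_{0}]$ (if $R > \Delta_{0}$ one simply replaces $R$ by $\Delta_{0}$, which only helps): the numerator is at most $C r^{-t}$ and the denominator is at least $c R^{-t}$, so
\begin{displaymath}
|\mathcal{P} \cap Q|_{r} \;=\; \frac{|\mathcal{P}|_{r}}{|\mathcal{P}|_{R}} \;\leq\; \frac{C r^{-t}}{c R^{-t}} \;=\; \frac{C}{c}\left(\frac{R}{r}\right)^{t}.
\end{displaymath}
Tracking the $O_{d,T,\Delta_{0}}(1)$ losses incurred above from rounding scales and cubes gives the stated bound $|\mathcal{P} \cap Q|_{r} \lesssim_{d,T,\Delta_{0}} (C/c)(R/r)^{t}$. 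If $\mathcal{P} \cap Q = \emptyset$ there is nothing to prove, and if $Q \notin \mathcal{D}_{R}(\mathcal{P})$ then $|\mathcal{P}\cap Q|_r = 0$, so we may assume throughout that $Q$ meets $\mathcal{P}$.

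For the final assertion, suppose $\mathcal{P}$ is moreover a $(\delta,t,\delta^{-\epsilon})$-set; then property (1) of Definition \ref{def:deltaTRegularSet} holds with constant $\delta^{-\epsilon}$. For property (2), apply the first part with $C = \delta^{-\epsilon}$ (the assumed upper bound in \eqref{form118}) and with $c$ the constant coming from the lower bound $|\mathcal{P}|_{\Delta} \geq c \Delta^{-t}$; but the $(\delta,t,\delta^{-\epsilon})$-set hypothesis applied with $r = \delta$ to any ball meeting $\mathcal{P}$ already forces $|\mathcal{P}|_{\delta} \geq \delta^{-t}\cdot\delta^{\epsilon}$ — wait, more carefully: one cannot get a \emph{lower} bound on $|\mathcal{P}|_\Delta$ from the $(\delta,s)$-set condition alone. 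So here one must genuinely \emph{use} the two-sided hypothesis \eqref{form118}, i.e. the lower bound $|\mathcal{P}|_\Delta \geq c\Delta^{-t}$ is part of the assumption; taking $c$ and $C = \delta^{-\epsilon}$ one gets $|\mathcal{P}\cap Q|_r \lesssim_{d,T,\Delta_0} (\delta^{-\epsilon}/c)(R/r)^t$, and since $c$ is an absolute constant (or at worst $\geq \delta^{\epsilon}$ after shrinking) this is $\lesssim_{d,T,\Delta_0} \delta^{-2\epsilon}(R/r)^t$, establishing property (2) with constant $O_{d,T,\Delta_0}(1)\delta^{-2\epsilon}$. Combining with property (1), $\mathcal{P}$ is $(\delta,t,O_{d,T,\Delta_{0}}(1)\delta^{-2\epsilon})$-regular.

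\textbf{Main obstacle.} There is no serious obstacle: this is a bookkeeping argument. The only point requiring mild care is the reduction of arbitrary dyadic scales $r,R$ and arbitrary dyadic cubes $Q \in \mathcal{D}_{R}$ to the scales $2^{-iT}$ and cubes of the uniform hierarchy — one must check that the associated covering-number losses are genuinely bounded by constants depending only on $d$, $T$, and (because of the restriction $\Delta \leq \Delta_{0}$ in \eqref{form118}) on $\Delta_{0}$. Once that rounding is done, the identity $|\mathcal{P}\cap Q|_r = |\mathcal{P}|_r/|\mathcal{P}|_R$ for uniform sets does all the work, and the two-sided bound \eqref{form118} converts it directly into the desired regularity estimate.
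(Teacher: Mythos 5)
Your proof of the first assertion is essentially the paper's argument: for scales and cubes of the uniform hierarchy, the identity $|\mathcal{P}\cap Q|_{r} = |\mathcal{P}|_{r}/|\mathcal{P}|_{R}$ together with the two-sided bound \eqref{form118} gives the estimate, and rounding to the grid costs only $O_{d,T}(1)$. Two small edge cases are treated glibly, though both are harmless: "replacing $R$ by $\Delta_{0}$" when $R>\Delta_{0}$ does not "only help" — you must either decompose $Q$ into $\lesssim\Delta_{0}^{-d}$ cubes of side $\Delta_{0}$ or note $|\mathcal{P}|_{R}\geq(\Delta_{0}/R)^{d}|\mathcal{P}|_{\Delta_{0}}$, picking up an allowed $O_{d,\Delta_{0}}(1)$ factor — and you never address the regime $\Delta_{0}<r\leq R\leq 1$, where \eqref{form118} applies at neither scale; there the bound is trivial, since $|\mathcal{P}\cap Q|_{r}\leq(R/r)^{d}\leq\Delta_{0}^{-d}(R/r)^{t}$ because $R/r\leq\Delta_{0}^{-1}$ (this is exactly the paper's last case).

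The genuine problem is in your treatment of the "in particular" clause. You declare that a lower bound on $|\mathcal{P}|_{\Delta}$ "cannot be obtained from the $(\delta,t)$-set condition alone" and conclude that the lower bound in \eqref{form118} "is part of the assumption". But the statement assumes only the \emph{upper} bound in \eqref{form118} (with $C=\delta^{-\epsilon}$); the lower bound is precisely what you must supply, and it does follow — not from the set condition alone, but from the set condition \emph{combined with uniformity}, which is at your disposal (it is the same multiplicativity you already used). Indeed, for a grid scale $\Delta=2^{-jT}$ and any $Q\in\mathcal{D}_{\Delta}(\mathcal{P})$, the $(\delta,t,\delta^{-\epsilon})$-set property gives $|\mathcal{P}\cap Q|_{\delta}\lesssim_{d}\delta^{-\epsilon}\Delta^{t}|\mathcal{P}|_{\delta}$, while uniformity gives $|\mathcal{P}|_{\delta}=|\mathcal{P}|_{\Delta}\cdot|\mathcal{P}\cap Q|_{\delta}$; combining, $|\mathcal{P}|_{\Delta}\gtrsim_{d}\delta^{\epsilon}\Delta^{-t}$, i.e.\ the lower bound in \eqref{form118} holds with $c\gtrsim_{d}\delta^{\epsilon}$ (this is the computation in the "conversely" part of Lemma \ref{l:regular-is-frostman-ahlfors}, or Lemma \ref{lemma4} together with the remark after Definition \ref{def:deltaSSet}). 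With this $c$ the first part yields property (2) of Definition \ref{def:deltaTRegularSet} with constant $O_{d,T,\Delta_{0}}(1)\delta^{-2\epsilon}$, and property (1) is the assumed set condition. As written, your argument either assumes a hypothesis that is not in the statement or leaves $c\geq\delta^{\epsilon}$ unjustified ("after shrinking" is not an argument); this matters because the lemma is later applied exactly in the "in particular" form (in Propositions \ref{prop5} and \ref{prop6}), where no lower box-counting bound is available as a hypothesis.
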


\begin{proof} First, let $\delta \leq r \leq R \leq \Delta_{0}$ be elements of the sequence $\{2^{-jT}\}_{j = 0}^{m}$. Let $Q \in \mathcal{D}_{R}$. Then, by the uniformity hypothesis, and \eqref{form118}, we have
\begin{equation}\label{form119} |\mathcal{P} \cap Q|_{r} = \frac{|\mathcal{P}|_{r}}{|\mathcal{P}|_{R}} \leq \frac{Cr^{-t}}{cR^{-t}} \leq (C/c)(R/r)^{t}.
\end{equation}
Next, if $\delta \leq r \leq R \leq \Delta_{0}$ are arbitrary dyadic numbers, we may replace $r,R$ by the closest elements in the sequence $\{2^{-jT}\}_{j = 1}^{m}$ to deduce that $|\mathcal{P} \cap Q|_{r} \leq O_{d,T}(1)(C/c)(R/r)^{t}$ for every $Q \in \mathcal{D}_{R}$.

If $r \leq \Delta_{0} < R$ and $Q \in \mathcal{D}_{R}$, we first decompose $Q$ into $\leq \Delta_{0}^{-d}$ cubes of side-length $\Delta_{0}$, and then use the previous case to obtain \eqref{form119} with constant $O_{d,T,\Delta_{0}}(1)$. Finally, if $\Delta_{0} \leq r \leq R \leq 1$, then \eqref{form119} trivially holds with constant $\Delta_{0}^{-d}$. \end{proof}

\subsection{Proof of Theorem \ref{thm:FurstIntro}} In the following, we will say that a line set $\mathcal{L}$ is a $(\delta,t,C)$-set, or a $(\delta,t,C)$-regular set, if $\mathcal{L} = \mathbf{D}(P)$, where $P$ is a $(\delta,t,C)$-set, or a $(\delta,t,C)$-regular set, respectively. (Here $\mathbf{D}$ is the point-line duality map from Definition \ref{def:dyadicTubes}.)

\begin{proposition}\label{prop5} Let $0 < s \leq 1$ and $t \in (0,2]$, and assume that there exist $\delta_{0},\epsilon,\chi > 0$ such that the following holds for all $\delta \in (0,\delta_{0}]$.

Let $\mathcal{L}$ be a $(\delta,t,\delta^{-\epsilon})$-regular line set. Let $\mathcal{P} \subset \mathcal{D}_{\delta}$ be a set with the property that $\mathcal{P} \cap \ell$ contains a non-empty $(\delta,s,\delta^{-\epsilon})$-set for every $T \in \mathcal{T}$. Then, $|\mathcal{P}| \geq \delta^{-\chi}$.

Then, the following also holds. Let $F \subset \R^{2}$ be an $(s,t)$-Furstenberg set associated to a line family $\mathcal{L}$ with packing dimension $t$. Then, $\Hd F \geq \chi$. \end{proposition}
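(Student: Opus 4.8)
The plan is to run a standard pigeonholing argument that transfers a Hausdorff-dimension hypothesis into the $\delta$-discretised form required by the assumption of Proposition \ref{prop5}. First I would fix a parameter $\chi' < \chi$ and aim to show $\Hd F \geq \chi'$, which suffices since $\chi' < \chi$ is arbitrary. Suppose for contradiction that $\Hd F < \chi'$, so $F$ can be covered by countably many balls $\{B_i\}$ with $\sum_i \diam(B_i)^{\chi'} < \infty$; in particular, passing to dyadic scales, for every small $\delta$ the covering number satisfies $|F|_{\delta'} \leq \delta'^{-\chi'}$ for all dyadic $\delta' \leq \delta$ along a full sequence of scales (this is where one uses that a set of Hausdorff dimension $<\chi'$ has upper box dimension $\le \chi'$ \emph{at arbitrarily fine scales}, after pigeonholing scales). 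Meanwhile, by definition of an $(s,t)$-Furstenberg set with associated family $\mathcal{L}$ of packing dimension $t$, and by the fact that $\Hd(F \cap \ell) \geq s$ for each $\ell \in \mathcal{L}$, one obtains, via Frostman's lemma applied to a measure on $\mathcal{L}$ (using $\Pd \mathcal{L} = t$, so there is a closed subset $\mathcal{L}_0 \subset \mathcal{L}$ with $\Hd \mathcal{L}_0$ arbitrarily close to $t$ — or rather one uses that packing dimension $t$ gives, at arbitrarily fine scales, a $(\delta,t-\kappa)$-set of $\delta$-tubes), and Frostman's lemma on each fibre $F \cap \ell$ (giving $(\delta,s-\kappa,C)$-sets of $\delta$-cubes along $\ell$), a $\delta$-discretised configuration of the exact type hypothesised.

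The key steps, in order, would be: (1) Replace $\mathcal{L}$ by a compact subfamily and invoke the definition of packing dimension to extract, at a suitable sequence of scales $\delta \to 0$, a set $P_{\mathcal{L}} \subset \mathcal{D}_\delta$ in tube-parameter space that is a $(\delta, t - \kappa, \delta^{-\kappa})$-set; crucially, using Lemma \ref{l:uniformization} to pass to a uniform subset, and then Lemma \ref{lemma9}, one upgrades this to a $(\delta, t-\kappa, \delta^{-2\kappa})$-\emph{regular} line set (this is exactly the point where the packing/box dimension hypothesis is used, not just Hausdorff). (2) For each surviving line $\ell$, use $\Hd(F \cap \ell) \geq s$ together with Frostman's lemma to produce a non-empty $(\delta, s - \kappa, \delta^{-\kappa})$-set of $\delta$-cubes on $\ell$, all of which meet $\mathcal{P}_\delta := \mathcal{D}_\delta(F)$. (3) Apply the hypothesis of Proposition \ref{prop5} (with $s - \kappa, t - \kappa$ in place of $s, t$, and $2\kappa$ in place of $\epsilon$, choosing $\kappa$ small enough that the hypothesis still applies with some uniform $\chi'' > \chi'$, which is possible by the continuity built into the hypothesis — or, more carefully, one should state Proposition \ref{prop5} so that $\chi$ depends on $s,t$ continuously, or simply apply the hypothesis at the slightly perturbed parameters and absorb the loss). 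This yields $|\mathcal{P}_\delta| = |F|_\delta \geq \delta^{-\chi''} > \delta^{-\chi'}$, contradicting step (1)'s consequence $|F|_\delta \leq \delta^{-\chi'}$.

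The main obstacle is the bookkeeping around the order of quantifiers on scales: the Hausdorff-dimension bound $\Hd F < \chi'$ only gives good covering bounds at \emph{some} sequence of scales, while the packing-dimension lower bound $\Pd \mathcal{L} = t$ gives a rich tube family at (a possibly different) sequence of scales, and Frostman's lemma on the fibres must be applied at a common scale to all lines simultaneously. Reconciling these requires care: one fixes a scale $\delta$ at which $|F|_\delta \leq \delta^{-\chi'}$ (available along a sequence $\delta \to 0$), then argues that at this same $\delta$ the packing dimension still forces a $(\delta, t - \kappa)$-set of tubes — here one uses that packing dimension equals the $\limsup$ of box-counting dimensions over all open subsets, so after restricting $\mathcal{L}$ to a suitable subfamily the box-counting number $|\mathcal{L}|_\delta$ is $\gtrsim \delta^{-(t-\kappa)}$ for \emph{all} small $\delta$. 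The second subtlety is that Frostman's lemma produces a measure, not literally a $(\delta,s)$-set; the standard fix is the pigeonholing recorded in Remark \ref{rem:discretised-proj} (or its analogue for fibres), extracting a genuine $(\delta, s - \kappa)$-set from a level set of the Frostman measure at scale $\delta$, at the cost of a $\log(1/\delta)^{O(1)}$ factor absorbed into $\delta^{-\kappa}$. Once these scale-matching and measure-to-set reductions are handled, the remaining steps are routine, and I would present the argument as essentially a transcription of the analogous reduction already carried out in \cite[Section 3.1]{OS23}.
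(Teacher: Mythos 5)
Your treatment of the line family (extracting, from the packing dimension hypothesis, a subfamily whose upper box dimension is within a small margin of $t$, uniformising via Lemma \ref{l:uniformization}, and upgrading to a $(\delta,t,\delta^{-\epsilon})$-regular set via Lemma \ref{lemma9}) matches the paper's argument in spirit (note, though, that what packing dimension buys is an \emph{upper} bound $|\mathcal{L}|_{\Delta}\leq \Delta^{-\bar t}$ at all scales for a subfamily; the lower bound comes from $\Hd\mathcal{L}\geq t$ via Hausdorff content). The genuine gap is in the core of your scheme: the reduction of the assumption $\Hd F<\chi'$ to a single-scale covering bound $|F|_{\delta}\leq \delta^{-\chi'}$ is false. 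A cover with $\sum_i \diam(B_i)^{\chi'}<\infty$ consists of balls of wildly different sizes and gives no control on $|F|_{\delta}$ at any scale, not even along a sequence of scales: an upper bound on Hausdorff dimension gives no upper bound on covering numbers (a countable dense subset of $[0,1]^2$ has $\Hd=0$ but $|F|_{\delta}\sim\delta^{-2}$ for every $\delta$). So the contradiction you aim for never materialises. The problem is symmetric: applying the discretised hypothesis to $\mathcal{P}_{\delta}=\mathcal{D}_{\delta}(F)$ only yields $|F|_{\delta}\geq\delta^{-\chi}$, i.e.\ a lower bound on the lower box dimension of $F$, which is strictly weaker than $\Hd F\geq\chi$; no argument that only tracks single-scale covering numbers of $F$ can prove the proposition.

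The repair is the route the paper (and \cite[Lemma 3.3]{HSY22}, \cite[Section 3.1]{OS23}) takes: make no assumption on $F$, take an \emph{arbitrary} cover $\mathcal{F}$ of $F$ by dyadic squares of small side length, and show directly that $\sum_{Q\in\mathcal{F}}\ell(Q)^{\chi}\geq 1$. The multiscale-to-single-scale pigeonholing happens along the fibres, not on $F$: since $\mathcal{H}^{\underline{s}}_{\infty}(F\cap\ell)\geq c$ for every $\ell$ and the cover splits into scale blocks $\mathcal{F}_{2^{-k}}$, subadditivity of Hausdorff content produces, for each $\ell$, a scale at which $\mathcal{H}^{\underline{s}}_{\infty}\big((\cup\mathcal{F}_{2^{-k}})\cap\ell\big)\gtrsim k^{-2}$, and a second pigeonholing over $\ell$ (using $\mathcal{H}^{\underline{t}}_{\infty}(\mathcal{L})\geq c$) fixes one common scale $\delta$ and a subfamily $\mathcal{L}_{\delta}$ of content $\geq\delta^{\epsilon/4}$. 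One then extracts genuine $(\delta,t)$- and $(\delta,s)$-sets from these content bounds via \cite[Proposition A.1]{FaO} (this also replaces Frostman's lemma, so no Borel assumptions are needed on $F$ or the fibres), applies the hypothesis to the scale-$\delta$ part $\mathcal{F}_{\delta}$ of the cover, and concludes $|\mathcal{F}_{\delta}|\geq\delta^{-\chi}$, hence $\sum_{Q}\ell(Q)^{\chi}\geq|\mathcal{F}_{\delta}|\,\delta^{\chi}\geq 1$. Finally, your worry about continuity of $\chi$ in $(s,t)$ is unnecessary: choosing $\underline{s}>s-\epsilon/2$ and $\bar t-\underline{t}<\epsilon/8$, the perturbed exponents are absorbed into the constant $\delta^{-\epsilon}$, so the hypothesis is applied verbatim with parameters $(s,t)$.
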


\begin{remark}\label{rem3} The $\delta$-discretised hypothesis in Proposition \ref{prop5} is "dual" to the conclusion we obtained in Theorem \ref{thm:Furstenberg-Ahlfors}. More precisely, if $s \in (0,1]$, $t \in [s,2]$, and $0 \leq u < \min\{(s + t)/2,1\}$, and $\epsilon = \epsilon(s,t,u) > 0$ is sufficiently small, then Theorem \ref{thm:Furstenberg-Ahlfors} implies that the hypothesis of Proposition \ref{prop5} holds with $\chi = s + u$. (We omit the straightforward details about "dualising" Theorem \ref{thm:Furstenberg-Ahlfors}, because -- up to numerology -- they have been recorded in the proof of \cite[Theorem 3.2]{OS23}.) Consequently, every $(s,t)$-Furstenberg set associated to a line family $\mathcal{L}$ with packing dimension $t$ has
\begin{displaymath} \Hd F \geq s + u. \end{displaymath}
Letting $u \nearrow \min\{(s + t)/2,1\}$ proves Theorem \ref{thm:FurstIntro}. 
\end{remark}

\begin{proof}[Proof of Proposition \ref{prop5}] 
  We denote upper box (or Minkowski) dimension by $\overline{\dim}_{\mathrm{B}}$. Thus, for bounded sets $A \subset \R^{d}$, as well as bounded line families $\mathcal{L}$, we write
  \begin{displaymath} \overline{\dim}_{\mathrm{B}} A := \limsup_{r \to 0} \frac{\log N(A,r)}{\log (1/r)}, \end{displaymath}
  where $N(A,r)$ is the least number of closed balls of radius $r$ needed to cover $A$. We also recall from \cite[Section 5.9]{Mattila95} that packing dimension may be defined via upper box dimension as
\begin{displaymath} 
  \dim_{\mathrm{P}} \mathcal{L} = \inf_{\{\mathcal{L}_{j}\}} \left\{\sup_{j} \overline{\dim}_{\mathrm{B}} \mathcal{L}_{j} : \mathcal{L} = \bigcup \mathcal{L}_{j}\right\}.
\end{displaymath}
Here the "$\inf$" runs over countable families $\{\mathcal{L}_{j}\}$ of bounded subsets of $\mathcal{L}$ whose union agrees with $\mathcal{L}$. Since, by our assumption, $t = \Hd \mathcal{L}=\dim_{\mathrm{P}} \mathcal{L}$, for any $\underline{t} < t < \bar{t}$ there exists a bounded subset $\mathcal{L}_{j} \subset \mathcal{L}$ with
\begin{displaymath}
   \underline{t} < \Hd \mathcal{L}_{j} \leq \overline{\dim}_{B} \mathcal{L}_{j} < \bar{t}. 
\end{displaymath}
We choose $\bar{t} - \underline{t} \leq \epsilon/8$, where "$\epsilon$" is the parameter from our hypothesis. We replace $\mathcal{L}$ by $\mathcal{L}_{j}$ without changing notation (precisely: we will only use the information $\Hd (F \cap \ell) \geq s$ for $\ell \in \mathcal{L}_{j}$ in the sequel). Let
\[
  \mathcal{H}^{\underline{t}}_{\infty}(\mathcal{L}) = \inf\left\{\sum_{i} \diam(U_{i})^{\underline{t}} : \mathcal{L} \subset \bigcup_{i} U_{i}\right\}
\]
denote Hausdorff content at dimension $\underline{t}$. Since  Then, for some $c,\Delta_{0} > 0$, we have
\begin{equation}\label{form112} \mathcal{H}^{\underline{t}}_{\infty}(\mathcal{L}) \geq c > 0 \quad \text{and} \quad |\mathcal{L}|_{\Delta} \leq \Delta^{-\bar{t}} \text{ for } \Delta \in (0,\Delta_{0}]. \end{equation}
Next, choose $s - \epsilon/2 < \underline{s} < s$. Then, up to replacing $\mathcal{L}$ by a further subset, and taking "$c$" smaller if needed, we may assume that
\begin{equation}\label{form113} \mathcal{H}^{\underline{s}}_{\infty}(F \cap \ell) \geq c > 0\quad\text{for all } \ell \in \mathcal{L}. \end{equation}
Let $\mathcal{F} \subset \mathcal{D}$ be an arbitrary cover of $F$ by dyadic squares of side-length $\leq \Delta_{1} \ll \Delta_{0}$, to be determined later. We will need to require that $\Delta_{1}$ is small in terms of $c,\epsilon,\Delta_{0}$. Whenever such requirements are met, we will show that
\begin{equation}\label{form114a} \sum_{Q \in \mathcal{F}} \ell(Q)^{\chi} \geq 1. \end{equation}
This will prove that $\Hd F \geq \chi$, as claimed.

We claim that there exists $\delta \in 2^{-\N} \cap (0,\Delta_{1}]$ with the following property. Let $\mathcal{F}_{\delta} := \{Q \in \mathcal{F} : \ell(Q) = \delta\}$. There exists a subset $\mathcal{L}_{\delta} \subset \mathcal{L}$ with $\mathcal{H}^{\underline{t}}_{\infty}(\mathcal{L}_{\delta}) \geq \delta^{\epsilon/4}$ such that
\begin{equation}\label{form114b} 
  \mathcal{H}^{\underline{s}}_{\infty}(\cup \mathcal{F}_{\delta} \cap \ell) \geq \delta^{\epsilon/4}\quad\text{for all } \ell \in \mathcal{L}_{\delta}. 
\end{equation}
To see this, write $\mathcal{F}_{2^{-j}} := \{Q \in \mathcal{F} : \ell(Q) = 2^{-j}\}$, for $2^{-j} \leq \Delta_{1}$, and $\mathcal{L}_{2^{-j}} := \{\ell \in \mathcal{L} : \mathcal{H}_{\infty}^{\underline{s}}(\cup \mathcal{F}_{2^{-j}} \cap \ell) \geq 2^{-\epsilon j/4}\}$. Note that if $\ell$ is a line which is not contained in $\bigcup_{j} \mathcal{L}_{2^{-j}}$, then by the sub-additivity of Hausdorff content,
\begin{displaymath} \mathcal{H}^{\underline{s}}_{\infty}(F \cap \ell) \leq \sum_{2^{-j} \leq \Delta_{1}} \mathcal{H}^{\underline{s}}_{\infty}(\cup \mathcal{F}_{2^{-j}} \cap \ell) \lesssim_{\epsilon} \Delta_{1}^{\epsilon/4}. \end{displaymath}
In particular, $\mathcal{H}^{\underline{s}}_{\infty}(F \cap \ell) < c$, provided $\Delta_{1} > 0$ is sufficiently small in terms of $\epsilon$. Comparing this with \eqref{form113}, we see that $\mathcal{L} \subset \bigcup_{j} \mathcal{L}_{2^{-j}}$. Applying \eqref{form112}, and again the sub-additivity of Hausdorff content,
\begin{displaymath} c \leq \mathcal{H}^{\underline{t}}_{\infty}(\mathcal{L}) \leq \sum_{2^{-j} \leq \Delta_{1}} \mathcal{H}^{\underline{t}}_{\infty}(\mathcal{L}_{2^{-j}}). \end{displaymath}
Assuming again $\Delta_{1}$ sufficiently small in terms of $\epsilon$, this shows that $\mathcal{H}^{\underline{t}}_{\infty}(\mathcal{L}_{2^{-j}}) \geq 2^{-\epsilon j/4}$ for at least one index "$j$" with $\delta := 2^{-j} \leq \Delta_{1}$. This is what we claimed in \eqref{form114b}.

Using \cite[Proposition 3.13]{FaO}, we may find a non-empty $(\delta,\underline{t},\delta^{-\epsilon/2})$-subset of $\mathcal{L}_{\delta}$, which we keep denoting $\mathcal{L}_{\delta}$. Further, by applying Lemma \ref{l:uniformization} with a suitable $T \sim_{\epsilon} 1$, and passing to another subset, we may assume that $\mathcal{L}_{\delta}$ is $\{2^{-jT}\}_{j = 1}^{m}$ uniform, with $\delta = 2^{-mT}$.

Next, we observe that $\mathcal{L}_{\delta}$ is a $(\delta,t,\delta^{-\epsilon})$-regular (line) set, assuming that the upper bound "$\Delta_{1}$" for $\delta$ is sufficiently small in terms of $\epsilon,\Delta_{0}$. This follows with a little effort from Lemma \ref{lemma9} using the upper bound \eqref{form112}, the fact that $\mathcal{L}$ is a $\{2^{-jT}\}_{j = 1}^{m}$-uniform $(\delta,\underline{t},\delta^{-\epsilon/2})$-set, and that $\bar{t} - \underline{t} < \epsilon/8$.

Finally, it follows from \eqref{form114b}, the inequality $s - \underline{s} < \epsilon/2$, and \cite[Proposition A.1]{FaO}, that $\mathcal{F}_{\delta} \cap \ell$ contains a $(\delta,s,\delta^{-\epsilon})$-set for every $\ell \in \mathcal{L}_{\delta}$. Consequently, the hypothesis of the proposition is applicable with $\mathcal{L} = \mathcal{L}_{\delta}$ and $\mathcal{P} = \mathcal{F}_{\delta}$. The conclusion is that $|\mathcal{F}_{\delta}| \geq \delta^{-\chi}$, and \eqref{form114a} is an immediate consequence. \end{proof}

\subsection{Proof of Theorem \ref{thm:projection}} \label{s:proof-of-proj-thm} In this section, we prove Theorem \ref{thm:projection}. We will need a variant of the "uniform subset lemma", Lemma \ref{l:uniformization}, where the set $P$ (or $\mathcal{P}$) is nearly exhausted by uniform subsets:
\begin{cor}\label{cor4} For every $\epsilon > 0$, there exists $T_{0} = T_{0}(\epsilon) \geq 1$ such that the following holds for all $\delta = 2^{-mT}$ with $m \geq 1$ and $T \geq T_{0}$. Let $\mathcal{P} \subset \mathcal{D}_{\delta}$. Then, there exist disjoint $\{2^{-jT}\}_{j = 1}^{m}$-uniform subsets $\mathcal{P}_{1},\ldots,\mathcal{P}_{N} \subset \mathcal{P}$ with the properties
\begin{itemize}
\item $|\mathcal{P}_{j}| \geq \delta^{2\epsilon}|\mathcal{P}|$ for all $1 \leq j \leq N$,
\item $|\mathcal{P} \, \setminus \, (\mathcal{P}_{1} \cup \ldots \cup \mathcal{P})| \leq \delta^{\epsilon}|\mathcal{P}|$.
\end{itemize}
\end{cor}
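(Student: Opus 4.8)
\textbf{Proof proposal for Corollary \ref{cor4}.}

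The plan is to iterate Lemma \ref{l:uniformization} a bounded number of times, peeling off one dense uniform subset at each stage, and to stop once the leftover set has become negligibly small compared to $\mathcal{P}$. First I would fix $\epsilon > 0$ and choose $T_{0} = T_{0}(\epsilon)$ large enough that $T_{0}^{-1}\log(2T_{0}) \leq \epsilon$; this is exactly the threshold appearing in the final sentence of Lemma \ref{l:uniformization}, which guarantees that any single application produces a $\{2^{-jT}\}_{j=1}^{m}$-uniform subset $\mathcal{P}' \subset \mathcal{P}$ with $|\mathcal{P}'|_{\delta} \geq \delta^{\epsilon}|\mathcal{P}|_{\delta}$. (Here I would also require $T \geq T_{0}$, so that $T^{-1}\log(2T) \leq \epsilon$ as well, since $x^{-1}\log(2x)$ is eventually decreasing.)

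The recursion runs as follows. Set $\mathcal{R}_{0} := \mathcal{P}$. Given a nonempty leftover $\mathcal{R}_{k} \subset \mathcal{P}$, apply Lemma \ref{l:uniformization} to $\mathcal{R}_{k}$ (with the same scales $\Delta_{j} = 2^{-jT}$, $\delta = 2^{-mT}$) to obtain a $\{2^{-jT}\}_{j=1}^{m}$-uniform set $\mathcal{P}_{k+1} \subset \mathcal{R}_{k}$ with $|\mathcal{P}_{k+1}| \geq \delta^{\epsilon}|\mathcal{R}_{k}|$, and put $\mathcal{R}_{k+1} := \mathcal{R}_{k} \setminus \mathcal{P}_{k+1}$. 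We halt at the first index $N$ with $|\mathcal{R}_{N}| \leq \delta^{\epsilon}|\mathcal{P}|$. The two required properties are then almost immediate. The leftover bound is the stopping condition itself, with $\mathcal{R}_{N} = \mathcal{P} \setminus (\mathcal{P}_{1} \cup \cdots \cup \mathcal{P}_{N})$ (disjointness is built into the construction, since each $\mathcal{P}_{k+1} \subset \mathcal{R}_{k}$ is removed before the next step). For the lower bound on the sizes: as long as $k < N$ we have $|\mathcal{R}_{k}| > \delta^{\epsilon}|\mathcal{P}|$, hence $|\mathcal{P}_{k+1}| \geq \delta^{\epsilon}|\mathcal{R}_{k}| > \delta^{2\epsilon}|\mathcal{P}|$, which is exactly the claimed bound for every $1 \leq j \leq N$.

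It remains to check that the process terminates, i.e. that $N$ is finite — in fact this is forced with no extra work, since at each step before halting we remove at least $\delta^{\epsilon}|\mathcal{R}_{k}| > \delta^{2\epsilon}|\mathcal{P}|$ squares, so $|\mathcal{R}_{k}| \leq |\mathcal{P}| - k\delta^{2\epsilon}|\mathcal{P}|$ and the procedure must stop after at most $N \leq \delta^{-2\epsilon}$ steps (more than enough, as the leftover drops below $\delta^{\epsilon}|\mathcal{P}|$ well before that). The only mild subtlety — and the closest thing to an obstacle — is bookkeeping: one must apply Lemma \ref{l:uniformization} to the shrinking sets $\mathcal{R}_{k}$ rather than to $\mathcal{P}$, noting that the scale parameters $m,T$ (hence the constant $2T$ in the lemma) do not change, so the factor $\delta^{\epsilon}$ gained at each step is the same, and that $|\mathcal{R}_{k}|$ need not be comparable to $|\mathcal{P}|$ for small $k$ — but this is harmless because the lower bound $|\mathcal{P}_{k+1}| \geq \delta^{2\epsilon}|\mathcal{P}|$ is only claimed while $|\mathcal{R}_{k}| > \delta^{\epsilon}|\mathcal{P}|$, i.e. precisely while the iteration has not yet halted. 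This completes the proof.
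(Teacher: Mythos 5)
Your proposal is correct and is essentially the paper's own argument: greedily apply Lemma \ref{l:uniformization} to the current leftover set, stopping as soon as the leftover has size at most $\delta^{\epsilon}|\mathcal{P}|$, so that each extracted uniform piece has size at least $\delta^{\epsilon}\cdot\delta^{\epsilon}|\mathcal{P}| = \delta^{2\epsilon}|\mathcal{P}|$. The choice of $T_{0}$ via $T_{0}^{-1}\log(2T_{0})\leq\epsilon$ and the (trivial) termination remark also match the paper, so there is nothing to add.
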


\begin{proof} We pick $T_{0} \in \N$ satisfying $T_{0}^{-1}\log(2T_{0}) \leq \epsilon$, and let $T \geq T_{0}$. We then apply Lemma \ref{l:uniformization} once to find a $\{2^{-jT}\}_{j = 1}^{m}$-uniform subset $\mathcal{P}_{1} \subset \mathcal{P}$ with $|\mathcal{P}_{1}| \geq \delta^{\epsilon}|\mathcal{P}|$.

Assume that the sets $\mathcal{P}_{1},\ldots,\mathcal{P}_{k}$ have already been selected for some $k \geq 1$. If $|\mathcal{P} \, \setminus \, (\mathcal{P}_{1} \cup \ldots \cup \mathcal{P}_{k})| \leq \delta^{\epsilon}|\mathcal{P}|$, we set $N := k$ and the construction terminates. In the opposite case, we apply Lemma \ref{l:uniformization} to the set $\mathcal{P}' := \mathcal{P} \, \setminus \, (\mathcal{P}_{1} \cup \ldots \cup \mathcal{P}_{k})$ to find another $\{2^{-jT}\}_{j = 1}^{m}$-uniform subset $\mathcal{P}_{k + 1} \subset \mathcal{P}'$ with $|\mathcal{P}_{k + 1}| \geq \delta^{\epsilon}|\mathcal{P}'| \geq \delta^{2\epsilon}|\mathcal{P}|$. \end{proof}

We also need to borrow the following lemma from \cite[Lemma 2.3]{MR4324956}:

\begin{lemma}\label{lemma10} Let $0 \leq s \leq d$, $\delta > 0$, $C \geq 1$, and let $K \subset \R^{d}$ be a bounded set with
\begin{equation*} 
  |K|_{\delta} \leq C\delta^{-t}. 
\end{equation*}
Then, for any $L \geq 1$, there exists a disjoint decomposition $K = K_{\mathrm{good}} \cup K_{\mathrm{bad}}$ such that
\begin{itemize}
\item[\textup{(1)}] $\calH^{t}_{\infty}(K_{\mathrm{bad}}) \lesssim_{d} L^{-1}$, and
\item[\textup{(2)}] $K_{\mathrm{good}}$ satisfies
\begin{displaymath} |K_{\mathrm{good}} \cap B(x,r)|_{\delta} \lesssim_{d} CL\left(\frac{r}{\delta}\right)^{t}\quad\text{for all } x \in \R^{d}, \, r \geq \delta. \end{displaymath}
\end{itemize}
\end{lemma}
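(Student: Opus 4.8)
\textbf{Proof plan for Lemma \ref{lemma10}.} The statement to prove is: given a bounded set $K \subset \R^{d}$ with $|K|_{\delta} \leq C\delta^{-t}$, and any $L \geq 1$, one can split $K = K_{\mathrm{good}} \cup K_{\mathrm{bad}}$ so that $\calH^{t}_{\infty}(K_{\mathrm{bad}}) \lesssim_{d} L^{-1}$ while $K_{\mathrm{good}}$ obeys the uniform bound $|K_{\mathrm{good}} \cap B(x,r)|_{\delta} \lesssim_{d} CL(r/\delta)^{t}$ for all $x$ and $r \geq \delta$. Since the authors cite this from \cite[Lemma 2.14]{MR4324956}, the role here is only to reconstruct the short argument. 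The plan is to run a greedy removal procedure: iteratively delete from $K$ the "heavy" balls that violate the desired non-concentration estimate, collect all removed pieces into $K_{\mathrm{bad}}$, and show the union of the removed balls is cheap in $\calH^{t}_{\infty}$.

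First I would set up the stopping-time / greedy scheme. Work at a fixed dyadic scale structure for convenience. Call a dyadic ball (or cube) $B$ of side-length $r \in [\delta,1]$ \emph{heavy} if $|K \cap B|_{\delta} > c_d \, CL (r/\delta)^{t}$ for a suitable dimensional constant $c_d$ to be fixed later. If there are no heavy balls, set $K_{\mathrm{bad}} = \emptyset$ and we are done. Otherwise, select a \emph{maximal} heavy ball $B_1$ (maximal with respect to inclusion, equivalently of largest side-length; there are only $\sim \log(1/\delta)$ scales and finitely many balls meeting $K$, so this is well-defined), remove $K \cap B_1$ from $K$, and repeat on $K \setminus B_1$. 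This produces a finite or countable disjoint-in-the-$K$-sense family of balls $B_1, B_2, \dots$ (the balls themselves may overlap, but each step removes genuinely new points of $K$, so the process terminates since $|K|_\delta < \infty$). Put $K_{\mathrm{bad}} := K \cap \bigcup_i B_i$ and $K_{\mathrm{good}} := K \setminus \bigcup_i B_i$.

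Next I would verify the two conclusions. For (2): by construction, when the process halts, $K_{\mathrm{good}}$ contains no heavy ball at \emph{any} scale, which is exactly the statement $|K_{\mathrm{good}} \cap B(x,r)|_{\delta} \lesssim_{d} CL(r/\delta)^{t}$ once we convert between arbitrary balls $B(x,r)$ and the dyadic balls used in the selection (this conversion costs only a dimensional constant, absorbed into $\lesssim_d$). For (1): each removed ball $B_i$ has side-length $r_i$ with $|K \cap B_i|_{\delta} > c_d CL (r_i/\delta)^{t}$ at the moment of removal, so (since the removed $K$-pieces are disjoint) $\sum_i (r_i/\delta)^{t} < (c_d C L)^{-1} \sum_i |(K \cap B_i) \setminus \bigcup_{j<i} B_j|_{\delta} \leq (c_d C L)^{-1} |K|_{\delta} \leq (c_d L)^{-1} \delta^{-t}$. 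Multiplying by $\delta^t$ gives $\sum_i r_i^{t} \lesssim_d L^{-1}$, and since $\{B_i\}$ covers $K_{\mathrm{bad}}$ this yields $\calH^{t}_{\infty}(K_{\mathrm{bad}}) \leq \sum_i \diam(B_i)^t \lesssim_d L^{-1}$.

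The only genuinely delicate point — and the one I'd expect to consume most of the write-up — is making the greedy selection well-posed and the disjointness bookkeeping airtight: one must ensure the "maximal heavy ball" is chosen so that later-removed balls are not nested inside earlier ones in a way that double-counts points, and one must be careful that after removing $K \cap B_i$ the set $K_{\mathrm{good}}$ really inherits non-concentration at scales \emph{larger} than any individual removed ball (this is where choosing maximal, rather than minimal, heavy balls matters). Everything else — the summation estimate, the dyadic-to-Euclidean ball conversion, and termination — is routine. Given that the result is quoted verbatim from \cite{MR4324956}, in the paper itself it suffices to cite that reference, but the above is the argument one would reproduce if needed.
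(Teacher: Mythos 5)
The paper never proves Lemma \ref{lemma10}: it is imported verbatim from \cite[Lemma 2.14]{MR4324956}, so there is no in-paper argument to compare with, and your task is really to reconstruct the proof from that reference. Your reconstruction is correct in substance, but note two things. First, the standard (and simpler) route is the \emph{static} decomposition rather than a greedy iteration: call a dyadic cube $Q$ with $\ell(Q)\geq\delta$ heavy if $|K\cap Q|_{\delta}>CL(\ell(Q)/\delta)^{t}$, let $K_{\mathrm{bad}}$ be $K$ intersected with the union of the \emph{maximal} heavy cubes, and $K_{\mathrm{good}}$ the rest. Maximal dyadic cubes are pairwise disjoint, so $\sum_{Q}\ell(Q)^{t}<(CL)^{-1}\delta^{t}\sum_{Q}|K\cap Q|_{\delta}\leq (CL)^{-1}\delta^{t}|K|_{\delta}\leq L^{-1}$ gives (1) in one line, while (2) holds because every heavy cube lies in a maximal one and hence misses $K_{\mathrm{good}}$, every non-heavy cube satisfies the bound trivially, arbitrary balls $B(x,r)$ with $\delta\leq r\lesssim 1$ are covered by $O_{d}(1)$ dyadic cubes of comparable side, and $r\gtrsim 1$ follows directly from $|K|_{\delta}\leq C\delta^{-t}$ (a case you should state, since the lemma allows all $r\geq\delta$). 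Your dynamic stopping-time version reaches the same endpoint but the iteration buys nothing over the one-shot maximal-cube selection. Second, and this is the one point you must commit to rather than hedge: the removal has to be done with dyadic cubes, not Euclidean balls. Your key inequality $\sum_{i}|(K\cap B_{i})\setminus\bigcup_{j<i}B_{j}|_{\delta}\leq |K|_{\delta}$ relies on each dyadic $\delta$-cube being counted for at most one piece, which is exactly the dyadic nesting property: a $\delta$-cube meeting the $i$-th piece is contained in $B_{i}$ and disjoint from all earlier $B_{j}$, hence cannot be counted again later. For overlapping Euclidean balls a single $\delta$-cube can straddle the boundaries of many removed balls and be counted with unbounded multiplicity, so the displayed sum bound is not justified as written. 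Since you already allow ``dyadic ball (or cube)'', fixing the choice as dyadic cubes closes the only real gap and your argument is complete.
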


We can then state a proposition which implies Theorem \ref{thm:projection}:

\begin{proposition}\label{prop6} Let $s,u \in (0,1]$, $t \in (0,2]$, and assume that there exist $\delta_{0},\epsilon > 0$ such that the following holds for all $\delta \in (0,\delta_{0}]$.

Let $\mathcal{P}$ be a non-empty $(\delta,t,\delta^{-\epsilon})$-regular set, and let $\mathcal{E} \subset S^{1}$ be a non-empty $(\delta,s,\delta^{-\epsilon})$-set. Then, there exists $\theta \in \mathcal{E}$ such that
\begin{displaymath} |\pi_{\theta}(\mathcal{P}')|_{\delta} \geq \delta^{-u}\quad\text{for all } \mathcal{P}' \subset \mathcal{P}, \, |\mathcal{P}'| \geq \delta^{\epsilon}|\mathcal{P}|. \end{displaymath}
Then, the following also holds. Let $K \subset \R^{2}$ be a set with $\Hd K = \dim_{\mathrm{P}} K = t$, and let $E \subset S^{1}$ be a set with $\Hd E \geq s$. Then, there exists $\theta \in E$ such that $\Hd \pi_{\theta}(K) \geq u$.
\end{proposition}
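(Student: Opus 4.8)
The strategy mirrors the proof of Proposition \ref{prop5}: we reduce the Hausdorff dimension statement about $\pi_{\theta}(K)$ to the hypothesised $\delta$-discretised statement by passing to a single scale $\delta$, extracting a uniform $(\delta,t)$-subset of $K$ which, thanks to the packing dimension hypothesis and Lemma \ref{lemma9}, is automatically $(\delta,t,\delta^{-\epsilon})$-regular, and similarly an honest $(\delta,s,\delta^{-\epsilon})$-subset of $E$. The new ingredient, compared with Proposition \ref{prop5}, is that we must produce a direction $\theta$ that works \emph{simultaneously} for all scales $\delta$ in an infinite sequence, since $\Hd \pi_{\theta}(K) \geq \chi$ is itself a multiscale statement; this is handled by a standard diagonal/pigeonholing argument over a countable dense set of directions, exactly as in Bourgain \cite[p.~222]{Bo2}. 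Here is where the parameter $u$ enters: letting $u \nearrow \min\{(s+t)/2,1\}$ (respectively the relevant threshold) at the very end gives the sharp exponent $\chi$.

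\textbf{Key steps.} First, fix $\underline{t} < t < \bar{t}$ with $\bar{t} - \underline{t} \leq \epsilon/8$ and, using that $\dim_{\mathrm{P}} K = t$, pass to a bounded piece of $K$ (not relabelled) with $\mathcal{H}^{\underline{t}}_{\infty}(K) > 0$ and $|K|_{\Delta} \leq \Delta^{-\bar{t}}$ for all small $\Delta$; also fix $\underline{s} < s$ with $s - \underline{s} < \epsilon/2$ and replace $E$ by a subset with $\mathcal{H}^{\underline{s}}_{\infty}(E) > 0$. Second, suppose for contradiction that $\Hd \pi_{\theta}(K) < \chi$ for every $\theta$ in a fixed countable dense set $D \subset E$; then for each $\theta \in D$ there is a cover of $\pi_{\theta}(K)$ witnessing this, and a pigeonholing step (as in \cite[Lemma 3.3]{HSY22} and the Bourgain-type argument) produces a single scale $\delta = \delta(\theta) \in 2^{-\N}$ at which $|\pi_{\theta}(K_{\delta})|_{\delta} \leq \delta^{-\chi - c}$ for a $\delta$-covering $K_{\delta}$ of $K$ of near-maximal Frostman content; by a further diagonal argument over $D$ one arranges a \emph{common} scale $\delta$ and a common sub-collection working for a $(\delta,s,\delta^{-\epsilon})$-set $\mathcal{E}$ of directions. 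Third, apply Lemma \ref{l:uniformization}/Corollary \ref{cor4} to extract a $\{2^{-jT}\}_{j=1}^{m}$-uniform subset $\mathcal{P} \subset \mathcal{D}_{\delta}(K_{\delta})$ of density $\geq \delta^{\epsilon}$, which by the box-dimension upper bound $|K|_{\Delta} \leq \Delta^{-\bar{t}}$ and Lemma \ref{lemma9} is $(\delta,t,\delta^{-\epsilon})$-regular (here the gap $\bar{t} - \underline{t} < \epsilon/8$ and Frostman content bound below give the matching lower bound $|\mathcal{P}|_{\Delta} \gtrsim \delta^{\epsilon}\Delta^{-\underline{t}}$). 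Fourth, invoke the hypothesis of the proposition: there is $\theta \in \mathcal{E}$ with $|\pi_{\theta}(\mathcal{P}')|_{\delta} \geq \delta^{-u}$ for every dense subset $\mathcal{P}'$; applying this to $\mathcal{P}' = \mathcal{P} \cap \pi_{\theta}^{-1}(\text{anything of density} \geq \delta^{\epsilon})$ contradicts $|\pi_{\theta}(K_{\delta})|_{\delta} \leq \delta^{-\chi - c}$ once $\chi = s + u$ (or whatever $\chi$ the hypothesis supplies) is chosen with $\chi + c < u$ — i.e.\ taking $u$ close enough to its supremum. Finally, conclude $\Hd \pi_{\theta}(K) \geq \chi$ for some $\theta \in E$, and let $u$ increase to the threshold.

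\textbf{Main obstacle.} The delicate point is the \emph{simultaneous} choice of scale and direction: a priori each exceptional direction $\theta$ comes with its own bad scale $\delta(\theta)$, and we need infinitely many directions (forming a $(\delta,s)$-set) to fail at the \emph{same} scale in order to feed them into the single-scale hypothesis. This is resolved by the now-standard argument of first fixing, via a counting/pigeonhole over a countable dense $D \subset E$, a scale $\delta$ and a positive-density subset $E_{\delta} \subset E$ of directions bad at scale $\delta$, then passing to a $(\delta,s,\delta^{-\epsilon})$-subset $\mathcal{E} \subset E_{\delta}$ using \cite[Proposition A.1]{FaO}; one must take care that the various $\epsilon$-losses (from uniformization, from the Frostman-content-to-$(\delta,s)$-set conversions, from the gap $\bar{t}-\underline{t}$) are all absorbed into the single parameter $\epsilon$ of the hypothesis, and that the final inequality $\chi + c < u$ can be met. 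Since all of these are routine and modelled on \cite[Theorem 3.2]{OS23} and \cite[p.~222]{Bo2}, we will be brief about them. The use of Lemma \ref{lemma10} (in place of Frostman's lemma) is what allows $K$ to be an arbitrary set of equal Hausdorff and packing dimension rather than a Borel set, and it is invoked precisely at the step where we need $K_{\delta}$ to carry near-maximal content together with a $(\delta,t)$-type upper bound.
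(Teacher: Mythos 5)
Your overall scheme is the right one (counter assumption, per-direction pigeonholing of a bad scale, Lemma \ref{lemma10} in place of Frostman's lemma, uniformisation plus Lemma \ref{lemma9} to manufacture regularity, extraction of a $(\delta,s,\delta^{-\epsilon})$-set of directions, contradiction with the discretised hypothesis), but there is a genuine gap at the crucial step. The counter assumption only yields, for each bad direction $\theta$, a \emph{$\theta$-dependent} subset $K_{\theta} \subset K$ of content $\gtrsim (\log 1/\delta)^{-3}$ with $|\pi_{\theta}(K_{\theta})|_{\delta} \leq \delta^{-\underline{u}}$; it does not yield a single $\theta$-independent ``$K_{\delta}$ of near-maximal Frostman content'' whose projection is small in every bad direction, which is what your step 2/4 uses. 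Your proposed fix, ``a further diagonal argument arranging a common sub-collection'', cannot be carried out: there are exponentially many candidate subsets $K_{\theta}$, so one cannot pigeonhole the subsets themselves into a common one. Since the hypothesis you must violate is quantified over \emph{all} $\delta^{\epsilon}$-dense subsets $\mathcal{P}'$ of one fixed regular set $\mathcal{P}$, the real task is to place the sets $K_{\theta}$, for a whole $(\delta,s)$-set of directions $\theta$, as $\delta^{\epsilon}$-dense subsets of one and the same $(\delta,t,\delta^{-\epsilon})$-regular set; extracting a single uniform subset $\mathcal{P} \subset \mathcal{D}_{\delta}(K)$, as you do, cannot achieve this, because $\mathcal{P}$ may simply have small (or empty) intersection with a given $K_{\theta}$.

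The paper resolves exactly this point with the exhaustion statement, Corollary \ref{cor4}: after the good/bad decomposition of Lemma \ref{lemma10}, the cube family $\mathcal{K}$ of $K_{\mathrm{good}}$ is covered, up to a $\delta^{\epsilon/100}$-fraction, by $N \leq \delta^{-\epsilon/50}$ disjoint uniform pieces $\mathcal{K}_{1},\ldots,\mathcal{K}_{N}$, each $(\delta,t,\delta^{-\epsilon})$-regular by Lemma \ref{lemma9}; since each $\mathcal{K}_{\theta}$ occupies a $\delta^{\epsilon/200}$-fraction of $\mathcal{K}$, it occupies a $\delta^{\epsilon}$-fraction of some piece $\mathcal{K}_{j(\theta)}$ (this is \eqref{form124}), and a second pigeonholing over $\theta$ -- carried out with the Hausdorff content of $E_{\delta}$, not over a countable dense set $D$ (a countable set has zero $\underline{s}$-content, so \cite[Proposition A.1]{FaO} cannot extract a $(\delta,s,\delta^{-\epsilon})$-set from it) -- fixes a single index $j$ and a content-large $E_{\delta}^{j}$ from which $\mathcal{E}$ is drawn. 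Only then does the hypothesis, applied with $\mathcal{P} = \mathcal{K}_{j}$ and $\mathcal{P}_{\theta} = \mathcal{K}_{\theta} \cap \mathcal{K}_{j}$, produce the contradiction. Two smaller inaccuracies in your write-up: the lower covering bound $|\mathcal{P}|_{\Delta} \gtrsim \delta^{O(\epsilon)}\Delta^{-t}$ required by Lemma \ref{lemma9} comes from the non-concentration of $K_{\mathrm{good}}$ supplied by Lemma \ref{lemma10} (content of $K$ plus the gap $\bar{t}-\underline{t}$ alone does not prevent a dense subset from concentrating in one $\Delta$-square), and the subset to which the hypothesis is applied is the family of cubes of $\mathcal{P}$ meeting $K_{\theta}$, not ``$\mathcal{P} \cap \pi_{\theta}^{-1}(\text{anything of density} \geq \delta^{\epsilon})$''.
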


\begin{remark} According to Corollary \ref{cor2}, the $\delta$-discretised hypothesis of Proposition \ref{prop6} holds for all $s \in (0,1]$, $t \in [s,2]$, and for all $0 \leq u < \min\{(s + t)/2,1\}$. Consequently, Proposition \ref{prop6} implies the following: Let
\begin{displaymath} s \in (0,1], \quad t \in [s,2], \quad \text{and} \quad 0 \leq u < \min\{(s + t)/2,1\}. \end{displaymath}
If $E \subset S^{1}$ has $\Hd E \geq s$, and $K \subset \R^{2}$ is an arbitrary set with $\Hd K = \dim_{\mathrm{P}} K = t$, then there exists $\theta \in E$ such that $\Hd \pi_{\theta}(K) \geq u$. This statement formally implies that
\begin{displaymath} \dim\{\theta \in S^{1} : \Hd \pi_{\theta}(K) < v\} \leq \min\{2v - t,0\}, \quad 0 \leq v \leq \min\{t,1\}, \end{displaymath}
as claimed in Theorem \ref{thm:projection}. \end{remark}

\begin{proof}[Proof of Proposition \ref{prop6}] Let $\underline{t} < t < \bar{t}$ with $\bar{t} - \underline{t} < \epsilon/100$. Arguing as in the proof of Proposition \ref{prop5}, we may assume that $K \subset [0,1)^{2}$, and
\begin{equation}\label{form121} \mathcal{H}_{\infty}^{\underline{t}}(K) \geq c > 0 \quad \text{and} \quad |K|_{\Delta} \leq \Delta^{-\bar{t}} \text{ for } 0 < \Delta \leq \Delta_{0}, \end{equation}
where $c > 0$ and $\Delta_{0} \in (0,\delta_{0}]$. Similarly, may assume that $\mathcal{H}^{\underline{s}}_{\infty}(E) \geq c > 0$ for some $s - \epsilon/100 \leq \bar{s} < s$.

Next, we fix $\Delta_{1} \ll \Delta_{0}$ (depending eventually on $\epsilon,c,\Delta_{0}$) and $\underline{u} < u$, and make the counter assumption that $\Hd \pi_{\theta}(K) < \underline{u}$ for all $\theta \in E$. The following objects can be located by pigeonholing (using the subadditivity of Hausdorff content):
\begin{enumerate}
\item A dyadic scale $\delta \in (0,\Delta_{1}]$.
\item A subset $E_{\delta} \subset E$ with $\mathcal{H}^{\underline{s}}_{\infty}(E_{\delta}) \geq (\log (1/\delta))^{-3}$.
\item For $\theta \in E_{\delta}$ a set $K_{\theta} \subset K$ with $\mathcal{H}^{\underline{t}}_{\infty}(K_{\theta}) \geq (\log(1/\delta))^{-3}$, and the property
\begin{equation}\label{form125} |\pi_{\theta}(K_{\theta})|_{\delta} \leq \delta^{-\underline{u}}\quad\text{for all } \theta \in E_{\delta}. \end{equation}
\end{enumerate}
Note from \eqref{form121} that $|K|_{\delta} \leq C\delta^{-\underline{t}}$ with $C := \delta^{-\epsilon/100}$. Write $L = \delta^{-\epsilon/8}$, and apply Lemma \ref{lemma10} with "$\underline{t}$" and this "$L$" to decompose $K = K_{\mathrm{good}} \cup K_{\mathrm{bad}}$, where $\mathcal{H}^{\underline{t}}_{\infty}(K_{\mathrm{bad}}) \lesssim_{d} \delta^{\epsilon/8}$, and $K_{\mathrm{good}}$ satisfies
\begin{equation}\label{form122} |K_{\mathrm{good}} \cap B(x,r)|_{\delta} \lesssim_{d} CL\left(\frac{r}{\delta}\right)^{\underline{t}} \leq \delta^{-\epsilon/7}\left(\frac{r}{\delta} \right)^{t}\quad\text{for all } x \in \R^{2}, \, r \geq \delta. \end{equation}
Let $\mathcal{K} := \{p \in \mathcal{D}_{\delta} : p \cap K_{\mathrm{good}} \neq \emptyset\}$, which is a cover of $K_{\mathrm{good}}$. Therefore,
\begin{displaymath} |\mathcal{K}| \gtrsim \delta^{-\underline{t}} \cdot \mathcal{H}^{\underline{t}}_{\infty}(K_{\mathrm{good}}) \geq \delta^{-\underline{t}} \cdot (\mathcal{H}^{\underline{t}}_{\infty}(K) - \mathcal{H}^{\underline{t}}_{\infty}(K_{\mathrm{bad}})) \gtrsim \delta^{\epsilon/100} \cdot \delta^{-t}. \end{displaymath}
Since \eqref{form122} continues to hold with "$\mathcal{K}$" in place of "$K_{\mathrm{good}}$", this shows that $\mathcal{K}$ is a $(\delta,t,\delta^{-\epsilon/6})$-set, satisfying $|\mathcal{K}|_{\Delta} \leq \delta^{-\epsilon/100} \cdot \Delta^{-t}$ for all $\delta \leq \Delta \leq \Delta_{0}$ by \eqref{form121}.

We now apply Corollary \ref{cor4} to $\mathcal{K}$, and with parameter "$\epsilon/100$", to obtain disjoint $\{2^{-jT}\}_{j = 1}^{m}$-uniform subsets $\mathcal{K}_{1},\ldots,\mathcal{K}_{N} \subset \mathcal{K}$ (with $T \sim_{\epsilon} 1$) satisfying $|\mathcal{K}_{j}| \geq \delta^{\epsilon/50}|\mathcal{K}|$, and
\begin{equation}\label{form123} |\mathcal{K} \, \setminus \, (\mathcal{K}_{1} \cup \ldots \cup \mathcal{K}_{N})| \leq \delta^{\epsilon/100}|\mathcal{K}|. \end{equation}
We record that $N \leq \delta^{-\epsilon/50}$. Furthermore, let us note that each $\mathcal{K}_{j}$ is a $\{2^{-jT}\}_{j = 1}^{m}$-uniform $(\delta,t,\delta^{-\epsilon/4})$-set, and therefore a $(\delta,t,\delta^{-\epsilon})$-regular set by Lemma \ref{lemma9}, assuming that $\delta > 0$ is small enough in terms of $\epsilon,\Delta_{0}$.

Next, recall from (3) above that $\mathcal{H}^{\underline{t}}_{\infty}(K_{\theta}) \geq (\log (1/\delta))^{-3}$ for all $\theta \in E_{\delta}$. Since on the other hand $K_{\mathrm{good}} \subset \cup \mathcal{K}$, and $\mathcal{H}^{\underline{t}}_{\infty}(K_{\mathrm{bad}}) \lesssim_{d} \delta^{\epsilon/8}$, we have
\begin{displaymath} \mathcal{H}_{\infty}^{\underline{t}}((\cup \mathcal{K}) \cap K_{\theta}) \geq \mathcal{H}_{\infty}^{\underline{t}}(K_{\theta}) - \mathcal{H}^{\underline{t}}_{\infty}(K_{\mathrm{bad}}) \gtrsim (\log (1/\delta))^{-3}\quad\text{for all } \theta \in E_{\delta}. \end{displaymath}
This implies that $\mathcal{K}_{\theta} := \{p \in \mathcal{K} : p \cap K_{\theta} \neq \emptyset\} \subset \mathcal{K}$ satisfies $|\mathcal{K}_{\theta}| \geq \delta^{\epsilon/200}|\mathcal{K}|$, because otherwise,
\begin{displaymath}  \mathcal{H}_{\infty}^{\underline{t}}((\cup \mathcal{K}) \cap K_{\theta}) \lesssim |\mathcal{K}_{\theta}| \cdot \delta^{\underline{t}} < \delta^{\epsilon/200}|\mathcal{K}| \cdot \delta^{\underline{t}} \stackrel{\eqref{form121}}{\leq} \delta^{\epsilon/200}. \end{displaymath}
Since $|\mathcal{K}_{\theta}| \geq \delta^{\epsilon/200}|\mathcal{K}|$, we may infer from \eqref{form123} that $\mathcal{K}_{\theta}$ must have large intersection with one of the $(\delta,t,\delta^{-\epsilon})$-regular families $\mathcal{K}_{1},\ldots,\mathcal{K}_{N} \subset \mathcal{K}$. More precisely, for each $\theta \in E_{\delta}$, there exists an index $j = j(\theta) \in \{1,\ldots,N\}$ such that
\begin{equation}\label{form124} |\mathcal{K}_{\theta} \cap \mathcal{K}_{j}| \gtrsim \tfrac{1}{N}|\mathcal{K}_{\theta}| \geq \delta^{\epsilon/50 + \epsilon/200}|\mathcal{K}| \geq \delta^{\epsilon}|\mathcal{K}_{j}|. \end{equation}
By a final application of the pigeonhole principle, and since $\mathcal{H}^{\underline{s}}_{\infty}(E_{\delta}) \geq (\log(1/\delta))^{-3}$ by (2) above, we may choose a fixed index $j \in \{1,\ldots,N\}$ such that the set
\begin{displaymath} E_{\delta}^{j} := \{\theta \in E_{\delta} : j(\theta) = j\} \end{displaymath}
satisfies $\mathcal{H}_{\infty}^{\underline{s}}(E^{j}_{\delta}) \geq \delta^{\epsilon/2}$. Recalling that $\underline{s} \geq s - \epsilon/4$, we may finally select a $(\delta,s,\delta^{-\epsilon})$-subset $\mathcal{E} \subset E_{\delta}^{j}$, for this specific index "$j$".

Let $\mathcal{P} := \mathcal{K}_{j}$, and let $\mathcal{P}_{\theta} := \mathcal{K}_{\theta} \cap \mathcal{K}_{j}$ for $\theta \in \mathcal{E}$. From \eqref{form124}, we infer that $|\mathcal{P}_{\theta}| \geq \delta^{\epsilon}|\mathcal{P}|$ for all $\theta \in \mathcal{E}$, and from \eqref{form125} we infer that
\begin{displaymath} |\pi_{\theta}(\mathcal{P}_{\theta})|_{\delta} < \delta^{-u}\quad\text{for all } \theta \in \mathcal{E}. \end{displaymath}
Since $\mathcal{P}$ is $(\delta,t,\delta^{-\epsilon})$-regular, and $\mathcal{E}$ is a $(\delta,s,\delta^{-\epsilon})$-set, this contradicts the hypothesis of the proposition. The proof is complete. \end{proof}

\def\cprime{$'$}

\bibliographystyle{plain}
\bibliography{references}

\end{document}